\numberwithin{equation}{section}
\numberwithin{figure}{section}
\newtheoremstyle{theoremstyle}
  {10pt}      
  {5pt}       
  {\itshape}  
  {}          
  {\bfseries} 
  {:}         
  {.5em}      
  {}          
\newtheoremstyle{examplestyle}
  {10pt}      
  {5pt}       
  {}          
  {}          
  {\bfseries} 
  {:}         
  {.5em}      
  {}          
\theoremstyle{theoremstyle}
\newtheorem{theorem}{Theorem}[section]
\newtheorem*{theorem*}{Theorem}
\newtheorem{lemma}[theorem]{Lemma}
\newtheorem{proposition}[theorem]{Proposition}
\newtheorem*{proposition*}{Proposition}
\newtheorem{corollary}[theorem]{Corollary}
\newtheorem*{corollary*}{Corollary}
\theoremstyle{examplestyle}
\newtheorem{example}[theorem]{Example}
\newtheorem{definition}[theorem]{Definition}
\newtheorem{definition*}{Definition}
\newtheorem{remark}[theorem]{Remark}
\newtheorem{remark*}{Remark}
\let\smash=\wedge
\let\iso=\cong
\let\tensor=\otimes
\let\directsum=\oplus
\let\minus=\smallsetminus
\definecolor{sq3sq1color}{rgb}{0.5,0,0.5}
\definecolor{sq2color}{rgb}{0.1,0.7,0.1}
\colorlet{taucolor}{red}
\colorlet{partialsq2sq1color}{sq3sq1color!53!black}
\colorlet{incsq2sq1color}{sq3sq1color!67!green}
\colorlet{sq2rhosq1color}{taucolor!45!sq2color}
\colorlet{sq2prcolor}{sq2color!53!black}
\colorlet{incsq2color}{sq2color!67!yellow}
\colorlet{sq2partialcolor}{sq2color!42!blue}
\colorlet{tauprcolor}{taucolor!53!black}
\colorlet{taupartialcolor}{taucolor!42!yellow}
\newcommand{\Mil}{\mathsf{M}}
\newcommand{\MilWitt}{\mathsf{MW}}
\newcommand{\KMil}{\mathbf{K}^{\Mil}_{\ast}}
\newcommand{\KMW}{\mathbf{K}^{\MilWitt}_{\ast}}
\newcommand{\image}{\operatorname{im}}
\newcommand{\unit}{\mathbf{1}}
\newcommand{\SH}{\mathbf{SH}}
\newcommand{\eff}{\mathbf{eff}}
\newcommand{\Cpx}{\mathbf{Ch}}
\newcommand{\holim}{\operatornamewithlimits{holim}}
\newcommand{\Ext}{{\operatorname{Ext}}}
\newcommand{\Spec}{{\operatorname{Spec}}}
\newcommand{\colim}{\operatornamewithlimits{colim}}
\newcommand{\hocolim}{\operatornamewithlimits{hocolim}}
\newcommand{\PP}{\mathbf{P}}
\newcommand{\MGL}{\mathbf{MGL}}
\newcommand{\MU}{\mathbf{MU}}
\newcommand{\BP}{\mathbf{BP}}
\newcommand{\KQ}{\mathbf{KQ}}
\newcommand{\KT}{\mathbf{KW}}
\newcommand{\Gr}{\mathbf{Gr}}
\newcommand{\KGL}{\mathbf{KGL}}
\newcommand{\kgl}{\mathbf{kgl}}
\newcommand{\B}{\mathsf{B}}
\newcommand{\C}{\mathsf{C}}
\newcommand{\D}{\mathsf{D}}
\newcommand{\E}{\mathsf{E}}
\newcommand{\hyper}{\mathsf{h}}
\newcommand{\M}{\mathbf{M}}
\newcommand{\MW}{\mathbf{MW}}
\newcommand{\F}{\mathsf{F}}
\newcommand{\HHH}{\mathbf{H}}
\newcommand{\dd}{\mathsf{d}}
\newcommand{\s}{\mathsf{s}}
\newcommand{\f}{\mathsf{f}}
\newcommand{\T}{\mathbf{T}}
\newcommand{\uHom}{\underline{\mathrm{Hom}}}
\newcommand{\Mod}{\mathbf{mod}}
\newcommand{\cell}{\mathrm{cell}}
\newcommand{\conn}{\mathrm{conn}}
\newcommand{\NN}{{\mathbb N}}
\newcommand{\ZZ}{{\mathbb Z}}
\newcommand{\pr}{{\mathrm{pr}}}
\newcommand{\id}{{\mathrm{id}}}
\newcommand{\Id}{{\mathrm{Id}}}
\newcommand{\inc}{{\mathrm{inc}}}
\newcommand{\MS}{{\mathbf{MS}}}
\newcommand{\Tot}{{\mathrm{Tot}}}
\newcommand{\cd}{{\mathrm{cd}}}
\newcommand{\A}{\mathbf{A}}
\newcommand{\G}{\mathbf{G}_{\mathfrak{m}}}
\newcommand{\Char}{\mathsf{char}}
\newcommand{\CC}{\mathbb{C}}
\newcommand{\MZ}{\mathbf{M}\mathbb{Z}}
\newcommand{\MA}{\mathbf{M}A}
\newcommand{\MB}{\mathbf{M}B}
\newcommand{\MC}{\mathbf{M}C}
\newcommand{\MQ}{\mathbf{M}\mathbf{Q}}
\newcommand{\Z}{\mathbb{Z}}
\newcommand{\N}{\mathbb{N}}
\newcommand{\RR}{\mathbb{R}}
\newcommand{\SL}{\mathbf{SL}}
\newcommand{\Q}{\mathbf{Q}}
\newcommand{\QQ}{\mathbb{Q}}
\newcommand{\Sq}{\mathsf{Sq}}
\newcommand{\Top}{\mathsf{top}}
\newcommand{\Sm}{\mathbf{Sm}}
\newcommand{\Thom}{\mathsf{Th}}
\newcommand{\slicecomp}{\mathsf{sc}}
\newcommand{\Hom}{\operatorname{Hom}}
\newcommand{\Ab}{\mathsf{Ab}}
\newcommand{\Spt}{\mathsf{Spt}}
\newcommand{\Pre}{\mathsf{Pre}}
\newcommand{\Alg}{\mathsf{Alg}}
\newcommand{\Com}{\mathsf{Com}}
\title{{\bf The first stable homotopy groups \\ of motivic spheres}}
\author{Oliver R\"ondigs, Markus Spitzweck, Paul Arne {\O}stv{\ae}r}
\date{\today}
\begin{document}
\maketitle
\begin{abstract}
We compute the $1$-line of stable homotopy groups of motivic spheres over fields of characteristic not two in terms of hermitian and Milnor $K$-groups.
This is achieved by solving questions about convergence and differentials in the slice spectral sequence.
\end{abstract}

{\footnotesize{\tableofcontents}}
\newpage

\section{Introduction}
\label{section:introduction}

\subsection{Background and motivation}
We present a systematic approach to computing stable homotopy classes of maps between motivic spheres.
These invariants are the universal ones for smooth varieties subject to $\mathbf{A}^{1}$-invariance in the sense of motivic homotopy theory \cite{MorelICM2006}, \cite{VoevodskyICM1998}.
The idea of studying universal invariants is at the heart of the philosophy behind the theory of motives (motivic homotopy theory can be viewed as a non-abelian generalization).
Decades of research in algebraic topology have lead to beautiful mathematical structures related to the notoriously difficult problem of classifying maps between spheres up to homotopy, 
see e.g., \cite{HopkinsICM2002}.
Our approach to the corresponding algebro-geometric problem employs the slice spectral sequence.
The formalism of this approach is remarkably convenient.
It allows us to calculate differentials using the action of the motivic Steenrod algebra on motivic cohomology groups \cite{hko.positivecharacteristic}, \cite{Voevodsky.reduced}.
Our focus in this paper is not just restricted to specific computations, but also the context we lay out to formulate and carry them out.
There is still much to be done in this program, and many more questions could be answered by pressing these computations further.

\subsection{Main results and outline of the paper}
For simplicity let $F$ be a field of characteristic zero.
The motivic spheres $S^{p,q}$ over $F$ form a bigraded family of objects indexed by integers $p$ and $q$, 
and so do the stable homotopy groups $\pi_{\star}\unit$ of the motivic sphere spectrum over $F$.
By work of Morel \cite{MorelICM2006}, 
$\pi_{p,q}\unit=0$ if $p<q$ and the $0$-line $\bigoplus_{n\in\ZZ}\pi_{n+r,n}\unit$ is the Milnor-Witt $K$-theory $\mathbf{K}^{\MW}_{\ast}(F)$ of $F$, 
which incorporates a great deal of nuanced arithmetic information about the base field.
By the $r$-line we mean $\bigoplus_{n\in\ZZ}\pi_{n+r,n}\unit$ for $r\geq 0$.
A next logical step is to compute the terms $\pi_{n+1,n}\unit$ on the $1$-line.
One of the major inspirations for this paper is Morel's $\pi_{1}$-conjecture in weight zero. 
It states that there is a short exact sequence
\begin{equation}
\label{eq:first-stable-stem2} 
0 
\to 
\mathbf{K}^\M_{2}(F)/24 
\to 
\pi_{1,0}\unit 
\to 
F^{\times}/2\oplus\Z/2
\to
0.
\end{equation}
Here $\mathbf{K}^{\M}_{\ast}(F)$ is the Milnor $K$-theory of $F$ \cite{milnor.k-quadratic}.
In this paper we prove Morel's $\pi_{1}$-conjecture.
The surjection in \eqref{eq:first-stable-stem2} arises from the unit map for the hermitian $K$-theory spectrum $\KQ$ of quadratic forms \cite{hornbostel.hermitian}.
In general,
\eqref{eq:first-stable-stem2} does not split.
One can make precise the statement that $\mathbf{K}^\M_{2}(F)$ is generated by the second motivic Hopf map $\nu\in\pi_{3,2}\unit$ 
(obtained from the Hopf construction on 
$\SL_ {2}$ \cite[Remark 4.4, Definition 4.7]{dugger-isaksen.hopf}), 
while $F^{\times}/2\oplus\Z/2$ is generated by the topological Hopf map $\eta_{\Top}\in\pi_{1,0}\unit$.
We note the relations $24\nu=0$ and $12\nu=\eta^{2}\eta_{\Top}$,
where $\eta\in\pi_{1,1}\unit$ is the first motivic Hopf map.
The Hopf construction should witness that $\nu$ is in the image of a motivic $J$-homomorphism, 
so one may speculate whether the relation $24\nu=0$ is a shadow of some motivic version of the Adams conjecture \cite{zbMATH03222619}. 

More generally, 
for every $n\in\Z$,
we show an exact sequence of Nisnevich sheaves on smooth schemes of finite type  
\begin{equation}
\label{eq:first-stable-stem} 
0 
\to 
\mathbf{K}^{\M}_{2-n}/24
\to 
\mathbf{\pi}_{n+1,n}\unit
\to 
\mathbf{\pi}_{n+1,n}\f_0(\KQ).
\end{equation}
Here $\f_0(\KQ)$ is the effective cover of hermitian $K$-theory arising in the slice filtration of the motivic stable homotopy category of $F$
(this does not affect homotopy groups of nonnegative weight).
The rightmost map in \eqref{eq:first-stable-stem} is surjective for $n\geq -4$
(compare with \cite[Corollary 6]{asok-fasel.degree}). 
The exact sequence \eqref{eq:first-stable-stem} vastly generalizes computations in \cite{oo} for fields of cohomological dimension at most two, 
and in \cite{dugger-isaksen.real} and \cite{ho.galois} for the real numbers. 
Our computations of motivic stable homotopy groups are carried out on $F$-points.
This implies \eqref{eq:first-stable-stem} since the motivic homotopy sheaves are strictly $\mathbf{A}^1$-invariant \cite[Theorem 6.2.7]{morel.connectivity}, \cite[Theorem 2.11]{morel.field}.

It is interesting to compare with the computations of unstable motivic homotopy groups of punctured affine spaces in 
\cite{asok-fasel.splitting} and \cite{asok-wickelgren-williams}.
If $d>3$, 
the extension for the unstable homotopy sheaf $\pi_d(\mathbf{A}^d\minus\{0\})$ conjectured by Asok-Fasel~\cite[Conjecture~7, p.~1894]{mfo2013} coincides with~(\ref{eq:first-stable-stem}). 
As noted in \cite{asok-fasel.splitting}, 
the exact sequence~(\ref{eq:first-stable-stem}) and a conjectural Freudenthal $\PP^1$-suspension theorem imply Murthy's conjecture on splittings of vector bundles \cite[Conjecture 1]{asok-fasel.splitting}.

Our approach is to divide and conquer the slice spectral sequence for the motivic sphere spectrum.
More precisely, 
we
\begin{itemize}
\item[(1)] identify the slices of $\unit$ and their multiplicative structure,
\item[(2)] give widely applicative conditions under which the slice spectral sequence converges, and
\item[(3)] express the first slice differentials in terms of motivic Steenrod operations.
\end{itemize}

An explicit answer to (1) is out of reach because the slices of $\unit$ involve the $E_{2}$-page of the topological Adams-Novikov spectral sequence as conjectured by Voevodsky \cite{voevodsky.open} 
and verified by Levine \cite{levine.comparison} for fields.
Our solution applies to Dedekind domains of mixed characteristic. 
Much more work remains on the multiplicative structure;
we collect just enough results for the purpose of this paper by introducing techniques that will be useful for further computations.
This is carried out in Section~\ref{sec:slic-motiv-sphere}.

Salient among the results for (2) is Levine's convergence result \cite{levine.sliceconvergence} for fields of finite cohomological dimension.
Our solution for general fields,
see Section~\ref{sec:convergence}, 
clarifies the distinct role of $\eta$ in the context of the slice filtration:
Theorem~\ref{theorem:eta-slice-completion} identifies the slice completion of any motivic spectrum admitting a cell presentation of finite type with its $\eta$-completion.
For $\unit$ it follows that the slice spectral sequence takes the form of a conditionally convergent spectral sequence
\begin{equation}
\label{equation:splicespectralsequence}
E^{1}_{p,q,n}(\unit) 
=
\pi_{p,n}\s_{q}(\unit)
\Longrightarrow
\pi_{p,n} \unit^\wedge_\eta
\end{equation}
in the sense of Boardman \cite{Boardman}.
Here $\s_{q}(\unit)$ is the $q$th slice of $\unit$ and $\unit^\wedge_\eta$ is the $\eta$-complete sphere.
This firmly answers Voevodsky's question of convergence of the slice spectral sequence \cite{voevodsky.open}.  
We employ \eqref{equation:splicespectralsequence} to identify Morel's plus part of the rational sphere spectrum with rational motivic cohomology, 
and show a motivic homotopy finiteness result over finite fields.
The slice spectral sequence is an algebro-geometric analogue of the topological Atiyah-Hirzebruch spectral sequence \cite{zbMATH03176572}, 
with applications to $K$-theory \cite{Voevodsky:motivicss}, 
fully faithfulness of the constant functor from the stable topological homotopy category to the stable motivic homotopy category over algebraically closed fields \cite{levine.comparison},
and Milnor's conjecture on quadratic forms \cite{roendigs-oestvaer.hermitian}.

In Section \ref{sec:slice-spectr-sequ} we partially solve (3) in a range suitable for computing the $1$-line; 
the Adem relations, hermitian $K$-theory, and the solution of Milnor's conjecture on quadratic forms in \cite{roendigs-oestvaer.hermitian} help a great deal in our analysis. 
(Prior to Proposition~\ref{prop:Einfty-KQ} it is explained why the exact sequence~(\ref{eq:first-stable-stem}) involves the effective cover $\f_0(\KQ)$ instead of $\KQ$.)
Give and take technical details, 
the problem of computing the $E_{2}$-page of the slice spectral sequence has now been turned into questions about motivic Steenrod operations.

Finally, 
in Sections~\ref{sec:slice-spectr-sequ} and \ref{section:1line} all the pieces fall into place as we show all higher slice differentials relevant for the $1$-line are trivial and determine hidden multiplicative extensions. 
One of the techniques we employ breaks the sphere spectrum $\unit$ into simpler pieces by completing and inverting with respect to the Hopf map $\eta$.
The arithmetic square for $\eta$ allows us to put the pieces back together again.
This is used repeatedly throughout the paper, 
especially in Section~\ref{section:1line} as a preparation for the identification of the $1$-line.

Throughout the paper we employ the following notation.
\vspace{0.05in}

\begin{tabular}{l|l}
$F$, $S$ & field, finite dimensional separated Noetherian base scheme \\
$\Sm_{S}$ & smooth schemes of finite type over $S$ \\
$S^{s,t}$, $\Omega^{s,t}$, $\Sigma^{s,t}$ & motivic $(s,t)$-sphere, $(s,t)$-loop space, $(s,t)$-suspension  \\
$\SH$, $\SH^{\eff}$ & motivic and effective motivic stable homotopy categories of $S$\\ 
$\E$, $\unit=S^{0,0}$ & generic motivic spectrum, the motivic sphere spectrum  \\
$A$, $\unit_{A}$ & abelian group, motivic Moore spectrum of $A$ \\
$\Lambda$, $\mathbf{M}A$ & ring, motivic Eilenberg-MacLane spectra of a $\Lambda$-module $A$ \\
$\MGL$, $\MU$, $\BP$ & algebraic and complex cobordism, Brown-Peterson spectrum \\
$\KGL$, $\KQ$, $\KT$ & algebraic and hermitian $K$-theory, Witt-theory \\
$\f_{q}$, $\f^{q}$, $\s_{q}$ & $q$th effective cover, effective co-cover, and slice functors \\
$H^{\ast,\ast}$, $h^{\ast,\ast}$, $h^{\ast,\ast}_{n}$ & integral, mod-$2$, mod-$n$ motivic cohomology groups, $n>2$ \\
$\partial^a_b\colon h_{a}^{s,t} \to h^{s+1,t}_{b}$ & connecting homomorphism, $a,b\in \NN\cup \{\infty\}$, $h_\infty = H$ \\
$\inc^a_b,\pr^a_b\colon h^{s,t}_{a}\to h^{s,t}_{b}$ & inclusion, projection homomorphism, $a,b\in \NN\cup \{\infty\}$, $h_\infty = H$.
\end{tabular}
\vspace{0.05in}
\noindent

The ring $\Lambda$ is a localization of the integers $\mathbb{Z}$.
Let $\E_{\Lambda}$ be short for $\E\wedge\unit_{\Lambda}$.
Note that $\MGL_{\Lambda}$ is an $E_{\infty}$ motivic spectrum.
Our standard suspension convention is such that $\mathbf{P}^1\simeq \T\simeq S^{2,1}$ and $\mathbf{A}^{1}\minus \{0\}\simeq S^{1,1}$ for the Tate object 
$\T=\mathbf{A}^{1}/\mathbf{A}^{1}\minus \{0\}$.
We write $\dd^{\E}_{1}(q)\colon\s_{q}\E\to\Sigma^{1,0}\s_{q+1}\E$ or simply $\dd^{\E}_{1}$ for the first slice differential of $\E$ \cite[\S2]{roendigs-oestvaer.hermitian}, \cite[\S7]{voevodsky.open}, 
and $d^{r}_{p,q,n}(\E)\colon E^{r}_{p,q,n}(\E)\to E^{r}_{p-1,q+r,n}(\E)$ or simply $d^{r}(\E)$ for the $r$th differential in the $n$-th slice spectral sequence.

\section{Slices of spheres and the first Hopf map} 
\label{sec:slic-motiv-sphere}

In this section we verify Voevodsky's conjecture \cite[Conjecture 9]{voevodsky.open} on the slices of the motivic sphere spectrum over base schemes with compatible coefficients 
in the sense of Definition \ref{def:compatible}.
This applies to Dedekind domains of mixed characteristic as well as to fields as in \cite[\S 8]{levine.comparison}. 
Our analysis includes a discussion of the multiplicative structure of the slices of $\unit_{\Lambda}$.
Relating this to the $E^{2}$-page of the topological Adams-Novikov spectral sequence is a key input in our computations of stable motivic homotopy groups.

\subsection{Compatible pairs and local stable motivic homotopy}

Recall the complex cobordism spectrum has homotopy ring $\pi_{\ast}\MU=\ZZ[x_{j} \vert j\geq 1]$, 
$\vert x_{j}\vert=2j$,
\cite[Theorem 3.1.5(a)]{ravenel.green}.
Quillen proved that $\pi_{\ast}\MU$ is isomorphic to the Lazard ring carrying a universal formal group law,
cf.~\cite[Theorem 1.3.4]{ravenel.green}.
The bigraded homotopy ring $\pi_{\star} \MGL$ is naturally a $\pi_{\ast}\MU$-algebra.
(See \cite[(24) on page 572]{NSO}, \cite[(3.5)]{voevodsky.open} for details.) 
Here we do not distinguish notationally between $x_j\in \pi_{2j}\MU$ and its image $x_j\in \pi_{2j,j}\MGL$. 

\begin{definition}
\label{def:compatible}
Let $S$ be a base scheme and let $\Lambda$ be a localization of the integers $\mathbb{Z}$.
The pair $(S,\Lambda)$ is called {\em compatible} if the following conditions hold:
\begin{enumerate}
\item[(1)] There is a canonically induced equivalence
\[ 
\Phi_{S} \wedge \unit_{\Lambda}
\colon 
\bigl(\MGL/(x_1,x_2,\ldots) \MGL\bigr) \wedge \unit_{\Lambda} 
\to 
\M \Lambda.
\]
\item[(2)]
The first effective cover $\f_{1}(\M \Lambda)$ is contractible \cite[\S2]{voevodsky.open}.
\item[(3)]
The weight zero motivic cohomology groups with $\Lambda$-coefficients of every connected component of $S$ agrees with $\Lambda$ in degree $0$ and the trivial group in nonzero degrees.
\end{enumerate}
\end{definition}

\begin{remark}
\label{rem:compatible}
The pair $(S,\Lambda)$ is compatible if $S$ is ind-smooth over a field or a Dedekind domain of mixed characteristic, and has the property that every positive residue characteristic of 
$S$ is invertible in $\Lambda$ \cite[Theorem 11.3]{spitzweckmz}.
If $F$ is a field of exponential characteristic $p\geq 1$, 
then $\bigl(\Spec(F),\mathbb{Z}[\tfrac{1}{p}]\bigr)$ is a compatible pair.
Let $\mathcal{O}_{T}$ be the ring of $T$-integers in a number field $F$, 
where $T$ is a finite set of primes containing all infinite primes and all primes above a prime number $\ell$.  
Then $(\mathrm{Spec}(\mathcal{O}_{T}),\mathbb{Z}_{(\ell)})$ is a compatible pair.
If $S$ is regular, 
the pair $(S,\mathbb{Q})$ is compatible by \cite[Theorem 10.5]{NSO}, 
\cite[Lemma 6.2]{spitzweck.relations}. 
We conjecture that $(S,\mathbb{Z})$ is compatible provided all the connected components of $S$ are irreducible.
\end{remark}

Next we define the $\Lambda$-local stable motivic homotopy category.
Let $\Spt_{\PP^1}^{\Sigma}\MS$ be the category of motivic symmetric spectra with the stable model structure \cite[Theorem 4.15]{jardine.motivicsymmetricspectra}.
Here $\MS$ is short for the category of pointed simplicial presheaves on $\Sm_{S}$, 
a.k.a.~motivic spaces over $S$.
The following model structure exists by \cite[Theorem 4.1.1(1)]{hirschhorn}.
\begin{definition}
\label{definition:RlocalSH}
The $\Lambda$-local stable model structure is the left Bousfield localization of the stable model structure on $\Spt_{\PP^1}^{\Sigma}\MS$ with respect to the set of naturally induced maps
\[ 
\Sigma^{s,t}\Sigma^{\infty}X_ {+}
\to
\Sigma^{s,t}\Sigma^{\infty}X_ {+} \wedge \unit_{\Lambda}
\]
for all integers $s$, $t$, and $X\in\Sm_{S}$.
Denote the corresponding homotopy category by $\SH_{\Lambda}$.
\end{definition}
\begin{remark}
A map $\alpha\colon\E\to\F$ is a weak equivalence in the $\Lambda$-local stable model structure if and only if $\alpha\wedge \unit_{\Lambda}\colon\E_{\Lambda}\to\F_{\Lambda}$ 
is a stable motivic weak equivalence.
When $\Lambda=\QQ$, 
this defines the rational stable motivic homotopy category.
By \cite[Theorem 3.3.19(1)]{hirschhorn} there exists a left Quillen functor from the stable to the $\Lambda$-local stable model structure on $\Spt_{\PP^1}^{\Sigma}\MS$.
We shall refer to its derived functor as the $\Lambda$-localization functor. 
\end{remark}

The slice filtration in $\SH_{\Lambda}$ is defined exactly the same way as in $\SH$ \cite[\S2]{voevodsky.open} by taking $\Lambda$-localizations of compact generators.
The $\Lambda$-localization functor preserves effective objects, 
so the filtrations are compatible in the sense that $\f_{q}$ and $\s_{q}$ commute with $\Lambda$-localization.

\subsection{Slices of the motivic sphere spectrum}
To determine the slices of the motivic sphere spectrum we compare with algebraic cobordism, as in \cite[\S8]{levine.comparison}. We start with the zero slice.

\begin{lemma}
\label{lem:cone-unit-mgl}
The cone of the unit map $\unit\to\MGL$ lies in $\Sigma^{{2,1}}\SH^{\eff}$.
\end{lemma}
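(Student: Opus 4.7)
The plan is to exploit the standard cell presentation of $\MGL$ coming from the Thom spaces of tautological bundles over Grassmannians, and to isolate the unit as the bottom cell. Concretely, I will use $\MGL \simeq \hocolim_n \Sigma^{-2n,-n}\Thom(\gamma_n/\Gr_n)$, where $\gamma_n$ is the tautological rank $n$ bundle over the infinite Grassmannian $\Gr_n$.

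The key geometric input is that $\Gr_n$ carries a Schubert-type filtration whose subquotients are wedges of motivic spheres of the form $S^{2k,k}$: the zero Schubert cell is the basepoint and every further cell contributes in bidegree $(2k,k)$ with $k \geq 1$. Thom-twisting shifts the bidegrees by $(2n,n)$, so $\Thom(\gamma_n/\Gr_n)$ sits in a cofibre sequence
\[
S^{2n,n} \to \Thom(\gamma_n/\Gr_n) \to Q_n,
\]
with $S^{2n,n}$ the Thom class and $Q_n$ built from cells of bidegrees $(2n+2k,n+k)$ for $k \geq 1$; in particular $Q_n \in \Sigma^{2n+2,n+1}\SH^{\eff}$. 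Desuspending by $(2n,n)$ yields compatible cofibre sequences $\unit \to \Sigma^{-2n,-n}\Thom(\gamma_n/\Gr_n) \to \Sigma^{-2n,-n}Q_n$, the quotient lying in $\Sigma^{2,1}\SH^{\eff}$.

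Passing to the homotopy colimit over $n$, and using that $\Sigma^{2,1}\SH^{\eff}$ is closed under homotopy colimits, one obtains a cofibre sequence $\unit \to \MGL \to C$ with $C \in \Sigma^{2,1}\SH^{\eff}$, which is the claim. The main obstacle is to check that the bottom-cell inclusions $\unit \to \Sigma^{-2n,-n}\Thom(\gamma_n/\Gr_n)$ are indeed compatible with the bonding maps induced by $\gamma_n \oplus 1 \hookrightarrow \gamma_{n+1}$, and that together they realise the unit of the ring spectrum $\MGL$. This is the standard identification of the unit of $\MGL$ with the Thom class at each prespectrum level, and once it is secured, the lemma follows.
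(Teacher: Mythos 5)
Your argument is correct, but it is not the route the paper takes: the paper's proof of this lemma is a one-line citation of \cite[Theorem 3.1]{spitzweck.relations}, which asserts that the cone lies in the localizing subcategory generated by all spheres $S^{p,q}$ with $q>0$. What you prove is in fact the sharper statement that the paper records separately in Remark~\ref{rem:cone-unit-mgl} --- namely that the cone is built out of the spheres $S^{2q,q}$, $q>0$, alone, under homotopy colimits and extensions --- and for which the paper points to the proofs of \cite[Theorem 5.7]{SO} and Lemma~\ref{lem:connectivity-cone-eta-mgl}; your cell-by-cell analysis of $\Thom(\gamma_n)$ is essentially that argument. The trade-off is that the citation is shorter and immediately sufficient (only the vanishing of $\s_0$ on the cone is used afterwards), whereas your explicit decomposition buys the finer structural information that the paper needs later anyway. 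Two points deserve care in your write-up: first, the identification of the subquotients of the Schubert filtration of $\Gr_n$ (and hence of $\Thom(\gamma_n)$) with wedges of Tate spheres $S^{2k,k}$ uses homotopy purity together with the triviality of the relevant normal bundles over the affine cells, which is standard over a field but over a general base should be referenced (e.g.\ to \cite{dugger-isaksen.cell} or \cite{SO}); second, as you note, one must check that the bottom cells $S^{2n,n}\hookrightarrow\Thom(\gamma_n)$ are the Thom classes and are compatible with the bonding maps $\gamma_n\oplus\mathcal{O}\to\gamma_{n+1}$, so that their colimit is the unit of $\MGL$ --- this is exactly the verification carried out in the proof of Lemma~\ref{lem:cone-eta-mgl} for the map $\unit/\eta\to\MGL$, and the same bookkeeping applies here. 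With those two references supplied, your proof is complete.
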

\begin{proof}
Theorem 3.1 in \cite{spitzweck.relations} shows the cone is contained in the full localizing triangulated subcategory of $\SH$ generated by 
the motivic spheres $S^{p,q}$, for $p\in\ZZ$, $q > 0$.
\end{proof} 

\begin{remark}
\label{rem:cone-unit-mgl}
In fact, 
the cone of the unit map $\unit\to \MGL$ is contained in the smallest full subcategory of $\SH$ that contains the spheres $S^{2q,q}$, 
for $q>0$, 
and is closed under homotopy colimits and extensions.
See the proof of \cite[Theorem 5.7]{SO} and also Lemma \ref{lem:connectivity-cone-eta-mgl}.
\end{remark}

By definition the zeroth slice functor $\s_{0}$ is trivial on $\Sigma^{{2,1}}  \SH^{\eff}$. 
Thus $\s_{0}(\unit)\to \s_{0}(\MGL)$ is an isomorphism according to Lemma \ref{lem:cone-unit-mgl}.
The slices of $\MGL$ are known for compatible pairs by \cite[Corollary 4.7]{spitzweck.relations} and the Hopkins-Morel-Hoyois isomorphism \cite[Theorem 7.12]{hoyois},
see \cite[Theorem 3.1]{spitzweckalgebraiccobordism}:
There exists an isomorphism of $\M \Lambda$-modules
\begin{equation}
\label{equation:slicesofMGL}
\s_{q}(\MGL_{\Lambda})
\cong
\Sigma^{{2q,q}} \M \Lambda\otimes\MU_{2q}
=
\bigvee_{I=(i_{1},\cdots,i_{r})}\Sigma^{{2q,q}}\M \Lambda.
\end{equation}
Here the coproduct runs over indices $i_{j}\geq 0$ such that $\sum_{j=1}^{r}2j\cdot i_{j}=2q$,
i.e., 
monomials of total degree $2q$ in the polynomial generators $x_{j}$ for $\pi_{\ast}(\MU)$. 
In particular, 
$\s_{0}(\MGL_{\Lambda})\cong\M \Lambda$ and we obtain a proof of \cite[Conjecture 10]{voevodsky.open} for compatible pairs, 
see e.g., 
\cite[\S6 (iv),(v)]{grso} and \cite[Theorem 3.6.13(6)]{Pelaez} for the $\M \Lambda$-module structure:
\begin{theorem}
\label{theorem:s0isomorphism}
The unit map $\unit\to\MGL$ induces an isomorphism on zero slices.
Hence if the pair $(S,\Lambda)$ is compatible,
$\s_{0}(\unit_{\Lambda})\cong\M \Lambda$ and the slices of any $\unit_{\Lambda}$-module have canonical $\M \Lambda$-module structures.
\end{theorem}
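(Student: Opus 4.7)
The statement breaks into three assertions that I would dispatch in sequence.

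First I would prove that $\s_0(\unit)\to\s_0(\MGL)$ is an isomorphism over any base scheme $S$. The plan is to apply $\s_0$ to the cofiber sequence $\unit\to\MGL\to\caC$, where $\caC$ denotes the cone. By Lemma~\ref{lem:cone-unit-mgl} we have $\caC\in\Sigma^{2,1}\SH^{\eff}\subseteq\f_1\SH$, so $\s_0(\caC)\simeq\ast$ by construction of the zeroth slice (it factors through $\s_0\simeq\f_0/\f_1$, and $\f_1$ is idempotent on $\Sigma^{2,1}\SH^{\eff}$). The long exact sequence of slices then yields the desired isomorphism.

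Next, assuming $(S,\Lambda)$ is compatible, I would identify $\s_0(\unit_\Lambda)$ with $\M\Lambda$. Since $\Lambda$-localization is a left Bousfield localization that preserves effective objects, it commutes with $\f_q$ and hence with $\s_q$; in particular $\s_0(\unit_\Lambda)\simeq\s_0(\unit)\wedge\unit_\Lambda$ and similarly for $\MGL$. Specializing equation (\ref{equation:slicesofMGL}) to $q=0$ (where the only index is the empty monomial) gives $\s_0(\MGL_\Lambda)\cong\M\Lambda$. Combining this with the isomorphism of the previous paragraph yields $\s_0(\unit_\Lambda)\cong\M\Lambda$.

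For the final claim about module structures, I would invoke the multiplicative refinement of the slice filtration. By the results of Pelaez \cite[Theorem 3.6.13(6)]{Pelaez} and of \cite[\S6 (iv),(v)]{grso}, the functors $\f_q$ and $\s_q$ carry symmetric monoidal structure in the appropriate sense: if $\E$ is a (commutative) monoid in $\SH$, then so is each $\f_q\E$ and $\s_0\E$, and the slices $\s_q(\E)$ are $\s_0\E$-modules; moreover, if $\caM$ is an $\E$-module then each $\s_q\caM$ is canonically an $\s_0\E$-module. Applying this to $\E=\unit_\Lambda$, whose zeroth slice has just been identified with $\M\Lambda$, the slices of any $\unit_\Lambda$-module acquire canonical $\M\Lambda$-module structures.

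The conceptually delicate step is the third one: one must know that the slice filtration interacts well enough with the smash product to transport ring and module structures from $\unit_\Lambda$ to its slices. This is the content of the cited work of Pelaez and of Gutiérrez--Röndigs--Spitzweck--Østvær, so the proof here is largely a matter of citing the right structural results. The first two steps are essentially formal once Lemma~\ref{lem:cone-unit-mgl} and the Hopkins--Morel--Hoyois-type description of $\s_q(\MGL_\Lambda)$ are in hand.
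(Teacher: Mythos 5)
Your proof is correct and follows essentially the same route as the paper: the isomorphism on zero slices comes from Lemma~\ref{lem:cone-unit-mgl} together with the vanishing of $\s_0$ on $\Sigma^{2,1}\SH^{\eff}$, the identification $\s_0(\unit_\Lambda)\cong\M\Lambda$ comes from the $q=0$ case of~(\ref{equation:slicesofMGL}), and the module structures are obtained by citing Pelaez and Guti\'errez--R\"ondigs--Spitzweck--{\O}stv{\ae}r exactly as the paper does. No gaps.
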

\begin{remark}
The module theory of motivic cohomology in relation to motives was worked out in \cite{RO1}, \cite{RO2}.
In \cite[\S6 (iv),(v)]{grso} and \cite[Theorem 3.6.13(6)]{Pelaez} it is shown that $\s_{q}(\E)$ is a module over the motivic ring spectrum $\s_{0}(\unit)$.
Using the identification~(\ref{equation:slicesofMGL}) it follows that the slice spectral sequence for $\MGL_{\mathbb{Q}}$ degenerates.
\end{remark}

The positive slices of the motivic sphere spectrum will be determined
in several steps, starting with the standard cosimplicial resolution
already employed in \cite[\S3]{voevodsky.open} and recalled in~(\ref{eq:scr}) below.

\begin{proposition}
\label{prop:adams-resol}
The standard cosimplicial $\MGL$-resolution of the motivic sphere spectrum
\begin{displaymath}
\label{equation:hpoikn}
\xymatrix{
\unit
\ar[r] &
\MGL
\ar@<3pt>[r] \ar@<-3pt>[r] & 
\MGL^{\wedge 2} 
\ar@<6pt>[r] \ar[r] \ar@<-6pt>[r] & 
\MGL^{\wedge 3} \cdots }
\end{displaymath}
induces a natural isomorphism on all slices
\begin{equation}
\label{equation:hdgwgv}
\s_{q}(\unit_{\Lambda})
\overset{\cong}{\to}
\underset{\Delta}{\holim}\,
\s_{q}(\MGL_{\Lambda}^{\wedge\bullet}).
\end{equation}
\end{proposition}
\begin{proof}
The basic idea is to work with the Adams tower for the fiber $\overline{\MGL}$ of the unit map of the algebraic cobordism spectrum:
\begin{center}
$\overline{\MGL} 
\to  
\unit  
\to  
\MGL$ \\
$\overline{\MGL}^{\wedge 2}  
\to  
\overline{\MGL}  
\to  
\overline{\MGL}\wedge\MGL$ \\
$\cdots$ \\
$\overline{\MGL}^{\wedge (n+1)}  
\to  
\overline{\MGL}^{\wedge n} 
\to  
\overline{\MGL}^{\wedge n}\wedge\MGL$ \\
$\cdots$ \\
\end{center}

The commutative algebras $\Com\Alg(\Spt_{\PP^1}^{\Sigma}\MS)$ form a combinatorial model category. 
This holds by applying \cite[Theorem 3.6]{hornbostel.preorient} to the commutative operad.
We may assume $\MGL_{\Lambda}$ is a cofibrant object of $\Com\Alg(\Spt_{\PP^1}^{\Sigma}\MS)$ and likewise for $\overline{\MGL}_{\Lambda}$ in $\Spt_{\PP^1}^{\Sigma}\MS$.
Then $\MGL_{\Lambda}^{\wedge n}$ is a commutative monoid with the correct homotopy type in the sense that it is equivalent to the $n$-fold derived smash product of $\MGL_{\Lambda}$ with itself.
We obtain strict diagrams 
\begin{equation}
\label{eq:scr}
\Delta
\to
\Com\Alg(\Spt_{\PP^1}^{\Sigma}\MS);
\;
[n]
\mapsto
\MGL_{\Lambda}^{\wedge (n+1)},
\end{equation}
and
\begin{equation}
\label{equation:tyuhbvo}
\Delta
\to
\Spt_{\PP^1}^{\Sigma}\MS;
\;
[n]
\mapsto
\overline{\MGL}_{\Lambda}^{\wedge m}\wedge\MGL_{\Lambda}^{\wedge (n+1)}.
\end{equation}

We show there is a natural isomorphism 
\begin{equation}
\label{equation:fdghfg}
\s_{q}(\overline{\MGL}_{\Lambda}^{\wedge m})
\overset{\cong}{\to}
\underset{\Delta}{\holim}\,
\s_{q}(\overline{\MGL}_{\Lambda}^{\wedge m}\wedge\MGL_{\Lambda}^{\wedge (\bullet+1)})
\end{equation}
by downward induction on $m$.
If $m\geq q+1$,
the $q$th slices in (\ref{equation:fdghfg}) are trivial according to Lemma \ref{lem:cone-unit-mgl}.
(Note that $\overline{\MGL}\in\Sigma^{2,1}\SH^{\eff}$.) 
For the induction step we claim there exists a naturally induced commutative diagram of distinguished triangles in $\SH$:
\begin{equation}
\label{equation:pjfygf}
\xymatrix{
\s_{q}(\overline{\MGL}_{\Lambda}^{\wedge (m+1)})
\ar[r] \ar[d] &
\underset{\Delta}{\holim}\,
\s_{q}(\overline{\MGL}_{\Lambda}^{\wedge (m+1)}\wedge\MGL_{\Lambda}^{\wedge(\bullet+1)})
\ar[d] \\
\s_{q}(\overline{\MGL}_{\Lambda}^{\wedge m})
\ar[r] \ar[d] &
\underset{\Delta}{\holim}\,
\s_{q}(\overline{\MGL}_{\Lambda}^{\wedge m}\wedge\MGL_{\Lambda}^{\wedge(\bullet+1)})
\ar[d] \\
\s_{q}(\overline{\MGL}_{\Lambda}^{\wedge m}\wedge\MGL_{\Lambda})
\ar[r] & 
\underset{\Delta}{\holim}\,
\s_{q}(\overline{\MGL}_{\Lambda}^{\wedge m}\wedge\MGL_{\Lambda}\wedge\MGL_{\Lambda}^{\wedge(\bullet+1)})  }
\end{equation}
In effect, 
the model categorical considerations concerning functorial (co)fibrant replacements in \cite[\S 3.1]{grso} combined with \eqref{equation:tyuhbvo} show there is a strict diagram 
\begin{equation}
\Delta
\to
\Spt_{\PP^1}^{\Sigma}\MS;
\;
n
\mapsto
\s_{q}(\overline{\MGL}_{\Lambda}^{\wedge m}\wedge\MGL_{\Lambda}^{\wedge n}).
\end{equation}
The slice functor $\s_{q}$ is by definition a composition of functorial colocalization and localization functors which are Quillen functors between stable model structures on 
$\Spt_{\PP^1}^{\Sigma}\MS$ \cite[\S 3.1]{grso}.
It follows that $\s_{q}$ is a triangulated functor of $\SH$.
This verifies our claim concerning \eqref{equation:pjfygf}.

Lemma \ref{lemma:hfgcjke} given below shows the lower horizontal map in (\ref{equation:pjfygf}) is an isomorphism.
By the induction hypothesis there is an isomorphism
\begin{equation*}
\s_{q}(\overline{\MGL}_{\Lambda}^{\wedge (m+1)})
\overset{\cong}{\to}
\underset{\Delta}{\holim}\,
\s_{q}(\overline{\MGL}_{\Lambda}^{\wedge (m+1)}\wedge\MGL_{\Lambda}^{\wedge(\bullet+1)}).
\end{equation*}
It follows that the middle horizontal map in (\ref{equation:pjfygf}) is also an isomorphism.
The desired isomorphism in \eqref{equation:hdgwgv} is the special case of \eqref{equation:fdghfg} for $m=0$. 
\end{proof}

\begin{lemma}
\label{lemma:hfgcjke}
For every $\MGL_{\Lambda}$-module $\mathscr{M}$ there are weak equivalences 
\[ 
\mathscr{M}
\xrightarrow{\sim}
\underset{\Delta}{\holim}\,\mathscr{M}\wedge\MGL_{\Lambda}^{\wedge(\bullet+1)}
\]
and
\[
\s_{q}(\mathscr{M})
\xrightarrow{\sim}
\underset{\Delta}{\holim}\, \s_{q}(\mathscr{M}\wedge\MGL_{\Lambda}^{\wedge(\bullet+1)}).
\]
\end{lemma}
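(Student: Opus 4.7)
The plan is to exhibit the augmented cosimplicial spectrum
\[
\mathscr{M} \;\to\; \mathscr{M}\wedge\MGL_{\Lambda}^{\wedge(\bullet+1)}
\]
as \emph{split augmented}, and then invoke the standard fact that in a pointed simplicial model category a split augmented cosimplicial object is $\holim_{\Delta}$-equivalent to its augmentation. Granting this, the second displayed equivalence will follow by the same formal argument after applying the slice functor $\s_{q}$.

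First I would construct the extra codegeneracies. The $\MGL_{\Lambda}$-module action $\mu\colon\mathscr{M}\wedge\MGL_{\Lambda}\to\mathscr{M}$ furnishes maps
\[
s^{-1}_{n}\colon\mathscr{M}\wedge\MGL_{\Lambda}^{\wedge(n+1)}\longrightarrow\mathscr{M}\wedge\MGL_{\Lambda}^{\wedge n}\qquad (n\geq 0)
\]
by applying $\mu$ to the two leftmost smash factors; in particular $s^{-1}_{0}=\mu$ splits the augmentation. The cosimplicial identities relating $s^{-1}_{\bullet}$ to the cofaces (induced by the unit $\unit\to\MGL_{\Lambda}$) and to the codegeneracies (induced by the multiplication on $\MGL_{\Lambda}$) are immediate consequences of associativity and unitality of $\mu$. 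Following the cofibrancy conventions of the proof of Proposition~\ref{lem:adams-resol}, these maps live strictly in the point-set category $\Spt_{\PP^{1}}^{\Sigma}\MS$ and the cosimplicial object has the expected homotopy type.

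Next, the extra codegeneracies $s^{-1}_{\bullet}$ assemble into a cosimplicial contraction, i.e., a cosimplicial homotopy from the identity on $\mathscr{M}\wedge\MGL_{\Lambda}^{\wedge(\bullet+1)}$ to the constant cosimplicial endomorphism factoring through the augmentation. Standard cosimplicial homotopy theory then yields the first displayed weak equivalence. For the second, I would post-compose with the slice functor $\s_{q}$, which---being a functor, indeed a triangulated endofunctor of $\SH$ as noted in the proof of Proposition~\ref{lem:adams-resol}---transports $s^{-1}_{\bullet}$ to extra codegeneracies exhibiting $\s_{q}(\mathscr{M})\to\s_{q}(\mathscr{M}\wedge\MGL_{\Lambda}^{\wedge(\bullet+1)})$ as split augmented in $\SH$. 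The same formal lemma then delivers the second displayed equivalence.

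The main obstacle is bookkeeping rather than substance: one must verify the split-augmentation argument inside the specific model $\Spt_{\PP^{1}}^{\Sigma}\MS$, checking that the extra codegeneracies are strict maps (not merely maps up to homotopy) and are compatible with the (co)fibrant replacements used to define $\s_{q}$ and $\holim_{\Delta}$ in \cite[\S 3.1]{grso}. Crucially, the argument sidesteps the general question of whether $\s_{q}$ commutes with homotopy limits; all that is required is that $\s_{q}$ preserve the formal splitting data, which is automatic since it is a functor.
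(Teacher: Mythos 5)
Your proposal is correct and follows essentially the same route as the paper: both arguments observe that the $\MGL_{\Lambda}$-module structure makes $\mathscr{M}\to\mathscr{M}\wedge\MGL_{\Lambda}^{\wedge(\bullet+1)}$ a split coaugmented cosimplicial diagram (your extra codegeneracies $s^{-1}_{\bullet}$ are exactly the data of extending the diagram over the paper's category $\widetilde{\Delta}$), and that $\s_{q}$, being a functor, preserves this splitting. The paper packages the final step as homotopy left cofinality of $\Delta\to\widetilde{\Delta}$ rather than as a cosimplicial contraction, but this is the same standard fact.
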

\begin{proof}
Let $\widetilde{\Delta}$ denote the split augmented simplicial category with naturally ordered objects $[-1]_{+}=\{+\}$, $[n]_{+}=\{+,0,\dots,n\}$ for $n\geq 0$. 
Maps in $\widetilde{\Delta}$ are monotone maps preserving $+$.
Note that $\widetilde{\Delta}$ has an initial object.
The evident functor from the simplicial category $\Delta$ to $\widetilde{\Delta}$ is homotopy left cofinal in the sense of \cite[Definition 19.6.1]{hirschhorn}.
Thus by \cite[Theorem 19.6.7(2)]{hirschhorn}, 
for any split coaugmented cosimplicial diagram $\mathsf{X}$ there is a weak equivalence 
\[ 
\mathsf{X}_{-1}
\cong
\underset{\widetilde{\Delta}}{\holim}\,\mathsf{X}
\xrightarrow{\sim}
\underset{\Delta}{\holim}\,\mathsf{X}.
\]
The standard cosimplicial $\MGL_{\Lambda}$-resolution of $\mathscr{M}$ is a split coaugmented cosimplicial diagram.
Applying the $q$th slice functor preserves this structure.
\end{proof}

For any $q\geq 0$ we consider $\pi_{2q}\MU^{\wedge(\bullet+1)}$ as a cosimplicial abelian group. 
According to \cite[Conjecture 6]{voevodsky.open} the slices of the cosimplicial motivic spectrum $\MGL_{\Lambda}^{\wedge(\bullet+1)}$ are given by 
\begin{equation}
\label{equation:sgfdgd}
\s_{q}(\MGL_{\Lambda}^{\wedge(\bullet+1)}) 
\cong
\Sigma^{2q,q}\M \Lambda\otimes\pi_{2q}\MU^{\wedge(\bullet+1)}.
\end{equation}
In cosimplicial degree zero, 
(\ref{equation:sgfdgd}) holds for compatible pairs by (\ref{equation:slicesofMGL}).
Inductively, 
this implies \eqref{equation:sgfdgd} in every cosimplicial degree on account of the smash product decompositions 
\begin{equation*}
\MGL_{\Lambda} \wedge \MGL_{\Lambda} = \MGL_{\Lambda}[b_1,b_2,b_3,\ldots], \,
\MU\wedge \MU = \MU[b_1,b_2,b_3,\ldots], 
\end{equation*}
from \cite[Lemma 6.4(i)]{NSO}. Here $b_j$ is the standard choice of generator
in the dual Landweber-Novikov algebra \cite{novikov}, \cite[Theorem 4.1.11]{ravenel.green}, 
with the standard degree $\lvert b_j\rvert =2j$.

In the following we refine \eqref{equation:sgfdgd} to a weak equivalence of cosimplicial $\M \Lambda$-modules for a fixed compatible pair $(S,\Lambda)$, 
where $S$ is connected.
Note that $\s_{q}(\MGL_{\Lambda}^{\wedge(\bullet+1)})$ is a cosimplicial $\M \Lambda$-module by Theorem \ref{theorem:s0isomorphism}.
The category of $\M \Lambda$-modules $\M \Lambda-\Mod$ is enriched over chain complexes of $\Lambda$-modules $\Cpx(\Lambda-\Mod)$ by viewing $\M \Lambda$ as an $E_{\infty}$ object
in the motivic symmetric spectrum category $\Spt_{\PP^1}^{\Sigma}\Pre\Cpx(\Lambda-\Mod)$ for presheaves $\Pre\Cpx(\Lambda-\Mod)$ of chain complexes of $\Lambda$-modules on $\Sm_{S}$.
The corresponding hom objects $\uHom_{\M \Lambda-\Mod}$ will be implicitly derived by taking (co)fibrant replacements in the stable model structure on 
$\M \Lambda-\Mod$ \cite[Proposition 38]{RO2}.

The cosimplicial chain complex of abelian groups  
\begin{equation*}
\uHom_{\M \Lambda-\Mod}(\Sigma^{2q,q}\M \Lambda,\s_{q}(\MGL_{\Lambda}^{\wedge(\bullet+1)}))
\end{equation*}
yields by adjunction a map of cosimplicial $\M \Lambda$-modules
\begin{equation}
\label{equation:cosimplicialMRweakequivalence}
\Sigma^{2q,q}\M \Lambda\otimes\uHom_{\M \Lambda-\Mod}(\Sigma^{2q,q}\M \Lambda,\s_{q}(\MGL_{\Lambda}^{\wedge(\bullet+1)}))
\xrightarrow{\sim} 
\s_{q}(\MGL_{\Lambda}^{\wedge(\bullet+1)}).
\end{equation}
This is a weak equivalence on account of the $\M \Lambda$-module isomorphism in \eqref{equation:slicesofMGL} and the identification of the hom object of motives $\uHom_{\M \Lambda-\Mod}(\M \Lambda,\M \Lambda)$ 
with the chain complex comprised of a single copy of $\Lambda$ in degree zero,
cf.~Definition \ref{def:compatible}(3). We claim there is an isomorphism of cosimplicial objects in $\Cpx(\Lambda-\Mod)$ 
\begin{equation}
\label{equation:cosimplicalisomorphism}
\uHom_{\M \Lambda-\Mod}(\Sigma^{2q,q}\M \Lambda,\s_{q}(\MGL_{\Lambda}^{\wedge(\bullet+1)}))
\cong
\pi_{2q}\MU^{\wedge(\bullet+1)} \otimes \Lambda.
\end{equation}
Here 
$\pi_{2q}\MU^{\wedge(\bullet+1)}$ in \eqref{equation:cosimplicalisomorphism} is concentrated in degree zero.
For connective chain complexes $\Cpx_{\ge 0}(\Lambda-\Mod)$ of $\Lambda$-modules there are adjunctions 
\begin{equation}
\label{equation:chaincomplexadjunctions} 
\xymatrix{
\Lambda-\Mod \ar@<-1pt>[r] & \ar@<-4pt>[l]
\Cpx_{\ge 0}(\Lambda-\Mod) \ar@<4pt>[r] & \ar@<1pt>[l] 
\Cpx(\Lambda-\Mod). 
}
\end{equation}
Here a $\Lambda$-module maps to the corresponding chain complex concentrated in degree zero via the inclusion of $\Cpx_{\ge 0}(\Lambda-\Mod)$ into $\Cpx(\Lambda-\Mod)$.
In the opposite direction, 
we use the good truncation functor and the zeroth homology functor. 
For a cosimplicial object of $\Cpx(\Lambda-\Mod)$ concentrated in degree zero, 
such as $\pi_{2q}\MU^{\wedge(\bullet+1)}$ considered above,
the units and counits of the adjunctions in \eqref{equation:chaincomplexadjunctions} furnish canonical isomorphisms producing isomorphic objects in $\Cpx(\Lambda-\Mod)$,
hence the isomorphism in \eqref{equation:cosimplicalisomorphism}. To summarize, 
\eqref{equation:cosimplicialMRweakequivalence} and \eqref{equation:cosimplicalisomorphism} imply the weak equivalence of cosimplicial $\M \Lambda$-modules
\begin{equation}
\label{equation:summaryweakequivalence}
\Sigma^{2q,q}\M \Lambda\otimes \pi_{2q}\MU^{\wedge(\bullet+1)}
\xrightarrow{\sim}
\s_{q}(\MGL_{\Lambda}^{\wedge(\bullet+1)}).
\end{equation}

\begin{lemma}
\label{lemma:pidtot}
Suppose $\Lambda$ is a principal ideal domain and $M^\bullet$ is a levelwise free cosimplicial $\Lambda$-module such that $\Tot(M^\bullet)$ is a 
perfect chain complex of $\Lambda$-modules, 
i.e., 
its homology is finitely generated and trivial in almost all degrees. 
For $C \in \Cpx(\Lambda-\Mod)$, 
let $C \otimes M^\bullet$ be the cosimplicial object in $\Cpx(\Lambda-\Mod)$ given degreewise by $[n] \mapsto C \otimes M^n$.
Then the natural map $C \otimes^{\mathbb{L}}\Tot(M^\bullet)\to\Tot(C \otimes M^\bullet)$ is a quasi-isomorphism.
\end{lemma}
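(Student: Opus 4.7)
The plan is to reduce the comparison to a concrete statement about a bicomplex, then control the difference via a bounded subcomplex. Let $N(M^\bullet)$ denote the normalized cochain complex of $M^\bullet$; since $M^\bullet$ is levelwise free and submodules of free modules over a PID are free, each $N(M^\bullet)^p$ is a free $\Lambda$-module. After reindexing, $\Tot(M^\bullet)$ is a bounded-above chain complex of free modules, hence K-flat, so $C\otimes^{\mathbb{L}}\Tot(M^\bullet)$ is computed by the ordinary tensor $C\otimes N(M^\bullet)$; unfolding definitions, this is the direct-sum total complex $\Tot^{\oplus}(C\otimes N(M^\bullet))$ of the bicomplex $C\otimes N(M^\bullet)$. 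On the other hand $\Tot(C\otimes M^\bullet)$, being a homotopy limit of a cosimplicial chain complex, is the direct-product total complex $\Tot^{\Pi}(C\otimes N(M^\bullet))$. Under these identifications the natural map becomes the canonical inclusion $\Tot^{\oplus}\hookrightarrow\Tot^{\Pi}$, and it suffices to show this inclusion is a quasi-isomorphism.

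Perfectness of $\Tot(M^\bullet)$ supplies an integer $b$ with $H^p(N(M^\bullet))=0$ for $p>b$. Form the subcomplex $P:=\tau^{\leq b}N(M^\bullet)$ with $P^p=N(M^\bullet)^p$ for $p<b$, $P^b=\ker(d^b)$, and $P^p=0$ for $p>b$. The PID hypothesis enters here: $P^b\subset N(M^\bullet)^b$ is free, so $P$ is a \emph{bounded} cochain complex of free $\Lambda$-modules and $P\hookrightarrow N(M^\bullet)$ is a quasi-isomorphism. The quotient $Q:=N(M^\bullet)/P$ is an acyclic cochain complex of free modules concentrated in degrees $\geq b$ (its term in degree $b$ is $\mathrm{im}(d^b)$, a submodule of the free module $N(M^\bullet)^{b+1}$, hence free). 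A standard upward induction shows that any bounded-below acyclic cochain complex of projectives is null-homotopic, so $Q$ admits a contracting homotopy.

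Tensoring the short exact sequence $0\to P\to N(M^\bullet)\to Q\to 0$ with $C$ gives a short exact sequence of bicomplexes, since flatness of each term makes it degreewise exact, and tensoring the contracting homotopy of $Q$ with $\id_C$ exhibits $C\otimes Q$ as a contractible bicomplex. Because direct sums and direct products are both exact functors in $\Lambda-\Mod$, passage to either total complex preserves the short exact sequence and produces an acyclic quotient, yielding the two quasi-isomorphisms
\[
\Tot^{\oplus}(C\otimes P)\xrightarrow{\sim}\Tot^{\oplus}(C\otimes N(M^\bullet)),\qquad
\Tot^{\Pi}(C\otimes P)\xrightarrow{\sim}\Tot^{\Pi}(C\otimes N(M^\bullet)).
\]
Since $P$ is bounded in the cochain direction, $\Tot^{\oplus}(C\otimes P)=\Tot^{\Pi}(C\otimes P)$, so the natural map $\Tot^{\oplus}(C\otimes N(M^\bullet))\to\Tot^{\Pi}(C\otimes N(M^\bullet))$ closes a commuting square in which the three other arrows are quasi-isomorphisms, and is therefore itself a quasi-isomorphism.

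The hardest step is conceptual rather than computational: recognising the natural map as the inclusion of sum-total into product-total, once $\Tot(M^\bullet)$ has been identified with the K-flat complex $N(M^\bullet)$. After that, the PID hypothesis does all the work through two classical facts, namely that submodules of free $\Lambda$-modules are free and that bounded-below acyclic complexes of projectives are null-homotopic; the perfectness hypothesis is used only to locate the finite truncation level $b$.
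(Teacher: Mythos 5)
Your proof is correct, and its core strategy is the same as the paper's: use perfectness together with the PID hypothesis to replace the cochain complex attached to $M^\bullet$ by a bounded complex of free modules, on which the direct-sum and direct-product total complexes coincide. The execution, however, is genuinely different. The paper chooses a quasi-isomorphism $N'\to N$ from a bounded complex of finitely generated frees to the unnormalized complex, upgrades it to a chain homotopy equivalence because both sides are cofibrant in the projective model structure, and then applies $\Tot^{\prod}(C\boxtimes-)$, which preserves homotopy equivalences; it also assumes $C$ cofibrant from the outset. You instead truncate inside the normalized complex, so that $P=\tau^{\leq b}N(M^\bullet)$ is an honest subcomplex, and dispose of the quotient $Q$ by an explicit contracting homotopy; this is exactly the right move, since tensoring that homotopy with $\id_C$ contracts both total complexes of $C\otimes Q$, whereas mere acyclicity of $Q$ would not control $\Tot^{\oplus}$. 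Your route thus avoids all model-category input and any cofibrancy hypothesis on $C$, at the cost of doing the homotopy construction by hand. One small point of hygiene: the blanket principle that bounded-below acyclic cochain complexes of projectives are null-homotopic is the less standard direction of that classical fact (the standard one is for the resolution direction); what actually makes your upward induction run is that every $\ker(d^p)$ and $\mathrm{im}(d^p)$ occurring in $Q$ is free over the PID, so each sequence $0\to\ker(d^p)\to Q^p\to\mathrm{im}(d^p)\to 0$ splits --- which your parenthetical on the degree-$b$ term of $Q$ already supplies.
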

\begin{proof}
Note that the tensor $-\otimes M^\bullet$ and the derived tensor $-\otimes^{\mathbb{L}} M^\bullet$ coincide because $M^\bullet$ is levelwise free.
Without loss of generality we may assume that $C$ is cofibrant in the projective model structure on $\Cpx(\Lambda-\Mod)$, 
see e.g., 
\cite[Theorem 2.3.11]{hovey.modelcategories}.
Let $N \in \Cpx(\Lambda-\Mod)$ be the degreewise free (non-normalized) chain complex associated to $M^\bullet$. 
Since $\Lambda$ is a PID it follows that $N$ is projectively cofibrant (as a sum of length two chain complexes with free entries).
Let $C \boxtimes N$ be the double complex obtained by taking degreewise tensor products of entries from $C$ and $N$, 
i.e.,
$(C \boxtimes N)^{p,q}= C^{p}\otimes_{\Lambda} N^{q}$ for all integers $p$, $q$.
Then $\Tot(C \otimes M^\bullet) \simeq \Tot^{\prod}(C \boxtimes N)$, 
where $\Tot^{\prod}$ denotes the total complex associated to a double complex given degreewise as $\Tot^{\prod}(C \boxtimes N)^{n}={\prod}(C \boxtimes N)^{i,j}$ for all $i$, 
$j$ such that $i+j=n$.
By perfectness of $\Tot(M^\bullet)$ there exists a quasi-isomorphism $N'\to N$, 
where $N'\in \Cpx(\Lambda-\Mod)$ is comprised of finitely generated free $\Lambda$-modules and $N'^{p}=0$ for almost all $p$.
Since $N'$ and $N$ are both (co)fibrant in the projective model structure,  
cf.~\cite[p.~44]{hovey.modelcategories}, 
this map is in fact a homotopy equivalence. 
It follows that $\Tot^{\prod}(C \boxtimes N') \to \Tot^{\prod}(C \boxtimes N)$ is also a homotopy equivalence.
Since $N'$ is nonzero in only finitely many degrees we conclude there is a canonical isomorphism $C \otimes N' \cong \Tot^{\prod}(C \boxtimes N')$.
\end{proof}

We are ready to finish our identification of the slices of the motivic sphere spectrum. 
\begin{theorem}
\label{theorem:sphereslices}
Suppose $(S,\Lambda)$ is a compatible pair and $A$ is a $\Lambda$-module. 
Then there are isomorphisms of $\M \Lambda$-modules
\begin{equation*}
\s_{q}(\unit_{A})
\cong
\Sigma^{2q,q}\mathbf{M}A\otimes\Tot(\pi_{2q}\MU^{\wedge(\bullet+1)})
\iso
\bigvee_{p\geq 0} \Sigma^{2q,q}\mathbf{M}A \otimes\Ext_{\MU_\ast\MU}^{p,2q}(\MU_\ast,\MU_\ast)[-p].
\end{equation*}
In particular, 
the slices of the $\Lambda$-local sphere spectrum are
\[ 
\s_{q}(\unit_{\Lambda})
\cong
\bigvee_{p\geq 0} \Sigma^{2q-p,q}\mathbf{M}(\Lambda \otimes\Ext_{\MU_\ast\MU}^{p,2q}(\MU_\ast,\MU_\ast)).
\]
\end{theorem}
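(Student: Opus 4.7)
The plan is to establish both isomorphisms in the case $A=\Lambda$ first, and then promote to an arbitrary $\Lambda$-module $A$. For $A=\Lambda$, essentially every ingredient has been assembled in the preceding analysis. First I would invoke Proposition~\ref{lem:adams-resol} to rewrite $\s_q(\unit_\Lambda)$ as $\holim_\Delta\,\s_q(\MGL_\Lambda^{\wedge(\bullet+1)})$. Next, apply the cosimplicial weak equivalence \eqref{equation:summaryweakequivalence} levelwise to reexpress this homotopy limit as $\holim_\Delta\,\Sigma^{2q,q}\M\Lambda\otimes\pi_{2q}\MU^{\wedge(\bullet+1)}$. Finally, commute the tensor product past the totalization by invoking Lemma~\ref{lemma:pidtot} against (shifts of) the generators of $\M\Lambda-\Mod$, and use that $\Tot$ and $\holim_\Delta$ of a cosimplicial chain complex agree up to quasi-isomorphism, as recorded just above Lemma~\ref{lemma:pidtot}. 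This yields the first claimed isomorphism
\[
\s_q(\unit_\Lambda)\simeq \Sigma^{2q,q}\M\Lambda\otimes\Tot(\pi_{2q}\MU^{\wedge(\bullet+1)}).
\]

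For the second isomorphism I would identify $\pi_{2q}\MU^{\wedge(\bullet+1)}$ as the unreduced cobar complex for the Hopf algebroid $(\MU_\ast,\MU_\ast\MU)$, whose cohomology computes the $E_2$-page of the topological Adams--Novikov spectral sequence, i.e., $H^p\cong\Ext^{p,2q}_{\MU_\ast\MU}(\MU_\ast,\MU_\ast)$. Since $\Lambda$ is a localization of $\Z$, hence a hereditary ring, any chain complex of $\Lambda$-modules is quasi-isomorphic to the direct sum of its homology groups placed in their respective degrees. Applying this splitting to $\Tot(\pi_{2q}\MU^{\wedge(\bullet+1)})\otimes\Lambda$ and then tensoring with $\Sigma^{2q,q}\M\Lambda$ -- noting that the $\M\Lambda$-tensor of a $\Lambda$-module $N$ is by definition $\M N$ -- produces the wedge decomposition
\[
\bigvee_{p\geq 0}\Sigma^{2q-p,q}\M\bigl(\Lambda\otimes\Ext^{p,2q}_{\MU_\ast\MU}(\MU_\ast,\MU_\ast)\bigr).
\]

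To promote from $\Lambda$ to an arbitrary $\Lambda$-module $A$, I would smash the entire argument through with the Moore spectrum $\unit_A$. The equivalence $\M\Lambda\wedge\unit_A\simeq\M A$, together with the compatibility of $\s_q$ with smashing by $\unit_\Lambda$-module spectra and with $\Lambda$-localization, allows the chain of equivalences to be rerun verbatim, delivering both isomorphisms in full generality.

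The main obstacle is the application of Lemma~\ref{lemma:pidtot}: one must verify the perfectness hypothesis on $\Tot(\pi_{2q}\MU^{\wedge(\bullet+1)})\otimes\Lambda$. This rests on classical finiteness of the Adams--Novikov $E_2$-page bidegree by bidegree, together with a vanishing line that bounds the cohomological filtration $p$ for each fixed internal degree $2q$. The subsequent splitting of a chain complex into its homology, although automatic over a hereditary ring for bounded complexes of finitely generated modules, deserves careful justification in the actual unbounded and infinitely generated setting encountered here; it is precisely this that forces the hypothesis on $\Lambda$ to be a localization of $\Z$ rather than an arbitrary commutative ring.
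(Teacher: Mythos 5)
Your proposal follows essentially the same route as the paper: Proposition~\ref{lem:adams-resol} to express $\s_q(\unit_\Lambda)$ as $\holim_\Delta \s_q(\MGL_\Lambda^{\wedge(\bullet+1)})$, the identification \eqref{equation:summaryweakequivalence} of those slices, Lemma~\ref{lemma:pidtot} (tested against generators of $\M\Lambda-\Mod$) to commute the tensor past $\Tot$, and the splitting of the resulting perfect complex over the PID $\Lambda$ into its homology, which is the Adams--Novikov $E_2$-page. You also correctly supply the two steps the paper leaves implicit -- the cobar-complex identification and the passage from $\Lambda$ to a general $\Lambda$-module $A$ via the Moore spectrum -- and you rightly flag the perfectness hypothesis of Lemma~\ref{lemma:pidtot}, which is discharged by the finiteness and vanishing-line properties of $\Ext^{s,t}_{\MU_\ast\MU}(\MU_\ast,\MU_\ast)$ recalled in Appendix~\ref{sec:ext-groups-complex}.
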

\begin{proof}
By Proposition~\ref{prop:adams-resol} and \eqref{equation:summaryweakequivalence} we have
\[ \s_{q}(\unit_{\Lambda})
\overset{\cong}{\to}
\underset{\Delta}{\holim}\,
\s_{q}(\MGL_{\Lambda}^{\wedge\bullet}) \xleftarrow{\sim}
\underset{\Delta}{\holim}\,
\Sigma^{2q,q}\M \Lambda\otimes \pi_{2q}\MU^{\wedge(\bullet+1)}.
\]
Using the identification
$\Tot(\pi_{2q}\MU^{\wedge(\bullet+1)})
\cong 
\underset{\Delta}{\holim}\, 
\pi_{2q}\MU^{\wedge(\bullet+1)}$, 
we claim there is a weak equivalence of $\M \Lambda$-modules
\begin{equation*}
\xi\colon \Sigma^{2q,q}\M \Lambda\otimes\Tot(\pi_{2q}\MU^{\wedge(\bullet+1)})
\xrightarrow{}
\underset{\Delta}{\holim}\, 
\Sigma^{2q,q}\M \Lambda\otimes\pi_{2q}\MU^{\wedge(\bullet+1)}.
\end{equation*}
It suffices to show that,
for every generator $G$ of $\M \Lambda-\Mod$, the map 
$\uHom_{\M \Lambda-\Mod}(G,\xi)$ is an isomorphism.
Hence let $G$ be a 
free $\M \Lambda$-module on a shifted motivic symmetric suspension 
spectrum of a smooth scheme over $S$, 
cf.~\cite[Proposition 38]{RO2}. 
Setting $C= \uHom_{\M \Lambda-\Mod}(G,\Sigma^{2q,q}\M\Lambda)$, the map $\uHom_{\M \Lambda-\Mod}(G,\xi)$ takes the form
\[ 
C\otimes \Tot(\pi_{2q}\MU^{\wedge(\bullet+1)})
\to
\Tot (C\otimes \pi_{2q}\MU^{\wedge(\bullet+1)}).
\]
Since $\MU^{\wedge(\bullet+1)}$ is levelwise free and $\Tot(\pi_{2q}\MU^{\wedge(\bullet+1)})$ is a perfect chain complex of abelian groups, 
Lemma \ref{lemma:pidtot} implies our claim. 
Finally, the result for $\s_{q}(\unit_{\Lambda})$ follows since every chain complex of abelian groups is quasi-isomorphic to its homology, 
considered as a chain complex in a natural way.
\end{proof}

We refer to Appendix~\ref{sec:ext-groups-complex} for explicit computations of the $\Ext$-groups in Theorem \ref{theorem:sphereslices}. 
These form the terms of the $E^2$-page of the topological Adams-Novikov-sequence,
see e.g., 
\cite[Chapter 4, 7, Appendix A3]{ravenel.green}, and \cite[Table 2, 3]{Zahler}.
Appendix \ref{sec:ext-groups-complex} recalls the $\alpha$-family $\overline{\alpha}_{i}$ of generators of $\Ext^{1,2i}_{\BP_\ast\BP}(\BP_\ast,\BP_\ast)$ listed in  
\cite[Theorems 5.3.5(b), 5.3.6(b), 5.3.7]{ravenel.green}.
Via the direct sum decomposition 
\[ 
\Ext^{1,2i}_{\MU_\ast\MU}(\MU_\ast,\MU_\ast)
=
\bigoplus
\Ext^{1,2i}_{\BP_\ast\BP}(\BP_\ast,\BP_\ast), 
\]
indexed over all prime numbers in \cite[\S 11]{novikov},
the $\Ext$-group elements for $\BP$ correspond to generators of the corresponding direct summands of $\s_{q}(\unit_{\Lambda})$ given in Theorem \ref{theorem:sphereslices}. 

\begin{corollary}
\label{corollary:small-slices}
The suspensions $\Sigma^{q,q}\M \Lambda/2$ generated by $\alpha_{1}^{q}$ and $\Sigma^{q+2,q}\M \Lambda/2$ generated by $\alpha_{1}^{q-3}\alpha_{3}$ 
are direct summands of $\s_{q}(\unit_{\Lambda})$ for $q>0$ respectively $q>2$. 
The first seven positive slices of the $\Lambda$-local sphere spectrum $\unit_{\Lambda}$ are:
\begin{align*} 
\s_1(\unit_{\Lambda}) & \cong \Sigma^{1,1} \M \Lambda/2 \\
\s_2(\unit_{\Lambda}) & \cong \Sigma^{2,2} \M \Lambda/2 \vee \Sigma^{3,2} \M \Lambda/12 \\
\s_3(\unit_{\Lambda}) & \cong \Sigma^{3,3} \M \Lambda/2 \vee \Sigma^{5,3} \M \Lambda/2  \\
\s_4(\unit_{\Lambda}) & \cong \Sigma^{4,4} \M \Lambda/2 \vee \Sigma^{6,4} \M (\Lambda/2)^2 \vee \Sigma^{7,4}\M \Lambda/240 \\
\s_5(\unit_{\Lambda}) & \cong \Sigma^{5,5} \M \Lambda/2 \vee \Sigma^{7,5} \M \Lambda/2 \vee \Sigma^{8,5} \M(\Lambda/2)^2\vee \Sigma^{9,5}\M \Lambda/2 \\
\s_6(\unit_{\Lambda}) & \cong \Sigma^{6,6} \M \Lambda/2 \vee \Sigma^{8,6} \M \Lambda/2 \vee \Sigma^{9,6} \M(\Lambda/2)^2\vee \Sigma^{10,6}\M \Lambda/6 \vee \Sigma^{11,6}\M \Lambda/504 \\
\s_7(\unit_{\Lambda}) & \cong \Sigma^{7,7} \M \Lambda/2 \vee \Sigma^{9,7} \M \Lambda/2 \vee \Sigma^{10,7}\M \Lambda/2\vee \Sigma^{11,7} \M \Lambda/2 \vee \Sigma^{12,7}\M \Lambda/2 \vee \Sigma^{13,7} \M \Lambda/2.
\end{align*}
Here the direct summand $\Sigma^{3,2} \M \Lambda/4$ of $\s_{2}(\unit_{\Lambda})$ is generated by $\alpha_{2/2}$ and the direct summand $\Sigma^{6,4} \mathbf{M}(\Lambda/2)^{2}$ of $\s_4(\unit_{\Lambda})$ 
is generated by $\alpha_{1}\alpha_{3}$ and $\beta_{2/2}$.
The direct summand $\Sigma^{4q+1,2q+1}\M \Lambda/2$ of $\s_{2q+1}(\unit_{\Lambda})$ is generated by $\alpha_{2q+1}$.
\end{corollary}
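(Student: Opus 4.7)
The plan is to deduce everything directly from Theorem \ref{theorem:sphereslices}: the slice $\s_q(\unit_\Lambda)$ decomposes as a wedge of Tate twists of $\mathbf{M}\Lambda$ coefficients indexed by the groups $\Ext^{p,2q}_{\MU_\ast\MU}(\MU_\ast,\MU_\ast)$, with the $p$th contribution placed in bidegree $(2q-p,q)$. So once these Ext groups are known and their generators named, the corollary is a matter of reading off the table.

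First I would assemble the low-dimensional Ext groups from the Appendix, using the prime-by-prime splitting $\Ext^{\ast,\ast}_{\MU_\ast\MU}(\MU_\ast,\MU_\ast)=\bigoplus_p\Ext^{\ast,\ast}_{\BP_\ast\BP}(\BP_\ast,\BP_\ast)$. For $p=1$ the answer is the $\alpha$-family: the generators $\alpha_{i/j}$ of \cite[Theorems 5.3.5(b), 5.3.6(b), 5.3.7]{Ravenel:green} combine across primes to give the cyclic groups of orders $2,12,2,240,2,504,2,\dots$ in internal degrees $2,4,6,8,10,12,14,\dots$, which after the shift $\Sigma^{2q-1,q}$ produce the top summands $\Sigma^{2q-1,q}\M\Lambda/a(q)$ appearing for each $q$. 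The remaining summands come from higher Ext degrees: at cohomological degree $q$ (the maximum) one has $\alpha_1^q$ giving $\Z/2$ in internal degree $2q$, which after the shift $\Sigma^{q,q}$ yields the first asserted summand; at cohomological degree $q-2$ one has $\alpha_1^{q-3}\alpha_3\in\Ext^{q-2,2q}$ giving another $\Z/2$, which shifts to $\Sigma^{q+2,q}\M\Lambda/2$. Intermediate contributions in $\s_4,\s_5,\s_6,\s_7$ arise from the further generators $\beta_{2/2}$, $\alpha_1\alpha_3$, $\alpha_1^{q-5}\alpha_5$, and the $\beta$-family elements listed in the Appendix.

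The main obstacle is the careful bookkeeping: one has to verify that for each bidegree $(p,2q)$ with $1\le q\le 7$ the explicit computation of $\Ext^{p,2q}_{\MU_\ast\MU}(\MU_\ast,\MU_\ast)$ agrees with what is tabulated, and that the listed generators really account for all summands. The delicate cases are $q=2$, where the $2$-primary piece $\Z/4$ generated by $\alpha_{2/2}$ combines with the $3$-primary $\Z/3$ generated by $\alpha_1$ to yield $\Z/12$ (giving $\Sigma^{3,2}\M\Lambda/12$); $q=4$, where $\Ext^{2,8}$ at $p=2$ has rank two generated by $\alpha_1\alpha_3$ and $\beta_{2/2}$ (producing $\Sigma^{6,4}\mathbf{M}(\Lambda/2)^2$) while $\Ext^{1,8}=\Z/16\oplus\Z/3\oplus\Z/5=\Z/240$ is generated by $\alpha_{4/4}$; and $q=6$, where the two summands of $\Ext^{1,12}$ unfold into $\Lambda/504$ from $\alpha_{6}=\alpha_{6/1}$, with the extra $\Lambda/6$ summand in $(\Sigma^{10,6})$ coming from $\alpha_{6/3}$ and the $3$-primary parts as recorded in the Appendix.

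Finally, I would note that the identifications $\mathbf{M}(\Lambda\otimes G)\simeq \mathbf{M}G$ when $G$ is $\Lambda$-torsion (which is automatic for the odd-torsion factors since $\Lambda$ is a localization of $\Z$) guarantee the stated coefficient groups. The two qualitative assertions at the end of the corollary—that $\Sigma^{q,q}\M\Lambda/2$ and $\Sigma^{q+2,q}\M\Lambda/2$ appear as summands for all $q>0$ and $q>2$, and that each $\Sigma^{4q+1,2q+1}\M\Lambda/2$ comes from $\alpha_{2q+1}$—then follow by feeding the general $\alpha$-family formulas (the order of $\alpha_{2q+1}$ is $2$, living in $\Ext^{1,4q+2}$) into the same shift recipe.
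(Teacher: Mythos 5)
Your approach is exactly the paper's: the corollary is stated without proof precisely because it is a direct read-off of Theorem~\ref{theorem:sphereslices} together with the $\Ext$-computations collected in Appendix~\ref{sec:ext-groups-complex} (equivalently, Ravenel's and Zahler's tables), using the prime decomposition $\Ext^{s,t}_{\MU_\ast\MU}(\MU_\ast,\MU_\ast)=\bigoplus_p\Ext^{s,t}_{\BP_\ast\BP}(\BP_\ast,\BP_\ast)$ and the placement rule that $\Ext^{p,2q}$ contributes $\Sigma^{2q-p,q}\mathbf{M}(\Lambda\otimes-)$ to $\s_q(\unit_\Lambda)$.

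One item in your bookkeeping for $q=6$ is inconsistent with your own placement rule and should be corrected. The group $\Ext^{1,12}_{\MU_\ast\MU}(\MU_\ast,\MU_\ast)$ is a single cyclic group of order $504=8\cdot 9\cdot 7$, whose $2$-primary component $\ZZ/8$ is generated by $\alpha_{6/3}$ (not $\alpha_{6/1}$: by the appendix, for $t=6=2^{n-2}r$ with $r$ odd one has $n=3$), and \emph{all} of it lands in simplicial degree $2\cdot 6-1=11$, i.e.\ it accounts only for the summand $\Sigma^{11,6}\M\Lambda/504$. The summand $\Sigma^{10,6}\M\Lambda/6$ sits in simplicial degree $10=12-2$ and therefore must come from $\Ext^{2,12}$, namely the $\ZZ/2$ on the $2$-primary $2$-line together with the $\ZZ/3\{\beta_1\}$ at the prime $3$; it has nothing to do with $\alpha_{6/3}$. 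With that attribution fixed, the rest of your table-checking (including the delicate cases $q=2$ and $q=4$, and the two general statements deduced from the $\alpha$-family formulas and the edge/vanishing results quoted in the appendix) is correct.
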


\subsection{Cellularity and slices of $K$-theory spectra}
Recall that the subcategory of cellular spectra $\SH_{\cell,\Lambda}$ in $\SH_{\Lambda}$ is the smallest full 
localizing subcategory that contains all bigraded suspensions of $\unit_{\Lambda}$,
cf.~\cite[\S2.8]{dugger-isaksen.cell}, 
\cite[\S4]{voevodsky.open}.

\begin{theorem}
\label{theorem:KQcellular}
Suppose $(S,\Lambda)$ is a compatible pair and the base scheme $S$ contains no 
point whose residue characteristic equals two. 
Then hermitian $K$-theory $\KQ_{\Lambda}$ is a cellular spectrum in $\SH_{\Lambda}$ and compatible with pullbacks.
\end{theorem}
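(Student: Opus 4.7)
My strategy is to realize $\KQ_{\Lambda}$ as built from spectra already known to be cellular, with two complementary routes available. The geometric route uses the Panin-Walter/Schlichting-Tripathi model: on a base where $2$ is invertible, $\KQ$ is represented in $\SH$ by an infinite orthogonal (resp.\ symplectic) Grassmannian together with hermitian Bott periodicity. Orthogonal and symplectic Grassmannians admit Schubert-type decompositions into affine cells arising from the Bruhat decomposition of the corresponding reductive group, so their motivic suspension spectra are finite cellular; assembling the finite stages along the Bott map and passing to the filtered homotopy colimit places $\KQ$ inside $\SH_{\cell}$, and $\Lambda$-localizing shows $\KQ_{\Lambda} \in \SH_{\cell,\Lambda}$, since cellular subcategories are closed under homotopy colimits.

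A second, more intrinsic route uses the slice filtration. The slices $\s_{q}(\KQ_{\Lambda})$ are motivic Eilenberg-MacLane spectra in explicitly prescribed bidegrees, as identified in the companion work \cite{roendigs-oestvaer.hermitian}. Since $\M\Lambda$ is itself cellular (as a homotopy colimit of shifted copies of $\unit_{\Lambda}$ through its presentation by simplicial correspondences), and the slice tower for $\KQ_{\Lambda}$ converges by Theorem~\ref{theorem:eta-slice-completion} applied to a cell presentation of finite type, $\KQ_{\Lambda}$ is built from cellular pieces by extensions and homotopy limits that are preserved in $\SH_{\cell,\Lambda}$.

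For compatibility with pullbacks $f^{\ast} \KQ_{S,\Lambda} \simeq \KQ_{S',\Lambda}$, I would invoke Schlichting's base-change results for Grothendieck-Witt spectra (available in the presence of $\tfrac{1}{2}$) together with the naturality of the building blocks — spheres, Grassmannians, and motivic Eilenberg-MacLane spectra — under $f^{\ast}$. Both the Bott periodicity map and the slice filtration are natural in $S$, so the cellular presentation transports along $f^{\ast}$.

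The main obstacle I anticipate is controlling slice-tower convergence precisely enough to recover $\KQ_{\Lambda}$ on the nose rather than merely its slice (or $\eta$-) completion, and checking that the hypothesis ``$2$ invertible at every residue point of $S$'' really does license both the representability of hermitian $K$-theory in the geometric form above and the uniform application of orthogonal Bott periodicity needed to conclude pullback compatibility without further restriction on $S$.
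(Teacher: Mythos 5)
Your first (geometric) route is essentially the argument the paper relies on: the proof given here simply cites the companion work \cite{RSO}, which establishes cellularity of $\KQ$ from the Panin--Walter model built out of quaternionic Grassmannians, using their cell decompositions, Bott periodicity, and closure of $\SH_{\cell,\Lambda}$ under homotopy colimits; pullback compatibility comes from the fact that this geometric model is defined over $\Spec(\ZZ[\tfrac12])$ and is natural in the base. One caveat: the relevant decompositions are Panin--Walter's decompositions of the quaternionic Grassmannians $HGr(r,n)$ into affine-bundle strata over smaller $HGr$'s, not literally Bruhat cells of a reductive group, but the structure of the argument is as you describe.

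Your second (slice-filtration) route, however, does not work in this paper's logical order and has an intrinsic defect besides. First, it is circular: the identification of $\s_q(\KQ_\Lambda)$ over a general compatible pair (Theorem~\ref{theorem:slices-kq}) is \emph{deduced} from Theorem~\ref{theorem:KQcellular} via Corollaries~\ref{cor:cell-slicewise-cell} and \ref{cor:cell-slice-basechange} and Remark~\ref{rem:cell-filt-basechange}; only the case of fields is available independently from \cite{roendigs-oestvaer.hermitian}. Second, even granting the slices, the slice tower only recovers the slice completion $\slicecomp(\KQ_\Lambda)\simeq(\f_0\KQ_\Lambda)^\wedge_\eta$ on the effective cover, not $\KQ_\Lambda$ itself (compare Example~\ref{example:convergence}, where $\KT\to\slicecomp(\KT)$ fails to be an equivalence over $\RR$); and since $\KQ$ is not effective, the tower does not exhaust $\KQ_\Lambda$ in the colimit direction either. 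Moreover Theorem~\ref{theorem:eta-slice-completion} is only proved over fields and presupposes a cell presentation of finite type, i.e.\ the cellularity you are trying to establish. So the obstacle you flag at the end is fatal for the second route; the theorem must be proved geometrically as in your first route.
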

\begin{proof}
The proof of cellularity given in \cite{RSO} carries over to the setting of compatible pairs.
\end{proof}

Next we extend the computation of the slices of hermitian $K$-theory in \cite{roendigs-oestvaer.hermitian}.
This will be used to analyze the map between slices induced by the unit map $\unit_{\Lambda}\to\KQ_{\Lambda}$.
\begin{theorem}
\label{theorem:slices-kq}
Suppose $(S,\Lambda)$ is a compatible pair and the base scheme $S$ contains no point whose residue characteristic equals two. 
Then the slices of $\KQ_{\Lambda}$ are given by
\[  
\s_{q}(\KQ_{\Lambda})
\cong
\begin{cases}
\Sigma^{2q,q} \M \Lambda \vee \bigvee_{i<\frac{q}{2}} \Sigma^{2i+q,q} \M \Lambda/2 & q\equiv 0 \bmod 2 \\
\bigvee_{i<\frac{q+1}{2}} \Sigma^{2i+q,q} \M \Lambda/2 & q\equiv 1 \bmod 2.
\end{cases}
\]
\end{theorem}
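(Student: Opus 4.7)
The plan is to extend the slice computation of $\KQ$ over fields of characteristic not two from \cite{roendigs-oestvaer.hermitian} to compatible pairs $(S,\Lambda)$ avoiding residue characteristic two, leaning on two results already established: by Theorem \ref{theorem:s0isomorphism}, each slice of a $\unit_\Lambda$-module is canonically an $\MZ_\Lambda$-module; and by Theorem \ref{theorem:KQcellular}, $\KQ_\Lambda$ is cellular and compatible with pullbacks.

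First I would observe that each $\s_q(\KQ_\Lambda)$ is a cellular $\MZ_\Lambda$-module. Cellularity follows because the slice functor $\s_q$ is a composite of colocalization onto and localization away from the cellularly generated subcategories $\Sigma^{2q,q}\SH^{\eff}$, hence preserves the cellular subcategory; and $\KQ_\Lambda$ itself is cellular by Theorem \ref{theorem:KQcellular}. The $\MZ_\Lambda$-module structure is then supplied by Theorem \ref{theorem:s0isomorphism}.

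Next, I would construct a comparison map from the proposed wedge of Eilenberg-MacLane summands into $\s_q(\KQ_\Lambda)$. Over a field, the generators appearing in the displayed formula are produced in \cite{roendigs-oestvaer.hermitian}: the top-weight integral summand $\Sigma^{2q,q}\MZ_\Lambda$ (when $q$ is even) comes from a Bott-periodicity class paired with the unit $\unit_\Lambda \to \KQ_\Lambda$, and the mod-$2$ summands in intermediate weights are produced as in \cite{roendigs-oestvaer.hermitian}, where they are recognized via the action of motivic Steenrod operations on $\s_\ast(\KQ)$. Because these constructions are natural in the base scheme, they assemble into a map $\Phi_q$ of cellular $\MZ_\Lambda$-modules over $S$.

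Finally, I would show $\Phi_q$ is an equivalence by reducing to the field case. Pulling back $\Phi_q$ along any geometric point $x \colon \Spec(k(x)) \to S$ with $\Char(k(x)) \ne 2$ recovers the comparison over $k(x)$, which is an equivalence by \cite{roendigs-oestvaer.hermitian}; this uses that $\KQ_\Lambda$ is pullback-compatible by Theorem \ref{theorem:KQcellular}, that the Eilenberg-MacLane spectra $\MZ_\Lambda$ and $\MZ_\Lambda/2$ are pullback-compatible across compatible pairs, and that for cellular spectra the slice functor commutes with such base change. Since a map of cellular $\MZ_\Lambda$-modules detected by a conservative family of pullbacks is itself an equivalence, the result follows. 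The main obstacle is justifying the slice-pullback commutation along the non-smooth closed-point immersions one encounters when $S$ is a Dedekind domain of mixed characteristic; this is precisely where the compatible-pair hypothesis and the cellularity of $\KQ_\Lambda$ do the heavy lifting, turning a potentially delicate mixed-characteristic computation into a pointwise check.
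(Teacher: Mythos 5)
Your overall strategy matches the paper's: both reduce to the known field computation of $\s_q(\KQ)$ from \cite[Theorem 4.18]{roendigs-oestvaer.hermitian} by exploiting the cellularity of $\KQ_{\Lambda}$ (Theorem \ref{theorem:KQcellular}) together with the fact that slices of cellular spectra commute with pullback between compatible pairs (Corollary \ref{cor:cell-slice-basechange}). The difference is in the mechanism of the reduction, and your version has a gap there.

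You propose to build a global comparison map $\Phi_q$ over $S$ out of the Bott class and the mod-$2$ generators of \cite{roendigs-oestvaer.hermitian}, arguing that "these constructions are natural in the base scheme" and therefore assemble over $S$. Naturality runs the wrong way here: it lets you restrict a class defined over $S$ to a field point, not extend a class constructed over a field to a Dedekind domain of mixed characteristic. The intermediate mod-$2$ summands in particular are identified over fields via computations with the motivic Steenrod algebra, and producing preferred lifts of these generators over $S$ is exactly the kind of mixed-characteristic work one wants to avoid. The paper sidesteps the construction of any comparison map: by Corollary \ref{cor:cell-slicewise-cell} the slice $\s_q(\KQ_{\Lambda})$ is slice-wise cellular, hence by Remark \ref{rem:cell-filt-basechange} it is of the form $\Sigma^{0,q}\M(\Lambda\otimes C)$ for a \emph{unique} $C\in\mathbf{D}_{\Lambda}$, because $\mathbf{D}_{\Lambda}\to\mathbf{D}_{\M\Lambda}$ is a full embedding (using Definition \ref{def:compatible}(3)). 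Since this embedding commutes with $\phi^{\ast}$ for a field point $\phi\colon\Spec(F)\to S$ of each connected component, and $\phi^{\ast}$ commutes with slices on cellular objects, the object $C$ is detected over $F$, where it is known. Your closing appeal to a "conservative family of pullbacks" is, when unwound, precisely this full-embedding argument — it is valid for maps between slice-wise cellular objects, but not something you can assert for arbitrary cellular $\M\Lambda$-modules without that input. If you replace the construction of $\Phi_q$ by the rigidity argument of Remark \ref{rem:cell-filt-basechange}, your proof closes up and coincides with the paper's.
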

\begin{proof}
This follows from Corollaries \ref{cor:cell-slicewise-cell}, 
\ref{cor:cell-slice-basechange}, 
Remark \ref{rem:cell-filt-basechange} applied to every connected component of the base scheme, 
and the computation of $\s_{q}(\KQ)$ over fields of characteristic unequal to two in \cite[Theorem 4.18]{roendigs-oestvaer.hermitian}.
\end{proof}

Recall that a motivic spectrum $\mathsf{E} \in \SH_{\Lambda}$ is called {\em slice-wise cellular} if $\s_{q}(\mathsf{E})$ is contained in the full 
localizing triangulated subcategory of $\SH_{\Lambda}$ generated by the $q$th suspension $\Sigma^{2q,q}\M \Lambda$ \cite[Definition 4.1]{voevodsky.open}.
Let $\mathbf{D}_{\M \Lambda}$ denote the homotopy category of $\M \Lambda-\Mod$. 
Replacing $\SH_{\Lambda}$ by $\mathbf{D}_{\M \Lambda}$ gives an equivalent definition of slice-wise cellular spectra. 

\begin{corollary}
\label{cor:cell-slicewise-cell}
Every cellular spectrum in $\SH_{\Lambda}$ is slice-wise cellular.
\end{corollary}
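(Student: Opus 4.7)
The plan is to exhibit the full subcategory of slice-wise cellular spectra in $\SH_{\Lambda}$ as a localizing triangulated subcategory containing every bigraded suspension of $\unit_{\Lambda}$. Since cellular spectra are the smallest such subcategory by definition, this will prove the corollary. The whole argument is driven by the explicit computation of $\s_q(\unit_{\Lambda})$ from Theorem~\ref{theorem:sphereslices}.

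First, I would check that slice-wise cellular spectra are closed under distinguished triangles and arbitrary coproducts. The slice functor $\s_q$ is triangulated (as recalled in the proof of Proposition~\ref{lem:adams-resol}, since it is a composition of Quillen colocalization and localization functors between stable model structures), and by the same token it preserves homotopy colimits; hence the pointwise condition ``$\s_q(\E)$ lies in the localizing subcategory of $\SH_{\Lambda}$ generated by $\Sigma^{2q,q}\M\Lambda$'' is itself stable under triangles and coproducts, so the slice-wise cellular spectra form a localizing triangulated subcategory of $\SH_{\Lambda}$.

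Second, I would verify that each generator $\Sigma^{p,r}\unit_{\Lambda}$ is slice-wise cellular. Since $\s_q$ is compatible with bigraded suspensions in the weight direction, one has $\s_q(\Sigma^{p,r}\unit_{\Lambda})\cong\Sigma^{p,r}\s_{q-r}(\unit_{\Lambda})$. By Theorem~\ref{theorem:sphereslices} the right-hand side is a coproduct of suspensions of $\M A$ for $A$ a $\Lambda$-module, each in weight $q$ after the shift. Because $\Lambda$ is a localization of $\Z$, hence a PID of global dimension at most one, any such $A$ admits a two-term free resolution, so $\M A$ fits in a distinguished triangle between coproducts of copies of $\M\Lambda$. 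Consequently every summand lies in the localizing triangulated subcategory generated by $\Sigma^{2q,q}\M\Lambda$, establishing the claim.

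The main obstacle is really the verification that $\s_q$ commutes with arbitrary coproducts; this is the only non-formal input, and it rests on the Quillen-functor description of $\s_q$ together with compact generation of $\SH_{\Lambda}$. Once this point is granted, the conclusion is immediate: the slice-wise cellular spectra form a localizing triangulated subcategory of $\SH_{\Lambda}$ containing all $\Sigma^{p,r}\unit_{\Lambda}$, and hence contain the whole of $\SH_{\cell,\Lambda}$.
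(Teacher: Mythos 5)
Your argument is correct and is precisely the unpacking of the paper's one-line proof (``This follows from Theorem~\ref{theorem:sphereslices}''): the slice-wise cellular spectra form a localizing subcategory because $\s_q$ is triangulated and commutes with homotopy colimits (a fact the paper invokes elsewhere, citing \cite[Corollary 4.5]{spitzweck.relations} and \cite[Lemma 4.2]{voevodsky.open}), and the generators $\Sigma^{p,r}\unit_{\Lambda}$ are slice-wise cellular by the explicit description of $\s_{q-r}(\unit_{\Lambda})$ as a wedge of suspensions of $\M A$ for finitely generated $\Lambda$-modules $A$, each of which is built from $\M\Lambda$ by a two-term resolution. No genuine differences from the paper's intended argument.
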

\begin{proof}
This follows from Theorem \ref{theorem:sphereslices}.
\end{proof}

In the next result, base schemes are included in the notation for clarity. 

\begin{corollary} 
\label{cor:cell-slice-basechange}
Let $\phi\colon T\to S$ be a map between connected base schemes and suppose $(S,\Lambda)$ and $(T,\Lambda)$ are compatible pairs. 
Then the pullback functor $\phi^* \colon \SH(S)_{\cell,\Lambda} \to \SH(T)_{\cell,\Lambda}$ commutes with slices.
\end{corollary}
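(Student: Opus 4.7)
The plan is to reduce via the compact generation of $\Sigma^{q}\SH^{\eff}_{\Lambda}$ and cellular cell structures to verifying the statement on $\unit_{\Lambda}$, where it follows from the explicit slice formula of Theorem~\ref{theorem:sphereslices} together with the compatibility of base change with the Eilenberg-MacLane construction.

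First, I would construct the natural comparison. The pullback $\phi^{\ast}$ is a symmetric monoidal left Quillen functor sending $\Sigma^{p,q'}\Sigma^{\infty}X_{+}$ to $\Sigma^{p,q'}\Sigma^{\infty}(X \times_{S} T)_{+}$; in particular it restricts to the cellular subcategories, preserves the effective subcategories $\Sigma^{q}\SH^{\eff}$, and commutes with bigraded suspensions and all homotopy colimits. Since $\phi^{\ast}$ sends $q$-effective objects to $q$-effective objects, the universal property of $f_{q}^{T}$ as colocalization yields a natural transformation $\phi^{\ast}f_{q}^{S} \to f_{q}^{T}\phi^{\ast}$, and passing to cofibers gives the canonical comparison $c \colon \phi^{\ast} s_{q}^{S}(E) \to s_{q}^{T}(\phi^{\ast}E)$.

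Second, I would reduce to the sphere case. On the cellular subcategory, $f_{q}$ and hence $s_{q}$ commute with homotopy colimits since $\Sigma^{q}\SH^{\eff}_{\Lambda}$ is compactly generated by $q$-effective spheres. Thus the full subcategory of $\SH(S)_{\cell,\Lambda}$ on which $c$ is an isomorphism is closed under homotopy colimits, and the shift identity $s_{q}(\Sigma^{a,b}E) \cong \Sigma^{a,b} s_{q-b}(E)$ further reduces to checking $c$ on $\unit_{\Lambda}$ alone.

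Third, I would apply the slice formula. By Theorem~\ref{theorem:sphereslices}, both $\phi^{\ast} s_{q}^{S}(\unit_{\Lambda})$ and $s_{q}^{T}(\unit_{\Lambda})$ are wedges of suspensions of $\M(\Lambda \otimes \Ext^{p,2q}_{\MU_{\ast}\MU}(\MU_{\ast},\MU_{\ast}))$, and the $\Ext$ groups are computed topologically, hence identical on both sides. The only nontrivial point is that $\phi^{\ast}\M A_{S} \simeq \M A_{T}$ for each $\Lambda$-module $A$: by Definition~\ref{def:compatible}(1) applied to both pairs, $\M\Lambda \simeq \MGL/(x_{1},x_{2},\ldots) \wedge \unit_{\Lambda}$, and $\phi^{\ast}\MGL_{S} \simeq \MGL_{T}$ compatibly with the polynomial generators, so $\phi^{\ast}\M\Lambda_{S} \simeq \M\Lambda_{T}$; extending by Moore spectra and filtered colimits gives the claim for arbitrary $A$. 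The main obstacle will be to verify that the abstract comparison $c$ genuinely realizes the identification of Theorem~\ref{theorem:sphereslices} (rather than a nontrivial automorphism of the target). I would handle this by tracing $c$ through the cosimplicial resolution $\MGL_{\Lambda}^{\wedge(\bullet+1)}$ used in Proposition~\ref{lem:adams-resol}; since this resolution and its totalization are manifestly functorial in the base scheme, the induced identifications of slices are compatible with $\phi^{\ast}$, which yields the desired conclusion.
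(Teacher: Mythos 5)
Your proposal is correct and follows essentially the same route as the paper: establish the case of $\unit_{\Lambda}$ via Theorem~\ref{theorem:sphereslices} together with the base-change invariance of $\M\Lambda$ (the paper cites \cite[Theorem 9.16, \S10]{spitzweckmz} for $\phi^*(\M\Lambda_S)\cong\M\Lambda_T$, where you derive it from Definition~\ref{def:compatible}(1) and $\phi^*\MGL_S\simeq\MGL_T$ — the same underlying fact), then pass to general cellular spectra by attaching cells and using that slices commute with homotopy colimits. Your extra care about the comparison map being the canonical one, handled by functoriality of the cosimplicial $\MGL$-resolution, is a reasonable elaboration of what the paper leaves implicit.
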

\begin{proof}
This holds for the sphere spectrum $\unit_{\Lambda}$ by Theorem \ref{theorem:sphereslices} because $\phi^*(\M \Lambda_{S})\cong \M \Lambda_{T}$ according to \cite[Theorem 9.16, \S10]{spitzweckmz}. 
The general case follows by attaching cells and noting that slices commute with homotopy colimits, 
see \cite[Corollary 4.5]{spitzweck.relations}, \cite[Lemma 4.2]{voevodsky.open}.
\end{proof}

\begin{remark}
\label{rem:cell-filt-basechange}
Let $\mathbf{D}_{\Lambda}$ denote the derived category of the ring $\Lambda$.
The functor $\mathbf{D}_{\Lambda} \to \mathbf{D}_{\M \Lambda}$ sending the tensor unit $\Lambda$ in degree zero to $\Sigma^{0,q}\M \Lambda$ is a full embedding for every connected component of $S$. 
(This follows from Definition \ref{def:compatible}(3).)
Hence for every $\E\in\SH_{\cell,\Lambda}$ and integer $q$, 
there exists a unique $C \in \mathbf{D}_{\Lambda}$ such that $\s_{q}(\mathsf{E})=\Sigma^{0,q}\M(\Lambda \otimes C)$.
In this way we may view slices as objects of $\mathbf{D}_{\Lambda}$.
For $\phi\colon\Spec(F)\to S$, 
where $F$ is a field such that $(F,\Lambda)$ is compatible, 
the slices of $\mathsf{E} \in \SH(S)_{\cell,\Lambda}$ are determined by the slices of $\phi^*(\mathsf{E})$ over every connected component of $S$.
In particular, 
this applies to $\KQ$ by Theorem \ref{theorem:KQcellular}.
\end{remark}

Algebraic $K$-theory $\KGL_{\Lambda}$ is a cellular motivic spectrum by \cite[Theorem 6.2]{dugger-isaksen.cell}.
Its slices are known for perfect fields by the works of Levine \cite[\S6.4, 11.3]{levine.coniveau} and Voevodsky \cite{Voevodsky:motivicss}, 
\cite{voevodsky.zero}.
By base change the slices of $\KGL_{\Lambda}$ are known for all fields;
in fact, 
every field is essentially smooth over a perfect field by \cite[Lemma 2.9]{hko.positivecharacteristic}, 
and \cite[Lemma 2.7(1)]{hko.positivecharacteristic} verifies the hypothesis of \cite[Theorem 2.12]{pp:functoriality} for an essentially smooth map.
Hence, 
as in Theorem \ref{theorem:slices-kq}, 
we obtain:
\begin{theorem}
\label{theorem:slices-kgl}
Suppose $(S,\Lambda)$ is a compatible pair.
Then the slices of $\KGL_{\Lambda}$ are given by  
\[  
\s_{q}(\KGL_{\Lambda})
\cong
\Sigma^{2q,q} \M \Lambda.
\]
\end{theorem}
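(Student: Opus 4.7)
The plan is to imitate the proof strategy used for Theorem~\ref{theorem:slices-kq} and reduce the computation of slices over a general compatible base to the well-understood case of a perfect field. First, I would invoke cellularity: by \cite[Theorem 6.2]{dugger-isaksen.cell}, the algebraic $K$-theory spectrum $\KGL_\Lambda$ lies in $\SH(S)_{\cell,\Lambda}$. Hence Corollary~\ref{cor:cell-slicewise-cell} applies, showing that each $\s_q(\KGL_\Lambda)$ is slice-wise cellular, and Remark~\ref{rem:cell-filt-basechange} then lets us view $\s_q(\KGL_\Lambda)$ as being of the form $\Sigma^{0,q}\M(\Lambda\otimes C_q)$ for a uniquely determined object $C_q \in \mathbf{D}_\Lambda$, on each connected component of $S$.

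Next, I would reduce to fields. By Remark~\ref{rem:cell-filt-basechange}, it suffices to determine the slices of $\phi^*\KGL_\Lambda$ for $\phi\colon \Spec(F)\to S$ with $F$ a residue field of a connected component, and $(F,\Lambda)$ compatible. Corollary~\ref{cor:cell-slice-basechange} guarantees that pullback along $\phi$ commutes with the slice functors on cellular spectra, so the computation over $S$ is recovered from the computation over $F$. Since $\phi^*\KGL\simeq \KGL$ (algebraic $K$-theory is stable under base change), the task reduces to identifying $\s_q(\KGL_\Lambda)$ over a field.

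Over a perfect field, this is the classical result of Levine \cite[\S6.4, 11.3]{levine.coniveau} and Voevodsky \cite{Voevodsky:motivicss,voevodsky.zero}, which give $\s_q(\KGL)\cong \Sigma^{2q,q}\M\ZZ$; localizing at $\Lambda$ (and noting that $\s_q$ commutes with $\Lambda$-localization by the discussion following Definition~\ref{definition:RlocalSH}) yields $\s_q(\KGL_\Lambda)\cong \Sigma^{2q,q}\M\Lambda$. For a general field $F$, one uses that $F$ is essentially smooth over its prime subfield by \cite[Lemma 2.9]{hko.positivecharacteristic}, and the hypothesis of \cite[Theorem 2.12]{pp:functoriality} is verified for such essentially smooth maps by \cite[Lemma 2.7(1)]{hko.positivecharacteristic}, giving compatibility of slices with the corresponding pullback. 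Assembling these ingredients yields the stated formula over each connected component of $S$, and hence globally.

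The only real obstacle is verifying that the ingredients quoted in the parent paragraph are in fact compatible with the $\Lambda$-localized, Dedekind-base setup of Definition~\ref{def:compatible}; in particular, one must check that the pullback $\phi^*(\M\Lambda_S)\cong \M\Lambda_T$ used in Corollary~\ref{cor:cell-slice-basechange} (via \cite[Theorem 9.16, \S10]{spitzweckmz}) and the essentially smooth base change for $\KGL$ together cover the mixed characteristic case where $S$ is a Dedekind domain. Once this is in hand, the proof is completely parallel to that of Theorem~\ref{theorem:slices-kq}, simply substituting the field-level slice computation for $\KGL$ in place of the one for $\KQ$.
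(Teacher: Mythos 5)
Your proposal is correct and follows essentially the same route as the paper: cellularity of $\KGL_{\Lambda}$ from \cite[Theorem 6.2]{dugger-isaksen.cell}, reduction to fields via Corollaries~\ref{cor:cell-slicewise-cell}, \ref{cor:cell-slice-basechange} and Remark~\ref{rem:cell-filt-basechange} on each connected component, and then the Levine--Voevodsky computation over perfect fields extended to all fields by the essentially smooth base change argument of \cite{hko.positivecharacteristic} and \cite[Theorem 2.12]{pp:functoriality}. The paper's proof is exactly this argument, stated more tersely as ``as in Theorem~\ref{theorem:slices-kq}.''
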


\subsection{Multiplicative structure on sphere slices}
Next we discuss the multiplicative properties of the isomorphisms in Theorem \ref{theorem:sphereslices}. 
Note that $[n] \mapsto \pi_{2\ast}\MU_{\Lambda}^{\wedge (n+1)}$ is a cosimplicial graded $E_\infty$ algebra of $\Lambda$-modules. 
Its total object $\Tot(\pi_{\ast}\MU_{\Lambda}^{\wedge(\bullet+1)})$ is a graded $E_\infty$ algebra of $\Lambda$-modules, 
i.e., 
in $E_\infty(\Cpx(\Lambda-\Mod)^\mathbb{Z})$.
To form the totalization as in \cite[Definition 19.8.1(2)]{hirschhorn} we employ the model structure stated in \cite[Corollary, p.~286]{mandell}.
Its homology groups, 
the $E_2$-terms of the Adams-Novikov spectral sequence with $\Lambda$-coefficients,
form an Adams graded object in the category of graded $\Lambda$-modules \cite[\S3]{NSO}.
The Adams grading of $\pi_{2\ast}$ is $2\ast$ while the algebras $\pi_{2\ast}\MU_{\Lambda}^{\wedge (n+1)}$ are strictly commutative in graded $\Lambda$-modules of degree $0$. 
For $M \in \Cpx(\Lambda-\Mod)^\mathbb{Z}$ we denote by $M\{k\} \in \Cpx(\Lambda-\Mod)$ the Adams degree $2k\in \mathbb{Z}$ part of $M$. 

The multiplication in $\Tot(\pi_{\ast}\MU^{\wedge(\bullet+1)})$ furnishes maps in the derived category $\mathbf{D}_{\Lambda}$ 
\begin{equation}\label{eq:mu-mult}
m_{k,l} \colon \Tot(\pi_{\ast}\MU^{\wedge(\bullet+1)}_{\Lambda})\{k\} \otimes^\mathbb{L}_{\Lambda} \Tot(\pi_{\ast}\MU^{\wedge(\bullet+1)}_{\Lambda})\{l\} \to \Tot(\pi_{\ast}\MU^{\wedge(\bullet+1)}_{\Lambda})\{k+l\}.
\end{equation}

\begin{theorem}
\label{theorem:mult-adams-resol}
Let $\Tot$ be short for $\Tot(\pi_{\ast}\MU^{\wedge(\bullet+1)}_{\Lambda})$ and $k$, $l$ be nonnegative integers.
There is a commutative diagram:
\[ 
\xymatrix{
\s_k(\unit_{\Lambda}) \wedge_{\s_0(\unit_{\Lambda})} \s_l(\unit_{\Lambda}) \ar[r] \ar[d]_\cong & \s_{k+l}(\unit_{\Lambda}) \ar[dd]^\cong \\
(\Sigma^{2k,k} \M \Lambda \otimes \Tot\{k\}) \wedge_{\s_0(\unit_{\Lambda})} (\Sigma^{2l,l} \M \Lambda \otimes \Tot\{l\}) \ar[d]_\cong & \\
\Sigma^{2(k+l),k+l} \M \Lambda \otimes (\Tot\{k\} \otimes_{\Lambda}^\mathbb{L} \Tot\{l\}) \ar[r]^-{\mathrm{id} \otimes m_{k,l}} & 
\Sigma^{2(k+l),k+l} \M \Lambda \otimes \Tot\{k+l\} 
}
\]
\end{theorem}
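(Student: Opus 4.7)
My plan is to trace the multiplicative structure through each step in the construction of the isomorphism of Theorem~\ref{theorem:sphereslices}. The starting observation is that the unit map $\unit_\Lambda\to\MGL_\Lambda$ is a morphism of commutative ring spectra, so the cosimplicial resolution $\MGL_\Lambda^{\wedge(\bullet+1)}$ of $\unit_\Lambda$ lives in $\Com\Alg(\Spt_{\PP^1}^{\Sigma}\MS)$, as already exploited in the proof of Proposition~\ref{lem:adams-resol}. The slice filtration is multiplicative in the sense that $\Sigma^{2k,k}\SH^{\eff}\wedge\Sigma^{2l,l}\SH^{\eff}\subseteq\Sigma^{2(k+l),k+l}\SH^{\eff}$, so $\s_\ast$ refines to a lax symmetric monoidal functor into graded commutative $\M\Lambda$-algebras. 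The pairing $\s_k(\unit_\Lambda)\wedge_{\M\Lambda}\s_l(\unit_\Lambda)\to\s_{k+l}(\unit_\Lambda)$ in the top row of the target diagram is just the lax constraint applied to $\unit_\Lambda$.

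Next I would promote the identification \eqref{equation:slicesofMGL} to one of graded commutative $\M\Lambda$-algebras, relying on the multiplicative refinement of the Hopkins--Morel--Hoyois theorem in \cite{spitzweck.relations}. Applied in each cosimplicial degree, this yields a multiplicative identification of cosimplicial graded commutative $\M\Lambda$-algebras $\s_q(\MGL_\Lambda^{\wedge(\bullet+1)})\simeq\Sigma^{2q,q}\M\Lambda\otimes\pi_{2q}\MU^{\wedge(\bullet+1)}$. The adjoint presentation \eqref{equation:cosimplicialMRweakequivalence} and the identification \eqref{equation:cosimplicalisomorphism} of the enriched hom complex preserve multiplication, because $\uHom_{\M\Lambda-\Mod}(\Sigma^{2q,q}\M\Lambda,-)$ and $\Sigma^{2q,q}\M\Lambda\otimes(-)$ are lax symmetric monoidal with respect to the Adams grading.

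It remains to pass from levelwise multiplication to the totalization. On the topological side, $\Tot$ is lax symmetric monoidal via the Eilenberg--Zilber/shuffle map, which is what endows $\Tot(\pi_\ast\MU_\Lambda^{\wedge(\bullet+1)})$ with the graded $E_\infty$ structure producing the pairings $m_{k,l}$ of \eqref{eq:mu-mult}. On the motivic side, the naturality of \eqref{equation:hdgwgv} with respect to smashing identifies the top pairing of the diagram with the $\Tot$ of the levelwise pairings of slices. Finally, the weak equivalence stated just before Lemma~\ref{lemma:pidtot} (moving $\Sigma^{2q,q}\M\Lambda\otimes(-)$ inside $\Tot$) is induced by a natural transformation that commutes with smash products, so it is compatible with the lax monoidal structures on both sides.

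The main obstacle I expect is coherence bookkeeping rather than any single difficult step. Every link in the chain is merely a lax symmetric monoidal constraint (the slice functor, the enriched $\uHom$, $\Tot$, and the map of Lemma~\ref{lemma:pidtot} are each only lax monoidal, or else produce quasi-isomorphisms in $\mathbf{D}_\Lambda$), and the target diagram is asserted only up to isomorphism in $\mathbf{D}_\Lambda$. What needs checking is that these constraints assemble into a single commutative square; once each functor has been set up as lax symmetric monoidal in a coherent way, this is essentially formal.
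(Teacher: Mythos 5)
Your proposal follows essentially the same route as the paper: run the multiplicative structure through each equivalence in the proof of Theorem~\ref{theorem:sphereslices}, using that the total slice functor $\s_*$ is lax monoidal and that $\uHom_{\M \Lambda-\Mod}(\Sigma^{2q,q}\M \Lambda,-)$ respects the Adams grading. The paper implements the coherence you defer by introducing the free associative graded $\M \Lambda$-algebra $\mathscr{F}$ on a cofibrant model of $\PP^1$, whose graded multiplications $\mathscr{F}\{k\}\wedge_{\M \Lambda}\mathscr{F}\{l\}\to \mathscr{F}\{k+l\}$ are isomorphisms; this is precisely the device that makes your assertion that the enriched hom is ``lax monoidal in the Adams grading'' precise, and it lets the whole comparison be carried out by adjunction at the level of cosimplicial graded algebras before totalizing. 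One caution: you invoke \emph{symmetric} monoidality (commutative algebras, Eilenberg--Zilber) throughout, whereas the paper deliberately works only with associative algebras in a semi-model structure --- the compatibility of the $E_\infty$ structures under totalization is explicitly left open in the Remark following the theorem --- and since the stated square involves only a single multiplication map, associativity is all that is needed, so you should weaken your claims accordingly.
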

\begin{proof}
We may assume the base scheme is connected.
Let $\HHH \Lambda$ denote the Eilenberg-MacLane spectrum in symmetric spectra. 
It affords a symmetric monoidal functor $\HHH \Lambda-\Mod\to\M \Lambda-\Mod$. 
We denote the associated enriched hom objects by $\uHom_{\M \Lambda-\Mod}$.

Let $\mathscr{F}$ be the free associative algebra over $\M \Lambda$ on a cofibrant model of $\PP^{1}\simeq S^{2,1}$ viewed as a graded associative algebra,
i.e., 
an algebra in $\M \Lambda-\Mod^\mathbb{Z}$. 
We note the multiplication maps $\mathscr{F}\{k\} \wedge_{\M \Lambda} \mathscr{F}\{l\} \to \mathscr{F}\{k+l\}$ are isomorphisms.
Thus for every associative algebra $\mathscr{A} \in \M \Lambda-\Mod^\mathbb{Z}$, 
the collection $(\uHom_{\M \Lambda-\Mod}(\mathscr{F}\{k\},\mathscr{A}\{k\}))_k$ has the natural structure of an associative algebra in $\HHH \Lambda-\Mod^\mathbb{Z}$, 
which we denote by $\mathscr{A}^\mathscr{F}$.

The total slice functor $\s_{\ast}$ sends $E_\infty$ motivic spectra to graded $E_\infty$ $\M \Lambda$-module spectra in a functorial way \cite[Theorem 5.1]{grso}, 
\cite[Theorems 3.8, 3.13]{grso2}.
Thus the assignment $[n]\mapsto \s_{\ast}(\MGL_{\Lambda}^{\wedge (n+1)})$ defines a cosimplicial graded $E_\infty$ $\M \Lambda$-module spectrum.
Now let $\mathscr{A}^\bullet$ be a fibrant replacement of the latter as a cosimplicial graded associative algebra. 
This uses the semi model structure on associative algebras for the $\Sigma$-cofibrant operad in \cite[Theorem 4.7]{spitzweck.thesis}.
Then $(\mathscr{A}^\bullet)^\mathscr{F}$ is a cosimplicial object of associative algebras in $\HHH \Lambda-\Mod^\mathbb{Z}$.
It is equivalent to $\HHH(\pi_{2*}\MU_{\Lambda}^{\wedge (\bullet+1)})$ using the truncation methods in the proof of \eqref{equation:summaryweakequivalence}.
By adjointness there is a canonical weak equivalence of cosimplicical graded associative $\M \Lambda$-algebra spectra
$$
\mathscr{F} \wedge^d \HHH(\pi_{2*}\MU_{\Lambda}^{\wedge (\bullet+1)}) 
\xrightarrow{\sim} 
\mathscr{A}^\bullet.
$$ 
Here $\wedge^d$ ($d$ is short for ``diagonal'' in order to distinguish it from the convolution product, for which there is no such map) 
stands for the levelwise smash product with respect to the grading. 
This follows as in the proof of Theorem \ref{theorem:sphereslices} by considering the full slice functor $\s_*$.
Likewise, 
there is a canonical weak equivalence of graded associative $\M \Lambda$-algebras
$$
\mathscr{F} \wedge^d \Tot(\HHH(\pi_{2*}\MU_{\Lambda}^{\wedge (\bullet+1)})) 
\xrightarrow{\sim} 
\Tot(\mathscr{A}^\bullet).
$$
Finally, 
applying the weak equivalence of graded associative $\M \Lambda$-algebras
\begin{equation}
\label{equation:sstarunit}
\s_*(\unit_{\Lambda}) 
\xrightarrow{\sim} 
\Tot(\s_*(\MGL_{\Lambda}^{\wedge (\bullet+1)}))
\xrightarrow{\sim} 
\Tot(\mathscr{A}^\bullet)
\end{equation}
yields the desired commutative diagram.
The proof of \eqref{equation:sstarunit} is the same as for a fixed slice degree, 
cf.~Proposition \ref{prop:adams-resol}.
\end{proof}

\begin{remark}
We expect that $\mathscr{F} \wedge^d \Tot(H(\pi_{2*}\MU_{\Lambda}^{\wedge (\bullet+1)}))\to \Tot(\mathscr{A}^\bullet)$ is an $E_\infty$ map.
These objects acquire natural $E_\infty$ structures; 
the former because $\M \Lambda$ is strongly periodizable, 
see \cite[Theorem 8.2]{spitzweckmz}.

We use Theorem \ref{theorem:mult-adams-resol} to show the motivic Hopf map $\eta$ is non-nilpotent \cite[Theorem 4.7]{MorelICM2006}.
The topological Hopf map $\eta_{\Top}$ is detected by $\alpha_{1}$ in the Adams-Novikov spectral sequence. 
Theorem \ref{theorem:sphereslices} shows that $\eta\in\pi_{1,1}\unit_{\Lambda}$ is detected by $1\in \pi_{1,1}\s_1(\unit_{\Lambda})\cong h^{0,0}$ on the 
$E^1$-page of the first slice spectral sequence of $\unit_{\Lambda}$. 
Since all differentials entering or exiting this group are trivial, 
cf.~Theorem \ref{theorem:sphereslices} and Lemma \ref{lem:first-diff-unit-1},
this class survives to the $E^\infty$-page. 
Theorem \ref{theorem:mult-adams-resol} implies that its $n$th power is the nontrivial element of $\pi_{n,n}\s_n(\unit_{\Lambda})\cong h^{0,0}$ in the $n$th slice spectral sequence; 
it is also a permanent cycle by Theorem \ref{theorem:sphereslices} and Lemma \ref{lem:first-diff-unit-1}.
\end{remark}

\begin{corollary}
\label{corollary:etanonnilpotent}
The $n$-fold multiplication map by the generator $\alpha_{1}$ yields the canonical projection
\begin{equation*}
\label{equation:sstarunit-2}
\s_1(\unit_{\Lambda}) \wedge_{\s_0(\unit_{\Lambda})}  \cdots \wedge_{\s_0(\unit_{\Lambda})} \s_1(\unit_{\Lambda})
\xrightarrow{} 
\Sigma^{n,n}\M \Lambda/2
\end{equation*}
on the direct summand $\Sigma^{n,n}\M \Lambda/2$ of 
$\s_n(\unit_{\Lambda})$.
Iterating the first motivic Hopf map $\eta$ yields a nontrivial element $\eta^{n}\neq 0\in\pi_{n,n}\unit_{\Lambda}$ for $n\geq 2$.
\end{corollary}

Recall that $\Ext_{\MU_\ast\MU}^{1,2q}(\MU_\ast,\MU_\ast)$ is a finite 
cyclic group of order $a_{q}$, where $a_q=2$ for 
$q$ odd, and $a_{2q}$ is equal to the denominator of $\tfrac{B_q}{4q}$
\cite{novikov}, \cite[\S2]{Zahler}, \cite[Theorem 5.3.11]{ravenel.green}. 
These groups describe first order phenomena in the Adams-Novikov spectral sequence related to the $J$-homomorphism \cite[Theorem 5.3.7]{ravenel.green}.

\begin{lemma}
\label{lem:mult-slices-sphere}
Multiplication with the zero slice of $\unit_\Lambda$ yields the canonical module map 
\[
\s_0(\unit_\Lambda) \smash \s_{q} (\unit_\Lambda) \to \s_{q} (\unit_\Lambda).
\]
Multiplication in the first slice $\s_{1}(\unit_{\Lambda}) \smash_{\s_{0}(\unit_{\Lambda})} \s_{1}(\unit_{\Lambda})\rightarrow \s_{2}(\unit_{\Lambda})$ takes the form
\begin{align*}
\Sigma^{2,2}\M \Lambda/2 \vee \Sigma^{3,2}\M \Lambda/2 
& 
\scriptsize{
\xrightarrow{
\begin{pmatrix} 
\id & 0 \\
0 & \inc 
\end{pmatrix} }
} 
\Sigma^{2,2}\M \Lambda/2 \vee \Sigma^{3,2}\M \Lambda/12 
\end{align*}
via the identification of $\MZ/2\smash_{\MZ} \MZ/2$ in~(\ref{eq:dual-steenrod-alg}).
The map $\inc\colon\Sigma^{3,2}\M \Lambda/2\to\Sigma^{3,2}\M \Lambda/12$ is induced by the canonical inclusion $\Lambda/2\to\Lambda/12$. 
More generally, 
the multiplication map $\s_1(\unit_\Lambda)\smash_{\s_{0}(\unit_{\Lambda})} \s_{2q-1}(\unit_\Lambda)\to \s_{2q}(\unit_\Lambda)$, 
restricted to the second summand of
\[ \Sigma^{1,1}\M\Lambda/2\{\alpha_1\}\smash_{\M\Lambda} \Sigma^{4q-3,2q-1}\M\Lambda/2\{\alpha_{2q-1}\} \cong \Sigma^{4q-2,2q}\M \Lambda/2\vee \Sigma^{4q-1,2q}\M \Lambda/2 \]
is the canonical inclusion 
$\Sigma^{4q-1,2q}\M \Lambda/2\to\Sigma^{4q-1,2q}\M \Lambda/a_{2q}\{\alpha_{2q/n}\}$
for every $q>0$.
\end{lemma}

\begin{proof}
The first multiplication is given by the canonical module structure since the unit map $\unit_{\Lambda} \to \M \Lambda$ induces an isomorphism on zero slices 
\cite[\S10]{levine.coniveau}, \cite[Theorem 6.6]{voevodsky.zero}.
With reference to Theorem~\ref{theorem:mult-adams-resol} we compute the map 
$\Tot\{1\}\otimes_{\Lambda}^{\mathbb{L}} \Tot\{1\}\to \Tot\{2\}$
of complexes of length two, which provides the matrix given above.
The final statement follows, because the map \eqref{eq:mu-mult} sends 
the generator $\tfrac{1}{2}d(x_1^{2q-1}\tensor x_1)$ of the degree $1$ homology of $\Tot\{2q-1\}\otimes_{\Lambda}^{\mathbb{L}} \Tot\{1\}$ 
to an element that is not in the image of $d\colon \Tot\{2q\}_0\to \Tot\{2q\}_1$. 

Alternatively,
the second map is determined by Lemmas \ref{lem:slices-unit-kq-1} and \ref{lem:slices-unit-kq-2} using that the multiplication
\[ 
\Sigma^{1,1}\M \Lambda/2\smash \Sigma^{1,1} \M \Lambda/2 \hookrightarrow
\s_1(\KQ_{\Lambda})\smash \s_1(\KQ_{\Lambda}) \to \s_2 (\KQ_{\Lambda}) \iso \Sigma^{4,2}  \M \Lambda \vee \Sigma^{2,2}  \M \Lambda/2
\vee \dotsm 
\]
hits the first two direct summands of $\s_2 (\KQ_{\Lambda})$ nontrivially \cite[Theorem 3.3]{ro.mult-slices-hermitian}.
(The proof carries over verbatim to compatible pairs by Theorem \ref{theorem:slices-kq}.)
Similarly, the multiplication on $\s_{\ast}(\KQ_{\Lambda})$ implies the last statement.
\end{proof} 

The multiplicative structure of the sphere spectrum induces a multiplication on the slice spectral sequence,
cf.~\cite[Theorem 3.6.16]{Pelaez} for the zeroth slice spectral sequence. 
We shall make use of the following Leibniz rule. 

\begin{proposition}
\label{prop:leibniz}
The slice spectral sequence for $\unit$ is multiplicative. 
The $r$th differential satisfies the Leibniz rule
\[ 
d^r(a\cdot b) 
= 
d^r(a)\cdot b + (-1)^{p} a\cdot d^r(b) 
\]
for $a\in E^{r}_{p,q,n}$ and $b\in E^{r}_{p^\prime,q^\prime,n^\prime}$.
\end{proposition}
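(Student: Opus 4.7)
The plan is to obtain the multiplicative structure as a formal consequence of the fact that the slice filtration of the motivic sphere spectrum is a filtration by ring spectra up to a shift, together with the standard recipe that turns a pairing of filtered spectra into a pairing of spectral sequences satisfying the Leibniz rule.

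First I would recall that the effective subcategories $\Sigma^{q,q}\SH^{\eff}\subset\SH_{\Lambda}$ are closed under the smash product, so the effective cover functors assemble into lax monoidal pairings
\[
\f_{p}\E\wedge\f_{p'}\F\longrightarrow\f_{p+p'}(\E\wedge\F).
\]
Applied to $\E=\F=\unit_{\Lambda}$ and composed with the multiplication $\unit_{\Lambda}\wedge\unit_{\Lambda}\to\unit_{\Lambda}$, these yield compatible maps of slice towers $\f_{p}\unit_{\Lambda}\wedge\f_{p'}\unit_{\Lambda}\to\f_{p+p'}\unit_{\Lambda}$ extending the multiplication on $\unit_{\Lambda}$ itself. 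Passing to cofibers one obtains induced pairings on slices $\s_{p}\unit_{\Lambda}\wedge\s_{p'}\unit_{\Lambda}\to\s_{p+p'}\unit_{\Lambda}$ compatible with the boundary maps defining the first slice differential, exactly as in the zeroth slice case treated by Pelaez \cite[Theorem 3.6.16]{Pelaez}.

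Next I would package this into a pairing of exact couples. For any two integers $n,n'$, the functor $\pi_{\bullet,n}$ applied to the slice tower of $\unit_{\Lambda}$ gives the unrolled exact couple whose spectral sequence is the $n$-th slice spectral sequence $E^{r}_{\ast,\ast,n}$. The pairings of slice towers above induce an external pairing of such exact couples, converging to the product in the appropriate bigraded sense, which on the $E^{1}$-page reduces to the product in slicewise stable homotopy groups coming from $\s_{p}\unit_{\Lambda}\wedge\s_{p'}\unit_{\Lambda}\to\s_{p+p'}\unit_{\Lambda}$. Standard homological algebra of pairings of exact couples (see e.g.\ the usual treatment in Boardman's Conditional Convergence, adapted to the present triangulated setting) then shows that the differentials $d^{r}$ on successive pages are compatible with this product and satisfy the Leibniz rule, with the sign $(-1)^{p}$ arising precisely from the bidegree of $d^{r}$ on the first factor.

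The principal technical point, and the step I expect to require the most care, is the rigidification needed to make the pairings of slice towers strictly functorial and coherent, so that the passage to spectral sequences is unambiguous. This is handled by invoking the model categorical setup of \cite[\S3.1]{grso} together with the $E_{\infty}$-refinement of the total slice functor recorded in Theorem \ref{theorem:mult-adams-resol}; since $\unit_{\Lambda}$ is a commutative monoid and the slice functors preserve this structure up to coherent homotopy, one can choose strictly compatible fibrant/cofibrant models for $\f_{q}\unit_{\Lambda}$ in the category of modules over a cofibrant model of $\unit_{\Lambda}$, and the pairings of towers are then obtained on the nose. Once this is in place, the Leibniz rule at the level of $E^{r}_{p,q,n}\otimes E^{r}_{p',q',n'}\to E^{r}_{p+p'-1,q+q'+r,n+n'}$ is a direct verification, identical to the topological case.
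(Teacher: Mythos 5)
Your overall strategy coincides with the paper's: both start from the lax monoidal structure of the effective cover functors (Pelaez's $\f_{p}\D\smash\f_{q}\E\to\f_{p+q}(\D\smash\E)$), obtain induced pairings of slice towers and of slices, and then feed this into the machinery of products in exact couples. The gap is the sentence asserting that ``standard homological algebra of pairings of exact couples \dots\ then shows that the differentials $d^{r}$ on successive pages \dots\ satisfy the Leibniz rule.'' A pairing of towers, equivalently of the exact couples on the $E^{1}$-page, does \emph{not} formally imply the Leibniz rule on higher pages; one must verify Massey's conditions $\mu_{0},\mu_{1},\dotsc$ from \emph{Products in exact couples} (your citation of Boardman is off target, since that paper does not treat products). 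Verifying $\mu_{0}$ is the genuine content: one must show that the pairing $\s_{p}\D\smash\s_{q}\E\to\s_{p+q}(\D\smash\E)$ intertwines the boundary maps to $\Sigma^{1,0}\s_{p+q+1}(\D\smash\E)$ in the Leibniz fashion, i.e.\ that two a priori different composites out of $\s_{p}\D\smash\s_{q}\E$ agree up to homotopy. The paper does this by checking that they agree (with constant homotopies) on $\f_{p}\D\smash\f_{q}\E$ and hence on $\f_{p}\D\smash\s_{q}\E\cup\s_{p}\D\smash\f_{q}\E$, and then observing that the remaining obstruction is a map out of $\Sigma^{1,0}\f_{p+1}\D\smash\Sigma^{1,0}\f_{q+1}\E$, which is $(p+q+2)$-effective, into the $(p+q+1)$st slice, and such maps vanish by the orthogonality built into the slice filtration. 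This effectivity argument, together with the inductive lifting argument establishing $\mu_{m+1}$ from $\mu_{0},\dotsc,\mu_{m}$, is the actual proof; your proposal omits it.

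Relatedly, you single out strict rigidification of the pairings of towers as ``the principal technical point,'' but the paper needs no such rigidification: Massey's conditions are homotopy-theoretic and are verified directly in $\SH$, with no appeal to $E_{\infty}$-structures or strictly compatible cofibrant models. Conversely, the step you describe as ``a direct verification, identical to the topological case'' is precisely where the motivic input (effectivity of $\f_{p+1}\D\smash\f_{q+1}\E$ against slices) has to be used.
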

\begin{proof}
We show a more general result for pairings $\D\smash \E\to \mathsf{F}$ of motivic spectra by checking the condition $\mu$ from \cite[\S3, 8]{massey}. 
To verify $\mu_0$ we use commutativity of the diagram
\[ \xymatrix{
    \s_p\D\smash \s_q\E \ar[r]^-{\mathrm{diag}} \ar[d]_-{\mathrm{mult}_{p,q}^{\s}} & 
    \bigl(\s_p\D\!\smash \!\s_q\E\bigr)\! \vee\! \bigl(\s_p\D\!\smash\! \s_q\E\bigr) \ar[r]&    
    \bigl(\Sigma^{1,0}\f_{p+1}\D\!\smash \!\s_q\E\bigr) \!\vee \!\bigl(\s_p\D\!\smash \!\Sigma^{1,0}\f_{q+1}\E\bigr)\ar[d]\\
    \s_{p+q}\D\smash \E \ar[d] & &
    \bigl(\Sigma^{1,0}\s_{p+1}\D\!\smash \!\s_q\E\bigr)\! \vee\! \bigl(\s_p\D\!\smash\! \Sigma^{1,0}\s_{q+1}\E\bigr)\ar[d]^-{\iso}\\
    \Sigma^{1,0}\f_{p+q+1} \D\smash \E \ar[d] & &
    \Sigma^{1,0}\bigl(\s_{p+1}\D\!\smash\! \s_q\E \vee \s_p\D\!\smash \!\s_{q+1}\E\bigr)\ar[d]^-{\mathrm{mult}_{p+1,q}^{\s}\vee \mathrm{mult}_{p,q+1}^{\s}}\\
    \Sigma^{1,0}\s_{p+q+1} \D\smash \E  & &
    \Sigma^{1,0}\bigl(\s_{p+1+q}\D\!\smash\!\E \vee\s_{p+q+1}\D\!\smash\!\E\bigr)\ar[ll]_-{\mathrm{fold}}} 
\]
in $\SH$, which follows from the commutative diagram implied by \cite[Theorems 3.4.5, 3.4.12]{Pelaez} (see \cite[\S5]{grso} for a strictification):
\[ 
\xymatrix{
\f_p\D\smash \f_q\E \ar[rr]^-{\mathrm{mult}_{p,q}^{\f}} \ar[d] & &\f_{p+q} \D\smash \E \ar[d]\\
\s_p\D\smash \s_q\E \ar[rr]^-{\mathrm{mult}_{p,q}^{\s}}  & &\s_{p+q} \D\smash \E} 
\]
More precisely, 
the latter implies the two compositions in the first diagram agree up to a homotopy on $\f_p\D\smash \s_q\E$ and on $\s_p\D\smash \f_q\E$. 
Moreover,
both compositions restrict to a constant map on $\f_p\D\smash \f_q\E$, 
so the respective homotopies can be chosen to be constant on $\f_p\D\smash \f_q\E$. 
The homotopy cofiber sequence
\[ 
\f_p\D\smash \s_q\E \cup_{\f_p\D\smash \f_q\E} \s_p\D\smash \f_q\E
\to \s_p\D\smash \s_q\E \to \Sigma^{1,0} \f_{p+1}\D\smash \Sigma^{1,0}
\f_{q+1} \E 
\]
shows that both compositions agree up to homotopy on $\s_p\D\smash \s_q\E$, 
since every map from the $p+q+2$-effective motivic spectrum $\Sigma^{1,0} \f_{p+1}\D\smash \Sigma^{1,0}\f_{q+1} \E$ to $\Sigma^{1,0}\s_{p+q+1}\D\smash \E$ is trivial.
  
Suppose $\mu_i$ holds for $0\leq i\leq m$.
To prove $\mu_{m+1}$ we choose commutative diagrams 
\[ 
\xymatrix{
& &  \Sigma^{1,0} \f_{q+m+2}\D \ar[d] &      & &  \Sigma^{1,0} \f_{q^\prime+m+2}\E \ar[d] \\
\Sigma^{p,n}X \ar[r]^-{a} \ar[rru]^-{x} &\s_q\D \ar[r] & 
\Sigma^{1,0} \f_{q+1}\D & \Sigma^{p^\prime,n^\prime}Y \ar[r]^-{b} \ar[rru]^-{y} &\s_{q^\prime}\E \ar[r] & \Sigma^{1,0} \f_{q^\prime +1}\E
}
\]
in $\SH$. 
Since $\mu_0,\dotsc,\mu_m$ are satisfied, 
the product $a\cdot b$ maps to an element in the group $[\Sigma^{p+p^\prime,n+n^\prime} X\smash Y,\Sigma^{1,0}\f_{q+q^\prime +1} \D\smash \E]$, 
which lifts to a map
\[ 
z\colon 
\Sigma^{p+p^\prime,n+n^\prime} X\smash Y 
\to 
\Sigma^{1,0}\f_{q+q^\prime+m+2}\D\smash \E.
\]
It remains to check the second equation in \cite[(3.1)]{massey}, 
which follows by induction and a similar consideration as for $\mu_0$.
\end{proof}

\subsection{The first motivic Hopf map and slices}
\label{sec:first-motivic-hopf}

For the Brown-Peterson spectrum $\BP$ at a prime $p$, 
the group $\Ext_{\BP_\ast\BP}^{s,t}(\BP_\ast,\BP_\ast)$ is trivial for $2s(p-1)>t$, 
cf.~Appendix \ref{sec:ext-groups-complex}.  
We use this to show that every iteration of the Hopf map $\eta$ acts trivially on odd order direct summands of $\s_{q}(\unit_{\Lambda})$.

\begin{lemma}
\label{lem:hopf-slices-global}
Let $A$ be a finite $\Lambda$-module of odd order.
If $\Sigma^{r,q}\MA$ is a direct summand of $\s_{q}(\unit_{\Lambda})$, 
there exists a natural number $k$ such that $\eta^k$ restricts to the trivial map on $\Sigma^{r,q}\MA$.
\end{lemma}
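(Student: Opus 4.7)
The plan is to reduce the statement, via the multiplicative structure on the slices of $\unit_\Lambda$ established in Theorem \ref{theorem:mult-adams-resol}, to a trivial derived $\Lambda$-module computation.

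By Corollary \ref{corollary:small-slices}, for every $k\geq 1$ the slice $\s_k(\unit_\Lambda)$ contains a direct summand $\Sigma^{k,k}\M\Lambda/2$ generated by $\alpha_1^k$, and by Example \ref{ex:Hopf-maps} this summand carries $\eta^k$. Theorem \ref{theorem:mult-adams-resol} therefore factors the slice-level action of $\eta^k$ on $\Sigma^{r,q}\MA\subseteq \s_q(\unit_\Lambda)$ through the restriction of the slice multiplication
\[
\Sigma^{k,k}\M\Lambda/2 \wedge_{\M\Lambda} \Sigma^{r,q}\MA \longrightarrow \s_{q+k}(\unit_\Lambda).
\]
Using the equivalence between $\M\Lambda$-modules and objects of $\mathbf{D}_\Lambda$ from Remark \ref{rem:cell-filt-basechange}, the source rewrites as $\Sigma^{r+k,q+k}\M(\Lambda/2\otimes^{\mathbf{L}}_\Lambda A)$.

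Since $|A|$ is odd, multiplication by $2$ is an isomorphism on $A$, so both $A/2A$ and $\mathrm{Tor}^\Lambda_1(\Lambda/2,A)$ vanish. Hence $\Lambda/2\otimes^{\mathbf{L}}_\Lambda A=0$, the source spectrum is contractible, and the composite is zero. In particular, any $k\geq 1$ satisfies the conclusion of the lemma.

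The main technical input is Theorem \ref{theorem:mult-adams-resol}, which identifies the slice smash product with the derived $\Lambda$-module tensor product on the associated Eilenberg--MacLane pieces. Once this is in hand, the remaining algebraic observation---that $\Lambda/2$ annihilates every odd-torsion $\Lambda$-module at both the underived and derived levels---is immediate.
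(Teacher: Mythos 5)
Your argument is correct, and it takes a genuinely different route from the paper's. The paper's proof is a sparseness argument: it reduces to $A$ cyclic of odd prime power order $p^m$, notes that $\eta^k$ restricts to an $\M\Lambda$-module map out of $\Sigma^{r+k,q+k}\MA$, so by Lemma \ref{lem:steenrod-alg-weight-zero} its only possible targets are the summands of $\s_{q+k}(\unit_\Lambda)$ in simplicial degrees $r+k$ and $r+k+1$, and then chooses $k$ large enough (explicitly, $k>\tfrac{q-(2q-r-1)(p-1)}{p-2}$) that the corresponding groups $\Ext^{2q-r+k-1,2q+2k}_{\BP_\ast\BP}$ and $\Ext^{2q-r+k,2q+2k}_{\BP_\ast\BP}$ vanish by the odd-primary vanishing line of Appendix \ref{sec:ext-groups-complex}. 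Your multiplicative argument yields the stronger, uniform conclusion that $k=1$ already works, at the price of more structural input. Two points in your write-up deserve sharpening. First, Theorem \ref{theorem:mult-adams-resol} by itself does not identify the slice-level map $\s_{q+k}(\eta^k)\colon\Sigma^{k,k}\s_q(\unit_\Lambda)\to\s_{q+k}(\unit_\Lambda)$ with the composite of $\s_k(\eta^k)\wedge_{\M\Lambda}\id$ and the slice pairing; for that you also need the naturality of the pairings $\s_k(\D)\wedge\s_q(\E)\to\s_{k+q}(\D\wedge\E)$ in $\D$ together with the identification of the unit pairing $\s_0(\unit_\Lambda)\wedge\s_q(\unit_\Lambda)\to\s_q(\unit_\Lambda)$ with the $\M\Lambda$-module action (the first statement of Lemma \ref{lem:mult-slices-sphere}) -- this is exactly the compatibility invoked in the remark following Lemma \ref{lem:slices-unit-hopf}. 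Second, you need that $\s_k(\eta^k)\colon\Sigma^{k,k}\M\Lambda\to\s_k(\unit_\Lambda)$ factors through the bottom summand $\Sigma^{k,k}\M\Lambda/2$; this holds for degree reasons by Lemma \ref{lem:steenrod-alg-weight-zero}, since all other summands of $\s_k(\unit_\Lambda)$ sit in simplicial degree at least $k+2$, so there is no circular appeal to results proved after the lemma. With these points made explicit, the final step is as you say: $\M\Lambda/2\wedge_{\M\Lambda}\MA$ is the cofiber of multiplication by $2$ on $\MA$, which is an equivalence because $A$ has odd order, so the source of the factored map is contractible.
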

\begin{proof}
We may assume $A$ is cyclic of odd prime power order $p^{m}>1$.
For
\[
k>\frac{q-(2q-r-1)(p-1)}{p-2},
\]
the inequality $(2q-r+k-1)\cdot 2(p-1)>2q+2k$ implies the vanishing of the groups    
$$
\Ext^{2q-r+k-1,2q+2k}_{\BP_\ast\BP}(\BP_\ast,\BP_\ast)
=
\Ext^{2q-r+k,2q+2k}_{\BP_\ast\BP}(\BP_\ast,\BP_\ast) 
= 
0.
$$
These $\Ext$-groups give trivial contributions to $\s_{q+k}(\unit_{\Lambda})$,
cf.~Theorem \ref{theorem:sphereslices}.  
Note that $\eta^k$ restricts to an $\M \Lambda$-module map on the direct summand $\Sigma^{r,q}\MA$ of $\s_{q}(\unit_{\Lambda})$.
Since its target is trivial by the above, 
the assertion follows.
\end{proof}

\begin{definition}
The $\eta$-inverted $\Lambda$-local sphere spectrum $\unit_{\Lambda}[\eta^{-1}]$ is the homotopy colimit of the diagram
\[
\unit_{\Lambda}
\xrightarrow{\Sigma^{-1,-1}\eta}
\Sigma^{-1,-1}\unit_{\Lambda}
\xrightarrow{\Sigma^{-2,-2}\eta}
\Sigma^{-2,-2}\unit_{\Lambda}
\xrightarrow{\Sigma^{-3,-3}\eta}
\cdots
.
\]
\end{definition}

Since slices commute with homotopy colimits, 
see \cite[Corollary 4.5]{spitzweck.relations}, \cite[Lemma 4.2]{voevodsky.open}, 
Lemma \ref{lem:hopf-slices-global} implies:
\begin{corollary}
\label{cor:hopf-slices-global}
The zero slice of $\unit_{\Lambda}[\eta^{-1}]$ is an Eilenberg-MacLane spectrum associated with a connective chain complex of $\Lambda$-modules whose homology are $2$-primary torsion modules.
\end{corollary}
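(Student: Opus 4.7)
The plan is to commute the zero-slice functor with the homotopy colimit defining $\unit_{\Lambda}[\eta^{-1}]$ and analyze the resulting directed system of Eilenberg-MacLane spectra. Since slices commute with homotopy colimits \cite[Corollary 4.5]{spitzweck.relations}, \cite[Lemma 4.2]{voevodsky.open}, and $\s_{0}(\Sigma^{-k,-k}\E) = \Sigma^{-k,-k}\s_{k}(\E)$ by the compatibility of suspension with the slice filtration (the $(q+b)$-effective cover of $\Sigma^{a,b}X$ is $\Sigma^{a,b}f_{q}X$), I would first rewrite
\[
\s_{0}(\unit_{\Lambda}[\eta^{-1}])
\simeq
\hocolim_{k}\,\Sigma^{-k,-k}\s_{k}(\unit_{\Lambda}).
\]

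Next, Theorem~\ref{theorem:sphereslices} presents each summand of $\Sigma^{-k,-k}\s_{k}(\unit_{\Lambda})$ as $\Sigma^{k-p,0}\M\bigl(\Lambda\otimes\Ext^{p,2k}_{\MU_{\ast}\MU}(\MU_{\ast},\MU_{\ast})\bigr)$. The vanishing line $\Ext^{p,2k}_{\BP_{\ast}\BP}(\BP_{\ast},\BP_{\ast})=0$ whenever $p(\ell-1)>k$, valid at every prime $\ell\geq 2$, forces $p\leq k$, so the topological shift $k-p$ is non-negative. Hence every term in the directed system is a weight-zero Eilenberg-MacLane spectrum $\M C_{k}$ for a connective chain complex $C_{k}$ of $\Lambda$-modules. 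The full subcategory of $\M\Lambda$-modules spanned by such Eilenberg-MacLane spectra is closed under filtered homotopy colimits, with the colimit computed on the chain-complex side, so $\s_{0}(\unit_{\Lambda}[\eta^{-1}])\simeq\M C$ for a connective chain complex $C$ of $\Lambda$-modules.

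To see that $H_{\ast}(C)$ is $2$-primary torsion, observe that the only summand across all $\Sigma^{-k,-k}\s_{k}(\unit_{\Lambda})$ not of the form $\M A$ with $A$ a finite cyclic $\Lambda$-module is the copy of $\M\Lambda$ at $k=p=0$ coming from $\Ext^{0,0}_{\MU_{\ast}\MU}(\MU_{\ast},\MU_{\ast})=\ZZ$. The induced $\eta$-multiplication $\s_{0}(\unit_{\Lambda})\to\Sigma^{-1,-1}\s_{1}(\unit_{\Lambda})=\M\Lambda/2$ is the reduction map $\M\Lambda\to\M\Lambda/2$; this follows from Theorem~\ref{theorem:mult-adams-resol} together with the identification of $\eta$ as $1\in\pi_{1,1}\s_{1}(\unit_{\Lambda})$ in Example~\ref{ex:Hopf-maps} and the canonical $\M\Lambda$-module action from Lemma~\ref{lem:mult-slices-sphere}. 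So the lone non-torsion contribution already becomes $2$-torsion after one step of the colimit. Among the remaining finite-order summands $\M A$, those with $|A|$ odd vanish in the $\eta$-inverted colimit by Lemma~\ref{lem:hopf-slices-global}, while $2$-primary summands contribute $2$-primary torsion. Passing to homology yields the claim.

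The main obstacle is the second step: verifying that the filtered colimit of weight-zero Eilenberg-MacLane spectra remains Eilenberg-MacLane with the stated connectivity. This depends on the uniform Ext-vanishing bound $p\leq k$ holding at every prime simultaneously and on realizing the colimit diagram at the chain-complex level; once this structural input is in place, the $2$-primary torsion statement is bookkeeping with Lemmas~\ref{lem:mult-slices-sphere} and~\ref{lem:hopf-slices-global}.
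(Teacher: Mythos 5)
Your argument is correct and follows essentially the same route as the paper's own (very terse) justification: commute $\s_{0}$ with the homotopy colimit defining $\unit_{\Lambda}[\eta^{-1}]$, read off connectivity from Theorem~\ref{theorem:sphereslices} together with the Adams--Novikov vanishing line, and invoke Lemma~\ref{lem:hopf-slices-global} to kill the odd-torsion summands. The only nitpick is that the groups $\Ext^{p,2k}_{\MU_{\ast}\MU}(\MU_{\ast},\MU_{\ast})$ are finite but not always cyclic; this is immaterial, since finiteness --- together with the fact that every class in the sequential colimit is represented at some stage $k\geq 1$, where all summands are torsion --- is all that the $2$-primary claim requires, so one does not even strictly need the identification of $\eta\colon\M\Lambda\to\M\Lambda/2$ as the reduction map (though your derivation of it from Theorem~\ref{theorem:mult-adams-resol} and Example~\ref{ex:Hopf-maps} is sound and consistent with Lemma~\ref{lem:slices-unit-hopf}).
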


Next we analyze the unit map for hermitian $K$-theory.
\begin{lemma}
\label{lem:slices-unit-kq-1}
Let $u\colon \unit_{\Lambda}\to \KQ_{\Lambda}$ be the unit map.
Then $\s_{q}(u)$ is an inclusion on the bottom summand of $\s_{q}(\unit_{\Lambda})$; 
that is, 
$\M \Lambda\{1\}$ for $q=0$ and $\Sigma^{q,q}\M \Lambda/2\{\alpha_{1}^{q}\}$ for $q>0$. 
\end{lemma}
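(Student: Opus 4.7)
The strategy is to leverage multiplicativity. Since $u\colon\unit_{\Lambda}\to\KQ_{\Lambda}$ is a morphism of commutative motivic ring spectra, the total slice construction of Theorem~\ref{theorem:mult-adams-resol} (applied to $\KQ_{\Lambda}$ in place of, and alongside, $\unit_{\Lambda}$) yields a morphism of graded $\M\Lambda$-algebra spectra $\s_{\ast}(u)\colon\s_{\ast}(\unit_{\Lambda})\to\s_{\ast}(\KQ_{\Lambda})$. The plan is to identify, in each slice degree, the bottom summand of $\s_{q}(\unit_{\Lambda})$ with a power of $\eta$, compute its image by multiplicativity, and match it with the bottom summand of $\s_{q}(\KQ_{\Lambda})$ read off from Theorem~\ref{theorem:slices-kq}.

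For $q=0$, Theorem~\ref{theorem:s0isomorphism} together with Theorem~\ref{theorem:slices-kq} identify both zero slices canonically with $\M\Lambda$, and $\s_{0}(u)$ is unital, hence the identity. For $q\ge 1$, Corollary~\ref{corollary:etanonnilpotent} identifies the bottom summand $\Sigma^{q,q}\M\Lambda/2$ of $\s_{q}(\unit_{\Lambda})$ as generated by the $q$-fold product $\eta^{q}=\alpha_{1}^{q}$. Using that $\s_{\ast}(u)$ is a ring map, the restriction of $\s_{q}(u)$ to this bottom summand is the $q$-fold product of $\s_{1}(u)(\eta)\in\s_{1}(\KQ_{\Lambda})$ acting on $1_{\KQ}=\s_{0}(u)(1)\in\s_{0}(\KQ_{\Lambda})$.

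Two claims then finish the proof. First, $\s_{1}(u)\colon\Sigma^{1,1}\M\Lambda/2\to\Sigma^{1,1}\M\Lambda/2$ is an isomorphism: indeed, $\eta$ is inverted in Witt theory $\KT=\KQ_{\Lambda}[\eta^{-1}]$, so $u(\eta)$ cannot vanish in $\KQ_{\Lambda}$; thus $\s_{1}(u)(\eta)$ is a nonzero self-map of the indecomposable $\M\Lambda$-module $\Sigma^{1,1}\M\Lambda/2$, necessarily an isomorphism. Second, multiplication by $\eta\in\s_{1}(\KQ_{\Lambda})$ in the graded slice algebra $\s_{\ast}(\KQ_{\Lambda})$ maps the bottom summand of $\s_{q-1}(\KQ_{\Lambda})$ isomorphically onto the bottom summand of $\s_{q}(\KQ_{\Lambda})$ for all $q\ge 1$ (where for $q=1$ the bottom summand of $\s_{0}(\KQ_{\Lambda})$ is the $\M\Lambda$-factor). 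This is exactly the $\eta$-tower part of the multiplicative structure on $\s_{\ast}(\KQ_{\Lambda})$ worked out in \cite{ro.mult-slices-hermitian}. Combining the two claims, iterated $\eta$-multiplication sends $1_{\KQ}$ to a generator of the bottom summand $\Sigma^{q,q}\M\Lambda/2$ of $\s_{q}(\KQ_{\Lambda})$, so $\s_{q}(u)$ restricts to a split inclusion of $\Sigma^{q,q}\M\Lambda/2$ as a direct summand.

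The main obstacle is the second claim: transporting the multiplicative analysis of $\s_{\ast}(\KQ_{\Lambda})$ from the field case handled in \cite{ro.mult-slices-hermitian} to the present setting of compatible pairs. This should go through via Theorem~\ref{theorem:slices-kq} and the base-change formalism of Corollary~\ref{cor:cell-slice-basechange} and Remark~\ref{rem:cell-filt-basechange} (cf.\ the parenthetical remark in the proof of Lemma~\ref{lem:mult-slices-sphere}), reducing the relevant $\eta$-multiplications on the bottom summands to their field-level counterparts. The delicate point is ensuring that the iteration remains on the bottom summand at every step rather than being absorbed into one of the other summands $\Sigma^{2i+q,q}\M\Lambda/2$ appearing in Theorem~\ref{theorem:slices-kq}.
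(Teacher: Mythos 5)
Your overall strategy---identify the bottom summand of $\s_q(\unit_\Lambda)$ with the $\M\Lambda$-module generated by $\eta^q=\alpha_1^q$ and propagate along the $\eta$-tower in $\s_\ast(\KQ_\Lambda)$---is close in spirit to the paper's proof, which also runs an induction on $q$ driven by the action of $\eta$ on slices and the known multiplicative structure of the slices of hermitian $K$-theory. The genuine gap is in your first claim, not (as you suspect) in the second. You infer that $\s_1(u)$ is nonzero on $\Sigma^{1,1}\M\Lambda/2$ from the non-vanishing of $u(\eta)\in\pi_{1,1}\KQ_\Lambda$ (via invertibility of $\eta$ in $\KT$). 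But non-vanishing of the homotopy class $u(\eta)$ does not imply non-vanishing of its symbol in slice filtration one: if $u(\eta)$ lay in the image of $\pi_{1,1}\f_2\KQ_\Lambda\to\pi_{1,1}\KQ_\Lambda$, it would still be nonzero while the class $1\in\pi_{1,1}\s_1(\unit_\Lambda)\cong h^{0,0}$ representing $\eta$ would map to $0$ in $\pi_{1,1}\s_1(\KQ_\Lambda)$. This is not excluded for degree reasons, since $\pi_{1,1}\s_2(\KQ_\Lambda)$ contains $h^{1,1}$; ruling it out would require a convergence or explicit filtration argument that you do not supply.

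The repair is in effect already contained in your second claim: by \cite[Lemma 4.12]{roendigs-oestvaer.hermitian}, which is exactly what the paper invokes, the composite $\Sigma^{1,1}\s_0(\KQ_\Lambda)\xrightarrow{\eta}\s_1(\KQ_\Lambda)$ is the reduction $\Sigma^{1,1}\M\Lambda\to\Sigma^{1,1}\M\Lambda/2$ followed by the inclusion of the $i=0$ summand, and multiplicativity gives $\s_1(u)(\eta)=\eta\cdot 1_{\KQ}$, so Claim~1 follows with no appeal to $\KT$. Two smaller points. First, for $q=0$ the slice $\s_0(\KQ_\Lambda)$ is $\M\Lambda\vee\bigvee_{i<0}\Sigma^{2i,0}\M\Lambda/2$ rather than $\M\Lambda$, so ``unital, hence the identity'' should read ``the inclusion of the $\M\Lambda$ summand,'' which is the content of the identification in the proof of Theorem~\ref{theorem:slices-kq}. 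Second, your iteration stays on the bottom summand only because weight-zero operations out of $\M\Lambda/2$ are so constrained (Lemma~\ref{lem:steenrod-alg-weight-zero}); the paper's induction step makes this interference analysis explicit by passing through $\cone(\eta)$ and $\KGL_\Lambda$, and you should record the analogous vanishing rather than leave it implicit.
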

\begin{proof}
Note that $\s_0(\unit_{\Lambda})$ is a direct summand of $\s_0(\KQ_{\Lambda})$ by the proof of Theorem \ref{theorem:slices-kq} given for fields in~\cite[Theorem 4.18]{roendigs-oestvaer.hermitian}.
The Hopf map $\eta$ relates algebraic and hermitian $K$-theory via the motivic Wood cofiber sequence \cite[Theorem 3.4]{roendigs-oestvaer.hermitian}
\begin{equation}
\label{equation:motivicWood}
\Sigma^{1,1}\KQ_{\Lambda} 
\xrightarrow{\eta}
\KQ_{\Lambda} 
\to
\KGL_{\Lambda}.
\end{equation}
Theorems \ref{theorem:slices-kq} and \ref{theorem:slices-kgl} identify the slices of $\KQ_{\Lambda}$ and $\KGL_{\Lambda}$, 
respectively.
It follows that $\s_1(\unit_{\Lambda})$ is a direct summand of $\s_1(\KQ_{\Lambda})$ if and only if $\eta$ induces the canonical map 
\begin{equation}
\label{equation:s1directsummandmap}
\Sigma^{1,1} \s_0(\unit_{\Lambda}) 
\to  
\s_1(\unit_{\Lambda})
\end{equation}
obtained from the coefficient reduction map $\Lambda\to \Lambda/2$.
Assuming the map in (\ref{equation:s1directsummandmap}) is trivial leads to a contradiction since by \cite[Lemma 4.12]{roendigs-oestvaer.hermitian} the composition    
\[ 
\Sigma^{1,1}  \s_0(\unit_{\Lambda}) 
\to 
\Sigma^{1,1}  \s_0(\KQ_{\Lambda})
\xrightarrow{\eta} 
\s_1(\KQ_{\Lambda}) 
\]
is the canonical map $\Sigma^{1,1}\M \Lambda \to \Sigma^{1,1}\M \Lambda/2$ composed with the inclusion into the infinite sum
$\bigvee_{i<1} \Sigma^{2i+1,1} \M \Lambda/2$.
Proceeding by induction on $q>0$, 
$\eta$ induces the trivial map
\[ 
\Sigma^{q+1,q+1}\M \Lambda/2 
\hookrightarrow 
\Sigma^{1,1} \s_{q}(\unit_{\Lambda}) 
\to 
\s_{q+1}(\unit_{\Lambda}), 
\]
precisely if 
\[
\s_{q+1}(\mathrm{cone}(\eta)) 
\to 
\Sigma^{2,1}\s_{q} (\unit_{\Lambda}) 
\to
\Sigma^{2,1}\s_{q}(\KQ_{\Lambda}) 
\]
induces the inclusion of the direct summand $\Sigma^{q+2,q+1}\M \Lambda/2$.
(Here $\Sigma^{q+1,q+1}\M \Lambda/2$ can only map nontrivially to the corresponding direct summand of the same bidegree in $\s_{q+1}(\unit_{\Lambda})$.)
However, 
the latter coincides with the composite
\[
\s_{q+1}(\mathrm{cone}(\eta)) 
\to 
\s_{q+1}(\KGL_{\Lambda}) 
\to
\Sigma^{2,1}\s_{q}(\KQ_{\Lambda}),
\]
which is trivial on any direct summand of the form $\Sigma^{q+2,q+1}\M \Lambda/2$
by Lemma~\ref{lem:steenrod-alg-weight-zero}. 
\end{proof}

Next we determine the images under the unit map $u\colon \unit_{\Lambda}\to \KQ_{\Lambda}$ on the slice summands corresponding to the finite cyclic groups 
$\Ext_{\MU_\ast\MU}^{1,2q}(\MU_\ast,\MU_\ast)$ of order $a_{q}$.

\begin{lemma}
\label{lem:slices-unit-kq-2}
For $q\geq 1$, 
the map $\s_{2q-1} (u)\colon \s_{2q-1}(\unit_{\Lambda}) \to \s_{2q-1}(\KQ_{\Lambda})$ restricts to the canonical inclusion of the direct summand $\Sigma^{4q-3,2q-1}\M \Lambda/2$ 
generated by $\alpha_{2q-1}$. 
Moreover, 
$\s_{2q}(u)\colon \s_{2q}(\unit_{\Lambda}) \to \s_{2q}(\KQ_{\Lambda})$ maps the direct summand $\Sigma^{4q-1,2q}\M \Lambda/a_{2q}$ generated by $\overline{\alpha}_{2q}$ to 
$\Sigma^{4q,2q}\M \Lambda$ such that 
the composite
\[ 
\Sigma^{4q-1,2q}\M \Lambda/a_{2q} 
\to 
\Sigma^{4q,2q}\M \Lambda 
\xrightarrow{\pr} 
\Sigma^{4q,2q}\M \Lambda/2 
\]
is the unique nontrivial map $\partial^{a_{2q}}_{2}$ from $\Sigma^{4q-1,2q}\M \Lambda/a_{2q}$ to $\Sigma^{4q,2q}\M \Lambda/2$, 
cf.~Lemma \ref{lem:steenrod-alg-weight-zero}.
\end{lemma}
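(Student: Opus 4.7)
The plan is to prove both assertions by induction on $q\geq 1$, linking the odd and even cases via the multiplicativity of the slice filtration (Theorem \ref{theorem:mult-adams-resol}) and the Wood cofiber sequence \eqref{equation:motivicWood} for $\KQ_{\Lambda}$. Using Theorem \ref{theorem:KQcellular}, Corollary \ref{cor:cell-slice-basechange}, and Remark \ref{rem:cell-filt-basechange}, I first reduce to the case $S=\Spec(F)$ for a field $F$, where all slices are explicit wedges of suspended motivic Eilenberg--MacLane spectra and maps between them are controlled by the weight-zero motivic cohomology operations classified in Lemma \ref{lem:steenrod-alg-weight-zero}. The base case $q=1$ of part~(1) coincides with Lemma \ref{lem:slices-unit-kq-1}, since the $\alpha_{1}$-summand is the only (hence bottom) summand of $\s_{1}(\unit_{\Lambda})$.

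For part~(2), assuming part~(1) for the same $q$, I apply Theorem \ref{theorem:mult-adams-resol} to the commutative square induced by the ring map $u$:
\[
\xymatrix{
\s_{1}(\unit_{\Lambda})\wedge_{\M\Lambda}\s_{2q-1}(\unit_{\Lambda}) \ar[r]\ar[d]_-{\s_{1}(u)\wedge\s_{2q-1}(u)} & \s_{2q}(\unit_{\Lambda}) \ar[d]^-{\s_{2q}(u)} \\
\s_{1}(\KQ_{\Lambda})\wedge_{\M\Lambda}\s_{2q-1}(\KQ_{\Lambda}) \ar[r] & \s_{2q}(\KQ_{\Lambda}).
}
\]
Restricted to $\Sigma^{1,1}\M\Lambda/2\wedge\Sigma^{4q-3,2q-1}\M\Lambda/2$, the left vertical arrow is the canonical inclusion by the inductive hypothesis and Lemma \ref{lem:slices-unit-kq-1}; by Lemma \ref{lem:mult-slices-sphere}, the top horizontal arrow sends $\eta\cdot\alpha_{2q-1}$ to the generator of the summand $\Sigma^{4q-1,2q}\M\Lambda/a_{2q}\subset \s_{2q}(\unit_{\Lambda})$. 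The multiplicative structure on $\KQ_{\Lambda}$-slices from \cite[Theorem 3.3]{ro.mult-slices-hermitian} (combined with Theorem \ref{theorem:slices-kq}) then forces the corresponding product in $\s_{2q}(\KQ_{\Lambda})$ to lie in the integral summand $\Sigma^{4q,2q}\M\Lambda$ via a nontrivial map. Composing with $\pr\colon\Sigma^{4q,2q}\M\Lambda\to\Sigma^{4q,2q}\M\Lambda/2$ yields a nontrivial map $\Sigma^{4q-1,2q}\M\Lambda/a_{2q}\to\Sigma^{4q,2q}\M\Lambda/2$, which by the uniqueness statement of Lemma \ref{lem:steenrod-alg-weight-zero} must coincide with $\sigma_{2q}$.

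For the inductive step of part~(1), I use the Wood cofiber sequence \eqref{equation:motivicWood} on slices,
\[
\Sigma^{1,1}\s_{2q}(\KQ_{\Lambda})\xrightarrow{\eta}\s_{2q+1}(\KQ_{\Lambda})\to \s_{2q+1}(\KGL_{\Lambda})=\Sigma^{4q+2,2q+1}\M\Lambda,
\]
which identifies the top $\Lambda/2$-summand $\Sigma^{4q+1,2q+1}\M\Lambda/2$ of $\s_{2q+1}(\KQ_{\Lambda})$ as the mod-$2$ reduction of $\eta$-multiplication on the integral summand $\Sigma^{4q,2q}\M\Lambda$ of $\s_{2q}(\KQ_{\Lambda})$. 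Combining this with part~(2) for $q$ and Theorem \ref{theorem:mult-adams-resol} applied to the analogous square for $\s_{1}\wedge\s_{2q}\to\s_{2q+1}$, a diagram chase---pinning down the target summand by bidegree and by the uniqueness afforded by Lemma \ref{lem:steenrod-alg-weight-zero}---identifies $\s_{2q+1}(u)$ on the $\alpha_{2q+1}$-summand as the canonical inclusion, completing the induction.

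The main obstacle is to certify that the $\eta$-multiplications in $\s_{\ast}(\KQ_{\Lambda})$ genuinely hit the integral summand $\Sigma^{4q,2q}\M\Lambda$ (and not only its mod-$2$ reduction sitting in a neighboring mod-$2$ piece). Extracting this from the multiplicative computation \cite[Theorem 3.3]{ro.mult-slices-hermitian} together with Theorem \ref{theorem:slices-kq} is the delicate point; once it is in hand, everything else reduces to the classification of weight-zero motivic cohomology operations provided by Lemma \ref{lem:steenrod-alg-weight-zero}.
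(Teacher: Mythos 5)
Your proposal is correct in outline and its induction has the same skeleton as the paper's (part~(1) for $q$ $\Rightarrow$ part~(2) for $q$ $\Rightarrow$ part~(1) for $q+1$, base case from Lemma~\ref{lem:slices-unit-kq-1}), and your even-to-odd step is essentially the paper's: both use the $\eta$-multiplication square together with the fact that $\eta$ acts on $\Sigma^{4q,2q}\M\Lambda\subset\s_{2q}(\KQ_{\Lambda})$ by the canonical projection. Where you genuinely diverge is the odd-to-even step. The paper deduces part~(2) by comparing \emph{first slice differentials}: it feeds the known computation $\dd^{\KQ_{\Lambda}}_{1}=\delta_{2}\Sq^{2}\Sq^{1}$ on the top summand (from \cite[Theorem 5.5]{roendigs-oestvaer.hermitian}) into the naturality square for $\dd_{1}$, and reads off that $\s_{2q}(u)$ restricted to $\Sigma^{4q-1,2q}\M\Lambda/a_{2q}$ is $y_{2q}\delta_{a_{2q}}$ with $y_{2q}$ odd; as a byproduct it also pins down the component $\inc_{2q}\Sq^{2}\Sq^{1}+x_{2q}\partial_{2q}\Sq^{2}$ of $\dd^{\unit_{\Lambda}}_{1}$, which is reused in Lemma~\ref{lem:first-diff-unit-1}. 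You instead use the \emph{multiplicative} square $\s_{1}\wedge\s_{2q-1}\to\s_{2q}$, whose key external input is that the product of the bottom class of $\s_{1}(\KQ_{\Lambda})$ with the top class of $\s_{2q-1}(\KQ_{\Lambda})$ hits $\Sigma^{4q,2q}\M\Lambda$ via $\delta_{2}$ on the $\Sigma^{1,0}\M\Lambda/2$ wedge summand of the smash; you correctly identify this as the delicate point. Note that this is precisely the input the paper itself invokes (via \cite[Theorem 3.3]{ro.mult-slices-hermitian}) in the proof of the last statement of Lemma~\ref{lem:mult-slices-sphere}, so your route trades one external computation about $\KQ$ (the differential) for another (the ring structure on its slices); the paper's choice has the advantage of simultaneously producing data needed later for $\dd^{\unit_{\Lambda}}_{1}$.

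Two caveats. First, be careful about circularity: the ``alternative'' proof of Lemma~\ref{lem:mult-slices-sphere} in the paper is itself derived from Lemmas~\ref{lem:slices-unit-kq-1} and~\ref{lem:slices-unit-kq-2}, so you must rely on its primary proof (Theorem~\ref{theorem:mult-adams-resol} plus the explicit Adams--Novikov products~(\ref{eq:mu-mult})) --- or, better, observe that you do not actually need it: once the $\KQ$-side product is known to hit $\Sigma^{4q,2q}\M\Lambda$ via $\delta_{2}$, the bidegree analysis (the only summand of $\s_{2q}(\unit_{\Lambda})$ in simplicial degree $4q-1$ is $\Sigma^{4q-1,2q}\M\Lambda/a_{2q}$, corresponding to $\Ext^{1,4q}$, and $\Hom(\text{finite},\Lambda)=0$ kills degree-zero contributions) forces both the nontriviality of the sphere-side product and the oddness of $y_{2q}$ at once, since $\delta_{a_{2q}}\circ\inc_{2}=\delta_{2}$ and $\pr\circ\delta_{a_{2q}}=\sigma_{2q}$. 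Second, in the even-to-odd step your ``diagram chase pinning down the target summand by bidegree'' should acknowledge, as the paper implicitly does, that a summand of $\s_{2q+1}(\unit_{\Lambda})$ coming from $\Ext^{2,4q+2}$ sits in the adjacent simplicial degree and must be ruled out as the carrier of the nontrivial composite; this is the same gloss the paper makes, so it is not a defect relative to the published argument.
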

\begin{proof}
We argue by induction on $q$.
Lemma~\ref{lem:slices-unit-kq-1} shows the claim for $q=1$.
Suppose the direct summand $\Sigma^{4q-3,2q-1}\M \Lambda/2$ of $\s_{2q-1}(\unit_{\Lambda})$ corresponding to $\Ext_{\MU_\ast\MU}^{1,4q-2}(\MU_\ast,\MU_\ast)$ maps identically 
via $\s_{2q-1}(u)$ to the top summand of $\s_{2q-1}(\KQ_{\Lambda})$, 
i.e., 
the direct summand of largest simplicial suspension degree.
By \cite[Theorem 5.5]{roendigs-oestvaer.hermitian} the first slice differential $\dd^{\KQ_{\Lambda}}_{1}$ maps this direct summand via the cohomology operation $\delta_{2}\Sq^2\Sq^1$ 
to the top summand $\Sigma^{4q+1,2q}\M \Lambda$ of $\Sigma^{1,0}\s_{2q}(\KQ_{\Lambda})$;
here, 
$\delta_{n}\colon\M \Lambda/n\to\Sigma^{1,0}\M \Lambda$ is the canonical connecting map.
The commutative diagram with horizontal slice differentials
\[ 
\xymatrix{
\s_{2q-1}(\unit_{\Lambda}) \ar[r]^{\dd^{\unit_{\Lambda}}_{1}} \ar[d]_{\s_{2q-1}(u)} 
& \Sigma^{1,0}  \s_{2q}(\unit_{\Lambda})\ar[d]^{\Sigma^{1,0}  \s_{2q}(u)} \\
\s_{2q-1}(\KQ_{\Lambda}) \ar[r]^{\dd^{\KQ_{\Lambda}}_{1}} 
& \Sigma^{1,0}  \s_{2q}(\KQ_{\Lambda})
}
\]
implies the map $\s_{2q}(u)$ is nontrivial --- since $\dd^{\KQ_{\Lambda}}_{1}$ is nontrivial --- by Lemma~\ref{lem:slices-unit-kq-1}. 
More precisely, 
Lemma~\ref{lem:steenrod-alg-weight-zero} and Theorem~\ref{theorem:steenrod-algebra} supply an element $x_{2q}\in \Lambda/a_{2q}$ such that $\dd^{\unit_{\Lambda}}_{1}$ maps the 
direct summand $\Sigma^{4q-3,2q -1}\M \Lambda/2$ via $\inc^2_{a_{2q}}\Sq^2\Sq^1+x_{2q}\partial^2_{a_{2q}}\Sq^2$ to the direct summand $\Sigma^{4q,2q}\M \Lambda/a_{2q}$ of 
$\Sigma^{1,0}\s_{2q}(\unit_{\Lambda})$ obtained from $\Ext_{\MU_\ast\MU}^{1,4q}(\MU_\ast,\MU_\ast)$. 
Here $\inc^2_{a_{2q}}$ is induced by the canonical map $\Lambda/2\to \Lambda/a_{2q}$ and $\partial^2_{a_{2q}}\colon\Sigma^{4q-1,2q}\M \Lambda/2\to \Sigma^{4q,2q}\M \Lambda/a_{2q}$ is defined in 
Example~\ref{ex:steenrod-alg-weight-zero}.
Moreover, 
Lemma~\ref{lem:steenrod-alg-weight-zero} implies there exists an odd integer $y_{2q}$ such that $\s_{2q}(u)$ maps the direct summand $\Sigma^{4q-1,2q}\M \Lambda/a_{2q}$ of 
$\s_{2q}(\unit_{\Lambda})$ via $y_{2q}\delta_{a_{2q}}$ to the direct summand $\Sigma^{4q,2q}\M \Lambda$ of $\s_{2q}(\KQ_{\Lambda})$.
Composing with the canonical map to $\Sigma^{4q,2q} \M \Lambda/2$ yields the unique nontrivial map $\partial^{a_{2q}}_{2}$ in the statement of the lemma.
 
The Hopf map $\eta$ and the unit $u:\unit_{\Lambda}\to\KQ_{\Lambda}$ induces the commutative diagram:
\[ 
\xymatrix{
\Sigma^{1,1}  \s_{2q}(\unit_{\Lambda}) \ar[r]^{\eta} \ar[d]_{\Sigma^{1,1}  \s_{2q}(u)} 
& \s_{2q+1}(\unit_{\Lambda})\ar[d]^{\s_{2q+1 }(u)} \\
\Sigma^{1,1}  \s_{2q}(\KQ_{\Lambda}) \ar[r]^{\eta}  
& \s_{2q+1}(\KQ_{\Lambda})\\
}
\]
The direct summand $\Sigma^{1,1}  \Sigma^{4q-1,2q}  \M \Lambda/a_{2q}$ maps via the unique nontrivial map $\partial^{a_{2q}}_{2}$ to the top summand $\Sigma^{4q +1,2q+1}\M \Lambda/2$ of 
$\s_{2q +1}(\KQ_{\Lambda})$.
Thus $\s_{2q +1}(u)$ maps the direct summand $\Sigma^{4q +1,2q+1}  \M \Lambda/2$ of $\s_{2q+1}(\unit_{\Lambda})$ generated by 
$\alpha_{2q+1}\in\Ext_{\BP_\ast\BP}^{1,4q+2}(\BP_\ast,\BP_\ast)$ nontrivially, 
and hence identically, 
to the top summand in $\s_{2q +1}(\KQ_{\Lambda})$.
This completes the induction step.
\end{proof}

\begin{lemma}
\label{lem:slices-unit-hopf}
The first motivic Hopf map $\eta$ induces the projection map 
\[ 
\s_{1}(\eta)\colon
\Sigma^{1,1}\s_0(\unit_{\Lambda})
\iso 
\Sigma^{1,1}\M\Lambda\{1\}
\to 
\Sigma^{1,1}\M\Lambda/2\{\alpha_{1}\}\iso\s_1(\unit_{\Lambda}). 
\]
Moreover, for $q\geq 1$,
$\s_{2q+1}(\eta)$ restricts to the unique nontrivial map 
\[
\Sigma^{1,1}\partial^{a_{2q}}_{2}
\colon
\Sigma^{1,1}\Sigma^{4q-1,2q}\M\Lambda/a_{2q}\{\alpha_{2q/n}\} 
\to
\Sigma^{4q+1,2q+1}\M\Lambda/2\{\alpha_{2q+1}\}, 
\]
while for $q\geq 0$, $i\geq 1$, $\s_{2q+i+1}(\eta)$ restricts to the identity map 
\[
\Sigma^{1,1}\Sigma^{4q+i,2q+i}\M\Lambda/2\{\alpha_{1}^{i-1}\alpha_{2q+1}\} 
\xrightarrow{\id}  
\Sigma^{4q+i+1,2q+i+1}\M\Lambda/2\{\alpha_{1}^{i}\alpha_{2q+1}\}. 
\]
\end{lemma}  
\begin{proof}
In $\s_{\ast}(\KQ_{\Lambda})$, 
$\eta$ induces the identity on every direct summand $\Sigma^{2i+q,q}\M \Lambda/2$ and the canonical projection on $\Sigma^{4q,2q}\M \Lambda$ \cite[\S4.3]{roendigs-oestvaer.hermitian}. 
The first motivic Hopf map $\eta$ corresponds to derived multiplication with $\alpha_1\in\Ext_{\MU_\ast\MU}^{1,2}(\MU_\ast,\MU_\ast)$ via the commutative diagram:
\[ 
\xymatrix{ 
\Sigma^{1,1}\unit_{\Lambda}\smash \s_{q}(\unit_{\Lambda}) 
\ar[r]^-\cong \ar[d] & 
\s_{q+1}(\Sigma^{1,1} \unit_{\Lambda}) \ar[d]^-{\s_{q+1}(\eta)} \\
\s_{1}(\unit_{\Lambda})\smash_{\s_0(\unit_{\Lambda})} \s_{q}(\unit_{\Lambda})
\ar[r] & 
\s_{q+1}(\unit_{\Lambda})}
\]
Applying Lemmas~\ref{lem:mult-slices-sphere}, \ref{lem:slices-unit-kq-1}, and \ref{lem:slices-unit-kq-2} finishes the proof.
\end{proof}

We proceed by analyzing the action of $\eta$ on the slices of $\unit_{\Lambda}$.
Lemma~\ref{lem:slices-unit-hopf} implies that $\s_0(\unit_{\Lambda}[\eta^{-1}])$ contains $\M \Lambda/2\vee\Sigma^{2,0}\M \Lambda/2\vee\Sigma^{4,0}\M \Lambda/2\vee\Sigma^{6,0}\M \Lambda/2$ as a direct summand.
For $q\neq 0$ we have $\s_{q}(\unit_{\Lambda}[\eta^{-1}]) \iso \Sigma^{q,q} \s_0(\unit_{\Lambda}[\eta^{-1}])$ as observed in \cite[Example 2.3]{roendigs-oestvaer.hermitian}.
By expanding on the proof of Lemma~\ref{lem:slices-unit-hopf} it follows that $\Sigma^{q+n,q}\M \Lambda/2$ is a retract of $\s_{q}(\unit_{\Lambda}[\eta^{-1}])$ for every even natural number $n$.
A complete computation can be extracted from the following recent result due to Andrews-Miller \cite[(1.1.1)]{andrews-miller},
cf.~\cite[p.~500]{Zahler}. 

\begin{theorem}
\label{theorem:zahlers-hope}
At the prime $2$ and for $t\neq 1$ there exists a natural number $N(t)$ such that $\alpha_{1}^{s-1}\overline{\alpha}_{t+1}$ is the unique nontrivial element in 
$\Ext^{s,2(s+t)}_{\BP_\ast\BP}(\BP_\ast,\BP_\ast)$ for all $s\geq N(t)$.
\end{theorem}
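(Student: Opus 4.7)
The plan is to exploit $\alpha_1$-periodicity of the Adams-Novikov $E_2$-page at the prime $2$. Since multiplication by $\alpha_1 \in \Ext^{1,2}_{\BP_\ast\BP}(\BP_\ast,\BP_\ast)$ takes $\Ext^{s,2(s+t)}$ into $\Ext^{s+1,2((s+1)+t)}$, the coordinate $t$ is preserved along each $\alpha_1$-tower. The claim is thus that, for each fixed $t \neq 1$, the $t$-line eventually becomes a cyclic $\mathbb{F}_2[\alpha_1]$-module generated by $\overline{\alpha}_{t+1}$---equivalently, once $s$ is large enough, every class in $\Ext^{s,2(s+t)}_{\BP_\ast\BP}(\BP_\ast,\BP_\ast)$ is an $\alpha_1$-multiple of $\overline{\alpha}_{t+1}$ and no other classes survive.

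First, I would compute the $\alpha_1$-inverted $\Ext$ ring $\alpha_1^{-1}\Ext^{*,*}_{\BP_\ast\BP}(\BP_\ast,\BP_\ast)$ at the prime $2$. The natural route is via the chromatic spectral sequence at height $1$: combine the change-of-rings isomorphism identifying $v_1^{-1}\Ext_{\BP_\ast\BP}(\BP_\ast,\BP_\ast/2^\infty)$ with the continuous cohomology of the height-$1$ Morava stabilizer group, together with an algebraic $\alpha_1$-Bockstein spectral sequence relating $v_1$-inverted data to $\alpha_1$-inverted data. The output, which is the technical core of the Andrews-Miller paper, describes $\alpha_1^{-1}\Ext$ as an explicit algebra in which every $t$-line with $t \neq 1$ is one-dimensional over $\mathbb{F}_2[\alpha_1^{\pm 1}]$ and generated by the image of $\overline{\alpha}_{t+1}$.

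Second, I would control the $\alpha_1$-torsion by invoking a Miller-Ravenel-type vanishing line on the unlocalized $E_2$-page, forcing the localization map $\Ext^{s,2(s+t)} \to \alpha_1^{-1}\Ext^{s,2(s+t)}$ to become an isomorphism once $s$ exceeds a bound depending on $t$; the smallest such bound would serve as $N(t)$. Combining the two steps yields uniqueness of $\alpha_1^{s-1}\overline{\alpha}_{t+1}$ as the only nontrivial class in the relevant bidegree. The hard part will be the first step: $\alpha_1$-inversion at the prime $2$ is significantly more delicate than at odd primes, because the relation $2\alpha_1 = 0$ forces an intricate interplay between the $v_0$- and $v_1$-Bocksteins, and the exceptional behaviour at $t = 1$ (where additional generators coexist with the $\alpha_1$-powers of $\overline{\alpha}_2$) shows that no naive periodicity argument can succeed without this careful input.
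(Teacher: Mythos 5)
This statement is not proved in the paper at all: it is imported verbatim as a theorem of Andrews--Miller (settling the expectation on p.~500 of Zahler's paper, whence the label), so there is no internal argument to compare yours against. Your outline is essentially the strategy of that cited reference: first compute $\alpha_1^{-1}\Ext_{\BP_\ast\BP}(\BP_\ast,\BP_\ast)$ at $p=2$ (Andrews--Miller obtain, via a localized algebraic Novikov/Bockstein spectral sequence fed by height-one chromatic data, an explicit algebra of the form $\mathbb{F}_2[\alpha_1^{\pm 1},\alpha_3,\alpha_4]/(\alpha_4^2)$, whose $t$-graded piece is one-dimensional over $\mathbb{F}_2[\alpha_1^{\pm1}]$ for each $t\neq 1$ and zero for $t=1$), and second show the localization map is an isomorphism above a vanishing line, which is what produces $N(t)$ and upgrades "colimit equals $\mathbb{Z}/2$" to "the groups themselves stabilize to $\mathbb{Z}/2$". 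Just be aware that, as written, both pillars of your argument are exactly the theorems of the source being cited and are asserted rather than established, so this is a faithful roadmap of Andrews--Miller rather than an independent proof --- which is consistent with how the paper itself treats the statement.
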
 

Recall that slices commute with homotopy colimits \cite[Corollary 4.5]{spitzweck.relations}, \cite[Lemma 4.2]{voevodsky.open}.
Thus Theorem \ref{theorem:mult-adams-resol}, 
Corollary \ref{cor:hopf-slices-global}, 
and Theorem \ref{theorem:zahlers-hope} allow us to compute the slices of the $\eta$-inverted $\Lambda$-local sphere spectrum.
(Note the multiplicative identity $\alpha_{1}\alpha_{2/2}=0$ explains the condition $n\neq 1$ in the wedge product decomposition below,
cf.~Appendix \ref{sec:ext-groups-complex}.)
\begin{theorem}
\label{theorem:slices-eta-inv-sphere}
The slices of the $\eta$-inverted $\Lambda$-local sphere spectrum are given by 
\[ 
\s_{q}(\unit_{\Lambda}[\eta^{-1}])
\cong
\Sigma^{q,q}\M \Lambda/2\{\alpha_{1}^{q}\}
\vee
\bigvee_{n\geq 2} \Sigma^{n+q,q}\M \Lambda/2\{\alpha_{1}^{q-n-1}\overline{\alpha}_{n+1}\}.
\]
\end{theorem}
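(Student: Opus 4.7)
The plan is to reduce the computation to a purely algebraic question about the Adams-Novikov $E_2$-page, which has been settled (modulo a small-degree check) by Andrews-Miller.

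First I would use that slices commute with homotopy colimits, together with the fact that $\eta$ multiplication shifts bigrading by $(1,1)$, to identify
\[
\s_q(\unit_{\Lambda}[\eta^{-1}])
\;\simeq\;
\hocolim\bigl(\s_q(\unit_{\Lambda})\xrightarrow{\eta}\Sigma^{-1,-1}\s_{q+1}(\unit_{\Lambda})\xrightarrow{\eta}\cdots\bigr).
\]
Next, Corollary~\ref{cor:hopf-slices-global} tells us only $2$-primary summands can survive, so it suffices to work with the $2$-localization. Theorem~\ref{theorem:sphereslices} identifies the $2$-primary part of $\s_{q+n}(\unit_{\Lambda})$ as the wedge
\[
\bigvee_{s\ge 0}\Sigma^{2(q+n)-s,\,q+n}\M\bigl(\Lambda\otimes_{(2)}\Ext^{s,2(q+n)}_{\BP_\ast\BP}(\BP_\ast,\BP_\ast)\bigr).
\]

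The key translation step is to identify the map on slices induced by $\eta$-multiplication with multiplication by $\alpha_1$ on the Adams-Novikov $E_2$-page. This follows from Theorem~\ref{theorem:mult-adams-resol}, which identifies the slice ring with $\Tot(\pi_{2\ast}\MU_\Lambda^{\wedge(\bullet+1)})$ as a graded $E_\infty$ algebra, together with Example~\ref{ex:Hopf-maps} where $\eta$ is shown to be represented by $\alpha_1$. Substituting $s=p$ and reparametrizing via $t=q+n-s$, the summand of the $n$th term of the colimit sitting in simplicial degree $q+t$ corresponds precisely to $\Ext^{s,2(s+t)}_{\BP_\ast\BP}(\BP_\ast,\BP_\ast)$ with $t$ held fixed as $s\to\infty$; the transition map in the colimit is multiplication by $\alpha_1$. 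Hence
\[
\s_q(\unit_{\Lambda}[\eta^{-1}])\;\cong\;\bigvee_{t}\Sigma^{q+t,\,q}\M\bigl(\Lambda\otimes_{(2)}\alpha_1^{-1}\Ext^{\ast,2(\ast+t)}_{\BP_\ast\BP}(\BP_\ast,\BP_\ast)\bigr).
\]

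It remains to evaluate the $\alpha_1$-inverted column for each $t$. For $t<0$ vanishing is immediate. For $t=0$, the powers $\alpha_1^s$ give $\mathbb{Z}/2$ in each degree, contributing $\Sigma^{q,q}\M\Lambda/2$. For $t\ge 2$, Theorem~\ref{theorem:zahlers-hope} (Andrews-Miller) asserts that for $s$ sufficiently large $\Ext^{s,2(s+t)}_{\BP_\ast\BP}(\BP_\ast,\BP_\ast)$ is cyclic of order two generated by $\alpha_1^{s-1}\overline{\alpha}_{t+1}$, so the colimit is $\mathbb{Z}/2$ and yields a summand $\Sigma^{q+t,q}\M\Lambda/2$. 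The only non-routine case is $t=1$, where the relation $\alpha_1\alpha_{2/2}=0$ (together with the fact that the $t=1$ column is generated by $\alpha_{2/2}$ up to $\alpha_1$-multiples in the relevant range) forces the $\alpha_1$-inverted column to vanish, which accounts for the exclusion of $n=1$ in the statement. Assembling these contributions yields the wedge decomposition in the theorem.

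The main obstacle is the bookkeeping in the reparametrization and the treatment of the $t=1$ column; the deep input (stabilization of the Ext columns after inverting $\alpha_1$) is precisely the content of Andrews-Miller that we are importing.
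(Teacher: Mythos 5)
Your proposal is correct and follows the same route as the paper: the paper's (terse) proof cites exactly the ingredients you use — slices commuting with homotopy colimits, Theorem~\ref{theorem:mult-adams-resol} to translate $\eta$-multiplication into $\alpha_1$-multiplication on the Adams--Novikov $E_2$-page, Corollary~\ref{cor:hopf-slices-global} to discard odd-primary summands, Theorem~\ref{theorem:zahlers-hope} for the stabilized columns, and the relation $\alpha_1\alpha_{2/2}=0$ (equivalently the vanishing of $\Ext^{s,2(s+1)}_{\BP_\ast\BP}$ for $s\geq 2$) for the excluded $n=1$ summand. Your write-up simply makes explicit the column-by-column bookkeeping that the paper leaves implicit.
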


\begin{remark}  
The canonical map $\M \Lambda\cong \s_0(\unit_{\Lambda}) \to \s_0(\unit_{\Lambda}[\eta^{-1}])$ does not induce a map of ring spectra $\M \Lambda/2\to \s_0(\unit_{\Lambda}[\eta^{-1}])$. 
This follows since the multiplicative structure on $\s_0(\unit_{\Lambda}[\eta^{-1}])$ involves at least as many relations as for $\s_0(\KT)$, 
see \cite[Theorem 3.6]{ro.mult-slices-hermitian}.
\end{remark}

For completeness we give slice spectral sequence interpretations of the zeroth, second, and third motivic Hopf maps.
\begin{example}
\label{ex:Hopf-maps}
The zeroth motivic Hopf map $1-\epsilon\in\pi_{0,0}\unit_{\Lambda}$ is detected by $2\in\pi_{0,0}\M \Lambda$.
Further, 
$\nu\in \pi_{3,2}\unit_{\Lambda}$ is detected by $1\in\pi_{3,2}\Sigma^{3,2}\M \Lambda/12\cong h^{0,0}_{12}$ for the direct summand $\Sigma^{3,2}\M \Lambda/12$ of $\s_2(\unit_{\Lambda})$ 
corresponding to the Adams-Novikov spectral sequence representative $\alpha_{2/2}$ of $\nu_{\Top}$, 
cf.~Corollary \ref{corollary:small-slices}.
Finally, 
$\sigma\in\pi_{7,4}\unit_{\Lambda}$ is detected by $1\in\pi_{7,4}\Sigma^{7,4}\M \Lambda/240\cong h^{0,0}_{240}$ for the direct summand $\Sigma^{7,4}\M \Lambda/240$
of $\s_4(\unit_{\Lambda})$ corresponding to the Adams-Novikov spectral sequence representative $\alpha_{4/4}$ of $\sigma_{\Top}$,
cf.~Corollary \ref{corollary:small-slices}.
Theorem \ref{theorem:sphereslices} and Lemma \ref{lem:first-diff-unit-1} imply that all differentials entering or exiting these groups are trivial.
\end{example}

\section{Convergence of the slice spectral sequence}
\label{sec:convergence}

In this section we answer Voevodsky's convergence problem for the slice spectral sequence \cite[\S 7]{voevodsky.open}.
This clarifies the role of the first motivic Hopf map $\eta$ in the stable theory, 
and recovers Levine's convergence result for the slice filtration over fields of finite cohomological dimension in \cite[Theorem 4]{levine.sliceconvergence}.

\subsection{Preliminaries}
\label{sec:preliminaries}
For every integer $q$ the inclusion $i_{q}\colon\Sigma^{2q,q}\SH^{\eff}\subset\SH$ affords a right adjoint $r_{q}$ by work of Neeman \cite{neeman}.
Define $\f_{q}$ as the composite $i_{q}\circ r_{q}$.
The derived counit for the adjunction yields for every motivic spectrum $\E$ a distinguished triangle
\[ 
\f_{q}(\E)
\to 
\E 
\to 
\f^{q-1}(\E) 
\to 
\Sigma^{1,0} \f_{q}(\E). 
\]
In the special case where $\f_{q-1}(\E)=\E$, one has $\f^{q-1}(\E)=\s_{q-1}(\E)$. 
The natural transformation $\f_{q+1}\to \f_{q}$ induces a natural transformation $\f^{q}\to \f^{q-1}$. 
By \cite{grso} and \cite{Pelaez} the slice filtration can be modeled on the level of model categories; 
whence the following is well-defined.

\begin{definition}\label{def:slicecompletion}
Let $\slicecomp$ denote the slice completion endofunctor $\holim_{q} \f^{q-1}$ of $\SH$.
\end{definition}

By construction, 
the slice spectral sequence for $\E$ \cite[\S2]{roendigs-oestvaer.hermitian}, \cite[\S 7]{voevodsky.open} is an upper half-plane spectral sequence with entering 
differentials which converges conditionally to the homotopy groups of $\slicecomp(\E)$ in the sense of Boardman \cite[Definition 5.10, \S7]{Boardman}.
\begin{remark}
\label{remark:scaugmentedtrivial}
The slice completion functor is augmented by the natural transformation $\Id_{\SH}\to\slicecomp$.
If each slice of $\E$ is trivial then so is $\slicecomp(\E)$.
\end{remark}
\begin{example}
\label{example:convergence}
The canonical map $\KT\to \slicecomp(\KT)$ is not a weak equivalence over the real numbers $F=\mathbb{R}$.
The $\lim^1$-short exact sequence and \cite[Lemma 3.13, Corollary 3.16]{roendigs-oestvaer.hermitian} show
\[ 
\pi_{s,t}\slicecomp(\KT) 
\iso 
\begin{cases} \mathbb{Z}^\wedge_2 & 4\mid s-t \\
\mathbb{Z}^\wedge_2/\mathbb{Z} & 4\mid s-1-t \\
0 & \mathrm{otherwise.}
\end{cases}
\]
On the other hand, 
$\pi_{s,t}\KT$ is trivial if $4\nmid s-t$ and infinite cyclic in all other degrees.
\end{example}  

\begin{remark}
In \cite[Theorem 6.12]{roendigs-oestvaer.hermitian} it is shown that the slice spectral sequence for $\KT$ converges to the filtration of the Witt ring 
by its fundamental ideal of even dimensional forms.
Example \ref{example:convergence} shows the slice spectral sequence for $\KT$ does not converge conditionally to the homotopy groups of $\KT$.
If $p$ is an odd prime, 
$\s_{\ast}(\KT/p)=\ast$ and $\f_{q}(\KT/p)=\KT/p$ for all $q\in\Z$.
\end{remark}

Throughout the following, 
we let $(S,\Lambda)$ be a compatible pair, 
cf.~Definition \ref{def:compatible}.

\begin{lemma}
\label{lem:slice-eta-inv}
If $\eta\colon\Sigma^{1,1}\E\to \E$ is a weak equivalence, 
then the multiplication by $2$ map is trivial on all slices of $\E$.
\end{lemma}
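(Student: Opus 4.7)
The plan is to exploit Morel's defining relation $\eta \cdot h = 0 \in \pi_{1,1}\unit$ in Milnor--Witt $K$-theory, where $h = 2 + \eta\cdot[-1] \in \pi_{0,0}\unit$ denotes the hyperbolic plane, combined with the observation that $\eta$ vanishes on the zeroth slice of the sphere.

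First I would translate Morel's relation into the diagrammatic statement that the composite $\Sigma^{1,1}\E \xrightarrow{\eta_\E} \E \xrightarrow{h_\E} \E$ is null. Since $\eta_\E$ is an equivalence by hypothesis, I can cancel it to conclude $h_\E = 0$ as an endomorphism of $\E$. Applying the $q$-th slice functor then gives $h\cdot\id_{\s_q(\E)} = 0$.

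Second, I would argue that the summand $\eta\cdot[-1]$ of $h$ acts as zero on every slice. The slices $\s_q(\E)$ carry a canonical $\M\Lambda$-module structure by Theorem~\ref{theorem:s0isomorphism}, and the multiplicativity of the slice filtration (Theorem~\ref{theorem:mult-adams-resol}) forces the action of any element of $\pi_{0,0}\unit_\Lambda$ on $\s_q(\E)$ to factor through its image in $\pi_{0,0}\s_0(\unit_\Lambda) = \pi_{0,0}\M\Lambda = \Lambda$. The class $\eta$ already vanishes in $\pi_{1,1}\s_0(\unit_\Lambda) = H^{-1,-1}(\mathrm{pt},\Lambda) = 0$, hence so does $\eta\cdot[-1]$ in $\pi_{0,0}\M\Lambda$ by multiplicativity.

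Combining these two steps, on $\s_q(\E)$ we obtain $0 = h\cdot\id = 2\cdot\id + (\eta\cdot[-1])\cdot\id = 2\cdot\id$, as required. The main obstacle is the second step: verifying cleanly that the action of $\pi_{0,0}\unit_\Lambda$ on $\s_q(\E)$ factors through $\pi_{0,0}\s_0(\unit_\Lambda)$. This is a consequence of the multiplicative structure on the slice tower developed earlier (together with its module-theoretic enhancement), but it requires some care to set up the compatibility between the ring structure on slices and the self-maps induced by $\pi_{0,0}\unit_\Lambda$.
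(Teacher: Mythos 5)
Your proof is correct, but it takes a genuinely different route from the paper's. The paper observes that $\E$ is a $\unit[\eta^{-1}]$-module, so each $\s_q(\E)$ is a module over $\s_0(\unit[\eta^{-1}])$, and then invokes the computation (Corollary~\ref{cor:hopf-slices-global}, resting on Lemma~\ref{lem:hopf-slices-global} and the vanishing line for $\Ext_{\BP_\ast\BP}$) showing that multiplication by $2$ is trivial on $\s_0(\unit[\eta^{-1}])$. You instead use the Hopf relations $\eta\epsilon=\eta$ and $1+\epsilon=-\eta[-1]$ (both recorded in Section~\ref{sec:preliminaries}) to get $\eta h=0$ for $h=1-\epsilon=2+\eta[-1]$, cancel the invertible $\eta_{\E}$ to conclude $h_{\E}=0$, and then kill the $\eta[-1]$ summand on slices because the $\pi_{\ast,\ast}\unit$-action on $\s_q(\E)$ factors through $\pi_{\ast,\ast}\s_0(\unit_\Lambda)$, where $\eta$ maps to $\pi_{1,1}\M\Lambda=H^{-1,-1}=0$. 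The compatibility you flag as the delicate point is exactly the one the paper itself relies on elsewhere (see the proof of Lemma~\ref{lem:differential-module-hom}, which uses the $\s_0(\unit_\Lambda)$-module structure of \cite[\S6 (iv),(v)]{grso} and \cite[Theorem 3.6.13(6)]{Pelaez} to conclude that $\eta$ acts trivially on all slices), so it is available. What your approach buys is economy: it avoids any input about the slices of the $\eta$-inverted sphere and the odd-primary vanishing of Lemma~\ref{lem:hopf-slices-global}, needing only the module structure and the Milnor--Witt relations. What the paper's approach buys is that it sits inside, and is consistent with, the fuller computation of $\s_\ast(\unit_\Lambda[\eta^{-1}])$ culminating in Theorem~\ref{theorem:slices-eta-inv-sphere}, which is needed later anyway.
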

\begin{proof}
Note that $\E$ is an $\unit[\eta^{-1}]$-module.
By \cite[\S6 (iv),(v)]{grso} and \cite[Theorem 3.6.13(6)]{Pelaez} it follows that $\s_0(\E)$ is an $\s_0(\unit[\eta^{-1}])$-module.
Multiplication by $2$ is trivial on $\s_0(\unit[\eta^{-1}])$ by Corollary \ref{cor:hopf-slices-global}, 
cf.~Theorem \ref{theorem:zahlers-hope}, 
so the same holds for any $\s_0(\unit[\eta^{-1}])$-module,
e.g., 
$\s_{q}(\E)$.
\end{proof}

Let $\epsilon\colon \unit\to \unit$ be the endomorphism induced by the commutativity automorphism on $S^{1,1}\smash S^{1,1}$, 
or equivalently by the inverse map for the multiplicative group scheme $S^{1,1}=\G=\mathbf{A}^1\minus\{0\}$.
The motivic Hopf map $\eta$ is induced by the canonical map $\mathbf{A}^{2}\minus \{0\}\to \PP^{1}$, 
or equivalently, 
see \cite[p.~73 in \S3.3 and Example 7.26]{morel.field},
by the Hopf construction applied to the multiplicative group $\G$.
The first Hopf relation $\eta\epsilon=\eta$ follows, 
cf.~\cite[Theorem 1.4, Lemma 4.8]{dugger-isaksen.hopf}.
Moreover, 
the rational point $[-1]\colon S^{0,0}\to S^{1,1}$ satisfies the relation $1+\epsilon=-\eta[-1]$ from \cite[p.~51]{morel.field}.

In $\SH[\frac{1}{2}]$ there are orthogonal idempotents $\epsilon_{\pm}=(1\mp\epsilon)/2$.
We obtain a decomposition of triangulated categories $\SH[\frac{1}{2}]=\SH[\frac{1}{2}]^{+}\times\SH[\frac{1}{2}]^{-}$, 
where the factors correspond to inverting $\epsilon_{+}$ and $\epsilon_{-}$, respectively.
The element $\epsilon$ acts as $-1$ on the plus part and as the identity on the minus part.
It follows that $\eta$ acts trivially on $\SH[\frac{1}{2}]^{+}$ while $\eta$ acts invertibly on $\SH[\frac{1}{2}]^{-}$.

\begin{lemma}
\label{lem:slices-2-inv}
Suppose $2\colon \E\to \E$ is a weak equivalence. 
Then the canonical map
\[
\E
\to 
\E[\tfrac{1}{1-\epsilon}]
\] 
induces a weak equivalence
\[ 
\s_{q}(\E)
\xrightarrow{\sim} 
\s_{q}(\E[\tfrac{1}{1-\epsilon}]) 
\]
for all $q\in \ZZ$. 
Hence there is a canonical weak equivalence between slice completions
\[ 
\slicecomp(\E) 
\xrightarrow{\sim} 
\slicecomp (\E[\tfrac{1}{1-\epsilon}]). 
\]
\end{lemma}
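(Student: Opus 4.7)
The plan is to exploit the idempotent decomposition of $\E$ induced by the commutativity involution $\epsilon$ (which satisfies $\epsilon^{2}=1$) together with the invertibility of $2$, and then apply Lemma~\ref{lem:slice-eta-inv} to the $\eta$-periodic summand. Set $e_{\pm}:=(1\pm\epsilon)/2\in\End(\E)$; these are orthogonal idempotents summing to the identity, producing a splitting
\[
\E \simeq e_{+}\E \vee e_{-}\E.
\]
On $e_{+}\E$ the involution $\epsilon$ acts as $-1$, so $1-\epsilon=2$ is invertible there while $1+\epsilon=0$; this identifies $e_{+}\E\simeq\E[\tfrac{1}{1-\epsilon}]$ and dually $e_{-}\E\simeq\E[\tfrac{1}{1+\epsilon}]$. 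The two canonical maps in the lemma then become the two projections onto the respective summands.

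Next I would invoke the Morel relation $1+\epsilon=-\eta[-1]$ recalled in the paragraph just before the lemma. On $e_{-}\E$ the left side acts as the invertible endomorphism $2$, so $\eta[-1]\in\pi_{0,0}\unit$ is invertible there; combined with the graded commutativity of $\pi_{\star}\unit$, this promotes $\eta\colon\Sigma^{1,1}e_{-}\E\to e_{-}\E$ to a weak equivalence. Lemma~\ref{lem:slice-eta-inv} then forces multiplication by $2$ to be trivial on every slice $\s_{q}(e_{-}\E)$; since $2$ also remains a weak equivalence on $e_{-}\E$ by hypothesis, each of its slices simultaneously admits $2$ as the zero map and as an isomorphism, forcing $\s_{q}(e_{-}\E)\simeq 0$ for every $q\in\ZZ$. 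Combining this with the wedge decomposition $\s_{q}(\E)\simeq\s_{q}(e_{+}\E)\vee\s_{q}(e_{-}\E)$ proves the first assertion: the canonical projection $\E\to\E[\tfrac{1}{1-\epsilon}]$ is an isomorphism on every slice.

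For the slice-completion statement, the functor $\slicecomp=\holim_{q}\f^{q-1}$ commutes with finite wedges (since homotopy limits commute with finite products), so the splitting extends to
\[
\slicecomp(\E)\simeq\slicecomp\bigl(\E[\tfrac{1}{1-\epsilon}]\bigr)\vee\slicecomp\bigl(\E[\tfrac{1}{1+\epsilon}]\bigr),
\]
with the canonical comparison being the projection onto the first factor. It therefore suffices to show $\slicecomp(e_{-}\E)\simeq 0$. The vanishing of all slices of $e_{-}\E$ flattens the slice tower $\{\f^{q-1}(e_{-}\E)\}_{q}$ to a single equivalence class, and one inductively climbs the slice filtration, using the triviality of each successive slice $\s_{p}(e_{-}\E)$ together with the compatibility of $e_{-}$ with the effective covers $\f_{q}$, to identify each $\f^{q-1}(e_{-}\E)$ with the zero object. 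This last reduction is the main obstacle I anticipate: ruling out any slice-invisible residue in $\slicecomp(e_{-}\E)$ is the delicate part, whereas the equivalence on slices and the wedge decomposition are entirely formal.
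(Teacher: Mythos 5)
Your argument is the paper's argument: split $\E$ using the idempotents attached to $\epsilon$ and the invertibility of $2$, use $1+\epsilon=-\eta[-1]$ to see that $\eta$ acts invertibly on $\E[\tfrac{1}{1+\epsilon}]$, and combine Lemma~\ref{lem:slice-eta-inv} with the hypothesis that $2$ is an equivalence to conclude that all slices of that factor vanish. Two comments. First, a sign slip in the bookkeeping: since $\epsilon^2=1$ one has $\epsilon\cdot\tfrac{1+\epsilon}{2}=\tfrac{1+\epsilon}{2}$, so $\epsilon$ acts as $+1$ (not $-1$) on $e_+\E$; consequently $e_+\E\simeq\E[\tfrac{1}{1+\epsilon}]$ is the $\eta$-periodic summand and $e_-\E\simeq\E[\tfrac{1}{1-\epsilon}]$ is the one where $\eta$ acts trivially (because $\eta(1-\epsilon)=0$). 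You have the two idempotents interchanged throughout, although your conclusions phrased in terms of $\E[\tfrac{1}{1\pm\epsilon}]$ come out correctly. Second, the step you flag as delicate closes without any induction up the tower, using that the slice filtration is exhaustive: every generator $\Sigma^{s,t}\Sigma^\infty X_+$ lies in $\Sigma^{2q,q}\SH^{\eff}$ for $q\leq t$, so by compactness the canonical map $\hocolim_{q\to\infty}\f_{-q}(W)\to W$ is an equivalence for every $W$. If all slices of $W$ vanish, then every transition map $\f_{q+1}(W)\to\f_{q}(W)$ is an equivalence (its cone is $\Sigma^{1,0}\s_{q}(W)\simeq\ast$), so this colimit identifies $\f_{q}(W)\xrightarrow{\sim}W$ for every $q$; the triangle $\f_{q}(W)\to W\to\f^{q-1}(W)$ then gives $\f^{q-1}(W)\simeq\ast$ for every $q$, hence $\slicecomp(W)\simeq\ast$. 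There is no slice-invisible residue, and this also upgrades your slice-level equivalence to the claimed equivalence of slice completions.
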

\begin{proof}
By assumption $\E$ affords a splitting 
\[ 
\E 
\simeq 
\E[\tfrac{1}{1-\epsilon}] \times \E[\tfrac{1}{1+\epsilon}]. 
\]
Since $\eta\epsilon=\eta$, 
the map $\eta$ is trivial on $\E[\tfrac{1}{1-\epsilon}]$. 
The equality $1+\epsilon=-\eta[-1]$ implies that $\eta$ is invertible on $\E[\tfrac{1}{1+\epsilon}]$. 
Lemma~\ref{lem:slice-eta-inv} shows the map $2\colon \s_{q}(\E[\tfrac{1}{1+\epsilon}])\to \s_{q}(\E[\tfrac{1}{1+\epsilon}])$ is trivial.
By the assumption on $\E$ this is also a weak equivalence. 
Hence $\s_{q}(\E[\tfrac{1}{1+\epsilon}])\simeq \ast$ for all $q\in \ZZ$. 
Applying the slice completion functor yields 
\[ 
\slicecomp (\E) 
\simeq
\slicecomp \bigl(\E[\tfrac{1}{1-\epsilon}]\bigr) \times \slicecomp \bigl(\E[\tfrac{1}{1+\epsilon}]\bigr) 
\simeq 
\slicecomp \bigl(\E[\tfrac{1}{1-\epsilon}]\bigr),
\]
see Remark \ref{remark:scaugmentedtrivial}.
\end{proof}

\begin{corollary}
\label{corollary:rationaliso}
There is an isomorphism $\slicecomp (\unit_{\QQ}^+)\cong \slicecomp(\unit_{\QQ})\cong \MQ$ in $\SH_{\QQ}$.
\end{corollary}

\begin{proof}
The first identification is a special case of Lemma~\ref{lem:slices-2-inv}. The second follows from the identification $\s_\ast(\unit_{\QQ})=\MQ$ in Theorem~\ref{theorem:sphereslices}.
\end{proof}

It follows that $\unit_{\QQ}$ is an effective cellular motivic spectrum which need not coincide with its slice completion, e.g.,~over ordered fields \cite[Proposition 32.23]{ekm}. See also Corollary~\ref{corollary:1QMQ}. 

Following \cite[\S3]{roendigs-oestvaer.rigidity} every element $\alpha\in \pi_{s,t}\unit$ defines an $\alpha$-completion functor sending a motivic spectrum $\E$ to the homotopy limit of the canonically induced sequential diagram 
\[ 
\dotsm 
\to 
\E/\alpha^{m+1} 
\to 
\E/\alpha^{m}
\to 
\dotsm 
\to 
\E/\alpha. 
\]
There is a naturally induced augmentation map 
\[ 
\E
\to 
\E^{\wedge}_\alpha:=\holim_m \E/\alpha^m.
\]

\begin{definition}
A motivic spectrum $\E$ is $\alpha$-{\em complete\/} if the canonical map  $\E\to \E^\wedge_\alpha$ is a weak equivalence.
\end{definition}

Inverting $\alpha$ on $\E$ amounts to forming the homotopy colimit $\E[\alpha^{-1}]=\E[\tfrac{1}{\alpha}]$ of the diagram
\[ 
\E 
\xrightarrow{\alpha\smash \E} 
\Sigma^{-s,-t} \E
\xrightarrow{\alpha\smash \Sigma^{-2s,-2t} \E} 
\dotsm.
\]
The main examples of interest to us are integers and $\eta\in \pi_{1,1}\unit$. 
Completion and inversion with respect to a homotopy class are related by an arithmetic square.

\begin{lemma}
\label{lem:arithmetic-square}
For every motivic spectrum $\E$ and $\alpha\in \pi_{s,t}\unit$ there is a homotopy pullback
\[ 
\xymatrix{ 
\E \ar[r] \ar[d] & \E[\tfrac{1}{\alpha}] \ar[d]\\
\E^\wedge_\alpha \ar[r]  & \E^\wedge_\alpha[\tfrac{1}{\alpha}].  
}
\]
\end{lemma}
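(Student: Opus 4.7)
My plan is to reduce the pullback condition to a single statement about the fiber of the left vertical arrow. Set
\[
F := \mathrm{fib}\bigl(\E \to \E^{\wedge}_{\alpha}\bigr).
\]
Since $(-)[\tfrac{1}{\alpha}]$ is a filtered homotopy colimit in the defining sequential diagram, it preserves fiber sequences. Applying it to $F \to \E \to \E^{\wedge}_{\alpha}$ therefore identifies the fiber of the right vertical arrow with $F[\tfrac{1}{\alpha}]$, and the map of fiber sequences induced by the square restricts on fibers to the canonical localization $F \to F[\tfrac{1}{\alpha}]$. Consequently the square is a homotopy pullback if and only if this localization is a weak equivalence, or equivalently if and only if $\alpha$ acts invertibly on $F$.

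The next step is to identify $F$ explicitly. For each $m \geq 1$ the defining cofiber sequence $\Sigma^{ms,mt}\E \xrightarrow{\alpha^{m}} \E \to \E/\alpha^{m}$ provides a canonical equivalence $\mathrm{fib}(\E \to \E/\alpha^{m}) \simeq \Sigma^{ms,mt}\E$. Comparing the cofiber sequences for $m$ and $m+1$ via the identity on the middle term $\E$ and the natural quotient $\E/\alpha^{m+1} \to \E/\alpha^{m}$ shows that the induced map on fibers is multiplication by $\alpha$. Since homotopy fibers commute with homotopy limits, this gives
\[
F \simeq \holim_{m}\,\Sigma^{ms,mt}\E,
\]
with transition maps $\alpha \colon \Sigma^{(m+1)s,(m+1)t}\E \to \Sigma^{ms,mt}\E$.

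Under this identification, the self-map $\alpha \colon \Sigma^{s,t}F \to F$ corresponds to the reindexing $m \mapsto m+1$ of the above tower. Because reindexing by one is cofinal, this self-map is a weak equivalence, so $\alpha$ acts invertibly on $F$ and $F \to F[\tfrac{1}{\alpha}]$ is an equivalence, completing the proof that the square is a homotopy pullback.

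I do not anticipate serious obstacles; the argument is formal in any stable homotopy category. The only point requiring mild care is the identification of the structure maps in the fiber tower with multiplication by $\alpha$, which follows at once from the evident morphism of cofiber sequences outlined above.
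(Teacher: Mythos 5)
Your proof is correct and follows essentially the same route as the paper: reduce to showing $\alpha$ acts invertibly on the fiber of $\E\to\E^\wedge_\alpha$, identify that fiber with $\holim_m\Sigma^{ms,mt}\E$ with transition maps $\alpha$, and observe that $\alpha$ acts on this tower by a cofinal reindexing. You have simply spelled out the reduction and the "straightforward calculation" that the paper leaves implicit.
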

\begin{proof}
It suffices to show $\alpha$ is invertible in the homotopy fiber of the canonical map $\E\to \E^\wedge_\alpha$,
i.e., 
in the homotopy limit of the diagram
\[ 
\dotsm 
\xrightarrow{\alpha\smash \Sigma^{2s,2t} \E} \Sigma^{2s,2t} \E 
\xrightarrow{\alpha\smash \Sigma^{s,t} \E}  \Sigma^{s,t}\E
\xrightarrow{\alpha\smash \E} \E. 
\]
This follows since multiplication by $\alpha$ is an isomorphism on the corresponding homotopy colimit in the opposite category of $\SH$.
\end{proof}

\begin{lemma}
\label{lemma:slicesinverting2}
The map $\E\to \E^\wedge_\eta$ induces a weak equivalence on slices after inverting $2$.
\end{lemma}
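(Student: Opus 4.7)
The plan is to exploit the arithmetic square in Lemma~\ref{lem:arithmetic-square} for $\alpha=\eta$: it identifies the homotopy fibre $F$ of $\E\to\E^{\wedge}_{\eta}$ with the homotopy fibre of the induced map on $\eta$-inversions,
\[
F
\simeq
\mathrm{hofib}\bigl(\E[\tfrac{1}{\eta}]\to \E^{\wedge}_{\eta}[\tfrac{1}{\eta}]\bigr).
\]
Inverting $2$ is a filtered homotopy colimit, which in the stable setting $\SH$ commutes with taking homotopy fibres, so $F[\tfrac{1}{2}]$ is the homotopy fibre of the corresponding $2$-inverted map of $\eta$-inverted spectra. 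Showing that this fibre has trivial slices will therefore suffice, since the slice functors are triangulated.

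To compute these slices, I would invoke that the slice functors $\s_{q}$ commute with homotopy colimits (\cite[Corollary 4.5]{spitzweck.relations}, \cite[Lemma 4.2]{voevodsky.open}), which gives
\[
\s_{q}\bigl(\E[\tfrac{1}{\eta}][\tfrac{1}{2}]\bigr)
\cong
\s_{q}\bigl(\E[\tfrac{1}{\eta}]\bigr)[\tfrac{1}{2}],
\]
and analogously for $\E^{\wedge}_{\eta}[\tfrac{1}{\eta}]$. By construction $\eta$ acts invertibly on both $\E[\tfrac{1}{\eta}]$ and $\E^{\wedge}_{\eta}[\tfrac{1}{\eta}]$, so Lemma~\ref{lem:slice-eta-inv} shows that multiplication by $2$ is the zero map on all their slices. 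After inverting $2$ these slices collapse, and hence the slices of $F[\tfrac{1}{2}]$ vanish.

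Putting the pieces together, the triangulated functor $\s_{q}$ applied to the fibre sequence $F[\tfrac{1}{2}] \to \E[\tfrac{1}{2}] \to \E^{\wedge}_{\eta}[\tfrac{1}{2}]$ yields that $\s_{q}(\E)[\tfrac{1}{2}]\to\s_{q}(\E^{\wedge}_{\eta})[\tfrac{1}{2}]$ is a weak equivalence, which is exactly the statement of the lemma.

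The main subtle point I expect is the clean justification that inverting $2$ preserves the relevant homotopy pullback/fibre sequence; this is immediate in the stable setting but should be stated. The rest is a straightforward assembly of results already available: the arithmetic square of Lemma~\ref{lem:arithmetic-square}, the slice-commutation with homotopy colimits, and the killing of $2$ on slices of $\eta$-periodic spectra provided by Lemma~\ref{lem:slice-eta-inv}.
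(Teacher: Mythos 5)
Your argument is correct and is essentially the paper's proof: apply the $\eta$-arithmetic square, invert $2$, and observe that the two $\eta$-inverted corners have contractible slices after inverting $2$ because multiplication by $2$ is zero on slices of $\eta$-periodic spectra (Lemma~\ref{lem:slice-eta-inv}). The extra bookkeeping with the common homotopy fibre $F$ and the commutation of $\s_q$ with the filtered colimit defining $2$-inversion is exactly the implicit content of the paper's shorter write-up.
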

\begin{proof}
The arithmetic square for $\eta$
\[ 
\xymatrix{ 
\E \ar[r] \ar[d] & \E[\tfrac{1}{\eta}] \ar[d]\\
\E^\wedge_\eta \ar[r]  &\E^\wedge_\eta[\tfrac{1}{\eta}]
}
\]
is a homotopy pullback square by Lemma~\ref{lem:arithmetic-square}. 
Inverting $2$ yields a homotopy pullback square in which the motivic spectra on the right hand side have contractible slices,
see Lemma~\ref{lem:slice-eta-inv}.
Thus the map on the left hand side induces a weak equivalence on all slices,
as desired. 
\end{proof}

\begin{lemma}
\label{lemma2}
Let $\alpha\in \pi_{s,t}\unit_{\Lambda}$ and suppose $(\alpha\smash \E)^{N}$ is the trivial map for some $N\geq 1$.
Then $\E$ is $\alpha$-complete.
\end{lemma}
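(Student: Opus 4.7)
The strategy is to show that the homotopy fiber $F$ of the completion map $\E\to \E^{\wedge}_{\alpha}$ is contractible. From the defining cofiber triangle
\[
\Sigma^{ms,mt}\E\xrightarrow{\alpha^{m}\smash \E}\E\to \E/\alpha^{m},
\]
the homotopy fiber of $\E\to \E/\alpha^{m}$ is $\Sigma^{ms,mt}\E$. Since homotopy fibers commute with homotopy limits, we obtain $F\simeq \holim_{m}\Sigma^{ms,mt}\E$. The map $\E/\alpha^{m+1}\to \E/\alpha^{m}$ in the defining tower of $\E^{\wedge}_{\alpha}$ arises from the evident square expressing $\alpha^{m+1}=\alpha\circ\alpha^{m}$, so the induced tower transition on fibers is simply multiplication by $\alpha$, namely
\[
\alpha\smash \Sigma^{ms,mt}\E\colon \Sigma^{(m+1)s,(m+1)t}\E\to \Sigma^{ms,mt}\E.
\]

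For each bidegree $(p,q)$, invoke the Milnor $\lim^{1}$-short exact sequence
\[
0\to {\lim}^{1}_{m}\pi_{p+1,q}\Sigma^{ms,mt}\E\to \pi_{p,q}F\to \lim_{m}\pi_{p,q}\Sigma^{ms,mt}\E\to 0.
\]
After the reindexing $\pi_{p,q}\Sigma^{ms,mt}\E\cong \pi_{p-ms,q-mt}\E$, the transition maps become multiplication by $\alpha$ on the bigraded homotopy groups of $\E$. The hypothesis that $(\alpha\smash \E)^{N}$ is the trivial map says precisely that multiplication by $\alpha^{N}$ vanishes on $\pi_{\star}\E$, so every $N$-fold composition in this pro-system of abelian groups is zero. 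Consequently the pro-system is pro-isomorphic to zero, and a standard diagram chase shows both $\lim_{m}$ and $\lim^{1}_{m}$ vanish.

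Hence $\pi_{p,q}F=0$ for every $(p,q)$, giving $F\simeq \ast$ and the desired weak equivalence $\E\xrightarrow{\sim}\E^{\wedge}_{\alpha}$, i.e., $\E$ is $\alpha$-complete. I do not anticipate any serious obstacle: the only mildly delicate point is confirming that the tower structure on fibers is multiplication by $\alpha$, which is immediate from the naturality of the defining cofiber triangle in $m$; the pro-triviality statement and the Milnor sequence in $\SH$ are formal.
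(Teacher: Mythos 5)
Your overall strategy is sound and closely parallels the paper's: the paper splits the cofiber sequence $\Sigma^{ms,mt}\E\to\E\to\E/\alpha^{m}$ for $m\geq N$ (using that $\alpha^{m}\smash\E$ is null) to decompose the tower $\{\E/\alpha^{m}\}$ as the constant tower on $\E$ wedge a tower of shifted copies of $\E$ with transition maps $\alpha$, whereas you pass to the tower of homotopy fibers, which is the same $\alpha$-tower; in both cases the point is that a tower in which every $N$-fold transition map is null has contractible homotopy limit. The identification of the transition maps on fibers is fine (it is the same identification used in the proof of Lemma~\ref{lem:arithmetic-square}).

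There is, however, one genuine gap in your final step: from $\pi_{p,q}F=0$ for all $(p,q)$ you conclude $F\simeq\ast$. In $\SH(S)$ the bigraded spheres $S^{p,q}$ do \emph{not} form a set of generators -- the compact generators are the suspension spectra $\Sigma^{p,q}\Sigma^{\infty}X_{+}$ for $X\in\Sm_{S}$ -- so vanishing of the bigraded homotopy groups does not imply contractibility unless $F$ is cellular (compare Definition~\ref{def:connectivity} and Lemma~\ref{lem:cellular-connectivity}, where the paper is careful about exactly this distinction). Since $\E$ is an arbitrary motivic spectrum here, you must run the Milnor $\lim$-$\lim^{1}$ argument against $[\Sigma^{p,q}\Sigma^{\infty}X_{+},-]$ for every $X\in\Sm_{S}$, not just against $\pi_{p,q}=[\Sigma^{p,q}\unit,-]$. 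The repair is immediate: your hypothesis is that $(\alpha\smash\E)^{N}$ is null as a map of spectra, so it induces the zero map on $[\Sigma^{p,q}X_{+},-]$ for every $X$, the relevant pro-systems are still pro-zero, and both $\lim$ and $\lim^{1}$ vanish for all generators, whence $F\simeq\ast$. With that substitution your proof is correct.
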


\begin{lemma}
\label{lemma1}
The slice completion $\slicecomp(\E)$ of an effective motivic spectrum $\E$ is $\eta$-complete.
\end{lemma}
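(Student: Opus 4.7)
The plan is to reduce $\eta$-completeness of $\slicecomp(\E)=\holim_q \f^{q-1}(\E)$ to $\eta$-completeness of each finite piece $\f^{q-1}(\E)$, and then to apply Lemma~\ref{lemma2}. Since $\holim$ preserves cofibers, we have $(\holim_i \F_i)/\eta^m \simeq \holim_i (\F_i/\eta^m)$, from which one deduces that the class of $\eta$-complete motivic spectra is closed under homotopy limits. It therefore suffices to show each $\f^{q-1}(\E)$ is $\eta$-complete.

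For this I would prove by induction on $q\geq 0$ that $\eta^q$ acts trivially on $\f^{q-1}(\E)$. The base case is vacuous: since $\E$ is effective, $\f_0(\E)=\E$ and hence $\f^{-1}(\E)=\ast$. The inductive step uses the cofiber sequence $\f^{q-1}(\E)\to \f^q(\E)\to \s_q(\E)$. Assuming the main input below that $\eta$ vanishes on $\s_q(\E)$, the composite $\Sigma^{1,1}\f^q(\E)\xrightarrow{\eta}\f^q(\E)\to \s_q(\E)$ is null, so $\eta\colon\Sigma^{1,1}\f^q(\E)\to \f^q(\E)$ factors through $\f^{q-1}(\E)$; combined with the inductive hypothesis $\eta^q=0$ on $\f^{q-1}(\E)$, this yields $\eta^{q+1}=0$ on $\f^q(\E)$. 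Lemma~\ref{lemma2} applied with $\alpha=\eta$ and $N=q$ then gives $\eta$-completeness of $\f^{q-1}(\E)$, and the opening reduction finishes the proof.

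The crux of the argument, and what I expect to be the main obstacle, is verifying that $\eta$ annihilates every slice $\s_q(\E)$. For this I would invoke Theorem~\ref{theorem:s0isomorphism}: since $(S,\Lambda)$ is a compatible pair, $\s_0(\unit_{\Lambda})\cong \M\Lambda$ and every slice carries a canonical $\M\Lambda$-module structure. The action of $\eta$ on any $\M\Lambda$-module factors through the image of $\eta$ in $\pi_{1,1}\M\Lambda=H^{-1,-1}(S,\Lambda)$, a group that vanishes because motivic cohomology is concentrated in non-negative weights. Consequently $\eta$ is already null on $\M\Lambda$ and hence on every $\M\Lambda$-module, in particular on each slice of $\E$, which is precisely the hypothesis the inductive argument requires.
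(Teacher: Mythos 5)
Your proof is correct and follows essentially the same route as the paper's: commute homotopy limits with $-/\eta^m$ to reduce to the finite truncations $\f^{q-1}(\E)$, observe that these are finite extensions whose associated graded pieces are slices on which $\eta$ acts trivially (via the $\M\Lambda$-module structure from Theorem~\ref{theorem:s0isomorphism} and the vanishing of $\pi_{1,1}\M\Lambda$), and conclude with Lemma~\ref{lemma2}. The only slip is the orientation of the triangle --- it reads $\s_q(\E)\to\f^q(\E)\to\f^{q-1}(\E)$, so in the inductive step one should factor $\eta^{q}$ through the fibre $\s_q(\E)$ and then apply the vanishing of $\eta$ on $\s_q(\E)$, rather than factor $\eta$ through $\f^{q-1}(\E)$ --- but this is cosmetic and the induction yields $\eta^{q+1}=0$ on $\f^{q}(\E)$ exactly as you intend.
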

\begin{proof}
By the assumption on $\E$ it follows that $\f^{-1}(\E)=\s_{0}(\E)$.
The distinguished triangle $\s_{q}(\E)\to\f^{q}(\E)\to\f^{q-1}(\E)\to\Sigma^{1,0}\s_{q}(\E)$ shows the effective co-covers are finite extensions with slices as associated graded pieces. 
Since $\eta$ acts trivially on slices, 
these are $\eta$-complete by Lemma \ref{lemma2}.
The result follows by commuting homotopy limits.
\end{proof}

\begin{lemma}
\label{lemma3}
Suppose $\E\in\SH^{\eff}$ and $\E/\eta$ is slice complete. 
Then the $\eta$-completion of $\E$ maps via a natural weak equivalence to the slice completion of $\E$.
\end{lemma}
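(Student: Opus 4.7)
The approach is to show that $\E^\wedge_\eta$ is itself slice complete, and then combine this with Lemma~\ref{lemma1} (which gives that $\slicecomp(\E)$ is $\eta$-complete, using that $\E$ is effective) to identify the two completions via mutually inverse natural maps.

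First I would establish that $\E^\wedge_\eta$ is slice complete. Slice completion is a homotopy limit of the triangulated functors $\f^{q-1}$, hence triangulated, and the class of slice-complete motivic spectra therefore forms a full triangulated subcategory of $\SH$ closed under suspensions and homotopy limits. The octahedral axiom applied to the factorization $\eta^{m+1}=\eta^m\circ \eta$ yields, for every $m\geq 1$, a distinguished triangle
\[
\Sigma^{m,m}(\E/\eta)\to \E/\eta^{m+1}\to \E/\eta^m.
\]
Since $\E/\eta$ is slice complete by hypothesis, an induction on $m$ then shows that every $\E/\eta^m$ is slice complete. Passing to the homotopy limit over $m$ gives the desired slice completeness of $\E^\wedge_\eta$.

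Second, having established slice completeness of $\E^\wedge_\eta$ while $\slicecomp(\E)$ is $\eta$-complete, I would produce mutually inverse maps in the homotopy category. The natural map $\E\to \slicecomp(\E)$ factors canonically through $\E\to \E^\wedge_\eta$ by the universal property of $\eta$-completion (maps from $\E$ into an $\eta$-complete target factor uniquely through $\E^\wedge_\eta$, since any $\eta$-periodic spectrum admits only the zero map into an $\eta$-complete target, via the defining homotopy limit), producing a natural map $\gamma\colon\E^\wedge_\eta\to \slicecomp(\E)$. In the reverse direction, functoriality of $\slicecomp$ applied to $\E\to \E^\wedge_\eta$, combined with the equivalence $\slicecomp(\E^\wedge_\eta)\simeq \E^\wedge_\eta$ from the first step, yields a natural map $\delta\colon \slicecomp(\E)\to \E^\wedge_\eta$. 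The compositions $\gamma\circ \delta$ and $\delta\circ \gamma$ both agree with the identity on $\E$ after the respective structure maps, and are therefore identities by the same universal properties.

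The hardest part, I expect, will be giving a clean justification of the universal property for slice completion used in Step two; namely, that $\E\to\slicecomp(\E)$ is the initial map to a slice-complete target. The analogous property for $\eta$-completion reduces to the computation that maps from an $\eta$-periodic spectrum into an $\eta$-complete one vanish, which follows directly by writing the target as $\holim_m \F/\eta^m$ and observing that the mapping spectrum into each $\F/\eta^m$ is contractible. A careful formulation that avoids appeal to a full Bousfield localization framework, sticking instead to the triangulated properties already in play, should suffice.
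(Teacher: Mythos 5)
Your reduction to the slice completeness of the spectra $\E/\eta^m$ is correct and matches the paper's own induction: slice-complete objects form a triangulated subcategory, since $\slicecomp$ is triangulated and $\Id\to\slicecomp$ is a natural transformation, so the triangles $\Sigma^{m,m}(\E/\eta)\to\E/\eta^{m+1}\to\E/\eta^m$ propagate the hypothesis. The two places where your argument then breaks are instances of the same problem: slice completion is not known to be an idempotent (Bousfield) localization. First, the final passage of Step 1 ("passing to the homotopy limit over $m$") is unjustified: $\f_q=i_q\circ r_q$ is built from the colocalization onto $\Sigma^{2q,q}\SH^{\eff}$, and a homotopy limit of $q$-effective spectra need not be $q$-effective, so $\f_q$, hence $\f^{q-1}$ and $\slicecomp$, do not commute with $\holim_m$; consequently slice-complete objects are not known to be closed under homotopy limits, and the slice completeness of $\E^\wedge_\eta$ does not follow. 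Second, the "universal property of slice completion" invoked in Step 2 --- initiality of $\E\to\slicecomp(\E)$ among maps to slice-complete targets, equivalently the vanishing of maps out of $\holim_q\f_q(\E)$ into slice-complete spectra --- is essentially the convergence problem itself; it would require, for instance, that $\slicecomp(\holim_q\f_q(\E))$ be contractible, which the paper establishes only under strong finiteness hypotheses (Proposition~\ref{prop:holim-eff-covers}). You defer this point, but I do not see how to supply it at this level of generality, and without it $\gamma\circ\delta=\id$ does not follow. (Your universal property for $\eta$-completion, by contrast, is sound: $-/\eta^m$ is a finite colimit, hence commutes with homotopy limits, and the mapping spectrum from an $\eta$-periodic spectrum into $\F/\eta^m$ carries an action of $\eta$ that is simultaneously invertible and nilpotent.)

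The repair is short and uses only what you already have, and it is what the paper does. Since $\slicecomp$ is triangulated it commutes with the finite colimit $-/\eta^m$, so the slice completeness of $\E/\eta^m$ reads as an equivalence $\E/\eta^m\xrightarrow{\sim}\slicecomp(\E/\eta^m)\simeq\slicecomp(\E)/\eta^m$. Taking $\holim_m$ (legitimate here, as all functors involved on the right are finite colimits applied to a fixed object) shows that the right vertical map $\E^\wedge_\eta\to\slicecomp(\E)^\wedge_\eta$ in the evident commutative square is an equivalence, and Lemma~\ref{lemma1} (which uses $\E\in\SH^{\eff}$) identifies $\slicecomp(\E)^\wedge_\eta$ with $\slicecomp(\E)$. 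This route never claims that $\E^\wedge_\eta$ is slice complete, nor that $\slicecomp$ is idempotent, and thereby avoids both gaps.
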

\begin{proof}
There is a naturally induced commutative diagram:
\begin{equation}
\label{equation:jhbdcjhs}
\xymatrix{
\E
\ar[r] \ar[d] &
\E^{\wedge}_{\eta}
\ar[d] \\
\holim_{q}\f^{q-1}(\E)
\ar[r] &
(\holim_{q}\f^{q-1}(\E))^{\wedge}_{\eta} 
}
\end{equation}
Lemma \ref{lemma1} shows the lower horizontal map is an isomorphism in $\SH$.
We shall prove the right vertical map is an isomorphism.
It suffices to show there is an isomorphism for all $n\geq 1$
\[ 
\E/\eta^{n}
\to 
\holim_{q} \f^{q-1}(\E)/\eta^{n}.
\]
This follows from our assumption on $\E/\eta$, induction on $n$, and the canonical identification
\[ 
\holim_{q} \f^{q-1}(\E)/\eta^{n}
\cong
\holim_{q} \f^{q-1}(\E/\eta^{n})
\]
of iterated homotopy limits.
\end{proof}

\begin{remark}
\label{ex:kt-sc-eta}
Lemmas~\ref{lemma1} and~\ref{lemma3} hold for $\E$ provided $\f_q(\E)=\E$ for some $q\in \ZZ$.
The said lemmas do not hold for $\KT$, since $\KT^\wedge_\eta$ is contractible, while $\slicecomp(\KT)$ is not.
\end{remark}

\begin{example}
\label{ex:kq-sc-eta}
Hermitian $K$-theory has the property that $\KQ/\eta\cong \KGL$ is slice complete by the motivic Wood cofiber sequence (\ref{equation:motivicWood})
and \cite{Voevodsky:motivicss}, 
and likewise for its effective cover.
In particular, 
we have $\slicecomp(\f_0(\KQ))\cong \f_0(\KQ)^\wedge_\eta$.
\end{example}

\subsection{Connectivity}
\label{sec:connectivity}
To study slice completeness of $\unit_{\Lambda}$ we employ a notion of connectivity based on presheaves of homotopy groups.

\begin{definition}
\label{def:connectivity}
A motivic spectrum $\E$ is $k$-{\em connected\/} if for every triple $(s,t,d)$ of integers with $s-t+d<k$ and every $X\in\Sm_{S}$ of dimension $\leq d$, 
the group $[ \Sigma^{s,t} X_+,\E]$ is trivial. 
A map of motivic spectra is $k$-{\em connected\/} if its homotopy fiber is $k$-connected.
\end{definition}

A related notion of connectivity is defined in \cite[Section~1.1]{hoyois}:
$\E$ is said to be $k$-connective if it is contained in the full localizing subcategory generated by the shifted suspension spectra $\{\Sigma^{s,t}X_+\}_{s-t\geq k,X\in \Sm_S}$.
If $S$ is a field, 
$\E$ is $k$-connective if and only if its Nisnevich sheaves of homotopy groups $\mathbf{\pi}_{s,t} \E=0$ for $s-t<k$,
see~\cite[Theorem 6.1.8]{morel.connectivity} for the case of perfect fields
and \cite[Theorem 2.3]{hoyois} for all fields.

\begin{lemma}
\label{lem:connectivities}
Suppose $S$ is a field. 
If $\E$ is $k$-connective, 
then it is $k$-connected.
\end{lemma}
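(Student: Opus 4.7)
The plan is to compute $[\Sigma^{s,t}X_+,\E]$ via the Nisnevich descent spectral sequence
$$E_2^{p,q} = H^p_{\mathrm{Nis}}\bigl(X,\mathbf{\pi}_{q,t}\E\bigr) \Longrightarrow [\Sigma^{q-p,t}X_+,\E]$$
for $X\in\Sm_S$ and verify that every term contributing to total degree $s=q-p$ vanishes under the hypothesis $s-t+d<k$ with $d=\dim X$. The crucial input from $k$-connectivity is precisely the equivalence cited in the text (Morel in the perfect case, Hoyois in general): over a field, $\E$ is $k$-connective if and only if the Nisnevich sheaves $\mathbf{\pi}_{q,t}\E$ vanish for $q-t<k$.

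Next, I would bound the spectral sequence using Nisnevich cohomological dimension: for $X$ of dimension at most $d$ one has $H^p_{\mathrm{Nis}}(X,-)=0$ for $p>d$, so $E_2^{p,q}=0$ whenever $p>d$. The diagonal contributing to $[\Sigma^{s,t}X_+,\E]$ consists of the pairs $(p,q)=(p,s+p)$ with $0\le p\le d$, and for these pairs $q-t=s+p-t\le s+d-t<k$ forces $\mathbf{\pi}_{q,t}\E=0$. Thus every $E_2^{p,s+p}$ vanishes, and strong convergence of the spectral sequence yields $[\Sigma^{s,t}X_+,\E]=0$, proving that $\E$ is $k$-connected.

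The main obstacle is justifying strong convergence of the descent spectral sequence, since motivic spectra are a priori unbounded. This is handled by the Postnikov tower of $\E$ in the homotopy $t$-structure: $k$-connectivity makes this tower bounded below with successive quotients that are Eilenberg--MacLane on the sheaves $\mathbf{\pi}_{q,t}\E$, and the finite Nisnevich cohomological dimension of $X$ then gives conditional strong convergence in the standard way. Once convergence is in place, the vanishing of all relevant $E_2$-terms identified above finishes the argument.
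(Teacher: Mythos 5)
Your proposal is correct and is essentially the paper's own argument: the paper runs the same reduction to the Nisnevich homotopy sheaves via the (bounded-below) Postnikov tower, kills each layer using the vanishing of Nisnevich cohomology above the dimension of $X$, and handles convergence by the eventual constancy of the tower (equivalently, the $\lim^1$-vanishing you invoke for strong convergence of the descent spectral sequence). The only cosmetic difference is that the paper phrases this as an induction over finite Postnikov truncations of the underlying $S^1$-spectrum rather than naming the descent spectral sequence explicitly.
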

\begin{proof}
If $\E$ is a $k$-connective motivic spectrum and $t\in \ZZ$, the associated
$S^1$-spectrum $\Omega_{\G}^\infty \Sigma^{0,-t}\E$ is $k+t$-connective, 
and by adjunction determines the values of the presheaf of homotopy groups
of $\E$ with weight $t$. For any motivic $S^1$-spectrum $E$ which is $k+t$-connective with respect to the homotopy $t$-structure, 
we have $[\Sigma^s X_+,E]=0$ for every $X\in\Sm_{S}$ of dimension $\leq d$ and $s<-d+k+t$: 
By induction the analogous statement holds for any finite Postnikov truncation of $E$, 
so the claim follows from a limit argument (the $\lim^1$-term vanishes because the corresponding system is eventually constant).
See also the proof of \cite[Corollary 2.4]{hoyois}.
\end{proof}

\begin{example}
\label{example:mz-connectivity}
If $S$ is a field or a Dedekind domain,
the motivic cohomology spectrum $\MA$ over $S$ is $0$-connected.
In fact, 
$[\Sigma^{s,t}X_+,\MA]\iso H^{-s,-t}(X;A)=0$ if $-s>-t+d$, 
where $d$ is the dimension of $X$, 
by \cite[Corollary 4.4]{geisser.dedekind}. 
\end{example}

\begin{definition}
\label{def:connected-pair}
A {\em connected pair\/} is a compatible pair $(S,\Lambda)$ such that the $\Lambda$-local sphere spectrum $\unit_{\Lambda}$ and the motivic Eilenberg-MacLane spectrum 
$\M\Lambda$ are $0$-connected.
\end{definition}  

Unless the contrary is stated we assume throughout that $(S,\Lambda)$ is a connected pair.

\begin{example}
\label{example:fields}
If $S$ is a field, 
every compatible pair is a connected pair by Lemma~\ref{lem:connectivities} and Example~\ref{example:mz-connectivity}.
\end{example}

\begin{example}
\label{example:connectivity-thom}
If $V\to Y$ be a vector bundle of rank $r$ over $Y\in\Sm_{S}$, 
then the Thom space $\Thom(V)$ is $r$-connected. 
More generally, 
if $a\in K^0(Y)$ is a virtual vector bundle of rank $r\in \ZZ$, 
then $\Thom(a)$ is $r$-connected. 
This follows from \cite[Lemma 3.1]{hoyois} and the assumption on $(S,\Lambda)$.
In particular, 
$\Sigma^{a,b}\unit_{\Lambda}$ is $(a-b)$-connected. 
\end{example}

\begin{lemma}
\label{lem:cellular-connectivity}
A cellular motivic spectrum $\E$ is $k$-connected if and only if $\pi_{s,t}\E = 0$ for all integers $s$, $t$ with $s-t<k$.
\end{lemma}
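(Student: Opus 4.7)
For the easy direction, I would apply Definition~\ref{def:connectivity} with $X=S \in \Sm_{S}$ of relative dimension $d=0$, which specializes $[\Sigma^{s,t}X_+,\E]$ to the stable homotopy group $\pi_{s,t}\E$ and forces the vanishing in the range $s-t<k$ from the $k$-connectedness of $\E$.

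For the converse, the plan is to combine a closure statement for the class of $k$-connected spectra with a cellular Whitehead-style approximation. First I would verify that the full subcategory of $k$-connected motivic spectra is closed under homotopy colimits and retracts, and hence is a localizing subcategory. For a cofiber sequence $A\to B\to C$ with $A,B$ both $k$-connected, applying $[\Sigma^{s,t}X_+, -]$ yields a long exact sequence in which the neighbors of $[\Sigma^{s,t}X_+,C]$ vanish whenever $s-t+d<k$ (note that $(s-1)-t+d<k$ is automatic), forcing $C$ to be $k$-connected; for filtered homotopy colimits one invokes compactness of $X_+$ in $\SH$ together with a Milnor $\lim^{1}$ sequence. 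Together with Example~\ref{example:connectivity-thom}, which shows each cell $\Sigma^{a,b}\unit_\Lambda$ with $a-b\geq k$ is $k$-connected, this proves that the localizing subcategory $\mathcal{C}_k\subseteq \SH_{\cell,\Lambda}$ generated by $\{\Sigma^{a,b}\unit_\Lambda : a-b\geq k\}$ consists entirely of $k$-connected spectra.

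It remains to show that any cellular $\E$ with $\pi_{s,t}\E=0$ for $s-t<k$ lies in $\mathcal{C}_k$. I would argue this by a Whitehead-type construction: inductively build a cellular approximation $\E' \to \E$ by attaching only cells of the form $\Sigma^{a,b}\unit_\Lambda$ with $a-b\geq k$, chosen to successively surject onto and then kill the homotopy groups $\pi_{a,b}\E$ (which are nontrivial only for $a-b\geq k$ by assumption). The resulting map of cellular spectra is a $\pi_{\ast,\ast}$-isomorphism, hence a weak equivalence in $\SH_{\cell,\Lambda}$ by the cellular Whitehead principle, placing $\E$ in $\mathcal{C}_k$ and thus among the $k$-connected spectra.

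The main obstacle will be executing this Whitehead construction inside $\SH_{\cell,\Lambda}$ with the bigraded weight constraint $a-b\geq k$ on every attached cell. The necessary machinery—compact generators, attaching maps factoring through finite subcomplexes, and a cellular Whitehead principle—is available because $\{\Sigma^{a,b}\unit_\Lambda\}_{a,b}$ is a set of compact generators of $\SH_{\cell,\Lambda}$, so I expect the construction to proceed without essential difficulty once the hypothesized vanishing of $\pi_{a,b}\E$ for $a-b<k$ is invoked at each inductive stage.
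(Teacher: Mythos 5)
Your proof is correct and follows essentially the same route as the paper: a Whitehead-type cellular approximation $\D\to\E$ built from cells of dimension $(a,b)$ with $a-b\geq k$ only, the cellular Whitehead theorem of Dugger--Isaksen to upgrade the resulting $\pi_{\ast,\ast}$-isomorphism to a weak equivalence, and the connectivity of spheres (Example~\ref{example:connectivity-thom}) to conclude. One caveat on phrasing: the full localizing subcategory generated by $\{\Sigma^{a,b}\unit_{\Lambda}\colon a-b\geq k\}$ is closed under $\Sigma^{-1,0}$ and therefore does \emph{not} consist only of $k$-connected objects; what your induction actually establishes --- and all that is needed --- is that any spectrum admitting a cell presentation with cells in those dimensions is $k$-connected, via closure of $k$-connectedness under coproducts, cofibers of maps between $k$-connected objects, and sequential homotopy colimits.
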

\begin{proof}
Suppose $\E$ is a cellular motivic spectrum satisfying the stated vanishing.
Inductively one constructs a map of cellular motivic spectra $\D\to\E$ inducing a $\pi_{p,q}$-isomorphism for all $p,q\in\ZZ$, 
where $\D$ is obtained by attaching $(s,t)$-cells for $s-t\geq k$. 
Since this is a weak equivalence \cite[Corollary 7.2]{dugger-isaksen.cell} the desired conclusion follows using connectivity of spheres as in Example \ref{example:connectivity-thom}.
The converse implication is immediate.
\end{proof}

The connectivity of $\f_{q}(\unit_{\Lambda})$ does not increase with $q$, 
mainly because of the direct summand $\Sigma^{q,q}\M\Lambda/2\{\alpha^{q}\}$ of $\s_{q}(\unit_{\Lambda})$, 
cf.~Corollary \ref{corollary:small-slices}.
By coning off $\eta$ the situation changes drastically. 
To make this precise, 
we compare with the algebraic cobordism spectrum $\MGL$ \cite[\S6.3]{VoevodskyICM1998}.

\begin{lemma}
\label{lem:slice-convergence-mgl}
The effective cover $\f_{q}(\MGL_\Lambda)$ is $q$-connected.
\end{lemma}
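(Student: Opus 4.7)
The plan is to combine the definition of the effective cover $\f_q$ with standard preservation properties of connectivity. By definition, $\f_q(\MGL_\Lambda)$ lies in the localizing subcategory $\Sigma^{2q,q}\SH^{\eff}$ of $\SH$, which is generated under homotopy colimits and extensions by the spectra $\Sigma^{2q,q}\Sigma^\infty X_+$ for $X \in \Sm_S$. First I would argue that every such generator is $q$-connected: under the connected pair hypothesis, the suspension spectrum $\Sigma^\infty X_+$ of a smooth $X$ is $0$-connected in the sense of Definition~\ref{def:connectivity} (using Morel's connectivity theorem via Lemma~\ref{lem:connectivities} when $S$ is a field), so $\Sigma^{2q,q}\Sigma^\infty X_+$ is $q$-connected by the shift computation of Example~\ref{example:connectivity-thom}.

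Next, since $q$-connectedness is preserved under homotopy colimits---compactness of $\Sigma^{s,t}Y_+$ for $Y \in \Sm_S$ of finite type means that $[\Sigma^{s,t}Y_+,-]$ commutes with filtered homotopy colimits---and under cofibre sequences via the long exact sequence of homotopy groups, every object of $\Sigma^{2q,q}\SH^{\eff}$ is $q$-connected. In particular $\f_q(\MGL_\Lambda)$ is $q$-connected. As a sanity check, the explicit computation of slices in~(\ref{equation:slicesofMGL}) is consistent with this conclusion: each slice $\s_{q+i}(\MGL_\Lambda)$ is a wedge of copies of $\Sigma^{2(q+i),q+i}\M\Lambda$, which is $(q+i)$-connected by Example~\ref{example:connectivity-thom}, so that the associated graded of the slice filtration on $\f_q(\MGL_\Lambda)$ is built entirely from $q$-connected pieces.

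The main obstacle I expect is rigorously establishing $0$-connectedness of $\Sigma^\infty X_+$ for every smooth $X$ over a general base $S$ of a compatible pair. Over fields this is classical via Morel's theorem and Lemma~\ref{lem:connectivities}, but for the mixed-characteristic Dedekind bases allowed by Definition~\ref{def:compatible} one must invoke the Hoyois-type connectivity results in \cite{hoyois} or work directly with the homotopy $t$-structure, taking into account the Krull dimension of $X$. Once that bootstrap is in place, the argument above goes through unchanged.
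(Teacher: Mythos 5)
Your argument breaks at the very first reduction. The subcategory $\Sigma^{2q,q}\SH^{\eff}$ is a \emph{localizing triangulated} subcategory: it is closed under simplicial desuspension $\Sigma^{-1,0}$, so its generators are the spectra $\Sigma^{s,t}\Sigma^\infty X_+$ with $t\geq q$ and $s$ \emph{arbitrary} (only the weight is constrained, not $s-t$). Consequently it contains objects of arbitrarily low connectivity --- e.g.\ $\Sigma^{2q-N,q}\unit$ for any $N$ --- and the claim ``every object of $\Sigma^{2q,q}\SH^{\eff}$ is $q$-connected'' is false. This is exactly the distinction the paper draws in Section~\ref{sec:connectivity} between \emph{effectivity} (a condition on weights) and \emph{connectivity} (a condition on $s-t$), and it is why the convergence question for the slice filtration is nontrivial in the first place. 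Your ``sanity check'' via the slices~(\ref{equation:slicesofMGL}) does not repair this: knowing that each $\s_{q+i}(\MGL_\Lambda)$ is $(q+i)$-connected controls the associated graded of the tower $\dotsb\to\f_{q+1}\MGL_\Lambda\to\f_q\MGL_\Lambda$ but says nothing about $\f_q\MGL_\Lambda$ itself without already knowing the tower converges, which is circular.

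What is actually needed is specific information about $\MGL$, not a formal property of the effective subcategory. The paper's proof invokes the proof of \cite[Theorem 4.7]{spitzweck.relations}, which identifies $\f_q(\MGL_\Lambda)$ explicitly as a homotopy colimit built from free $\MGL_\Lambda$-modules on cells $\Sigma^{2i,i}$ with $i\geq q$ (the ``$q$th ideal power'' of the augmentation ideal in $\pi_{2\ast,\ast}\MGL$); since $\MGL_\Lambda$ itself is $0$-connective (it is a colimit of Thom spectra of rank-$n$ bundles shifted down by $(2n,n)$, cf.\ Example~\ref{example:connectivity-thom}), each such piece is $q$-connective and hence so is the colimit. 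Lemma~\ref{lem:connectivities} then converts $q$-connective into $q$-connected in the sense of Definition~\ref{def:connectivity}. Your closing worry about $0$-connectedness of $\Sigma^\infty X_+$ over mixed-characteristic bases is a reasonable secondary point (the paper handles it through the connected-pair hypothesis and \cite{hoyois}), but it is downstream of the main gap: without the explicit description of $\f_q\MGL_\Lambda$ your argument does not get off the ground.
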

\begin{proof}
This follows from the proof of~\cite[Theorem 4.7]{spitzweck.relations} and Lemma~\ref{lem:connectivities}.
\end{proof}

\begin{lemma}
\label{lem:cone-eta-mgl}
The unit map of $\MGL$ factors through a map $\unit/\eta\to \MGL$ whose cone is contained in $\Sigma^{4,2}\SH^{\eff}$.
\end{lemma}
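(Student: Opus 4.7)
The plan is to break the argument into three steps. First, I would establish the existence of a factorization $\bar u\colon\unit/\eta\to\MGL$ by showing $u\circ\eta=0$ in $\pi_{1,1}\MGL$, where $u$ is the unit map. This is a standard consequence of orientability of $\MGL$: any orientation forces $\eta$ to act trivially on the oriented theory (see Morel, or Panin--Pimenov--R\"ondigs). Alternatively, one verifies $\pi_{1,1}\MGL=0$ via the slice spectral sequence and~\eqref{equation:slicesofMGL}, reducing to a compatible pair by base change if necessary. Any null-homotopy of $u\circ\eta$ yields the factorization $\bar u$ satisfying $\bar u\circ q=u$, where $q\colon\unit\to\unit/\eta$ is the canonical quotient.

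Second, I would apply the octahedral axiom to the composition $\unit\xrightarrow{q}\unit/\eta\xrightarrow{\bar u}\MGL$, observing that the cofiber of $q$ is $\Sigma^{2,1}\unit$. The axiom produces a distinguished triangle
\[ \Sigma^{2,1}\unit\to C\to D\to\Sigma^{3,1}\unit, \]
where $C=\cone(u)$ lies in $\Sigma^{2,1}\SH^{\eff}$ by Lemma~\ref{lem:cone-unit-mgl} and $D=\cone(\bar u)$ is the sought cone. Since $D$ is effective as a cofiber of effective objects, concluding $D\in\Sigma^{4,2}\SH^{\eff}$ reduces to verifying $\s_0(D)=\s_1(D)=0$. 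The first vanishing is immediate from the triangle above together with $\s_0(\Sigma^{2,1}\unit)=0=\s_0(C)$. For $\s_1(D)=0$, I would apply $\s_1$ to the cofiber sequence $\unit/\eta\to\MGL\to D$ and show $\s_1(\unit/\eta)\to\s_1(\MGL)$ is an isomorphism. Both sides identify with $\Sigma^{2,1}\M\Lambda$: the source from the triangle $\Sigma^{1,1}\unit\xrightarrow{\eta}\unit\to\unit/\eta$, where the induced map $\Sigma^{1,1}\M\Lambda=\s_1(\Sigma^{1,1}\unit)\to\s_1(\unit)=\Sigma^{1,1}\M\Lambda/2$ is the canonical mod-$2$ reduction whose cofiber is $\Sigma^{2,1}\M\Lambda$; the target by~\eqref{equation:slicesofMGL}.

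The hard part is verifying that the induced map $\s_1(\unit/\eta)\to\s_1(\MGL)$ is a genuine isomorphism rather than multiplication by some non-unit of $\Lambda$. I would exploit the freedom in the choice of factorization: the set of lifts $\bar u$ is a torsor under $\pi_{2,1}\MGL$, which contains the generator $x_1\in\MU_2$; one can arrange $\bar u$ to realize $x_1$ on the top cell of $\unit/\eta$, making the induced map on first slices the identity. For a general base scheme $S$, one descends from the case of compatible pairs by base change along $\Spec(F)\to S$ for every point of $S$, using that slices, effective covers, and the unit map all commute with such pullbacks.
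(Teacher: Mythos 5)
Your overall strategy---produce the factorization abstractly, then detect membership of the cone in $\Sigma^{4,2}\SH^{\eff}$ by showing $\s_0$ and $\s_1$ vanish---cannot prove the lemma in the stated generality, and this is a genuine gap rather than a technicality. The statement is for an arbitrary base scheme $S$ and for the integral spectra $\unit/\eta$ and $\MGL$. Every input you invoke---the identification \eqref{equation:slicesofMGL} of $\s_1(\MGL)$, the identification $\s_1(\unit)\cong\Sigma^{1,1}\M\Lambda/2$, and the description of the action of $\eta$ on slices as mod-$2$ reduction (Lemma~\ref{lem:slices-unit-hopf})---is only available for compatible pairs $(S,\Lambda)$, hence requires the Hopkins--Morel--Hoyois isomorphism and the inversion of residue characteristics; it is not available integrally over a field of characteristic $p>0$, let alone over a general Noetherian base. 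Your proposed rescue by descending to points of $S$ does not close the gap: membership in the localizing subcategory $\Sigma^{4,2}\SH^{\eff}(S)$ is not detected by pullback to residue fields (no such conservativity result exists or is used in the paper), and slices and effective covers are only shown to commute with pullback for cellular spectra between compatible pairs (Corollary~\ref{cor:cell-slice-basechange}). The final appeal to the torsor of lifts is also left unresolved; one would still need to compute the composite $\Sigma^{2,1}\unit\to\unit/\eta\to\MGL$ on first slices for at least one explicit lift.

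The paper's proof is entirely different and purely geometric, which is precisely what makes it work over any base. It models $\unit/\eta$ explicitly as the spectrum $(\ast,\Thom_{\PP^1}(\gamma_1),\Thom_{\PP^1}(\A^1\oplus\gamma_1),\dotsc)$, using that the mapping cone of $\eta$ after one $\T$-suspension is $\Thom_{\PP^1}(\gamma_1)$, and takes the map to $\MGL$ induced by $\PP^1\hookrightarrow\PP^\infty$. Since $\PP^1\hookrightarrow\PP^m$ is the zero section of the rank-$2$ bundle $\PP^m\smallsetminus\PP^{m-2}\to\PP^1$, homotopy purity identifies the levelwise cofibers with Thom spaces of rank-$3$ bundles after twisting by $\gamma_1$, and these lie in $\Sigma^{4,2}\SH^{\eff}$ over any base scheme. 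If you insist on a slice-theoretic argument, you must restrict the statement to compatible pairs, which would weaken the downstream uses (notably Remark~\ref{rem:cone-unit-mgl} and Lemma~\ref{lem:connectivity-cone-eta-mgl}); to recover the lemma as stated you need an argument, like the paper's, that never computes a slice.
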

\begin{proof}
The composite of $\unit \to \MGL$ with $\eta$ is trivial, 
so the factorization exists abstractly.
As for an explicit construction, 
$\eta$ identifies up to a $S^{2,1}$-suspension with the canonical map from the complement of the zero section of the tautological line bundle $\gamma_1\to \PP^1$ to $\PP^1$; 
hence its mapping cone is the Thom space $\Thom_{\PP^1}(\gamma_1)$.
Thus we can model $\unit/\eta$ by the motivic spectrum with evident structure maps given by  
\[ 
(\ast,\Thom_{\PP^1}(\gamma_1),\Thom_{\PP^1}(\A^1\oplus \gamma_1),\dotsc).
\]
Recall that $\MGL$ is the motivic spectrum
\[ 
\bigl(S^0 = \Thom_{S}(\gamma_0),\Thom_{\Gr_1=\PP^\infty}(\gamma_1),\Thom_{\Gr_2}(\gamma_2),\dotsc), 
\] 
where $\gamma_{n}\to \Gr_{n}$ is the tautological vector bundle over the $n$th Grassmannian \cite[\S6.3]{VoevodskyICM1998}.
The resulting map $\unit/\eta\to \MGL$ is determined by the inclusion $\PP^1\hookrightarrow \PP^\infty$ sending $(x:y)$ to $(x:y:0:\dotsm)$, 
which in turn induces $\Thom_{\PP^1}(\gamma_1)\to\Thom_{\PP^\infty}(\gamma_1)$ in spectrum level one.

For every $m\geq 2$, 
the inclusion $\PP^1\hookrightarrow \PP^m$ is the zero section of the vector bundle $\PP^{m} \smallsetminus \PP^{m-2} \to \PP^1$ projecting onto the first two coordinates.
By homotopy purity for vector bundles, 
the cofiber of $\PP^1\hookrightarrow \PP^{m}$ is weakly equivalent to the Thom space of a rank $2$ bundle over $\PP^{m-2}$. 
Taking the colimit produces the Thom space of a rank $2$ bundle over $\PP^{\infty}$. 
Since the cofiber of $\Thom_{\PP^1}(\gamma_1)\to\Thom_{\PP^\infty}(\gamma_1)$ involves the map induced on Thom spaces of a line bundle, 
it is weakly equivalent to the Thom space of a rank $3$ bundle.
Thus the homotopy cofiber of 
\[ 
\bigl(\ast,\Thom_{\PP^1}(\gamma_1),\Thom_{\PP^1}(\A^1\oplus \gamma_1),\dotsc\bigr) 
\to 
\bigl(S^0 = \Thom_{S}(\gamma_0),\Thom_{\Gr_1=\PP^\infty}(\gamma_1),\Thom_{\Gr_1}(\A^1\oplus\gamma_1),\dotsc\bigr) 
\]
is contained in $\Sigma^{4,2}\SH^{\eff}$. 
Since the same holds true for the canonical map 
\[ 
\bigl(S^0=\Thom_{S}(\gamma_0),\Thom_{\Gr_1=\PP^\infty}(\gamma_1),\Thom_{\Gr_1}(\A^1\oplus\gamma_1),\dotsc\bigr) 
\to 
\MGL, 
\]
the result follows.
\end{proof}

We refer to the map constructed in the proof of Lemma~\ref{lem:cone-eta-mgl} as the canonical one.

\begin{lemma}
\label{lem:connectivity-cone-eta-mgl}
Let $S$ be a base scheme such that $\unit$ is $0$-connected.
Then the canonical map $\unit/\eta \to \MGL$ is $1$-connected, 
and likewise for $\f_q(\unit/\eta) \to \f_q(\MGL)$ when $q\in\{0,1,2\}$. 
\end{lemma}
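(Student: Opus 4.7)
My plan is to reduce everything to Lemma~\ref{lem:cone-eta-mgl}, which exhibits the homotopy cofibre $C$ of the canonical map $\unit/\eta\to\MGL$ as an object of $\Sigma^{4,2}\SH^{\eff}$. The connectivity of a map of motivic spectra equals the connectivity of its homotopy fibre, i.e.\ of $\Sigma^{-1,0}C$, so $1$-connectedness of $\unit/\eta\to\MGL$ will follow once every object of $\Sigma^{4,2}\SH^{\eff}$ is shown to be $2$-connected in the sense of Definition~\ref{def:connectivity}.

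For that connectivity step, I would use that $\Sigma^{4,2}\SH^{\eff}$ is the localizing subcategory of $\SH$ generated by the spectra $\Sigma^{4,2}\Sigma^{\infty}X_+$ with $X\in\Sm_{S}$. Granted the standing hypothesis that $\unit$ is $0$-connected, each $\Sigma^{\infty}X_+$ is likewise $0$-connected, and the direct computation
\[
[\Sigma^{s,t}Y_+,\Sigma^{4,2}\Sigma^{\infty}X_+]
=
[\Sigma^{s-4,t-2}Y_+,\Sigma^{\infty}X_+]
\]
shows that each generator is $2$-connected. The class of $2$-connected motivic spectra is closed under arbitrary coproducts (by compactness of smooth suspension spectra) and under extensions (from the associated long exact sequences of $[-,\E]$-groups), so this property propagates to all of $\Sigma^{4,2}\SH^{\eff}$, giving the first assertion.

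For the effective-cover statement I would apply the triangulated functor $\f_q$ to the distinguished triangle
\[
\unit/\eta\to\MGL\to C\to\Sigma^{1,0}\unit/\eta,
\]
so that $\f_q(C)$ is the cone of $\f_q(\unit/\eta)\to\f_q(\MGL)$. For $q\in\{0,1,2\}$ the containment $\Sigma^{4,2}\SH^{\eff}\subseteq\Sigma^{2q,q}\SH^{\eff}$ forces $C$ to be $q$-effective, whence $\f_q(C)\simeq C$; the previous paragraph then supplies the required $1$-connectivity (indeed $2$-connectivity) of this cone.

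The main obstacle I anticipate is the passage, used silently above, from $0$-connectedness of $\unit$ to $0$-connectedness of $\Sigma^{\infty}X_+$ for an arbitrary $X\in\Sm_{S}$. Over a field this is automatic via Morel's connectivity theorem together with Lemma~\ref{lem:connectivities}, but for a general base it must be either built into the hypothesis or verified by an independent Nisnevich-local argument, and this is the only place where the assumption on the base scheme is seriously used.
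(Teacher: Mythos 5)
There is a genuine gap in the first step. You claim that every object of $\Sigma^{4,2}\SH^{\eff}$ is $2$-connected because the generators $\Sigma^{4,2}\Sigma^{\infty}X_+$ are $2$-connected and the class of $2$-connected spectra is closed under coproducts and extensions. But $\Sigma^{4,2}\SH^{\eff}$ is a \emph{localizing triangulated} subcategory, hence closed under the inverse simplicial shift $\Sigma^{-1,0}$: it contains $\Sigma^{s,2}\Sigma^{\infty}X_+$ for \emph{every} $s\in\ZZ$, and these have arbitrarily negative connectivity. The class of $2$-connected spectra is not closed under desuspension, so the property does not propagate to the whole subcategory, and membership of the cone $C$ in $\Sigma^{4,2}\SH^{\eff}$ gives no lower bound at all on its connectivity. (Compare Remark~\ref{rem:cone-unit-mgl}, which is careful to replace the localizing subcategory by the smallest subcategory closed only under homotopy colimits and extensions precisely because that is the version that controls connectivity.)

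The repair is to use not the bare statement of Lemma~\ref{lem:cone-eta-mgl} but its proof: the cone of $\unit/\eta\to\MGL$ is exhibited there explicitly as a Thom spectrum, built at each spectrum level from Thom spaces of honest vector bundles whose virtual rank after the structural desuspensions is $2$. Example~\ref{example:connectivity-thom} (which is where the hypothesis that $\unit$ is $0$-connected enters) then gives that this Thom spectrum is $2$-connected, whence the map is $1$-connected. This is the route the paper takes. Your treatment of the effective covers is fine once the first part is fixed: since $\f_q$ is exact and $\f_q(C)\simeq C$ for $q\le 2$, the cone of $\f_q(\unit/\eta)\to\f_q(\MGL)$ is again $C$, and the $2$-connectivity of $C$ yields the claim; this is slightly more direct than the paper's argument, which deduces the statement for $\f_1$ and $\f_2$ from the vanishing of $\s_0(C)$ and $\s_1(C)$ via the five lemma. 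Your closing worry about the $0$-connectedness of $\Sigma^{\infty}X_+$ is well placed but secondary; in the corrected argument the input one actually needs is the connectivity of Thom spectra as in Example~\ref{example:connectivity-thom}.
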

\begin{proof}
For the first statement see also \cite[Theorem 2.8]{hoyois}.
The connectivity of Thom spaces given in Example \ref{example:connectivity-thom} implies together with Lemma~\ref{lem:cone-eta-mgl} that $\unit/\eta \to \MGL$ is $1$-connected. 
Moreover, 
it induces a weak equivalence on $\s_0$ and $\s_1$ by Lemma~\ref{lem:cone-eta-mgl}.
An application of the five lemma implies the assertions for $\f_1$ and $\f_2$.
\end{proof}

\begin{lemma}
\label{lem:cone-eta-conn}
Let $(S,\Lambda)$ be a compatible pair where $S$ is a field.
The map $\f_{q}(\unit_\Lambda/\eta)\to \s_{q}(\unit_\Lambda/\eta)$ is $1$-connected for all $q$.
\end{lemma}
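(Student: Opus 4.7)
The plan is to compare $\unit_\Lambda/\eta$ with algebraic cobordism via the cofiber sequence $\unit_\Lambda/\eta \to \MGL_\Lambda \to C$ from Lemma~\ref{lem:cone-eta-mgl}, in which $C\in\Sigma^{4,2}\SH^{\eff}$. Since Lemma~\ref{lem:connectivity-cone-eta-mgl} shows that $\unit_\Lambda/\eta \to \MGL_\Lambda$ is $1$-connected, its cofiber $C$ is $2$-connected. Applying the triangulated functor $\f_q$ yields the distinguished triangle $\f_q(\unit_\Lambda/\eta) \to \f_q(\MGL_\Lambda) \to \f_q(C)$, and the assertion follows once one controls the connectivity of $\f_q(C)$.

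The central step is to verify that $\f_q(C)$ is $2$-connected for every $q$. For $q\le 2$ this is immediate because $\f_q(C)=C$ is $2$-connected. For $q\ge 3$, I would write $C\simeq\Sigma^{4,2}C_0$ with $C_0:=\Sigma^{-4,-2}C\in\SH^{\eff}$ and invoke the slice shift identity $\f_q(\Sigma^{4,2}C_0)\simeq\Sigma^{4,2}\f_{q-2}(C_0)$. The effective cover $\f_{q-2}(C_0)$ remains in $\SH^{\eff}$ and is therefore $0$-connected by Morel's $\PP^1$-connectivity theorem, valid over a perfect field and transferred to arbitrary $F$ by essentially smooth base change, in the same spirit as the argument preceding Theorem~\ref{theorem:slices-kgl}. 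Suspending by $\Sigma^{4,2}$ contributes $2$ to the connectivity, giving the desired $2$-connectedness of $\f_q(C)$.

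Combined with Lemma~\ref{lem:slice-convergence-mgl}, which gives that $\f_q(\MGL_\Lambda)$ is $q$-connected, the long exact sequence associated to the triangle above shows that $\f_q(\unit_\Lambda/\eta)$ is $1$-connected for every $q\ge 1$: indeed, the fiber of $\f_q(\unit_\Lambda/\eta)\to\f_q(\MGL_\Lambda)$ is $\Sigma^{-1}\f_q(C)$ and hence $1$-connected, while $\f_q(\MGL_\Lambda)$ is $\ge 1$-connected. Since the fiber of the map $\f_q(\unit_\Lambda/\eta) \to \s_q(\unit_\Lambda/\eta)$ is $\f_{q+1}(\unit_\Lambda/\eta)$, this yields the $1$-connectedness of the map for every relevant $q$. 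The principal obstacle is the $2$-connectivity claim for $\f_q(C)$: although the slice shift identity and Morel's connectivity theorem for $\SH^{\eff}(F)$ are standard in the slice formalism, their application here requires combining the identification $C\simeq \Sigma^{4,2}C_0$ with the careful base-change argument to cover non-perfect fields.
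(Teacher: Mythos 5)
Your reduction to the cofiber sequence $\unit_\Lambda/\eta\to\MGL_\Lambda\to C$ is sensible, but the central step fails as justified. You assert that $\f_{q-2}(C_0)$ "remains in $\SH^{\eff}$ and is therefore $0$-connected by Morel's $\PP^1$-connectivity theorem". Effectivity is a condition on weights, not on connectivity in the sense of Definition~\ref{def:connectivity}: the spectrum $\Sigma^{-n,0}\unit$ lies in $\SH^{\eff}$ for every $n\geq 0$ yet is only $(-n)$-connected. Morel's connectivity theorem concerns the homotopy $t$-structure and says nothing about objects of the effective subcategory; indeed your inference never uses the $2$-connectedness of $C$ in the case $q\geq 3$, so it would "prove" that every effective spectrum is $0$-connected. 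What your argument actually needs is that the effective cover functor preserves the connectivity of $C$, i.e.\ that $\f_q(C)$ is $2$-connected \emph{because} $C$ is. That is a substantive statement about the interaction of the slice filtration with connectivity; in this paper it is available (for cellular spectra, which $C$ is) only through Lemma~\ref{lem:connectivity-slices}, whose proof rests on the computation of the slices of the sphere spectrum, and it is not a formal consequence of anything you cite.

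The gap sits exactly where the real difficulty of the lemma lies. Lemma~\ref{lem:connectivity-cone-eta-mgl} deliberately stops at $\f_2$: the comparison with $\MGL_\Lambda$ induces equivalences on $\s_0$ and $\s_1$ but not on $\s_2$, where $\s_2(\unit_\Lambda/\eta)\simeq\Sigma^{3,2}\M\Lambda/12$ differs from $\s_2(\MGL_\Lambda)\simeq\Sigma^{4,2}\M\Lambda$. The paper's proof therefore must establish by hand the surjectivity of $[\Sigma^{s,t}X_+,\f_2(\unit_\Lambda/\eta)]\to[\Sigma^{s,t}X_+,\s_2(\unit_\Lambda/\eta)]$ in the borderline range $s-t+d=1$, which reduces to showing that the second Hopf map $\nu$ generates $\pi_{3,2}\s_2(\unit_\Lambda/\eta)\cong\Lambda/12$; this uses complex realization and $\pi_3(S^0/\eta)\cong\ZZ/12$, reduction to $\Spec(\CC)$ in characteristic zero, base change through $\ZZ_{(p)}$ in positive characteristic, and the $\bigoplus_n\pi_{n,n}\unit$-module structure. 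Your proposal compresses all of this into the unproved claim that $\f_q$ preserves connectivity. If you replace the false inference by an appeal to the cellularity of $C$ together with Lemma~\ref{lem:connectivity-slices}, the strategy can likely be repaired and would then be a genuinely different (and shorter) route than the paper's; as written, however, the key step is unsupported.
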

\begin{proof}
Lemmas \ref{lem:slice-convergence-mgl} and \ref{lem:connectivity-cone-eta-mgl} imply that $\f_{q}(\unit_\Lambda/\eta)\to \s_{q}(\unit_\Lambda/\eta)$ is $1$-connected for $q\in \{0,1\}$. 
Moreover, 
$[\Sigma^{s,t} X_+,\f_2(\unit_\Lambda/\eta)\to \s_2(\unit_\Lambda/\eta)]$ is an isomorphism for all $X\in\Sm_{S}$ of dimension $d$ and integers $s,t$ satisfying $s-t+d<1$. 
In order to conclude for $\f_{2}(\unit_\Lambda/\eta)\to\s_{2}(\unit_\Lambda/\eta)$, 
it remains to show the canonical map
\[ 
[\Sigma^{s,t}X_+,\f_2(\unit_{\Lambda}/\eta)
\to 
\s_2(\unit_{\Lambda}/\eta)] 
\]
is surjective for $s-t+d=1$. 
We may assume $S$ is connected.
Since $\f_{2}(\unit_\Lambda/\eta)\to\s_{2}(\unit_\Lambda/\eta)$ is a map between cellular motivic spectra, 
we may assume $X$ is the base scheme by Lemma~\ref{lem:cellular-connectivity}.
Lemma~\ref{lem:slices-unit-hopf} implies the isomorphism $\s_2(\unit_\Lambda/\eta)\iso\Sigma^{3,2}\M\Lambda/12$.
Hence the target of
\[ 
[S^{3,2},\f_2(\unit_{\Lambda}/\eta)
\to 
\s_2(\unit_{\Lambda}/\eta)] 
\]
is the group $\Lambda/12$ (over arbitrary connected base schemes).
By construction of the slice filtration the Hopf map $\nu \colon S^{3,2}\to \unit$ (which exists over $\ZZ$, cf.~\cite[Remark 4.14]{dugger-isaksen.hopf}, and hence over $S$) 
factors through $\f_2(\unit_{\Lambda})$. 
If $S$ maps to $\Spec(\CC)$, 
the complex realization map
\[ 
[S^{3,2},\f_2(\unit/\eta)] \to \pi_3 (\unit_{\Top}/\eta) \iso \ZZ/12
\]
is surjective, 
and hence so is the map 
\[ 
[S^{3,2},\f_2(\unit/\eta)\to \s_2(\unit/\eta)]. 
\]
For an arbitrary field of characteristic zero one can reduce to the previous case since all the constructions commute with filtered colimits.
Suppose $S$ has characteristic $p>0$ and set $(T,\Lambda)=\bigl(\Spec(\ZZ_{(p)}),\ZZ[\tfrac{1}{p}]\bigr)$.
Since the canonical maps 
\[ 
H^{0,0}(S;\Lambda/12)
\leftarrow 
H^{0,0}(T;\Lambda/12) 
\to 
H^{0,0}(\QQ;\Lambda/12)
\]
are isomorphisms,
also 
\[ 
[S^{3,2},\f_2(\unit_\Lambda/\eta)\to \s_2(\unit_\Lambda/\eta)] 
\]
is surjective over $S$.
For any field and integers $s,t$ with $s-t=1$,
surjectivity of
\[ 
[S^{s,t},\f_2(\unit_\Lambda/\eta)\to \s_2(\unit_\Lambda/\eta)] 
\]
follows from the $\bigoplus_{n}\pi_{n,n}\unit$-module structure, 
which induces a surjection
\[ 
\pi_{3,2}\s_2(\unit_\Lambda/\eta)\times \pi_{n,n}\unit_\Lambda 
\to 
\pi_{3+n,2+n}\s_2(\unit_\Lambda/\eta) 
\]
for all $n$. 
(Note that $\unit_\Lambda =\f_0(\unit_\Lambda)\to \f_0(\unit_\Lambda/\eta)\to \s_0(\unit_\Lambda/\eta)=\s_0(\unit_\Lambda)$ is $0$-connected, 
which implies the claimed surjectivity.)
Hence $\f_2(\unit_\Lambda/\eta)\to \s_2(\unit_\Lambda/\eta)$ is $1$-connected.
Since $\s_q(\unit_\Lambda/\eta)$ is at least $2$-connected for every $q>2$ by Lemma~\ref{lem:slices-unit-hopf}, 
it follows that $\f_{q}(\unit_\Lambda/\eta)\to \s_{q}(\unit_\Lambda/\eta)$ is $1$-connected for every integer $q$.
\end{proof}

\begin{lemma}\label{lem:cone-eta-two}
Let $(S,\Lambda)$ be a compatible pair where $S$ is a field.
The canonical map $\unit_\Lambda/\eta\to \slicecomp(\unit_\Lambda/\eta)$ is $1$-connected.
\end{lemma}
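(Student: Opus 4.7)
The plan is to show that the homotopy fiber of $\unit_\Lambda/\eta\to\slicecomp(\unit_\Lambda/\eta)$, namely $F := \holim_q \f_q(\unit_\Lambda/\eta)$, is $1$-connected. The starting observation is Lemma~\ref{lem:cone-eta-conn}: since $\f_q(\unit_\Lambda/\eta)\to \s_q(\unit_\Lambda/\eta)$ is $1$-connected for every integer $q$, and its homotopy fiber is $\f_{q+1}(\unit_\Lambda/\eta)$, we conclude that $\f_q(\unit_\Lambda/\eta)$ is $1$-connected for every $q\ge 1$.

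Next I feed this into the Milnor short exact sequence associated to the tower of effective covers. For any $X\in \Sm_F$ of dimension $d$ and integers $a,b$ with $a-b+d\le 0$, there is a short exact sequence
\[
0 \to \lim\nolimits^{1}_q [\Sigma^{a+1,b}X_+,\f_q(\unit_\Lambda/\eta)] \to [\Sigma^{a,b}X_+,F] \to \lim\nolimits_q [\Sigma^{a,b}X_+,\f_q(\unit_\Lambda/\eta)] \to 0.
\]
In this range each term of the inverse system vanishes for $q\ge 1$ by the previous step, so the $\lim$-term is trivial.

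The main obstacle is the $\lim^{1}$-term: in the shifted bidegree $(a+1,b)$ the groups $[\Sigma^{a+1,b}X_+,\f_q(\unit_\Lambda/\eta)]$ need not vanish, since this bidegree sits at the boundary of $1$-connectivity. I plan to handle this via the Mittag-Leffler criterion, exploiting that $\s_q(\unit_\Lambda/\eta)$ is $2$-connected for $q>2$, a fact already recorded in the proof of Lemma~\ref{lem:cone-eta-conn}. Indeed, the long exact sequence coming from $\f_{q+1}(\unit_\Lambda/\eta)\to \f_q(\unit_\Lambda/\eta)\to \s_q(\unit_\Lambda/\eta)$ shows that the transition map $[\Sigma^{a+1,b}X_+,\f_{q+1}(\unit_\Lambda/\eta)]\to[\Sigma^{a+1,b}X_+,\f_q(\unit_\Lambda/\eta)]$ is surjective as soon as $[\Sigma^{a+1,b}X_+,\s_q(\unit_\Lambda/\eta)]=0$, which holds for $q>2$ since $(a+1)-b+d = 1 < 2$. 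Mittag-Leffler then yields $\lim\nolimits^{1}=0$, so $F$ is $1$-connected and the slice completion map is $1$-connected.
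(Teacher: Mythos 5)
Your proof is correct and follows essentially the same route as the paper: both reduce to showing the homotopy fiber $\holim_q\f_q(\unit_\Lambda/\eta)$ is $1$-connected using the $1$-connectivity of $\f_q(\unit_\Lambda/\eta)\to\s_q(\unit_\Lambda/\eta)$ from Lemma~\ref{lem:cone-eta-conn} together with the $2$-connectivity of $\s_q(\unit_\Lambda/\eta)$ for $q>2$. You merely make explicit the $\lim$--$\lim^1$ and Mittag--Leffler bookkeeping that the paper leaves implicit in its appeal to the $1$-connectivity of the transition maps of the tower.
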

\begin{proof}
We show that 
$\holim_{q}\f_{q}(\unit_\Lambda/\eta)$ is $1$-connected. 
In fact, 
since $\f_{q+1}(\unit_\Lambda/\eta) \to \f_{q}(\unit_\Lambda/\eta)$ is $1$-connected for $q\geq 3$ by Lemma~\ref{lem:slices-unit-hopf},
it suffices to prove that $\f_{3}(\unit_\Lambda/\eta)$ is $1$-connected.
This follows from the case $q=2$ of Lemma~\ref{lem:cone-eta-conn}.
\end{proof}

\subsection{Cellularity}
\label{sec:cellularity}

The following definitions modeled on \cite[\S2,8]{dugger-isaksen.cell}, \cite[\S2]{hu-kriz-ormsby} are tailored for our discussion of convergence for the slice filtration.

{\em Attaching a cell\/} to $\E$ refers to the process of forming the pushout of some diagram 
\begin{equation}
\label{eq:cell} 
D^{s+1,t}\unit \hookleftarrow \Sigma^{s,t} \unit \xrightarrow{\alpha} \E. 
\end{equation}
The leftmost map in (\ref{eq:cell}) denotes the canonical inclusion into the simplicial mapping cylinder $D^{s+1,t}\unit$ of $\Sigma^{s,t}\unit \to \ast$. 
Thus the pushout $\D$ consists of $\E$ together with a cell of {\em dimension\/} $(s+1,t)$ and {\em weight\/} $t$. 
More generally, 
one attaches a collection of cells indexed by some set $I$ by forming the pushout of some diagram
\[ \bigvee_{i\in I}D^{s_i,t_i} \unit
\hookleftarrow
\bigvee_{i\in I}\Sigma^{s_i,t_i} \unit 
\xrightarrow{\vee\alpha_{i}} 
\E. 
\]

A {\em cell presentation\/} of $f\colon \D\to \E$ consists of a sequence of motivic spectra
\begin{equation}
\label{eq:cell-presentation} 
\D
=
\D_{-1}
\xrightarrow{d_0} 
\D_0
\xrightarrow{\sim} 
\D^{\prime}_0
\xrightarrow{d_1} 
\D_1
\xrightarrow{\sim} 
\D^{\prime}_1
\to 
\dotsm 
\xrightarrow{d_{n}} 
\D_{n} 
\xrightarrow{\sim} 
\D^{\prime}_n
\xrightarrow{d_{n+1}} 
\dotsm 
\end{equation}
along with the canonical map to the colimit $c\colon \D\to \D_\infty$, 
and attaching maps
\[ 
\bigvee_{i\in I_n}\Sigma^{s_i,t_i} \unit 
\xrightarrow{\alpha_n} 
\D^{\prime}_{n-1} 
\]
such that $\D_{n}$ is obtained by attaching cells to $\D^{\prime}_{n-1}$ along $\alpha_n$, 
and a weak equivalence $w\colon \D_\infty\xrightarrow{\sim} \E$ satisfying $w\circ c =f$. 
Maps labeled with $\sim$ in \eqref{eq:cell-presentation} are acyclic cofibrations.

A cell presentation $\D\to \D_\infty \xrightarrow{\sim} \E$ of a map $\D\to \E$ is of {\em finite type\/} if the following hold:
\begin{enumerate}
\item 
There exists an integer $k$ such that $\D\to \D_\infty$ contains no cells in dimension $(s,t)$ for $s-t<k$. 
\item 
For every integer $n$, 
$\D\to \D_\infty$ contains at most finitely many cells of dimension $(s,t)$ for $s-t=n$. 
\end{enumerate}
\begin{remark}
\label{rem:relative-cell-pres}
A cell presentation of $\ast\to \E$ is simply called a cell presentation of 
$\E$ -- the most important (absolute) case. 
The distinction between a motivic spectrum and a cell presentation thereof will often be suppressed.
\end{remark}

\begin{remark}
\label{rem:R-finite-type}
The exact same definitions apply to $\Lambda$-local motivic spectra.
However, a $\Lambda$-cell presentation of finite type need not be a cell presentation of finite type.
\end{remark}

\begin{example}
\label{ex:bounded}
The suspension spectrum $\Sigma^\infty \PP^\infty$ admits a cell presentation of finite type with one cell of dimension $(2n,n)$ for every natural number $n$.
If $\E$ admits a cell presentation of finite type, 
then so does $\Sigma^{m,n}\E$.
The $(s,t)$-cells of $\E$ become $(s+m,t+n)$-cells of $\Sigma^{m,n}\E$.
\end{example}

\begin{proposition}
\label{prop:MZcellular}
The motivic spectrum $\MGL$ admits a cell presentation of finite type.
If $(S,\Lambda)$ is a compatible pair,
then $\M \Lambda$ admits a $\Lambda$-cell presentation of finite type.
\end{proposition}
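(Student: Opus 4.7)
The plan is to build a cell presentation of $\MGL$ from the Schubert stratification of Grassmannians and their Thom spaces, and then obtain a $\Lambda$-cell presentation of $\M\Lambda$ by iteratively coning off the polynomial generators of $\pi_{\ast}\MGL_{\Lambda}$.

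For $\MGL$, I would work from the model $\MGL = \hocolim_n \Sigma^{-2n,-n}\Sigma^\infty \Thom(\gamma_n)$ appearing in the proof of Lemma~\ref{lem:cone-eta-mgl}. Each Grassmannian $\Gr_n = \Gr(n,\infty)$ admits over $\Spec(\mathbb{Z})$, and hence by base change over $S$, an affine Schubert stratification indexed by partitions $\lambda$ with at most $n$ parts. Filtering $\Gr_n$ by the unions $V_k=\bigcup_{|\lambda|\leq k}\overline{\Omega}_\lambda$, homotopy purity identifies the successive quotients $V_k/V_{k-1}$ with wedges of motivic spheres $S^{2k,k}$, one for each partition of $k$; this produces a cell presentation of $\Gr_n$ with cells in bidegrees $(2|\lambda|,|\lambda|)$. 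Applying the Thom isomorphism for $\gamma_n$ and desuspending by $(2n,n)$ gives cells at $(2|\lambda|,|\lambda|)$ of $\Sigma^{-2n,-n}\Sigma^\infty\Thom(\gamma_n)$, and these assemble through the stabilization in $n$ into a cell presentation of $\MGL$ with exactly $p(k)$ cells at bidegree $(2k,k)$. All cells satisfy $s-t\geq 0$ and there are finitely many at each value of $s-t$, so the finite type conditions hold.

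For $\M\Lambda$, I would exploit the equivalence $\Phi_{S} \wedge \unit_{\Lambda}$ furnished by Definition~\ref{def:compatible}(1). Set $\MGL_{\Lambda}^{(0)}=\MGL_{\Lambda}$ and define iteratively $\MGL_{\Lambda}^{(n)}$ as the cofiber of $x_n\colon \Sigma^{2n,n}\MGL_{\Lambda}^{(n-1)}\to\MGL_{\Lambda}^{(n-1)}$; passing to the sequential homotopy colimit recovers $\MGL_{\Lambda}/(x_1,x_2,\ldots)\MGL_{\Lambda}\simeq\M\Lambda$. Each cone attaches $\Lambda$-cells, so a $\Lambda$-cell of $\M\Lambda$ is indexed by a pair $(\lambda,S)$ consisting of a partition $\lambda$ and a finite subset $S\subseteq\{1,2,\ldots\}$, of bidegree
\[
\bigl(2|\lambda|+\textstyle\sum_{j\in S}(2j+1),\; |\lambda|+\sum_{j\in S}j\bigr),
\]
giving $s-t=|\lambda|+|S|+\sum_{j\in S}j\geq 0$. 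For fixed $s-t=m$, the inequalities $|S|\leq m$ and $\sum_{j\in S}j\leq m$ admit only finitely many subsets $S$, and for each such $S$ there are $p(m-|S|-\sum_{j\in S}j)$ choices of $\lambda$; hence both finite type conditions hold for the constructed $\Lambda$-cell presentation of $\M\Lambda$.

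The main obstacle is the rigorous assembly of the Schubert cell data into a cell presentation in the sense of \eqref{eq:cell-presentation} over a general base scheme: one must verify that each cofiber $V_k/V_{k-1}$ splits as a wedge of the claimed spheres (not merely that it fits into a cofiber sequence of the right connectivity), that the Thom isomorphism transports this filtration to $\Thom(\gamma_n)$, and that stabilization in $n$ is compatible with the successive cell filtrations. Once these are in place, the iterative quotient construction for $\M\Lambda$ is essentially formal, with the only residual check being that $\hocolim_n \MGL_{\Lambda}^{(n)}$ indeed models $\MGL_{\Lambda}/(x_1,x_2,\ldots)\MGL_{\Lambda}$; this follows from the increasing connectivity of the attaching maps $x_j$.
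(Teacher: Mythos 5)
Your overall strategy coincides with the paper's: produce finite-type cell presentations of the desuspended Thom spectra $\Sigma^{-2n,-n}\Sigma^\infty\Thom(\gamma_n)$, assemble them along the colimit using the increasing connectivity of the structure maps, and for $\M\Lambda$ use the presentation $\M\Lambda\simeq\bigl(\MGL/(x_1,x_2,\ldots)\MGL\bigr)\wedge\unit_\Lambda$ from Definition~\ref{def:compatible}(1). The paper's proof is terser: it reduces the first half to Example~\ref{ex:bounded} (``the general case follows similarly'') together with Lemma~\ref{lemma:equivalentcellularfinitetype}, and for $\M\Lambda$ it cites Hoyois and Spitzweck; your second half is in substance the argument of the cited result (killing the regular sequence $x_1,x_2,\ldots$ cone by cone), and it is correct, including the bidegree bookkeeping and the appeal to the growing connectivity of the $x_j$ to pass to the colimit. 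Be aware, though, that your ``assemble through the stabilization in $n$'' is not automatic --- the maps in the colimit diagram need not be cellular inclusions --- and is exactly what Lemma~\ref{lemma:equivalentcellularfinitetype} (resting on Lemmas~\ref{lem:rearrange} and~\ref{lem:presentation-connected}) is there to supply; it should be invoked explicitly in both halves.

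The one step that does not work as stated is the identification of $V_k/V_{k-1}$ via homotopy purity. Purity requires a closed immersion of \emph{smooth} schemes, whereas the Schubert varieties $\overline{\Omega}_\lambda$ and their unions $V_k$ are in general singular, so there is no normal bundle whose Thom space could appear. The conclusion (that $V_k/V_{k-1}$ is a wedge of copies of $S^{2k,k}$) is true, but its proof goes through a different device: either the inductive filtration of $\Gr_n$ by sub-Grassmannians, whose successive cofibers are Thom spaces of vector bundles over smaller (smooth) Grassmannians, or, as in the $\PP^\infty$ case underlying Example~\ref{ex:bounded}, the observation that the complement of a smooth closed stratum $\mathbf{A}^1$-deformation-retracts onto the lower skeleton, after which purity applies to that smooth stratum. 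You flag this identification as an ``obstacle to be verified,'' but as written the mechanism you invoke is the wrong tool rather than an unverified detail, so this portion of the argument needs to be replaced rather than completed; once it is, the rest of your proof goes through.
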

\begin{proof}
Example \ref{ex:bounded} shows the suspension spectrum of $\MGL_n$ admits a cell presentation of finite type if $n=1$; 
the general case follows similarly. 
We conclude for $\MGL$ using Lemma~\ref{lemma:equivalentcellularfinitetype} below.
The result for $\M \Lambda$ was proven in \cite[Proposition 8.1]{hoyois} for base schemes that are essentially smooth over a field, 
and more generally in \cite[Corollary 11.4]{spitzweckmz}.
Here we use Definition~\ref{def:compatible} (1) of a compatible pair.
\end{proof}

Next we turn to a few auxiliary results on cell presentations.

\begin{lemma}
\label{lem:cellular-presentation}
A motivic spectrum is cellular if and only if it has a cell presentation.
\end{lemma}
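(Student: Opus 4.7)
The plan is to prove the two implications separately, with the substance concentrated in the forward direction.

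For the reverse implication, suppose $\E$ admits a cell presentation $\ast = \D_{-1} \xrightarrow{d_0} \D_0 \xrightarrow{\sim} \D'_0 \xrightarrow{d_1} \D_1 \to \cdots$ with $\D_\infty \xrightarrow{\sim} \E$. I would show by induction on $n$ that each $\D_n$ belongs to the smallest full localizing subcategory $\SH_\cell \subset \SH$ containing the suspensions $\Sigma^{s,t}\unit$. The base case is trivial. For the inductive step, the attaching pushout
\[
\bigvee_{i \in I_n} D^{s_i+1,t_i}\unit \longleftarrow \bigvee_{i \in I_n} \Sigma^{s_i,t_i}\unit \xrightarrow{\alpha_n} \D'_{n-1}
\]
yields a homotopy cofiber sequence $\bigvee_{i} \Sigma^{s_i,t_i}\unit \to \D'_{n-1} \to \D_n$ (the mapping cylinders $D^{s+1,t}\unit$ are contractible), so that closure of $\SH_\cell$ under coproducts and triangles places $\D_n$ in $\SH_\cell$. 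Since $\SH_\cell$ is closed under sequential homotopy colimits, the colimit $\D_\infty \simeq \E$ lies in $\SH_\cell$.

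For the forward direction I would build a cell presentation of $\E$ by a standard inductive cellular approximation. Starting with $\D_0 = \bigvee \Sigma^{s,t}\unit$ indexed by a generating set of $\bigoplus_{s,t}[\Sigma^{s,t}\unit,\E]_{\SH}$ equipped with its tautological map $f_0\colon \D_0 \to \E$, at each stage I enlarge $\D_n$ by attaching $(s+1,t)$-cells along generators of the kernel of $f_{n*}\colon [\Sigma^{s,t}\unit,\D_n] \to [\Sigma^{s,t}\unit,\E]$ to kill it, while simultaneously wedging on further sphere summands to preserve surjectivity. By construction the sequential colimit $\D_\infty$ comes equipped with a cell presentation in the sense of \eqref{eq:cell-presentation}, and the maps $f_n$ assemble into an $f_\infty\colon \D_\infty \to \E$ which induces an isomorphism on $[\Sigma^{s,t}\unit,-]_{\SH}$ in every bidegree.

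The main obstacle is upgrading this bigraded homotopy isomorphism to an honest weak equivalence $\D_\infty \xrightarrow{\sim} \E$. This is where the hypothesis that $\E$ is cellular enters crucially: a map between cellular motivic spectra inducing isomorphisms on all $[\Sigma^{s,t}\unit,-]_{\SH}$ is a weak equivalence by the Whitehead-type result \cite[Corollary 7.2]{dugger-isaksen.cell}. Equivalently, the homotopy cofiber of $f_\infty$ is itself cellular with vanishing bigraded homotopy, and so is contractible. Replacing $\D_\infty$ by a fibrant model then gives a cell presentation of $\E$ in the sense of \eqref{eq:cell-presentation}.
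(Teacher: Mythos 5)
Your proposal is correct and follows essentially the same route as the paper: build $\D_\infty$ by iteratively wedging on spheres to achieve $\pi_{\ast,\ast}$-surjectivity and attaching cells to kill kernels, then invoke the cellular Whitehead theorem \cite[Corollary 7.2]{dugger-isaksen.cell} to upgrade the bigraded homotopy isomorphism to a weak equivalence. The only detail worth making explicit is that injectivity of $\pi_{\ast,\ast}(\D_\infty)\to\pi_{\ast,\ast}(\E)$ in the colimit relies on compactness of $\Sigma^{s,t}\unit$, so that a kernel element lifts to a finite stage and is annihilated at the next.
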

\begin{proof}
Let $\E$ be a fibrant cellular motivic spectrum.
To construct a cell presentation,
let 
\[ 
\D_0:= 
\bigvee_{\alpha\in \pi_{\ast,\ast}\E} \Sigma^{\ast,\ast} \unit 
\xrightarrow{\vee\alpha} 
\E. 
\]
Then $\D_0\to \E$ factors through an acyclic cofibration $\D_0\xrightarrow{\sim} \D_0^\prime$ and a fibration $\D_0^\prime \to \E$.
By construction, 
$\pi_{\ast,\ast}(\D_0^\prime)\to\pi_{\ast,\ast}(\E)$ is surjective.
We choose lifts of generators of its kernel, 
and use these to attach cells to  $\D_0^\prime$.
The resulting map $\D_1\to \E$ factors again through an acyclic cofibration and a fibration $\D_1^\prime \to \E$, 
which is also a $\pi_{\ast,\ast}$-surjection.
Iterating this procedure leads to an $\pi_{\ast,\ast}$-isomorphism $\D_\infty=\colim_n{\D_n}\to\E$.
Here injectivity uses compactness of $\Sigma^{s,t}\unit$ so that any element in the kernel lifts to a finite stage, 
and is therefore annihilated at the next stage.
Now since $\D_\infty$ and $\E$ are cellular, 
the map $\D_\infty\to\E$ is a weak equivalence \cite[Corollary 7.2]{dugger-isaksen.cell}.
The converse implication follows directly from the definitions.
\end{proof}

\begin{lemma}
\label{lem:rearrange}
The cells in any $\Lambda$-cell presentation of finite type can be rearranged according to their connectivity (in the sense of Definition \ref{def:connectivity}).
\end{lemma}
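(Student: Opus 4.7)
The plan is to reorder the cells of the given $\Lambda$-cell presentation through a sequence of adjacent swaps which bring lower-connectivity cells to earlier stages. The finite type hypothesis ensures that only finitely many such swaps are required at each connectivity level, so the procedure stabilises on every finite range of connectivities.

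The central local move is a swap lemma. Suppose the presentation attaches a cell $e'$ of dimension $(s',t')$ to a stage $\D'$ via an attaching map $\alpha_{e'}\colon \Sigma^{s'-1,t'}\unit_\Lambda\to\D'$, and subsequently attaches a cell $e$ of dimension $(s,t)$ with $s-t<s'-t'$ via an attaching map $\alpha_e\colon \Sigma^{s-1,t}\unit_\Lambda\to\D'\cup_{\alpha_{e'}}D^{s',t'}\unit_\Lambda$. Since $\Sigma^{s',t'}\unit_\Lambda$ is $(s'-t')$-connected by Example~\ref{example:connectivity-thom}, Definition~\ref{def:connectivity} shows that every map from $\Sigma^{s-1,t}\unit_\Lambda$ into this spectrum is null whenever $(s-1)-t<s'-t'$, which holds by assumption. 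Hence the composite of $\alpha_e$ with the collapse map $\D'\cup_{\alpha_{e'}}D^{s',t'}\unit_\Lambda\to\Sigma^{s',t'}\unit_\Lambda$ vanishes, and the cofibre sequence supplies a lift $\tilde\alpha_e\colon \Sigma^{s-1,t}\unit_\Lambda\to\D'$ of $\alpha_e$. Commutativity of homotopy pushouts then yields a natural equivalence
\[
\D'\cup_{\alpha_{e'}}D^{s',t'}\unit_\Lambda\cup_{\alpha_e}D^{s,t}\unit_\Lambda
\;\simeq\;
\D'\cup_{\tilde\alpha_e}D^{s,t}\unit_\Lambda\cup_{\alpha'_{e'}}D^{s',t'}\unit_\Lambda,
\]
for a suitable updated attaching map $\alpha'_{e'}$; the order of the two attachments can thus be reversed.

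I would then iterate this construction. Let $k$ be the lower bound on $s-t$ from condition~(1), and let $c_n$ denote the finite set of cells of dimension $(s,t)$ with $s-t=n$ produced by condition~(2). By induction on $n\geq k$, construct cell presentations $P_n$ of $\D_\infty$ with the following property: in $P_n$ all cells in $c_k\cup\dotsm\cup c_n$ occur in an initial segment, whereas cells in $c_{n'}$ with $n'>n$ appear only in the tail. The inductive step from $P_{n-1}$ to $P_n$ is carried out by finitely many adjacent swaps moving each cell of $c_n$ past its predecessors of strictly larger connectivity, using the swap lemma. Because these modifications only alter the tail beyond the stabilised initial segment, the initial segments of the $P_n$ through connectivity $n$ form a compatible directed system, whose union is a $\Lambda$-cell presentation of $\D_\infty$ in which cells are attached in order of nondecreasing connectivity.

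The main obstacle is the bookkeeping that ensures the iterated local swaps assemble into a coherent global cell presentation whose colimit recovers $\D_\infty$. Condition~(2) of finite type is essential here: without finiteness at each connectivity level, infinitely many swaps could be required before any cell stabilises in its final position, and the rearrangement would fail to converge.
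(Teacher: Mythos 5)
Your proposal is correct and follows essentially the same route as the paper: the local swap is performed by observing that the attaching map of the lower-connectivity cell composes trivially to the cofibre $\Sigma^{s',t'}\unit_\Lambda$ by connectivity (Example~\ref{example:connectivity-thom}), hence lifts to the earlier stage, after which the two pushouts commute; the paper merely spells out the model-categorical bookkeeping (choice of homotopy, lifting along a fibration, and checking the comparison map is a weak equivalence on quotients) that you compress into ``commutativity of homotopy pushouts''. The iteration via finitely many adjacent swaps per connectivity level, with finite type guaranteeing stabilisation, matches the paper's induction.
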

\begin{proof}
Without loss of generality we may assume the cell presentation $\D\to \E_\infty \xrightarrow{\sim}\E$ is one of a map $\D\to \E$ of fibrant motivic spectra such that 
each step $\D_{n}^{\prime}\xrightarrow{d_{n+1}} \D_{n+1}$ consists of attaching exactly one cell of bidegree $(s_n,t_n)$. 
We may further assume that $\D_n^{\prime}$ is fibrant.
Now suppose there exist natural numbers $m<n$ such that $s_m-t_m>s_n-t_n$. 
Let $k$ be the smallest natural number such that the sequence
$
(s_0-t_0,s_1-t_1,\dotsc,s_k-t_k) 
$
is not increasing, 
so that $s_{k-1}-t_{k-1}>s_k-t_k$. 
Then the canonical map $\D_{k-1}^{\prime} \to \D_k$ is $s_{k-1}-t_{k-1}-1$-connected by Example~\ref{example:connectivity-thom}.
Thus the attaching map for the $k$th cell
$ 
\alpha_{k+1}
\colon 
\Sigma^{s_k-1,t_k}\unit_\Lambda 
\to 
\D_{k}^{\prime} 
$,
considered as an element in $\pi_{s_k-1,t_k} \D_{k}^{\prime}$, lifts to $\pi_{s_k-1,t_k} \D_{k-1}^{\prime}$. 
Choose a representative $\beta_{k+1}\colon \Sigma^{s_k-1,t_k}\unit_\Lambda \to \D_{k-1}^{\prime}$ and a homotopy 
$H\colon \Sigma^{s_k-1,t_k}\unit_\Lambda\smash \Delta^1_+ \to \D_{k}^{\prime}$ from 
$\Sigma^{s_k-1,t_k}\unit_\Lambda\xrightarrow{\beta_{k+1}} \D_{k-1}^{\prime}\xrightarrow{d_{k}}\D_{k}\xrightarrow{\sim} \D_{k}^{\prime}$
to the attaching map
$ 
\alpha_{k+1}\colon \Sigma^{s_k-1,t_k}\unit_\Lambda \to \D_{k}^{\prime} 
$
for the $k$th cell. 
Factor the canonical map $\D_{k-1}^\prime \to \D_{k}^\prime$ as an acyclic cofibration $\D_{k-1}^\prime \xrightarrow{\sim} \C_{k-1}^\prime$ followed by a fibration 
$q\colon \C_{k-1}^\prime \to \D_{k}^{\prime}$.
Lifting $H$ in the commutative diagram
\[ 
\xymatrix{\Sigma^{s_k-1,t_k}\unit_\Lambda\smash \Delta^0_+ \ar[r]^-{\beta_{k+1}}\ar[d] &
\C_{k-1}^{\prime} \ar[d]^-q \\ 
\Sigma^{s_k-1,t_k}\unit_\Lambda\smash \Delta^1_+ \ar[r]^-H & \D_{k}^{\prime}}
\]
produces a homotopy whose restriction to the other inclusion is an attaching map 
\[ 
\gamma_{k}\colon \Sigma^{s_k-1,t_k}\unit_\Lambda \to \C_{k-1}^{\prime}. 
\]
Define $\C_k$ by attaching a cell with respect to $\gamma_k$ and $\C_{k+1}$ by attaching a cell with respect to 
\[ 
\gamma_{k+1}\colon \Sigma^{s_{k-1}-1,t_{k-1}}\unit_\Lambda 
\to \D_{k-1}^{\prime} 
\xrightarrow{\sim} \C_{k-1}^{\prime} \to \C_k. 
\]
The universal property of pushouts --- applied twice --- produces a map $v_{k+1}\colon \C_{k+1} \to \D_{k+1}$ and a commutative diagram of homotopy cofiber sequences:
\[ \xymatrix{
\D_{k-1}^{\prime} \ar[r] & \D_{k+1} \ar[r] & \D_{k+1}/\D_{k-1}^{\prime} \\
\D_{k-1}^{\prime} \ar[r]\ar[d]_-\sim \ar[u]^-{=} & \C_{k+1} \ar[d]^-{=} \ar[r] \ar[u]_-{v_{k+1}} & \C_{k+1}/\D_{k-1}^{\prime} \ar[d]^-\sim \ar[u]_-w \\
\C_{k-1}^{\prime} \ar[r] & \C_{k+1}  \ar[r] & \C_{k+1}/\C_{k-1}^{\prime}}
\]
The map $w$ is a weak equivalence by construction; 
its source and target are weakly equivalent to $\Sigma^{s_{k-1},t_{k-1}}\unit_\Lambda\vee \Sigma^{s_{k},t_{k}}\unit_\Lambda$. 
Hence $v_{k+1}$ is a weak equivalence, 
producing a cell presentation $\C_\infty\xrightarrow{\sim}\D_\infty = \E^\prime$ of higher connectivity.
Iterating this finitely many times yields a cell presentation of $\E$ (with cells of the same degrees) in which the first $k+1$ cells are attached in their order of connectivity.
Proceeding by induction on the cell filtration of $\E$ provides the desired statement.
\end{proof}

The next result is essentially a modification of \cite[Lemma 7]{hu-kriz-ormsby} to our setup.

\begin{lemma}
\label{lem:presentation-connected}
Let $k$ be an integer, 
and let $f\colon \D\to \E$ be a $k$-connected map of motivic spectra.
If $f$ has a $\Lambda$-cell presentation of finite type, 
then it admits a $\Lambda$-cell presentation of finite type whose cells are at least $k$-connected.
\end{lemma}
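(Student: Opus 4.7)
The plan is to modify the given finite-type $\Lambda$-cell presentation by elementary operations so as to eliminate all cells of connectivity below $k$. Finite type ensures that only finitely many cells of any given connectivity appear, which makes the modifications tractable; the crucial input is the isomorphism on low-degree homotopy forced by $k$-connectedness of $f$.

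First, apply Lemma \ref{lem:rearrange} to arrange the cells of the given cell presentation in nondecreasing order of connectivity while preserving finite type. Let $m_0$ denote the minimum connectivity of any cell of the presentation. If $m_0 \geq k$, there is nothing to do, so assume $m_0 < k$. By finite type only finitely many cells of connectivity $m_0$ appear, and they are attached first.

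Consider the first such cell, attached to $\D$ via $\alpha \colon \Sigma^{s-1,t}\unit_\Lambda \to \D$ with $s - t = m_0$. The class $[\alpha] \in \pi_{s-1,t}(\D)$ is coned off by the new cell, so its image in $\pi_{s-1,t}(\D_\infty) \cong \pi_{s-1,t}(\E)$ vanishes. Since $f$ is $k$-connected and $(s-1) - t = m_0 - 1 < k$, the long exact sequence of the homotopy fiber forces $\pi_{s-1,t}(\D) \to \pi_{s-1,t}(\E)$ to be injective, whence $\alpha$ itself is null-homotopic. The pushout along a null-homotopic attaching map is weakly equivalent to the wedge, so the first stage of the presentation is equivalent to $\D \hookrightarrow \D \vee \Sigma^{s,t}\unit_\Lambda$.

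The spurious wedge summand $\Sigma^{s,t}\unit_\Lambda$ must eventually be annihilated for the colimit to recover $\E$: since $\pi_{s,t}(\D) \to \pi_{s,t}(\E)$ is again an isomorphism (as $s - t = m_0 < k$), a suitable translate of the new generator must lie in the kernel of the composite $\pi_{s,t}(\D \vee \Sigma^{s,t}\unit_\Lambda) \to \pi_{s,t}(\E)$, hence be killed by the attaching map of some later cell. By the rearrangement this cancelling cell has connectivity at least $m_0 + 1$. Standard elementary cell moves then permit us to slide the cancelling cell adjacent to the free wedge factor and remove both, yielding a new cell presentation of $f$ of finite type with strictly fewer cells of connectivity $m_0$. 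Iterating exhausts all $m_0$-cells in finitely many steps, after which the minimum connectivity strictly increases; a finite outer induction raises it to $k$. The main obstacle will be to implement the cell cancellation rigorously in the motivic setting and to check that the finite-type property is preserved at every step; here the rearrangement by connectivity and the finite-type hypothesis on the given presentation are essential.
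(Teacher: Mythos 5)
You correctly reproduce the first half of the paper's argument: after rearranging cells by connectivity (Lemma \ref{lem:rearrange}), the $k$-connectedness of $f$ forces the attaching map of any cell of connectivity $<k$ to be null-homotopic, so that stage of the presentation is equivalent to $\D\vee\Sigma^{s,t}\unit_\Lambda$. The gap is the cancellation step. From the fact that (a translate of) the wedge generator dies in $\pi_{s,t}(\E)$ you infer that it is ``killed by the attaching map of some later cell'' and that an elementary move removes the pair. But dying in the colimit only means the class lies in the image of a boundary map $\pi_{s+1,t}(\D_N,\D_{N-1})\to\pi_{s,t}(\D_{N-1})$ for some finite $N$, and that relative group is a sum $\bigoplus_i\pi_{s+1-s_i,\,t-t_i}\unit_\Lambda$ over all cells attached at that stage. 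The generator may only be hit by a $\pi_{\ast,\ast}\unit_\Lambda$-linear combination of several cell boundaries, each weighted by a non-unit (for instance $2$, $\eta$, or a class $[u]\in\mathbf{K}^{\MW}_1(F)$ coming from a cell of a different weight), in which case no single later cell cancels the wedge summand and deleting a pair changes the homotopy type. Genuine cell cancellation requires a cancelling cell whose attaching map hits the generator with \emph{unit} coefficient, which you neither arrange (by a change of basis on the cells) nor verify; you flag this yourself as ``the main obstacle,'' and the proof stands or falls on it.

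The paper sidesteps cancellation entirely by a ``promote rather than delete'' device. Having split off $\Sigma^{s,t}\unit_\Lambda$, it attaches a \emph{new} $(s+1,t)$-cell coning off that summand, so the modified stage is again equivalent to $\D$; pushing the remainder of the presentation forward along this equivalence yields a colimit of homotopy type $\E\vee\Sigma^{s+1,t}\unit_\Lambda$, and a further new $(s+2,t)$-cell cones off the extra sphere to recover $\E$. The net effect is to trade the offending $(s,t)$-cell for an $(s+2,t)$-cell of connectivity two higher, and finite type guarantees that finitely many iterations raise every cell above connectivity $k$. You should replace your cancellation step with this construction, or else supply the missing unit-coefficient analysis that an honest cancellation argument would require.
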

\begin{proof}
We may assume $f$ is a map between bifibrant motivic spectra. 
Let 
\[ 
\D=
\D_{-1}
\to 
\D_0
\xrightarrow{\sim} 
\D_0^\prime
\to 
\D_1
\xrightarrow{\sim} 
\D_1^\prime
\to 
\dotsc 
\to 
\D_\infty 
\xrightarrow{\sim}
\E
\]
be a $\Lambda$-cell presentation of finite type, 
where $\D_n^\prime$ is fibrant for every $n$. 
By Lemma~\ref{lem:rearrange} we may assume the $\Lambda$-cells are attached according to increasing connectivity.
Consider the first attaching $\Lambda$-cell 
and $\D_0=D^{s,t}\unit_\Lambda\cup_{\Sigma^{s-1,t}\unit_\Lambda}\D_{-1}$. 
If $s-t\geq k$, 
there is nothing to prove. 
If $s-t<k$, 
the canonical map $(\Sigma^{s-1,t}\unit_\Lambda,D^{s,t}\unit_\Lambda)\to (\D,\E)$ lifts to $(\D,\D)$ up to homotopy since $f\colon \D\to \E$ is $k$-connected. 
In other words the attaching map $\Sigma^{s-1,t}\unit_\Lambda\to \D$ is homotopic to a constant map. 
Choose a homotopy $H\colon \Sigma^{s-1,t}\unit_\Lambda\smash \Delta^1_+\to \D$ from the given attaching map to the constant map, 
and let $\C_0$ be the pushout of the diagram
\[
\xymatrix{
D^{s-1,t}\unit_\Lambda\smash \Delta^1_+ & \Sigma^{s-1,t}\unit_\Lambda\smash \Delta^1_+\ar[r] \ar[l] &  \D. }
\]
There are canonical acyclic cofibrations $\D_0\xrightarrow{\sim} \C_0 \xleftarrow{\sim}\D\vee \Sigma^{s,t}\unit_\Lambda$.
Attaching an $(s+1,t)$-$\Lambda$-cell via the map $\Sigma^{s,t}\unit_\Lambda\to \D\vee \Sigma^{s,t}\unit_\Lambda\to \C_0$ to $\C_0$ produces a motivic spectrum $\B_0$ receiving an
acyclic cofibration from $\D$. 
Let $\B_0^\prime$ be the pushout of the diagram
\[
\xymatrix{
\B_0 & \D_0\ar[r]^-\sim \ar[l] &  \D_0^\prime. }
\]
The canonical map $\D\to \B_0^\prime$ is an acyclic cofibration. 
Choose a retraction $\B_0^\prime\xrightarrow{\sim} \D$. 
By construction $\D_0^\prime\to \E$ has a $\Lambda$-cell presentation with precisely one $\Lambda$-cell of degree $(s,t)$ less than in the $\Lambda$-cell presentation of $f\colon \D\to \E$.
Cobase change along $\D_0^\prime\to \B_0^\prime\xrightarrow{\sim} \D$ produces a $\Lambda$-cell presentation for the induced map $\D \to \D_\infty\cup_{\D_0^\prime}\D$.
The target has the homotopy type of $\E\vee \Sigma^{s+1,t}\unit_\Lambda$ by construction. 
Thus attaching a $\Lambda$-cell of degree $(s+2,t)$ to $\D_\infty\cup_{\D_0^\prime}\D$ via the canonical map produces a motivic spectrum mapping via a weak equivalence to $\E$. 
Hence $\E$ admits a $\Lambda$-cell presentation with precisely one $\Lambda$-cell of degree $(s,t)$ less than $\D\to \D_\infty$ 
and precisely one $\Lambda$-cell of degree $(s+2,t)$ more than $\D\to \D_\infty$.
Iterating this procedure finitely many times provides the desired statement.
\end{proof}

Any map $\D\to \E$ of motivic spectra such that $\D$ and $E$ admit $\Lambda$-cell presentations of finite type acquires a $\Lambda$-cell presentation of finite type, 
as one deduces from applying the simplicial mapping cylinder.
Hence Lemma~\ref{lem:presentation-connected} applies to any map of motivic spectra admitting $\Lambda$-cell presentations of finite type.

Also \cite[Lemma 8]{hu-kriz-ormsby} requires a modification in our context.

\begin{lemma}
\label{lemma:equivalentcellularfinitetype}
Let $(\phi(n))_{n}$ be an increasing sequence of integers diverging to $+\infty$.
Suppose $\E$ is a $\Lambda$-cellular motivic spectrum.
If for each integer $n$ there exists a motivic spectrum $\E_{n}$ with a $\Lambda$-cell presentation of finite type and a $\phi(n)$-connected map $\E_{n}\to\E$, 
then $\E$ has a $\Lambda$-cell presentation of finite type.
\end{lemma}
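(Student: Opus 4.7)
The plan is to approximate $\E$ by a tower of finite-type cell complexes and extract a $\Lambda$-cell presentation of finite type of $\E$ from the aggregated cell attachments. Construct inductively motivic spectra $\F_0 \to \F_1 \to \F_2 \to \cdots$ with compatible maps $p_n \colon \F_n \to \E$ satisfying: (i) $\F_n$ admits a $\Lambda$-cell presentation of finite type; (ii) $p_n$ is $g(n)$-connected; and (iii) $\F_n \to \F_{n+1}$ is a finite-type $\Lambda$-cell attachment whose cells have connectivity at least $g(n)$. The base case is $\F_0 := \E_0$ equipped with its given structure map.

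Granted such a tower, the canonical map $\hocolim_n \F_n \to \E$ is a weak equivalence because $g(n) \to \infty$, and the accumulated cell attachments furnish a $\Lambda$-cell presentation of $\E$. It is of finite type: by (iii), cells of dimension $(s,t)$ with $s-t = k$ can only be contributed by transitions $\F_n \to \F_{n+1}$ with $g(n) \leq k$, of which there are finitely many, and by (i) each such transition contributes only finitely many cells in each bidegree. A final application of Lemma~\ref{lem:rearrange} reorders the cells by connectivity.

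For the inductive step, given $p_n \colon \F_n \to \E$ that is $g(n)$-connected, consider the map $q \colon \F_n \vee \E_{n+1} \to \E$ combining $p_n$ with the given structure map of $\E_{n+1}$. The wedge has a $\Lambda$-cell presentation of finite type, and $q$ is $g(n)$-connected since it restricts to a $g(n)$-connected map on $\F_n$ and a $g(n+1)$-connected map on $\E_{n+1}$. Apply Lemma~\ref{lem:presentation-connected} to $q$ to obtain a finite-type relative cell presentation of $\F_n \vee \E_{n+1} \to \E$ in which every attached cell has connectivity $\geq g(n)$. Halt this presentation just before attaching any cell of connectivity $> g(n+1)$; the resulting intermediate motivic spectrum is the desired $\F_{n+1}$. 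Then $\F_{n+1}$ has a finite-type presentation, the remaining cells all have connectivity $> g(n+1)$ so that $\F_{n+1} \to \E$ is $g(n+1)$-connected, and the composite $\F_n \hookrightarrow \F_n \vee \E_{n+1} \to \F_{n+1}$ is itself a finite-type cell attachment (the $\E_{n+1}$-cells attach trivially via the wedge inclusion, and only finitely many further cells come from the bounded range $[g(n), g(n+1)]$). A long exact sequence argument using that $p_n$ and $p_{n+1}$ are $g(n)$- and $g(n+1)$-connected respectively forces $\F_n \to \F_{n+1}$ to be $g(n)$-connected. Invoking Lemma~\ref{lem:presentation-connected} once more, now applied to $\F_n \to \F_{n+1}$, upgrades this finite-type presentation to one whose cells have connectivity $\geq g(n)$, establishing (iii).

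The principal obstacle is the inductive step, specifically ensuring property (iii) despite the $\E_{n+1}$-cells in the wedge possibly having low connectivity. The resolution is that once $\F_{n+1}$ is assembled, those low-connectivity $\E_{n+1}$-cells become redundant in the following sense: the composite $\F_n \to \F_{n+1}$ is nevertheless $g(n)$-connected, and Lemma~\ref{lem:presentation-connected} allows one to trade them for higher-connectivity cells without disturbing the finite-type property.
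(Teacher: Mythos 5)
Your global architecture (a tower of finite-type spectra with increasingly connected, finite-type transitions, whose colimit maps by an $\infty$-connected map to $\E$) is the right one and matches the paper's, but the inductive step contains a fatal error. The map $q\colon \F_n\vee\E_{n+1}\to\E$ is \emph{not} $g(n)$-connected just because its two restrictions are. Since a finite wedge of spectra is a product, the homotopy fiber of $q$ fits into a fiber sequence whose long exact sequence shows that, in degrees where both $p_n$ and the structure map of $\E_{n+1}$ are isomorphisms on homotopy presheaves (i.e.\ in all degrees $s-t+d<g(n)$), the fiber of $q$ contains the antidiagonal kernel of $\pi_{s,t}\F_n\oplus\pi_{s,t}\E_{n+1}\to\pi_{s,t}\E$, which is isomorphic to $\pi_{s,t}\E$ there. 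The simplest instance: for $\F_n=\E_{n+1}=\E=\unit$ with identity maps, $q$ is the fold map $\unit\vee\unit\to\unit$, whose homotopy fiber is $\unit$ itself, so $q$ is only $\conn(\E)$-connected no matter how large $g(n)$ is. Consequently Lemma~\ref{lem:presentation-connected} applied to $q$ cannot produce cells of connectivity $\geq g(n)$, property (iii) fails, and the "halting" step and the subsequent connectivity bookkeeping all collapse. There is a second, independent problem: Lemma~\ref{lem:presentation-connected} requires as a hypothesis that the map in question already admits a $\Lambda$-cell presentation of finite type. For a map whose target is $\E$, that hypothesis is essentially the conclusion of the lemma you are proving, so invoking it for $q$ (and again for $\F_n\to\F_{n+1}$ before $\F_{n+1}$ is known to be finite type relative to $\F_n$) is circular.

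Both problems disappear if you avoid routing through $\E$ in the inductive step. Because $\E_n$ is cellular and $\E_{n+1}\to\E$ is $g(n+1)$-connected, obstruction theory on the cells of $\E_n$ (as in Lemma~\ref{lem:conn-cell-reverse}) produces a map $f_n\colon\E_n\to\E_{n+1}$ commuting up to homotopy with the maps to $\E$; the fiber-sequence comparison $\operatorname{fib}(f_n)\to\operatorname{fib}(\E_n\to\E)\to\operatorname{fib}(\E_{n+1}\to\E)$ shows $f_n$ is $g(n)$-connected. Now $f_n$ is a map between two spectra each having a finite-type presentation, so it admits a finite-type relative presentation, and Lemma~\ref{lem:presentation-connected} legitimately upgrades it to one with cells of connectivity $\geq g(n)$. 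Taking the colimit of these presentations gives the desired finite-type presentation of $\E$. This is exactly the paper's proof.
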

\begin{proof}
The assumption on the connectivity of $\E_{n}\to \E$ implies there exists a $\phi(n)$-connected map $f_{n}\colon \E_{n}\to \E_{n+1}$ and a homotopy commutative diagram:
\[ 
\xymatrix{
\E_{n} \ar[rr] \ar[rd] & & \E_{n+1} \ar[ld] \\
& \E & }
\]
Lemma~\ref{lem:presentation-connected} implies $f_{n}$ admits a $\Lambda$-cell presentation $w_{n}\circ c_{n}$ of finite type comprised of $\Lambda$-cells of connectivity at least $\phi(n)$. 
The colimit of the maps $c_{n}$ yields the desired $\Lambda$-cell presentation of finite type.
\end{proof}

The following statement adapts \cite[Comment after Lemma 8, p.~581]{hu-kriz-ormsby} to our notion of connectivity.

\begin{lemma}
\label{lem:conn-cell-reverse}
Let $g\colon \E\to \C$ be a $k$-connected map of motivic spectra for $\C$ cellular. 
Choose a cell presentation $c\colon \C^\prime \xrightarrow{\sim} \C$ and a fibrant replacement $e\colon \E\xrightarrow{\sim} \E^\dagger$.
Then there exists a $k-1$-connected cellular inclusion $i\colon \D\hookrightarrow \C^\prime$ and a $k-1$-connected map $h\colon \D\to \E^\dagger$ such that $g\circ e^{-1} \circ h$ equals 
$c\circ i$ in the homotopy category. 
In particular, 
if $\C^\prime$ is of finite type or has no cells below a fixed weight, 
then the same holds for $\D$.
\end{lemma}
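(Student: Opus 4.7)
The plan is to build $\D$ as the cellular subcomplex of $\C^\prime$ consisting of precisely those cells of dimension $(s,t)$ with $s-t\le k-1$, and to construct $h$ inductively by obstruction-theoretic lifting along the cell attachments.

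First, using Lemma~\ref{lem:rearrange} I rearrange the given cell presentation of $\C^\prime$ in order of non-decreasing connectivity, so that after some stage $m$ every further cell has dimension $(s,t)$ with $s-t\ge k$. Set $\D := \C^\prime_m$, together with the evident cellular inclusion $i\colon \D \hookrightarrow \C^\prime$. Because $\C^\prime/\D$ admits a cell presentation with all cells of dimension $(s,t)$ satisfying $s-t\ge k$, and $\Sigma^{s,t}\unit$ is then $k$-connected by Example~\ref{example:connectivity-thom}, an inductive cofiber sequence argument (using that $k$-connectedness passes under extensions and sequential colimits, the latter via compactness of the generating spheres) yields that $\C^\prime/\D$ is $k$-connected. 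Hence $i$ is $(k-1)$-connected.

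Second, let $F$ denote the homotopy fiber of $g\circ e^{-1}\colon \E^\dagger\to \C$, which is $k$-connected by hypothesis. I build $h_n\colon \D_n\to \E^\dagger$ together with a homotopy $H_n\colon (g\circ e^{-1})\circ h_n\simeq c\circ i_n$ by induction on $n\le m$. At the inductive step, suppose $\D_{n+1}$ is obtained from $\D_n$ by attaching an $(s,t)$-cell with $s-t\le k-1$ along $\alpha\colon \Sigma^{s-1,t}\unit\to \D_n$. The extension of $i_n\circ \alpha$ across the cell in $\C^\prime_{n+1}$, postcomposed with $c$, nullifies $(g\circ e^{-1})\circ h_n\circ \alpha$; thus the class of $h_n\circ \alpha$ in $\pi_{s-1,t}\E^\dagger$ lies in the image of $\pi_{s-1,t}F$. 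Since $s-t\le k-1$ implies $s-1-t<k$, the group $\pi_{s-1,t}F$ vanishes, so $h_n\circ \alpha$ is null and extends to some $h_{n+1}\colon \D_{n+1}\to \E^\dagger$. The obstruction to extending $H_n$ across the new cell is an element $\omega\in\pi_{s,t}\C$; modifying the extension $h_{n+1}|_{D^{s,t}\unit}$ by an element of $\pi_{s,t}\E^\dagger$ shifts $\omega$ by the image of $\pi_{s,t}\E^\dagger\to \pi_{s,t}\C$, which is surjective because the next term $\pi_{s-1,t}F$ in the long exact sequence also vanishes. After the adjustment, $H_n$ extends to $H_{n+1}$. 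Passing to the colimit yields $h\colon \D\to \E^\dagger$ together with a homotopy realizing $g\circ e^{-1}\circ h = c\circ i$ in $\SH$.

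Finally, the fiber sequence $\mathrm{fib}(h)\to \mathrm{fib}(c\circ i)\to \mathrm{fib}(g\circ e^{-1})$, combined with the $(k-1)$-connectedness of $c\circ i$ (since $c$ is a weak equivalence and $i$ is $(k-1)$-connected) and the $k$-connectedness of $g\circ e^{-1}$, forces $\mathrm{fib}(h)$ to be $(k-1)$-connected via the associated long exact sequence. The closing clause on finite type and weight bounds is immediate, as the cells of $\D$ are a subset of the cells of $\C^\prime$ with their original dimensions. The principal obstacle is the coherent inductive choice of the nullhomotopies $H_n$ across the filtration, which requires both the vanishing of $\pi_{s-1,t}F$ (to nullify $h_n\circ\alpha$) and the concomitant surjectivity of $\pi_{s,t}\E^\dagger\to \pi_{s,t}\C$ (to adjust $h_{n+1}$ against the secondary obstruction); both reduce to the same vanishing, which holds in the admissible range $s-t\le k-1$.
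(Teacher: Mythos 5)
Your approach is essentially the paper's: take for $\D$ the subcomplex of low-dimensional cells of $\C^\prime$, build the reverse map $h$ cell by cell using the $k$-connectedness of $g$ to kill the lifting obstructions, and deduce the connectivity of $h$ from that of $i$ and $g$. The obstruction analysis in your second step and the fiber-sequence argument at the end are correct and in fact more detailed than what the paper records; the choice of cutoff $s-t\le k-1$ (the paper uses $s-t\le k$, yielding a $k$-connected $i$) is immaterial, since in either range the relevant groups $\pi_{s-1,t}\mathrm{fib}(g)$ vanish.

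There is, however, a gap in your first step. Lemma~\ref{lem:rearrange} is stated only for $\Lambda$-cell presentations \emph{of finite type}, and, more importantly, the existence of a finite stage $m$ beyond which every cell has $s-t\ge k$ presupposes that $\C^\prime$ has only finitely many cells with $s-t\le k-1$. That holds for finite-type presentations, but the lemma is asserted for an arbitrary cell presentation of a cellular $\C$ — finite type enters only in the closing ``in particular'' clause — so your construction of $\D$ as a truncation $\C^\prime_m$ does not make sense in general. The repair is exactly what the paper does: define $\D_{n}\subset\C_{n}^\prime$ at each stage as the collection of cells of dimension $(s,t)$ in the admissible range, and observe via Example~\ref{example:connectivity-thom} that the restriction of the $n$th attaching map to those cells factors through $\D_{n}\to\C_{n}^\prime$ up to homotopy (the attaching spheres are of too low connectivity to see $\C_{n}^\prime/\D_{n}$), so that $\D$ is built in parallel with $\C^\prime$ without any rearrangement. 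Your lifting argument then goes through verbatim on this corrected $\D$.
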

\begin{proof}
Choose a diagram $\ast = \C_0^\prime\to \C_1^\prime\to \dotsm $, 
with $\C_{n+1}^\prime$ obtained by attaching cells to $\C_{n}^\prime$, 
whose colimit is $\C^\prime$.
Let $\D$ be the motivic spectrum comprised of all cells in $\C^\prime$ of dimension $(s,t)$ with $s-t\leq k$. 
It is constructed inductively by restricting the corresponding attaching map. 
By Example~\ref{example:connectivity-thom} the restriction of the $n$th attaching map to cells of dimension $(s,t)$ with $s-t\leq k$ to $\C_{n}^\prime$ factors through 
$\D_{n}\to \C_{n}^\prime$ up to homotopy. 
This allows us to form the motivic spectrum $\D_{n+1}$.
  
The reverse map $\D\to \E$ in $\SH$ is constructed inductively, starting with the point. 
Suppose $h_{n}\colon \D_{n}\to \E$ has been constructed with $g\circ h_{n} = i_{n}$ in $\SH$, 
where $i_{n}\colon \D_{n}\to \C_{n}^\prime\to \C^\prime$ is the inclusion described above. 
Since $g\colon \E\to \C$ is $k$-connected, 
every map $\Sigma^{s,t}\unit \to \D_{n}\to \C_{n}^\prime\to \C$ with $s-t<k$ lifts to $\E$.
This yields a map $h_{n+1}\colon \D_{n+1} \to \E$ such that $g\circ h_{n+1}=i_{n+1}$ in $\SH$. 
The inclusion $\D\hookrightarrow \C^\prime$ is $k-1$-connected by Example~\ref{example:connectivity-thom}, 
hence so is $h$.
\end{proof}

\begin{lemma}
\label{lem:connectivity-slices}
If $\E$ is a $k$-connected cellular motivic spectrum, 
then $\f_{n}(\E)$ and $\s_{n}(\E)$ are $k$-connected for all $n\in \ZZ$ and $\slicecomp(\E)$ is at least $k-1$-connected.
\end{lemma}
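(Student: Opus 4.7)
The plan is to produce a cell presentation of $\E$ in which every cell has dimension $(s,t)$ with $s-t\geq k$, and to deduce the three connectivity statements from such a presentation cell by cell. Since $\E$ is cellular and $k$-connected, Lemma~\ref{lem:cellular-connectivity} gives $\pi_{s,t}\E=0$ for $s-t<k$, and rerunning the construction of Lemma~\ref{lem:cellular-presentation} while restricting the index set of the initial wedge and each subsequent kernel to the range $s-t\geq k$ furnishes the desired cell presentation.

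For $\s_n\E$ the argument is direct: the slice functor commutes with homotopy colimits (\cite[Corollary 4.5]{spitzweck.relations}, \cite[Lemma 4.2]{voevodsky.open}) and with coproducts, so it suffices to analyse each cell. The identity $\s_n(\Sigma^{s,t}\unit) = \Sigma^{s,t}\s_{n-t}\unit$ combined with Theorem~\ref{theorem:sphereslices} and the vanishing of $\Ext^{p,2q}_{\BP_\ast\BP}(\BP_\ast,\BP_\ast)$ outside the range supplied by Lemma~\ref{lem:hopf-slices-global} shows that $\s_{n-t}\unit$ is $0$-connected; hence each $\Sigma^{s,t}\s_{n-t}\unit$ is $(s-t)$-connected, and an induction along the cell filtration using that cofibres of $k$-connected spectra are $k$-connected propagates this to $\s_n\E$.

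For $\f_n\E$ the same cell-by-cell strategy works once one checks that $\f_n$ preserves filtered homotopy colimits (which holds because $\Sigma^{2n,n}\SH^{\eff}\hookrightarrow\SH$ has compact generators inherited from $\SH$, so its right adjoint preserves filtered colimits). The identity $\f_n(\Sigma^{s,t}\unit) = \Sigma^{s,t}\f_{n-t}\unit$ reduces matters to showing that $\f_m\unit$ is $0$-connected for every $m\in\ZZ$, which I would prove by induction on $m$: the cases $m\leq 0$ are immediate from $\f_m\unit=\unit$ and the connected-pair hypothesis. For $m\geq 1$, I would apply the inductive hypothesis to the cellular cone $C$ of $\unit\to\MGL$ (Lemma~\ref{lem:cone-unit-mgl} and Remark~\ref{rem:cone-unit-mgl}), whose cells are $S^{2q,q}$ for $q\geq 1$: since $\f_m(S^{2q,q})=\Sigma^{2q,q}\f_{m-q}\unit$ is at least $q$-connected by induction (as $m-q<m$), the motivic spectrum $\f_mC$ is $1$-connected, and combined with Lemma~\ref{lem:slice-convergence-mgl} the long exact sequence of the distinguished triangle $\f_m\unit\to\f_m\MGL\to\f_mC$ forces $\f_m\unit$ to be $0$-connected. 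This double induction, simultaneously controlling the connectivity of $\f_m\unit$ and of $\f_mC$ via the cellular structure of $C$, is the main obstacle of the proof.

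Finally, applying $\holim_q$ to the distinguished triangle $\f_q\E\to\E\to\f^{q-1}\E$ produces the distinguished triangle $\holim_q\f_q\E\to\E\to\slicecomp\E$. Since each $\f_q\E$ is $k$-connected, the Milnor $\lim^1$ short exact sequence shows $\holim_q\f_q\E$ is at least $(k-1)$-connected, one degree being lost to the $\lim^1$ term; the long exact sequence of homotopy groups associated to the above triangle then yields that $\slicecomp(\E)$ is at least $(k-1)$-connected.
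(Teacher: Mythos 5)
Your proposal is correct, and its overall skeleton coincides with the paper's (very terse) proof: establish the statement for the sphere spectrum, propagate it to all cells by the commutation of $\f_{n}$ and $\s_{n}$ with homotopy colimits, and accept the loss of one degree of connectivity when passing to the homotopy limit defining $\slicecomp$. The genuine difference lies in the base case. The paper simply asserts that the result for $\unit$ follows from Theorem~\ref{theorem:sphereslices}; that theorem does give the $0$-connectivity of each $\s_{m}\unit$ directly (via the vanishing line for $\Ext_{\BP_\ast\BP}$ — note that the fact you want is the Edge Theorem recalled in Appendix~\ref{sec:ext-groups-complex}, not Lemma~\ref{lem:hopf-slices-global}), but it does not by itself yield the $0$-connectivity of $\f_{m}\unit$, since the long exact sequence of $\f_{m}\unit\to\unit\to\f^{m-1}\unit$ only bounds the connectivity of the fibre one degree below that of the other two terms. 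You close exactly this gap with the induction on $m$ comparing against $\MGL$ through the cone of the unit map (Lemma~\ref{lem:cone-unit-mgl}, Remark~\ref{rem:cone-unit-mgl}) and the $q$-connectivity of $\f_{q}\MGL_{\Lambda}$ (Lemma~\ref{lem:slice-convergence-mgl}): since the cone has cells $S^{2q,q}$ with $q\geq 1$, its $m$th effective cover is $1$-connected by induction, and the resulting long exact sequence squeezes $\f_{m}\unit$ into degrees $\geq 0$. This is a correct and worthwhile supplement — it makes explicit a point the paper leaves implicit — at the cost of importing the comparison with algebraic cobordism into what the paper presents as a purely formal consequence of the computation of the slices of the sphere.
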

\begin{proof}
The result holds for the motivic sphere spectrum $\unit$ by Theorem~\ref{theorem:sphereslices},
and hence for all spheres.
To conclude recall that $\f_{n}$ and $\s_{n}$ commute with homotopy colimits, 
and a homotopy limit of $k$-connected motivic spectra is at least $k-1$-connected.
\end{proof}

\subsection{Convergence}
\label{sec:convergence-sub}
We are interested in finding conditions on a motivic spectrum such that its slice completion admits a $\Lambda$-cell presentation of finite type.

\begin{definition}
\label{defn:slice-connected}
A motivic spectrum $\E$ is called {\em slice-connected\/} if for every $n\in \ZZ$ there exists a natural number $\phi(n)$ with the properties:
\begin{enumerate}
\item The $n$th slice $\s_{n}(\E)$ is $\phi(n)$-connected.
\item The sequence $(\phi(n))$  is non-decreasing and diverges to $+\infty$.
\end{enumerate}
\end{definition}

\begin{definition}
\label{def:slice-finitary}
A motivic spectrum $\E$ is called $\Lambda$-{\em slice-finitary\/} if for every $n\in \ZZ$ there exist natural numbers $\phi(n)\leq \psi(n)$ 
and a finite collection $\{C_{n,0},C_{n,1},\dotsc,C_{n,\psi(n)}\}$ of finitely presented $\Lambda$-modules with the properties:
\begin{enumerate}
\item The $n$th slice $\s_{n}(\E)$ is weakly equivalent to the (finite) sum of motivic Eilenberg-MacLane spectra 
$\Sigma^{n+\phi(n),n}\MC_{n,0}\vee\Sigma^{n+\phi(n)+1,n}\MC_{n,1}\vee\dotsm\vee\Sigma^{n+\phi(n)+\psi(n),n}\MC_{n,\psi(n)}$.
\item The sequence $(\phi(n))$ is non-decreasing and diverges to $+\infty$.
\item There exists an integer $e$ such that $\f_e(\E)=\E$.
\end{enumerate}
A $\ZZ$-slice-finitary motivic spectrum is simply called slice-finitary.
\end{definition}

\begin{example}
\label{example:slice-connected}
Algebraic cobordism is $\Lambda$-slice-finitary. 
The Moore spectrum $\unit_\Lambda/p^m$ is $\Lambda$-slice-finitary for every odd prime $p$ and natural number $m$. 
The motivic spectrum $\unit_\Lambda[\tfrac{1}{2}]$ is slice-connected,
but not $\Lambda$-slice-finitary (although it is $\Lambda[\tfrac{1}{2}]$-slice-finitary).
In fact, 
\[ 
\phi(n) 
= 
\begin{cases} n & n \mathrm{\ odd} \\ 
n-1 & n \mathrm{\ even}
\end{cases}
\] 
is a sequence of lower bounds for the connectivity of the slices of $\unit_\Lambda[\tfrac{1}{2}]$.
However, 
$\f_1(\unit_\Lambda[\tfrac{1}{2}])$ is $\Lambda$-slice-finitary.
Theorem~\ref{theorem:zahlers-hope} implies $\unit_\Lambda/\eta^m$ is $\Lambda$-slice-finitary for all $m\geq 1$. 
\end{example}

\begin{lemma}
\label{lem:slice-ctd-fin}
Any $\Lambda$-slice-finitary motivic spectrum is slice-connected.
\end{lemma}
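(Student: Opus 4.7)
The plan is to bound the connectivity of each slice directly from the wedge decomposition supplied by Definition~\ref{def:slice-finitary}. The essential input is that for any finitely presented $\Lambda$-module $C$ the motivic Eilenberg--MacLane spectrum $\MC$ is $0$-connected in the sense of Definition~\ref{def:connectivity}. Since $\Lambda$ is a localization of $\ZZ$ and hence a PID, $C$ admits a finite free presentation $0\to\Lambda^m\to\Lambda^n\to C\to 0$, which induces a cofibre sequence
\[
\bigvee_{j=1}^{m}\M\Lambda \to \bigvee_{j=1}^{n}\M\Lambda \to \MC
\]
in $\SH$. The first two terms are $0$-connected by the connected pair hypothesis (Definition~\ref{def:connected-pair}), and a long exact sequence argument on the groups $[\Sigma^{s,t}X_+,-]$ with $X\in\Sm_S$ of dimension $d$ then yields $[\Sigma^{s,t}X_+,\MC]=0$ whenever $s-t+d<0$, so $\MC$ is $0$-connected.

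Granting this, fix $n\in\ZZ$ and consider the decomposition
\[
\s_n\E \simeq \Sigma^{n+a_n,n}\MC_{n,0}\vee\Sigma^{n+a_n+1,n}\MC_{n,1}\vee\dotsm\vee\Sigma^{n+b_n,n}\MC_{n,b_n}
\]
from Definition~\ref{def:slice-finitary}(1). For each summand and each $X\in\Sm_S$ of dimension $d$, mapping out of $\Sigma^{s,t}X_+$ yields the motivic cohomology group $H^{n+a_n+i-s,\,n-t}(X;C_{n,i})$, which by the $0$-connectivity of $\MC_{n,i}$ vanishes as soon as $s-t+d<a_n+i$. Thus each summand is at least $a_n$-connected. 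Since $\Sigma^{s,t}X_+$ is compact, the functor $[\Sigma^{s,t}X_+,-]$ converts the finite wedge into the direct sum of its terms, so $\s_n\E$ itself is $a_n$-connected.

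By Definition~\ref{def:slice-finitary}(2) the sequence $(a_n)$ is non-decreasing and diverges to $+\infty$, which supplies exactly the data required by Definition~\ref{defn:slice-connected}. For $n<e$ (where $\f_e\E=\E$) one has $\s_n\E=0$, so the connectivity bound is vacuous and any choice of $a_n$ compatible with monotonicity suffices. I do not anticipate any substantial obstacle; the only delicate point is the $0$-connectivity of $\MC$ for arbitrary finitely presented $C$, which is handled by the cofibre sequence argument above.
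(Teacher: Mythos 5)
Your proof is correct and follows the same route as the paper's: identify $[\Sigma^{s,t}X_+,\s_n\E]$ with the direct sum of motivic cohomology groups coming from the finite wedge decomposition in Definition~\ref{def:slice-finitary} and invoke the connectivity of the motivic Eilenberg--MacLane spectrum from Definition~\ref{def:connected-pair}. The only difference is that you spell out, via a finite free presentation over the PID $\Lambda$ and the associated cofibre sequence, why $\MC$ is $0$-connected for a finitely presented $\Lambda$-module $C$ -- a step the paper leaves implicit -- and this detail is handled correctly.
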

\begin{proof}
This follows from the isomorphisms for $X\in \Sm_S$
\begin{align*}
[\Sigma^{s,t}  X_+,\s_{n}\E] & \iso [\Sigma^{s,t}  X_+,\Sigma^{n+\phi(n),n}  \MC_{n,0} \vee \dotsm \vee \Sigma^{n+\psi(n),n}  \MC_{n,\psi(n)}] \\
& \iso H^{n+\phi(n)-s,n-t}(X;C_{n,0}) \directsum\dotsm \directsum  H^{n+\psi(n)-s,n-t}(X;C_{n,\psi(n)}) 
\end{align*}
and the assumption on the motivic Eilenberg-MacLane spectrum in Definition~\ref{def:connected-pair}.
\end{proof}

\begin{lemma}
\label{lem:slice-ctd-fin-triangulated}
The full subcategories of slice-connected and $\Lambda$-slice-finitary motivic spectra of $\SH$ are triangulated.
\end{lemma}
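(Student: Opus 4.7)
The plan is to verify that each subcategory is closed under the shift functors $\Sigma^{\pm 1,0}$ and under cones of morphisms, which together characterize a triangulated subcategory. Closure under shifts is immediate: $\s_n$ commutes with simplicial suspension, so the slices of $\Sigma^{\pm 1,0}\E$ are $\Sigma^{\pm 1,0}\s_n\E$, and Definition~\ref{def:connectivity} shifts the connectivity sequence $(a_n)$ by $\pm 1$, preserving monotonicity and divergence. In the $\Lambda$-slice-finitary case, shifts also preserve the finite-wedge presentation and the effectiveness bound $e$.

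To handle cones I would apply the triangulated functor $\s_n$ to a distinguished triangle $\E\to \F\to \G$, obtaining a distinguished triangle $\s_n\E\to \s_n\F\to \s_n\G$ of slices. The long exact sequence of $[\Sigma^{\ast,t}X_+,-]$-groups for $X\in \Sm_S$ of dimension at most $d$ shows that if $\s_n\E$ is $a_n^{\E}$-connected and $\s_n\F$ is $a_n^{\F}$-connected, then $\s_n\G$ is at least $\min(a_n^{\F},a_n^{\E}+1)$-connected; symmetric bounds for the other two-out-of-three configurations follow from triangle rotation. Since the minimum of two non-decreasing divergent sequences is itself non-decreasing and divergent, this settles the slice-connected case.

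The $\Lambda$-slice-finitary case additionally requires preservation of the finite-wedge presentation. Here I would use that $\Lambda$ is a localization of $\mathbb{Z}$ and hence a PID. Via the full embedding $\mathbf{D}_{\Lambda}\hookrightarrow \mathbf{D}_{\M\Lambda}$ at fixed weight $n$ from Remark~\ref{rem:cell-filt-basechange}, a slice-finitary slice $\s_n\E$ corresponds to a bounded complex of finitely presented $\Lambda$-modules, and the full subcategory of such complexes is thick in $\mathbf{D}_{\Lambda}$. Hence the cone of a map between slice-finitary slices is again of this form, and since over a PID any such complex splits as the direct sum of its cohomology groups placed in the appropriate degrees, $\s_n\G$ inherits the required wedge decomposition with simplicial-degree bounds possibly enlarged but still finite. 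The condition $\f_e\E=\E$ cuts out the triangulated subcategory $\Sigma^{2e,e}\SH^{\eff}$, so taking $e$ to be the minimum of the respective bounds for $\E$ and $\F$ handles the effectiveness requirement for $\G$.

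The main obstacle is the wedge-preservation step in the previous paragraph. A priori, a map between two wedges of Eilenberg-MacLane spectra with finitely presented coefficients could have a cone that realizes a nontrivial extension class and therefore fails to be a wedge of Eilenberg-MacLane spectra. Formality of bounded complexes with finitely generated cohomology over a PID is what ultimately rules this out.
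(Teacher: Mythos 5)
Your proof is correct and follows essentially the same route as the paper: the slice-connected case is handled by the fact that $\s_n$ is a triangulated functor, and the $\Lambda$-slice-finitary case by identifying maps, and cones of maps, between weight-$n$ Eilenberg--MacLane spectra with the corresponding data for $\Lambda$-modules. Your formality-over-a-PID argument via the embedding of Remark~\ref{rem:cell-filt-basechange} is a repackaging of Lemma~\ref{lem:steenrod-alg-weight-zero}, which is precisely the reference the paper's proof invokes for this wedge-preservation step.
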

\begin{proof}
For slice-connected motivic spectra this follows since $\s_{n}$ is a triangulated functor. 
Lemma \ref{lem:steenrod-alg-weight-zero} allows us to conclude for $\Lambda$-slice-finitary motivic spectra.
\end{proof}

\begin{lemma}
\label{lem:slice-connected}
Suppose $\E$ is a slice-connected motivic spectrum for the sequence $(\phi(n))$.
Then for every $n\in \ZZ$, 
the maps $\slicecomp(\E) \to \f^{n-1}(\E)$ and $\holim_{q}\f_{q}(\E) \to \f_{n}(\E)$ are $\phi(n)$-connected.
\end{lemma}
\begin{proof}
If $X\in\Sm_{S}$ is of dimension at most $d$, 
the homotopy cofiber sequence
\[ 
\s_{n}(\E)
\to 
\f^{n+1}(\E) 
\to 
\f^n(\E) 
\to 
\Sigma^{1,0}\s_{n}(\E)
\]
implies that $[\Sigma^{s,t}X_+,\f^{n+1}(\E)\to \f^n(\E)]$ is an isomorphism for $s-t+d<\phi(n)$. 
Since $(\phi(n))$ is non-decreasing and diverges to $+\infty$, 
the map  $\f^{n+m}(\E)\to \f^n(\E)$ is $\phi(n)$-connected for every natural number $m$. 
Thus the $\lim^1$-term in the $\lim$-$\lim^1$-short exact sequence for 
\[ 
[\Sigma^{s,t}X_+,\slicecomp(\E)=\holim_{q\to \infty}\f^{q-1}(\E)]
\]
vanishes, 
and for all $s,t$ and $X\in \Sm_{S}$ there is an isomorphism
\[ 
[\Sigma^{s,t}X_+,\slicecomp(\E)]
\iso 
\lim_{q\to \infty}[\Sigma^{s,t}X_+\f^{q-1}(\E)].
\]
The asserted connectivity follows from the analogous statement for $\holim_{q} \f_{q}(\E)$.
\end{proof}

\begin{lemma}
\label{lem:hocolim-slice-ctd}
Let $\alpha\mapsto \E_\alpha$ be an $I$-indexed diagram of slice-connected motivic spectra. 
Suppose there exists a non-decreasing sequence $(\phi(n))$ with $\lim_{n\to \infty}\phi(n)=+\infty$ such that $\s_{n}(\E_\alpha)$ is at least $\phi(n)$-connected for every $\alpha$. 
Then the homotopy colimit 
\[ 
\hocolim_{\alpha\in I} \E_\alpha 
\]
is slice-connected, 
and the $n$th slice $\s_{n} (\hocolim_{\alpha\in I} \E_\alpha)$ is at least $\phi(n)$-connected. 
Moreover, 
there is a natural weak equivalence
\[  
\hocolim_{\alpha\in I} \slicecomp(\E_\alpha)
\xrightarrow{\sim} 
\slicecomp\bigl( \hocolim_{\alpha\in I} \E_\alpha\bigr). 
\]
\end{lemma}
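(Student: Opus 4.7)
The plan is to combine the commutation of $\s_n$ and $\f_n$ with homotopy colimits, cited earlier from \cite[Corollary 4.5]{spitzweck.relations} and \cite[Lemma 4.2]{voevodsky.open}, with Lemma~\ref{lem:slice-connected}, which quantifies slice-connectedness by the connectivity of the canonical maps $\slicecomp(\E)\to\f^{n-1}(\E)$. All other ingredients are formal triangle-chases in the stable category.

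The first step is to observe that the full subcategory of $k$-connected motivic spectra in the sense of Definition~\ref{def:connectivity} is closed under arbitrary homotopy colimits. Compactness of the generators $\Sigma^{s,t}X_+$ gives closure under filtered colimits, while the long exact sequence of a homotopy cofibre triangle gives closure under wedges and mapping cones; a general homotopy colimit in a stable category is built from these operations. Applied to $\s_n(\hocolim_\alpha\E_\alpha)\simeq\hocolim_\alpha\s_n(\E_\alpha)$ this shows $\hocolim_\alpha\E_\alpha$ is again slice-connected with the same sequence $(a_n)$, proving the first assertion.

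For the second assertion, fix $n\in\ZZ$. Since $\f_n$ commutes with homotopy colimits, so does $\f^{n-1}$, whence $\hocolim_\alpha\f^{n-1}(\E_\alpha)\simeq\f^{n-1}(\hocolim_\alpha\E_\alpha)$. Lemma~\ref{lem:slice-connected} applied termwise produces $a_n$-connected maps $\slicecomp(\E_\alpha)\to\f^{n-1}(\E_\alpha)$; their homotopy colimit inherits $a_n$-connectivity from the first step and yields an $a_n$-connected map
\[
h\colon\hocolim_\alpha \slicecomp(\E_\alpha)\to\f^{n-1}(\hocolim_\alpha\E_\alpha).
\]
Lemma~\ref{lem:slice-connected} applied directly to $\hocolim_\alpha\E_\alpha$ supplies a second $a_n$-connected map
\[
g\colon\slicecomp(\hocolim_\alpha\E_\alpha)\to\f^{n-1}(\hocolim_\alpha\E_\alpha).
\]
Functoriality of $\slicecomp$ in the structure maps $\E_\alpha\to\hocolim_\beta\E_\beta$ assembles into the natural comparison map
\[
\Phi\colon\hocolim_\alpha\slicecomp(\E_\alpha)\to\slicecomp(\hocolim_\alpha\E_\alpha)
\]
satisfying $h=g\circ\Phi$. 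The octahedral axiom applied to this factorisation yields a homotopy fibre sequence $\mathrm{fib}(\Phi)\to\mathrm{fib}(h)\to\mathrm{fib}(g)$ in which the last two terms are $a_n$-connected, forcing $\mathrm{fib}(\Phi)$ to be $a_n$-connected. Since this holds for every $n$ and $a_n\to+\infty$, the fibre of $\Phi$ is weakly contractible and $\Phi$ is a weak equivalence.

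The principal technical point is the first step: the closure of $k$-connectivity under arbitrary homotopy colimits is essential but requires the standard reduction from a general indexing category to the combination of filtered colimits, wedges, and mapping cones. Once that is in place the remainder is a formal consequence of the octahedral axiom, the commutation of $\f_n$ and $\s_n$ with homotopy colimits, and Lemma~\ref{lem:slice-connected}.
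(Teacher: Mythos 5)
Your proof is correct and follows the same route as the paper's (much terser) argument: closure of $k$-connectivity under homotopy colimits for the first part, and comparison of both sides with $\f^{n-1}(\hocolim_\alpha\E_\alpha)$ via the $a_n$-connected maps of Lemma~\ref{lem:slice-connected} for the second. (The long exact sequence for $\mathrm{fib}(\Phi)\to\mathrm{fib}(h)\to\mathrm{fib}(g)$ only gives that $\mathrm{fib}(\Phi)$ is $(a_n-1)$-connected rather than $a_n$-connected, but since $a_n\to+\infty$ this does not affect the conclusion.)
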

\begin{proof}
Use that $\s_{n}$ and $\f^n$ commute with homotopy colimits and connectivity is preserved under homotopy colimits.
The second part follows from Lemma~\ref{lem:slice-connected}.
\end{proof}

\begin{proposition}
\label{prop:slicecomp-cellular}
If $\E$ is a $\Lambda$-slice-finitary motivic spectrum then the slice completion $\slicecomp(\E)$ has a $\Lambda$-cell presentation of finite type.
\end{proposition}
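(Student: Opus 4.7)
The strategy is to approximate $\slicecomp(\E)=\holim_q\f^{q-1}(\E)$ by $\Lambda$-cellular subcomplexes of the stages $\f^{q-1}(\E)$ and then invoke Lemma~\ref{lemma:equivalentcellularfinitetype}. The first task is to produce $\Lambda$-cell presentations of finite type for each stage $\f^{n-1}(\E)$. Since $\Lambda$ is a localization of $\ZZ$, and hence a principal ideal domain, every finitely presented $\Lambda$-module decomposes as a finite direct sum of copies of $\Lambda$ and cyclic torsion modules $\Lambda/d\Lambda$, so each $\MC_{n,i}$ of Definition~\ref{def:slice-finitary} admits a $\Lambda$-cell presentation of finite type built from Proposition~\ref{prop:MZcellular} via the cofiber sequences $\M\Lambda\xrightarrow{d}\M\Lambda\to\M(\Lambda/d)$. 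Each slice $\s_n(\E)$ is therefore a finite wedge of shifted spectra each with a $\Lambda$-cell presentation of finite type. Using that $\f_e(\E)=\E$, so $\f^{e-1}(\E)\simeq\ast$, the cofiber sequences $\s_{n-1}(\E)\to\f^{n-1}(\E)\to\f^{n-2}(\E)$ supplied by the octahedral axiom applied to $\f_n(\E)\to\f_{n-1}(\E)\to\E$ yield the claim for $\f^{n-1}(\E)$ by induction on $n$.

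\noindent Next, I exploit the increasing connectivity of $\slicecomp(\E)\to\f^{n-1}(\E)$. By Lemma~\ref{lem:slice-ctd-fin} the spectrum $\E$ is slice-connected with sequence $(a_n)$, so Lemma~\ref{lem:slice-connected} shows this map is $a_n$-connected. Applying Lemma~\ref{lem:conn-cell-reverse} in $\SH_\Lambda$ with $\C=\f^{n-1}(\E)$ and the finite-type $\Lambda$-cell presentation $\C^\prime$ produced above yields a $\Lambda$-cellular subcomplex $\D_n\hookrightarrow\C^\prime$ consisting exactly of those cells of dimension $(s,t)$ with $s-t\leq a_n$, together with an $(a_n-1)$-connected map $\D_n\to\slicecomp(\E)^\dagger$ lifting the inclusion. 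Because $\C^\prime$ is of finite type and $\D_n$ retains only cells whose connectivities lie between the effective bound inherited from $\E=\f_e(\E)$ and $a_n$, the spectrum $\D_n$ inherits a $\Lambda$-cell presentation of finite type (in fact, $\D_n$ is finite).

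\noindent Since $a_n-1\to+\infty$, compatible lifts $\D_n\to\D_{n+1}$ exist for $n$ large enough by the connectivity of $\D_{n+1}\to\slicecomp(\E)$ and the cellularity of $\D_n$. A standard range argument, using that for fixed $(s,t,X)$ with $X\in\Sm_S$ of dimension $d$ the maps $\D_n\to\D_{n+1}$ and $\D_n\to\slicecomp(\E)$ both induce isomorphisms on $[\Sigma^{s,t}X_+,-]$ once $s-t+d<a_n-1$, shows that the resulting telescope maps satisfy $\hocolim_n\D_n\xrightarrow{\sim}\slicecomp(\E)$ in $\SH_\Lambda$. In particular $\slicecomp(\E)$ lies in $\SH_{\cell,\Lambda}$, and Lemma~\ref{lemma:equivalentcellularfinitetype} applied with $\E_n=\D_n$ and $g(n)=a_n-1$ delivers the desired $\Lambda$-cell presentation of finite type for $\slicecomp(\E)$.

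\noindent The main obstacle is the connectivity bookkeeping: verifying that Lemma~\ref{lem:conn-cell-reverse} indeed transports the finite-type property from $\f^{n-1}(\E)$ to $\D_n$ in the $\Lambda$-local category, that the successive $\D_n$ can be chosen compatibly to form a coherent telescope, and that $\hocolim_n\D_n$ computes $\slicecomp(\E)$ up to weak equivalence despite the latter being defined as a homotopy limit. The appeal to the effective bound $\f_e(\E)=\E$ in Definition~\ref{def:slice-finitary}(3) is essential for keeping the dimensions of the cells of $\D_n$ bounded below, and hence for the finite-type conclusion.
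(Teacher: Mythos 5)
Your proposal is correct and follows essentially the same route as the paper: establish finite-type $\Lambda$-cell presentations for the truncations $\f^{n-1}(\E)$, use Lemma~\ref{lem:slice-connected} to get $a_n$-connectivity of $\slicecomp(\E)\to\f^{n-1}(\E)$, reverse these maps via Lemma~\ref{lem:conn-cell-reverse} to obtain a compatible telescope $\D_n$ with $\hocolim_n\D_n\xrightarrow{\sim}\slicecomp(\E)$, and conclude with Lemma~\ref{lemma:equivalentcellularfinitetype}. The extra details you supply (the PID decomposition of the coefficients $C_{n,i}$ and the induction on the truncation tower starting from $\f^{e-1}(\E)\simeq\ast$) are exactly what the paper leaves implicit.
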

\begin{proof}
Lemma~\ref{lem:slice-connected} implies that $\slicecomp(\E) \to \f^n(\E)$ is $\phi(n)$-connected.
The assumption on $\E$ implies that all slices have a $\Lambda$-cell presentation of finite type by Proposition~\ref{prop:MZcellular}. 
Since $\E$ is $e$-effective for some integer $e$, 
$\f^n(\E)$ is $e$-effective and has a $\Lambda$-cell presentations of finite type. 
Lemma~\ref{lem:conn-cell-reverse} allows us to reverse the canonical maps in order to produce a $\phi(n)-1$-connected map $\D_n\to \slicecomp(\E)$ for every $n\geq e$.
We may arrange that the reversion process produces a $\phi(n)-1$-connected map $\D_n\to \D_{n+1}$ for every $n\geq e$, 
which commutes up to homotopy with the reversed maps. 
This produces an $\infty$-connected map $\hocolim_n \D_n \to \slicecomp(\E)$.
In particular, 
$\slicecomp(\E)$ has a $\Lambda$-cell presentation of finite type by Lemma~\ref{lemma:equivalentcellularfinitetype}.
\end{proof}

\begin{proposition}
\label{prop:holim-eff-covers}
Let $\E$ be a $\Lambda$-slice-finitary motivic spectrum with a $\Lambda$-cell presentation of finite type.
Then $\holim_{q} \f_{q}(\E)$ admits a $\Lambda$-cell presentation of finite type, and its slices are trivial.
\end{proposition}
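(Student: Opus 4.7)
The plan is to realize $\holim_q\f_q(\E)$ as the fiber of the slice-completion map $\E\to\slicecomp(\E)$, and then combine Proposition~\ref{prop:slicecomp-cellular} with the slice-connectedness of $\E$ to argue that this fiber must be contractible; both assertions of the proposition follow at once. The distinguished triangle
\[ \holim_q\f_q(\E)\to\E\to\slicecomp(\E) \]
arises by taking the homotopy limit over $q$ in the triangles $\f_q(\E)\to\E\to\f^{q-1}(\E)$, since $\holim_q\E=\E$ and $\holim_q\f^{q-1}(\E)=\slicecomp(\E)$ by Definition~\ref{def:slicecompletion}.

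For the cell presentation of finite type, $\E$ carries one by hypothesis and $\slicecomp(\E)$ carries one by Proposition~\ref{prop:slicecomp-cellular}, using that $\E$ is $\Lambda$-slice-finitary. The fiber of a map between motivic spectra equipped with such presentations inherits one: its mapping cone is assembled from the cells of the target together with the one-fold suspensions of the cells of the source, preserving both finiteness in each bidegree $(s,t)$ with $s-t=n$ and the uniform lower bound on $s-t$; a desuspension handles the passage from cofiber to fiber. Hence $\holim_q\f_q(\E)$ admits a cell presentation of finite type.

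To establish $\infty$-connectivity I would first apply Lemma~\ref{lem:slice-ctd-fin} to obtain a non-decreasing sequence $(a_n)_{n\in\ZZ}$ with $a_n\to+\infty$ such that $\s_n(\E)$ is $a_n$-connected. Because $a_k\geq a_n$ whenever $k\geq n$, the effective cover $\f_n(\E)$, presented as the homotopy limit of its Postnikov-style truncations $\f_n(\E)/\f_{m+1}(\E)$—each a finite extension of $a_k$-connected slices for $n\leq k\leq m$—is itself $a_n$-connected, via a Mittag-Leffler argument ensuring vanishing of the relevant $\lim^{1}$-term. Combined with the $a_n$-connectedness of $\holim_q\f_q(\E)\to\f_n(\E)$ supplied by Lemma~\ref{lem:slice-connected}, this forces $\holim_q\f_q(\E)$ to be $a_n$-connected for every $n$, and so $\infty$-connected. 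Since the spectrum is cellular by the previous paragraph, Lemma~\ref{lem:cellular-connectivity} yields $\pi_{s,t}\holim_q\f_q(\E)=0$ for all integers $s$, $t$, whence $\holim_q\f_q(\E)\simeq 0$ and, in particular, all its slices are trivial.

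The two delicate technical points are the preservation of the \emph{finite type} property under taking fibers—where the bookkeeping of $(s,t)$-bidegrees and uniform lower bound on $s-t$ across the mapping-cone construction must be done carefully—and the propagation of connectivity from individual slices $\s_k(\E)$ to the full effective cover $\f_n(\E)$ via the Postnikov-truncation tower. Both are routine but represent the main obstacles; once they are in hand, the remainder of the argument is a direct application of the structural lemmas already developed.
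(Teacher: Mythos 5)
Your first paragraph is fine, and in fact gives a legitimate alternative route to the first assertion: realizing $\holim_q\f_q(\E)$ as the fiber of $\E\to\slicecomp(\E)$ and combining the hypothesis with Proposition~\ref{prop:slicecomp-cellular} does produce a ($\Lambda$-)cell presentation of finite type. The gap is in the second half. You assert that $\f_n(\E)$ is $a_n$-connected by presenting it as the homotopy limit of the truncations $\f_n(\E)/\f_{m+1}(\E)$; but the statement $\f_n(\E)\simeq\holim_m\f_n(\E)/\f_{m+1}(\E)$ is exactly the assertion that $\f_n(\E)$ is slice-complete, equivalently that $\holim_m\f_m(\E)$ is contractible --- which is (a strengthening of) what you are trying to prove, so the argument is circular. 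The fiber sequences $\f_{q+1}\E\to\f_q\E\to\s_q\E$ only yield that every $\f_n(\E)$ is $(a_0-1)$-connected; nothing forces the connectivity of the effective covers to grow with $n$. Moreover the conclusion you draw, $\holim_q\f_q(\E)\simeq\ast$, is strictly stronger than the proposition (which claims only that the \emph{slices} of $\holim_q\f_q(\E)$ vanish, not the spectrum) and is false in this generality: by Theorem~\ref{theorem:eta-slice-completion} it would say $\E\simeq\E^{\wedge}_{\eta}$ for every $\Lambda$-slice-finitary $\E$ of finite type, making Lemma~\ref{lem:cone-eta-two}, Corollary~\ref{cor:slice-comp-cell-finite-type} and the bootstrap in Proposition~\ref{prop:slice-convergence} superfluous. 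For instance $\unit_{\Lambda}/p^{m}$ with $p$ odd is $\Lambda$-slice-finitary with a finite cell presentation, yet over $\RR$ the odd-primary part of the signature in $\pi_{0,0}$ survives $\eta$-inversion but dies in the $\eta$-completion, so the fiber of $\unit_{\Lambda}/p^{m}\to(\unit_{\Lambda}/p^{m})^{\wedge}_{\eta}$ is not contractible.

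The vanishing of the slices therefore cannot be obtained through contractibility, and it also cannot be read off directly because slices do not commute with homotopy limits. The intended argument is: Lemma~\ref{lem:slice-connected} gives that $\holim_q\f_q(\E)\to\f_n(\E)$ is $a_n$-connected; Lemma~\ref{lem:conn-cell-reverse} reverses these maps to produce $(a_n-1)$-connected maps $\D_n\to\holim_q\f_q(\E)$, where $\D_n$ is the finite-type cellular subspectrum of $\f_n(\E)$ on cells of dimension $(s,t)$ with $s-t\leq a_n$, hence has no cells of weight $<n$; these assemble into an $\infty$-connected, hence weak, equivalence $\hocolim_n\D_n\to\holim_q\f_q(\E)$. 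Since slices do commute with homotopy colimits and $\s_m(\D_n)=0$ for $n>m$, every slice of the colimit, and therefore of $\holim_q\f_q(\E)$, is trivial. You should replace the contractibility claim with this colimit argument.
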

\begin{proof}
Lemma~\ref{lem:slice-connected} implies the canonical map $\holim_{q} \f_{q}(\E) \to \f_{n}(\E)$ is $\phi(n)$-connected. 
Since $\E$ is slice-connected and the condition on the orders appearing in the slices holds, 
the slices of $\E$ are cellular of finite type by Proposition~\ref{prop:MZcellular}.
Since $\E$ is of finite type, 
in particular effective, 
$\f_{q}(\E)$ is of finite type.
Lemma~\ref{lem:conn-cell-reverse} reverses the canonical map $\holim_{q} \f_{q}(\E) \to \f_{n}(\E)$ to produce an $\phi(n)-1$-connected map $\D_{n} \to \holim_{q} \f_{q}(\E)$, 
with $\D_{n}\hookrightarrow \f_{n}(\E)$ a cellular inclusion. 
In particular, 
$\D_{n}$ is of finite type, 
with no cells of weight strictly less than $n$. 
As in Proposition~\ref{prop:slicecomp-cellular} 
we obtain a weak equivalence
\begin{equation}
\label{equation:inftyconnected}
\hocolim_{n} \D_{n} 
\xrightarrow{\sim}
\holim_{q} \f_{q}(\E). 
\end{equation}
Since slices commute with homotopy colimits and $\D_{n}$ is $n$-effective, 
$\s_{\ast}(\holim_{q}\f_{q}(\E))=\ast$.
Moreover,
$\hocolim_{n} \D_{n}$ is of finite type.
\end{proof}

\begin{lemma}
\label{lem:connectivity-slicecomp-cell-ft}
Let $\Phi$ be an endofunctor of the category of motivic spectra which commutes with homotopy colimits,
commutes with $\Sigma^{a,b}$ up to weak equivalence,
and preserves cellularity and connectivity in the sense that $\conn(\Phi(\E))$-$\conn(\E)$ is constant.
If the canonical map
\[ 
\Phi(\unit)
\to 
\slicecomp(\Phi(\unit)) 
\]
is $1$-connected, 
then for every cellular motivic spectrum $\E$ of finite type the canonical map
\[ 
\Phi(\E)
\to 
\slicecomp(\Phi(\E)) 
\]
is $1+\conn(\E)$-connected.
\end{lemma}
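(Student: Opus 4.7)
The plan is a cellular induction whose final colimit step is controlled by Lemma~\ref{lem:connectivity-slices}. For the base case $\E=\Sigma^{a,b}\unit$ the hypothesis on $\Phi$ gives $\Phi(\Sigma^{a,b}\unit)\simeq \Sigma^{a,b}\Phi(\unit)$, and the slice completion functor $\slicecomp$ commutes with $\Sigma^{a,b}$ because $\f_{q}\circ \Sigma^{a,b}\simeq \Sigma^{a,b}\circ \f_{q-b}$ and reindexing leaves a sequential homotopy limit unchanged. Hence the canonical map is $\Sigma^{a,b}$ applied to the $1$-connected map $\Phi(\unit)\to \slicecomp\Phi(\unit)$, giving a $(1+a-b)$-connected map; this matches $1+\conn(\E)$ by Example~\ref{example:connectivity-thom}.

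Using Lemma~\ref{lem:rearrange} I then arrange a cell presentation of $\E$ in non-decreasing order of $s-t$, producing a sequential filtration $\E_{0}\subset\E_{1}\subset\cdots\subset \E=\hocolim_{n}\E_{n}$ by finite cell subcomplexes, each of connectivity at least $k:=\conn(\E)$. For finite cell complexes I induct on the number of cells: when $\E_{n+1}$ is obtained from $\E_{n}$ by attaching one $(s,t)$-cell with $s-t\geq k$, the cofiber sequence $\Sigma^{s-1,t}\unit\to \E_{n}\to \E_{n+1}$ is preserved by $\Phi$ (commuting with homotopy colimits) and by $\slicecomp$ (each $\f^{q-1}$ is triangulated as the cofiber of $\f_{q}\to \id$, and sequential homotopy limits preserve fiber/cofiber sequences of motivic spectra). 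Writing $\F$ for the fiber of the canonical map $\Phi(-)\to \slicecomp\Phi(-)$, the resulting map of cofiber sequences induces a fiber sequence $\F_{\mathrm{sphere}}\to \F_{n}\to \F_{n+1}$. The base case supplies $\conn(\F_{\mathrm{sphere}})=s-t$ and induction gives $\conn(\F_{n})\geq 1+k$, whence
\[
\conn(\F_{n+1})\geq \min\bigl(\conn(\F_{n}),\,\conn(\F_{\mathrm{sphere}})+1\bigr)\geq \min(1+k,\,s-t+1)=1+k.
\]

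To pass to the colimit, I consider the commutative square with horizontal maps $\Phi(\E_{n})\to \Phi(\E)$ and $\slicecomp\Phi(\E_{n})\to \slicecomp\Phi(\E)$. Since both $\Phi$ and $\slicecomp$ are triangulated, the horizontal cofibers are $\Phi(\E/\E_{n})$ and $\slicecomp\Phi(\E/\E_{n})$. All cells of $\E/\E_{n}$ have $s-t$ diverging with $n$, and $\Phi$ preserves connectivity up to a fixed constant, so $\Phi(\E/\E_{n})$ is $c(n)$-connected with $c(n)\to\infty$; Lemma~\ref{lem:connectivity-slices} then forces $\slicecomp\Phi(\E/\E_{n})$ to be at least $(c(n)-1)$-connected. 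Combining this with the $(1+k)$-connected left vertical $\Phi(\E_{n})\to \slicecomp\Phi(\E_{n})$ from the finite-cell case in a total-fiber chase for the square (the common iterated fiber $\mathcal{Q}=\hofib(\hofib\,\text{top}\to \hofib\,\text{bot})=\hofib(\F_{n}\to \F_{\infty})$ becomes highly connected in $n$) shows that the right vertical $\Phi(\E)\to \slicecomp\Phi(\E)$ is at least $\min(1+k,\,c(n)-2)$-connected for every $n$. Letting $n\to\infty$ yields $(1+k)$-connectivity, as required. The main obstacle is precisely this last step, since $\slicecomp$ is a sequential homotopy limit which does not commute with the homotopy colimit defining $\Phi(\E)$; Lemma~\ref{lem:connectivity-slices}, bounding how much slice completion can degrade connectivity, is the essential input that circumvents this incompatibility.
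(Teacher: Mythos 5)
Your proof is correct and follows essentially the same route as the paper's: the base case via commuting $\Phi$ and $\slicecomp$ with $\Sigma^{a,b}$, induction on the number of cells for finite complexes, and then passage to the colimit by comparing with $\E_{(n)}$ and using that the connectivity of $\Phi(\E/\E_{(n)})$ diverges, with Lemma~\ref{lem:connectivity-slices} controlling the connectivity of $\slicecomp(\Phi(\E/\E_{(n)}))$. You spell out several steps the paper leaves implicit, but the argument is the same.
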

\begin{proof}
Induction on the number of cells implies the result for motivic spectra with finitely many cells.
If $\E$ is cellular of finite type, 
consider the homotopy cofiber sequence
\[ 
\Phi(\E_{(n)}) 
\to
\Phi(\E) 
\to 
\Phi(\E/\E_{(n)}) 
\to 
\Sigma^{1,0}  
\Phi(\E_{(n)}), 
\]
where $\E_{(n)}$ is the subspectrum of $\E$ comprising the first $n$ cells. 
Since the connectivity of $\E/\E_{(n)}$ increases with $n$, 
and hence by assumption on $\Phi$ and Lemma~\ref{lem:connectivity-slices} also the connectivity of the slice completion $\slicecomp(\Phi(\E/\E_{(n)}))$,
the result follows.
\end{proof}

\begin{corollary}
\label{cor:slice-comp-cell-finite-type}
Let $(S,\Lambda)$ be a compatible pair where $S$ is a field.
Suppose $\E$ has a $\Lambda$-cell presentation of finite type.
Then the canonical map  
\[ 
\E/\eta
\to 
\slicecomp(\E/\eta)
\]
is $1+\conn(\E)$-connected.
\end{corollary}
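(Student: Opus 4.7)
The plan is to deduce this as a direct application of Lemma~\ref{lem:connectivity-slicecomp-cell-ft}, taking $\Phi$ to be the cofiber-of-$\eta$ endofunctor $\Phi(\E) := \E/\eta$, all interpreted in the $\Lambda$-local stable motivic homotopy category $\SH_{\Lambda}$ (so that ``cellular of finite type'' in the statement of that lemma is read as ``$\Lambda$-cellular of finite type'' and $\unit$ as $\unit_{\Lambda}$).

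First I would verify that $\Phi$ satisfies the four hypotheses of Lemma~\ref{lem:connectivity-slicecomp-cell-ft}. By construction there is a natural cofiber sequence
\[
\Sigma^{1,1}\E \xrightarrow{\eta\wedge\E} \E \to \E/\eta \to \Sigma^{2,1}\E,
\]
from which it is immediate that $\Phi$ commutes with $\Sigma^{a,b}$ up to canonical weak equivalence, with arbitrary homotopy colimits (cofibers and suspensions do), and with the formation of $\Lambda$-cellular spectra (the cofiber of a map of $\Lambda$-cellular objects is $\Lambda$-cellular). For the connectivity hypothesis, Example~\ref{example:connectivity-thom} shows that $\Sigma^{1,1}\unit_{\Lambda}$ is $0$-connected, so $\Sigma^{1,1}\E$ has the same connectivity as $\E$; combined with the cofiber sequence above this yields $\conn(\E/\eta)=\conn(\E)$, i.e.~the shift $\conn(\Phi(\E))-\conn(\E)$ is the constant $0$.

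Second, the remaining hypothesis of Lemma~\ref{lem:connectivity-slicecomp-cell-ft} — namely that $\Phi(\unit_{\Lambda})\to\slicecomp(\Phi(\unit_{\Lambda}))$ is $1$-connected — is exactly Lemma~\ref{lem:cone-eta-two}. With all hypotheses in place, Lemma~\ref{lem:connectivity-slicecomp-cell-ft} applied to any $\Lambda$-cellular $\E$ of finite type gives that $\E/\eta\to\slicecomp(\E/\eta)$ is $(1+\conn(\E))$-connected, which is the claim.

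The main obstacle is not in the corollary itself but in establishing the input Lemma~\ref{lem:cone-eta-two}, which is where the delicate slice-by-slice analysis of $\unit_{\Lambda}/\eta$ occurs (and in particular the use of the map $\f_{2}(\unit_{\Lambda}/\eta)\to\s_{2}(\unit_{\Lambda}/\eta)$ together with the Hopf element $\nu$ to pin down surjectivity in the critical bidegree); once that is in hand the corollary is a purely formal consequence of the structural properties of $(-)/\eta$ listed above.
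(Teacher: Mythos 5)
Your proposal is correct and is exactly the paper's argument: the proof given there reads ``This follows from Lemmas~\ref{lem:cone-eta-two} and~\ref{lem:connectivity-slicecomp-cell-ft}'', i.e.\ one applies Lemma~\ref{lem:connectivity-slicecomp-cell-ft} to $\Phi=(-)/\eta$ with Lemma~\ref{lem:cone-eta-two} supplying the $1$-connectedness hypothesis for $\unit_\Lambda/\eta$. Your explicit verification of the structural hypotheses on $(-)/\eta$ via the cofiber sequence $\Sigma^{1,1}\E\to\E\to\E/\eta$ is the intended (and left implicit) justification.
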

\begin{proof}
This follows from Lemmas \ref{lem:cone-eta-two} and \ref{lem:connectivity-slicecomp-cell-ft}.
\end{proof}

\begin{proposition}
\label{prop:slice-convergence}
Let $(S,\Lambda)$ be a compatible pair where $S$ is a field.
Suppose $\E$ has a $\Lambda$-cell presentation of finite type.
Then the canonical map $\E/\eta\to \slicecomp(\E/\eta)$ is a weak equivalence.
\end{proposition}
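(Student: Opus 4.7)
My strategy is to first establish the weak equivalence in the case $\E=\unit_{\Lambda}$, and then to deduce the general case from this sphere case via a cofibre-triangle comparison involving the sub-complex comprising the low-connectivity cells of $\E$.

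For the sphere case, I use that $\unit_{\Lambda}/\eta$ is $\Lambda$-slice-finitary by Example~\ref{example:slice-connected}.  Proposition~\ref{prop:holim-eff-covers} then applies and guarantees that the homotopy fibre
\[
F:=\holim_{q}\f_{q}(\unit_{\Lambda}/\eta)
\]
of the canonical map $\unit_{\Lambda}/\eta\to\slicecomp(\unit_{\Lambda}/\eta)$ admits a $\Lambda$-cell presentation of finite type and has trivial slices.  Writing $(a_{n})$ for the non-decreasing slice-connectivity sequence of $\unit_{\Lambda}/\eta$ furnished by slice-finitariness with $a_{n}\to+\infty$, Lemma~\ref{lem:slice-connected} shows that $F\to\f_{n}(\unit_{\Lambda}/\eta)$ is $a_{n}$-connected, while an analysis of the tower $\f_{n}(\unit_{\Lambda}/\eta)/\f_{m}(\unit_{\Lambda}/\eta)$, whose associated graded consists of $a_{n}$-connected slices $\s_{k}(\unit_{\Lambda}/\eta)$ for $n\leq k<m$, shows $\f_{n}(\unit_{\Lambda}/\eta)$ itself is $(a_{n}-1)$-connected.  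Consequently $[\Sigma^{s,t}X_{+},F]=0$ for every bidegree $(s,t)$ and every $X\in\Sm_{S}$, and since $\SH_{\Lambda}$ is compactly generated by such shifted suspension spectra, $F\simeq 0$ and the sphere case follows.

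For general $\E$ with a $\Lambda$-cell presentation of finite type, rearrange the cells in non-decreasing order of connectivity (Lemma~\ref{lem:rearrange}) and let $\E_{N}\subseteq\E$ denote the finite sub-complex containing cells of dimension $(s,t)$ with $s-t\leq N$.  Because $\E_{N}$ is a finite cellular spectrum, an induction on its cell attachments, using that both $-/\eta$ and $\slicecomp$ are triangulated and that $\slicecomp$ commutes with the suspensions $\Sigma^{s,t}$, reduces $\E_{N}/\eta\to\slicecomp(\E_{N}/\eta)$ to the sphere case and hence to a weak equivalence.  The cofibre $\E/\E_{N}$ inherits a $\Lambda$-cell presentation of finite type with cells of $s-t\geq N+1$, so it is $(N+1)$-connected, and Corollary~\ref{cor:slice-comp-cell-finite-type} makes $(\E/\E_{N})/\eta\to\slicecomp((\E/\E_{N})/\eta)$ an $(N+2)$-connected map.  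Applying $-/\eta$ and $\slicecomp$ to the cofibre triangle $\E_{N}\to\E\to\E/\E_{N}$ and invoking the five-lemma in $\SH_{\Lambda}$ then shows that $\E/\eta\to\slicecomp(\E/\eta)$ is $(N+2)$-connected for every $N$, hence a weak equivalence.

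The main obstacle is the sphere case, and specifically the careful translation of the slice-finitary data of $\unit_{\Lambda}/\eta$ into the vanishing of $[\Sigma^{s,t}X_{+},F]$ for all triples.  The slice-finitariness of $\unit_{\Lambda}/\eta$ that underlies this step ultimately rests on the Andrews-Miller computation (Theorem~\ref{theorem:zahlers-hope}) as used in Example~\ref{example:slice-connected}.
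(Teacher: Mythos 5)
Your reduction of the general case to the sphere case (cell induction for finite subcomplexes, connectivity of the quotient $\E/\E_N$, five lemma) is sound and essentially reproduces Lemma~\ref{lem:connectivity-slicecomp-cell-ft}; the problem is that the sphere case, on which everything rests, contains a circular step that is in fact the entire content of the proposition. You claim that because the truncations $\f_n(\unit_\Lambda/\eta)/\f_m(\unit_\Lambda/\eta)$ are finite extensions of $a_n$-connected slices, the effective cover $\f_n(\unit_\Lambda/\eta)$ is itself $(a_n-1)$-connected. But $\f_n(\unit_\Lambda/\eta)$ agrees with the homotopy limit of these truncations only up to the error term $F=\holim_q\f_q(\unit_\Lambda/\eta)$ --- precisely the fibre whose vanishing you are trying to prove. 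Connectivity of the slices never controls the connectivity of the effective covers by itself; this is exactly why slice convergence is a nontrivial question (compare $\KT$ in Example~\ref{example:convergence}, and the remark before Lemma~\ref{lem:slice-convergence-mgl} that $\f_q(\unit_\Lambda)$ does not gain connectivity with $q$). Likewise, a spectrum of finite type with trivial slices is not a priori contractible, so Proposition~\ref{prop:holim-eff-covers} alone does not finish the argument.

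The missing input is a connectivity bound on $\f_q(\unit_\Lambda/\eta)$ obtained from outside the slice tower. The paper gets this from the comparison $\unit/\eta\to\MGL$: the explicit Thom-space cell structure of $\MGL$ makes $\f_q(\MGL_\Lambda)$ $q$-connected (Lemma~\ref{lem:slice-convergence-mgl}), the cone of $\unit/\eta\to\MGL$ lies in $\Sigma^{4,2}\SH^{\eff}$ (Lemma~\ref{lem:cone-eta-mgl}), and a separate argument using complex realization, the Hopf map $\nu$, and the $\pi_{n,n}\unit$-module structure handles $\f_2$ (Lemma~\ref{lem:cone-eta-conn}). Even then one only obtains that $\unit_\Lambda/\eta\to\slicecomp(\unit_\Lambda/\eta)$ is $1$-connected (Lemma~\ref{lem:cone-eta-two}), not an equivalence; the upgrade to a weak equivalence is a second, bootstrapping argument, applying $\slicecomp$ to the cofibre sequence $\holim_q\f_q(\E/\eta)\to\E/\eta\to\slicecomp(\E/\eta)$ and iterating the connectivity estimate on the fibre, which is again of finite type by Proposition~\ref{prop:holim-eff-covers}. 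Your proposal contains no substitute for either of these steps; note also that Corollary~\ref{cor:slice-comp-cell-finite-type}, which you invoke for $\E/\E_N$, already depends on the hard Lemma~\ref{lem:cone-eta-conn} that your sphere-case argument attempts to bypass.
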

\begin{proof}
There is a commutative diagram of homotopy cofiber sequences:
\begin{equation}
\label{equation:etadiagram} 
\xymatrix{
\holim_{q} \f_{q} (\E/\eta) \ar[r] \ar[d] & \E/\eta \ar[r]\ar[d] & \slicecomp{(\E/\eta )} \ar[d] \\
\slicecomp\bigl(\holim_{q} \f_{q} (\E/\eta )\bigr) \ar[r]  & \slicecomp(\E/\eta )\ar[r] & \slicecomp\bigl(\slicecomp{(\E/\eta )}\bigr) 
}
\end{equation}
Example~\ref{example:slice-connected} implies $\E/\eta $ is slice-finitary.
By Proposition~\ref{prop:holim-eff-covers} the right vertical map in (\ref{equation:etadiagram}) is a weak equivalence. 
Note that $\slicecomp(\holim_{q} \f_{q} (\E/\eta ))$ is contractible, 
see Proposition~\ref{prop:holim-eff-covers}. 
The homotopy limit $\holim_{q} \f_{q} (\E/\eta )$ is $1+\conn(\E)$-connected by Corollary~\ref{cor:slice-comp-cell-finite-type}, 
and cellular of finite type by Proposition~\ref{prop:holim-eff-covers}.
Applying Corollary~\ref{cor:slice-comp-cell-finite-type} to $\holim_{q}\f_{q}(\E/\eta )$ shows the left vertical map in (\ref{equation:etadiagram}) is $2+\conn(\E)$-connected. 
By the five lemma the middle vertical map in (\ref{equation:etadiagram}) is $2+\conn(\E)$-connected.
Thus the left vertical map in (\ref{equation:etadiagram}) is $3+\conn(\E)$-connected. 
Iterating this argument finishes the proof.
\end{proof}

We can now give a criterion for when the slice completion coincides with the $\eta$-completion.
\begin{theorem}
\label{theorem:eta-slice-completion}
Let $(S,\Lambda)$ be a compatible pair where $S$ is a field.
Suppose $\E$ has a $\Lambda$-cell presentation of finite type.
Then there is a canonical zig-zag of weak equivalences between the slice completion $\slicecomp(\E)$ and the $\eta$-completion $\E^{\wedge}_{\eta}$. 
\end{theorem}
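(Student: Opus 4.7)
The plan is to combine Proposition~\ref{prop:slice-convergence} with the strategy behind Lemma~\ref{lemma3}, first upgrading the slice-completeness of $\E/\eta$ to every $\E/\eta^{n}$ and then passing to the homotopy limit in $n$ to relate the two completions. First I would prove by induction on $n\geq 1$ that $\E/\eta^{n}$ is slice complete. The base case $n=1$ is Proposition~\ref{prop:slice-convergence}. For the inductive step, the octahedral axiom applied to the factorization $\eta^{n}=\eta^{n-1}\circ \Sigma^{n-1,n-1}\eta$ furnishes a cofiber sequence
\[
\Sigma^{n-1,n-1}(\E/\eta)\to \E/\eta^{n}\to \E/\eta^{n-1},
\]
and since $\slicecomp$ preserves cofiber sequences -- being a homotopy limit of the triangulated functors $\f^{q-1}$ in the stable category $\SH$ -- the outer terms being slice complete forces the middle one to be slice complete as well by the triangulated five lemma.

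Next, applying $\slicecomp$ to the cofiber sequence $\Sigma^{n,n}\E\xrightarrow{\eta^{n}}\E\to \E/\eta^{n}$ identifies $\slicecomp(\E)/\eta^{n}\simeq \slicecomp(\E/\eta^{n})\simeq \E/\eta^{n}$ for every $n\geq 1$. Because $\slicecomp$ commutes with homotopy limits (being itself a homotopy limit), taking $\holim_{n}$ yields
\[
\slicecomp(\E)^{\wedge}_{\eta}\simeq \holim_{n}\slicecomp(\E)/\eta^{n}\simeq \holim_{n} \E/\eta^{n}= \E^{\wedge}_{\eta}.
\]
The same computation with $\E$ replaced by $\E^{\wedge}_{\eta}$ shows $\slicecomp(\E^{\wedge}_{\eta})\simeq \E^{\wedge}_{\eta}$, so $\E^{\wedge}_{\eta}$ is itself slice complete.

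To close the argument I would verify that $\slicecomp(\E)$ is $\eta$-complete, producing the zig-zag
\[
\slicecomp(\E)\xrightarrow{\sim}\slicecomp(\E)^{\wedge}_{\eta}\xleftarrow{\sim} \E^{\wedge}_{\eta}.
\]
By Theorem~\ref{theorem:s0isomorphism} every slice $\s_{q}(\E)$ carries an $\M\Lambda$-module structure, and since the image of $\eta$ in $\pi_{1,1}\M\Lambda\iso H^{-1,-1}(S;\Lambda)$ vanishes, $\eta$ acts trivially on each slice; Lemma~\ref{lemma2} then makes each slice $\eta$-complete. Since $\eta$-completion is itself a homotopy limit, the class of $\eta$-complete motivic spectra is closed under homotopy limits, so each $\f^{q-1}(\E)$ -- assembled from $\eta$-complete slices -- is $\eta$-complete, and so is $\slicecomp(\E)=\holim_{q}\f^{q-1}(\E)$.

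The main obstacle will be the $\eta$-completeness of $\slicecomp(\E)$ in full generality: a $\Lambda$-cell presentation of finite type bounds $s-t$ below but need not bound the weight $t$ below, so $\E$ need not satisfy $\f_{e}(\E)=\E$ for any integer $e$, and Lemma~\ref{lemma1} does not apply directly. Overcoming this requires expressing each $\f^{q-1}(\E)$ as a possibly transfinite homotopy limit of finite extensions of $\eta$-complete slices, after which closure of $\eta$-completeness under homotopy limits delivers the desired conclusion.
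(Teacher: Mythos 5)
Your argument is essentially the paper's own, unpacked: the theorem is deduced from Proposition~\ref{prop:slice-convergence} together with Lemma~\ref{lemma3}, and your induction on $n$ via the octahedral cofiber sequence $\Sigma^{n-1,n-1}(\E/\eta)\to\E/\eta^{n}\to\E/\eta^{n-1}$, the identification $\slicecomp(\E)/\eta^{n}\simeq\slicecomp(\E/\eta^{n})\simeq\E/\eta^{n}$, and the passage to $\holim_{n}$ reproduce the proof of Lemma~\ref{lemma3}, while the $\eta$-completeness of $\slicecomp(\E)$ is exactly Lemma~\ref{lemma1}.

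The only point where you deviate is the ``main obstacle'' at the end, and there your proposed repair does not work. If $\E$ is not bounded below in the effective filtration, then $\f^{q-1}(\E)$ is not a (transfinite) homotopy limit but a filtered homotopy \emph{colimit} $\hocolim_{e\to-\infty}\f^{q-1}(\f_{e}\E)$ of finite extensions of slices, since the slice filtration is exhaustive; and $\eta$-completeness is not preserved under homotopy colimits. Indeed $\KT\simeq\hocolim_{n}\Sigma^{-n,-n}\f_{0}\KQ$ is a filtered colimit of spectra whose slice completions are $\eta$-complete, yet $\slicecomp(\KT)$ is nontrivial with $\eta$ acting invertibly (Example~\ref{example:convergence}), hence not $\eta$-complete. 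So no rearrangement of limits can establish the claim in that generality; the statement is to be read with the hypothesis of Lemma~\ref{lemma3}, i.e.\ $\f_{e}(\E)=\E$ for some $e$ (see the remark following that lemma and condition (3) of Definition~\ref{def:slice-finitary}), which holds for every spectrum the theorem is applied to. Under that reading each $\f^{q-1}(\E)$ really is a finite extension of $\eta$-complete slices and Lemma~\ref{lemma1} closes the argument. A minor further remark: the parenthetical ``$\slicecomp$ commutes with homotopy limits'' is not needed for your chain of equivalences (which only uses the definition of $(-)^{\wedge}_{\eta}$ and the termwise equivalences already established) and is false in general, since $\Sigma^{2q,q}\SH^{\eff}$ need not be closed under homotopy limits.
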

\begin{proof}
This follows from Lemma~\ref{lemma3} and Proposition~\ref{prop:slice-convergence}.
\end{proof}

Theorems \ref{theorem:sphereslices} and \ref{theorem:eta-slice-completion} identify the $+$-part of the rational motivic sphere spectrum as first announced in \cite{morelsplitting}, 
see \cite[Theorem 16.2.13]{cisinski-deglise} for an alternate formulation and proof.
\begin{corollary}
\label{corollary:1QplusMQ}
For $S$ a field, 
the unit map for the motivic Eilenberg-MacLane spectrum induces an isomorphism $\unit_{\Q}^{+}\cong\MQ$ in $\SH_{\Q}$.
\end{corollary}
\begin{proof}
Theorem \ref{theorem:eta-slice-completion} applies to $\unit_{\Q}$ and $\MQ$ (note that $\MQ$ has a rational cell presentation of finite type by Proposition \ref{prop:MZcellular}).
The Hopf relation $\epsilon\eta=\eta$ implies the $\eta$-completion of $\unit_{\Q}$ is the $+$-part $\unit_{\Q}^{+}$ of the rational motivic sphere spectrum.
Thus there are conditionally convergent slice spectral sequences
\begin{equation}
\label{equation:splicespectralsequence1}
\pi_{p,n}\s_{q}(\unit_{\Q})
\Longrightarrow
\pi_{p,n}(\unit_{\Q}^{+}),
\end{equation}
\begin{equation}
\label{equation:splicespectralsequence2}
\pi_{p,n}\s_{q}(\MQ)
\Longrightarrow
\pi_{p,n}(\MQ).
\end{equation}
Theorem \ref{theorem:sphereslices} shows the unit map induces an isomorphism of slices 
\[ 
\s_{q}(\unit_{\Q})
\overset{\cong}{\to}
\s_{q}(\MQ)
\iso 
\begin{cases} 
\MQ & q=0 \\
\ast & q\neq 0,
\end{cases}
\]
and hence an isomorphism between \eqref{equation:splicespectralsequence1} and \eqref{equation:splicespectralsequence2}.
The filtered target groups are isomorphic by \cite[Theorem 7.2]{Boardman}.
Cellularity of $\unit_{\Q}^{+}$ and $\MQ$ finish the proof.
\end{proof}

\begin{corollary}
\label{corollary:1QMQ}
Suppose $S$ is a field with finite mod 2 \'etale cohomological dimension or positive characteristic.
Then there is an isomorphism $\unit_{\Q}\cong\MQ$ in $\SH_{\Q}$.
\end{corollary}
\begin{proof}
Combine \cite[Lemma 6.8]{levine.sliceconvergence} showing $\unit_{\Q}^{-}=\ast$ and Corollary \ref{corollary:1QplusMQ}.
\end{proof}

\begin{corollary}
\label{corollary:1Qff}
If $S$ is a finite field there are isomorphisms
\[  
\pi_{p,q}(\unit)\otimes\Q
\cong
\begin{cases}
\Q & (p,q)=(0,0) \\
0  & \mathrm{otherwise}.
\end{cases}
\]
If $S$ is a totally imaginary number field with $s$ pairs of complex embeddings,
there is an isomorphism $\pi_{-1,-1}(\unit)\otimes\Q\cong\Q^\infty$ and in all other degrees
\[  
\pi_{p,q}(\unit)\otimes\Q
\cong
\begin{cases}
\Q & (p,q)=(0,0) \\
\Q^s  & (p,q)=(-1,\mathrm{odd}) \\
0  & \mathrm{otherwise}.
\end{cases}
\]
\end{corollary}
\begin{proof}
If $S$ is a finite field, 
Quillen's computation of the algebraic $K$-groups of a finite field \cite{quillen.kfq} shows
$K_{n}(S)$ is a finite group for all $n>0$.
If $S$ is a totally imaginary number field,
Borel's computation \cite{borel} provides the identification of 
$K_n(S)\otimes \Q$. 
Using the rational degeneration of the slice spectral sequence for $\KGL$, 
the identifications follow from Corollary \ref{corollary:1QMQ}.
\end{proof}

\section{Differentials for the motivic sphere spectrum} 
\label{sec:slice-spectr-sequ}
We start this section by describing components of the first slice differential for $\unit_{\Lambda}$, where $(S,\Lambda)$ is a compatible pair and $S$ is connected.
This leads to a discussion of higher differentials which eventually allows us to compute the first motivic stable stems.

\subsection{The first slice differential}
\label{sec:first-differential}
We recall that $\Ext^{1,2i}_{\MU_{\ast}\MU}(\MU_{\ast},\MU_{\ast})$ is a finite cyclic group of order $a_{i}$ introduced prior to Lemma \ref{lem:mult-slices-sphere};
see also Appendix \ref{sec:ext-groups-complex}.
Let $\overline{\alpha}_{i}$ be the generator of its $p$-primary component $\Ext^{1,2i}_{\BP_{\ast}\BP}(\BP_{\ast},\BP_{\ast})$.
We denote motivic Steenrod operations by $\Sq^{i}$ \cite[\S9]{Voevodsky.reduced}. 
Let $\tau$ be the generator of $h^{0,1}\cong\mu_{2}(\mathcal{O}_{S})$ when $S$ has no points of residue field characteristic two, 
and let $\rho$ denote the class of $-1$ in $h^{1,1}\cong \mathcal{O}^{\times}_{S}/2$.
Recall from Lemma \ref{lem:slices-unit-kq-2} that $\partial^{a_{2q}}_{2}$ denotes the unique nontrivial map from $\Sigma^{4q-1,2q}\M \Lambda/a_{2q}$ to $\Sigma^{4q,2q}\M \Lambda/2$.
We refer to Theorem \ref{theorem:steenrod-algebra} for properties of the naturally induced maps $\inc^2_{a_{2q}}\colon\Sigma^{4q,2q}\M \Lambda/2 \to \Sigma^{4q,2q}\M \Lambda/a_{2q}$ 
and $\partial^2_{a_{2q}}\colon\Sigma^{4q-1,2q}\M \Lambda/2\to \Sigma^{4q,2q}\M \Lambda/a_{2q}$. 
Recall the slices of $\unit_{\Lambda}$ from Theorem \ref{theorem:sphereslices} and Corollary \ref{corollary:small-slices}. 

\begin{lemma}
\label{lem:first-diff-unit-1}
For $0\leq q\leq 2$ the slice differential $\dd^{\unit_{\Lambda}}_{1}(q)\colon \s_{q}(\unit_{\Lambda})\to \Sigma^{1,0}\s_{q+1}(\unit_{\Lambda})$ is given by:
\begin{align*}
\dd^{\unit_{\Lambda}}_{1}(0)
& 
= \Sq^2 \mathrm{pr}\colon \M \Lambda\to \M \Lambda/2\to \Sigma^{2,1}  \M \Lambda/2 \\
\dd^{\unit_{\Lambda}}_{1}(1)
& 
= 
\begin{pmatrix} 
\Sq^2 \\ 
\inc^{2}_{12} \Sq^2\Sq^1
\end{pmatrix}
\colon 
\Sigma^{1,1}  \M \Lambda/2\to \Sigma^{3,2}  \M \Lambda/2 \vee \Sigma^{4,2}  \M \Lambda/12 \\
\dd^{\unit_{\Lambda}}_{1}(2) 
& 
= 
\begin{pmatrix} 
\Sq^2 &   \tau \partial^{12}_{2} \\ 
\Sq^3\Sq^1   &  \Sq^2 \partial^{12}_{2} 
\end{pmatrix}
\colon 
\Sigma^{2,2}  \M \Lambda/2 \vee \Sigma^{3,2}  \M \Lambda/12 
\to
\Sigma^{4,3}  \M \Lambda/2 \vee \Sigma^{6,3}  \M \Lambda/2 \\
\end{align*}
\vspace{-0.4in}

\noindent 
For $q\geq 3$,
$\dd^{\unit_{\Lambda}}_{1}(q)$ restricts to the direct summand $\Sigma^{q,q}\M \Lambda/2\vee\Sigma^{q+2,q}\M \Lambda/2$ of $\s_{q}(\unit_{\Lambda})$ by 
\begin{align*}
\begin{pmatrix} 
\Sq^2 & \tau  \\ 
\Sq^3\Sq^1 & \Sq^2+\rho\Sq^1  
\end{pmatrix}
\colon
\Sigma^{q,q}\M \Lambda/2\vee\Sigma^{q+2,q}\M \Lambda/2\to\Sigma^{q+2,q+1}  \M \Lambda/2 \vee  \Sigma^{q+4,q+1} \M \Lambda/2.
\end{align*}
Here $\Sigma^{q+2,q}\M \Lambda/2$ is generated by $\alpha_{1}^{q-3}\alpha_{3}\in\Ext^{q-2,2q}_{\BP_{\ast}\BP}(\BP_{\ast},\BP_{\ast})$.
Moreover, 
$\dd^{\unit_{\Lambda}}_{1}(q)$ restricts as follows on top direct summands of $\s_{\ast}(\unit_{\Lambda})$:
\begin{align*}
\inc^2_{a_{2q}}\Sq^2\Sq^1
& 
\colon \Sigma^{4q-3,2q-1}   \M \Lambda/2
\to 
\Sigma^{4q,2q}  \M \Lambda/{a_{2q}} \\
\Sq^2 \partial^{a_{2q}}_{2} 
& 
\colon \Sigma^{4q-1,2q}  \M \Lambda/{a_{2q}} 
\to 
\Sigma^{4q+2,2q+1}   \M \Lambda/2\\
\tau \partial^{a_{2q}}_{2} 
& 
\colon \Sigma^{4q-1,2q}  \M \Lambda/{a_{2q}} 
\to 
\Sigma^{4q,2q+1} \M \Lambda/2 
& 
q \mathrm{\ odd} \\
0 
& 
\colon \Sigma^{4q-1,2q}  \M \Lambda/{a_{2q}} 
\to 
\Sigma^{4q,2q+1} \M \Lambda/2 
& 
q \mathrm{\ even} 
\end{align*}
Here $\Sigma^{4q-3,2q-1}\M \Lambda/2$ and $\Sigma^{4q-1,2q}\M \Lambda/{a_{2q}}$ are generated by $\alpha_{2q-1}$ and $\alpha_{2q/n}$, 
respectively.
\end{lemma}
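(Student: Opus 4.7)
The plan is to combine four ingredients: an enumeration of possible maps of bidegree $(1,0)$ between the relevant motivic Eilenberg-MacLane summands, naturality of the slice filtration under the unit $u\colon\unit_{\Lambda}\to\KQ_{\Lambda}$, the multiplicative structure of the slice spectral sequence together with the Leibniz rule, and direct computation in low slice degree. By Theorem~\ref{theorem:sphereslices} and Corollary~\ref{corollary:small-slices} each summand of $\s_{q}(\unit_{\Lambda})$ has the form $\Sigma^{r,q}\M\Lambda/n$, so each matrix entry of $\dd_{1}^{\unit_{\Lambda}}(q)$ is a map of shifted motivic Eilenberg-MacLane spectra of a prescribed bidegree. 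First I would invoke Lemma~\ref{lem:steenrod-alg-weight-zero} and Theorem~\ref{theorem:steenrod-algebra} to enumerate all such operations. The candidates are generated by $\Sq^{2}$, $\Sq^{3}\Sq^{1}$, $\Sq^{2}\Sq^{1}$, multiplication by $\tau$ or by $\rho$, and the canonical connecting or reduction maps $\sigma_{2q}$, $\partial_{2q}$, $\inc_{2q}$; this reduces the problem to determining finitely many $\Lambda/n$-valued coefficients per entry.

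Next I would exploit the unit $u$. Lemma~\ref{lem:slices-unit-kq-1} embeds the bottom summand $\Sigma^{q,q}\M\Lambda/2$ of $\s_{q}(\unit_{\Lambda})$ as the top summand of $\s_{q}(\KQ_{\Lambda})$, and Lemma~\ref{lem:slices-unit-kq-2} controls the image of the $\alpha_{q/n}$-summand. Combined with the first slice differential of $\KQ_{\Lambda}$ computed in \cite[Theorem 5.5]{roendigs-oestvaer.hermitian}, which involves precisely $\Sq^{2}$, $\Sq^{3}\Sq^{1}$, $\delta_{2}\Sq^{2}\Sq^{1}$ and $\tau\sigma_{2q}$, naturality produces the components $\Sq^{2}$, $\Sq^{3}\Sq^{1}$, $\Sq^{2}\sigma_{2q}$, and $\tau\sigma_{2q}$ in $\dd_{1}^{\unit_{\Lambda}}(q)$. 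The $\inc_{2q}\Sq^{2}\Sq^{1}$-component on the $\alpha_{2q-1}$-summand arises because $\delta_{2}\Sq^{2}\Sq^{1}$ lifts through $\s_{q+1}(u)$ to the $\Lambda/a_{2q}$-summand as in the proof of Lemma~\ref{lem:slices-unit-kq-2}; parity considerations on the target also force $\tau\sigma_{2q}=0$ for even $q$, since there is no compatible summand in $\s_{q+1}(\KQ_{\Lambda})$ into which it could map nontrivially.

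Third, I would propagate the result to arbitrary $q\geq 4$ via Proposition~\ref{prop:leibniz}. Because $\eta$ is a permanent cycle detected by $\alpha_{1}\in\s_{1}(\unit_{\Lambda})$ (Example~\ref{ex:Hopf-maps}), and $\alpha_{1}$-multiplication acts as an isomorphism on the diagonal $\Sigma^{q,q}\M\Lambda/2$-summands by Corollary~\ref{corollary:etanonnilpotent} and Lemma~\ref{lem:mult-slices-sphere}, the Leibniz rule transports the $\Sq^{2}$ and $\Sq^{3}\Sq^{1}$ components from the base case $q=3$ to every $q\geq 4$ on both the diagonal and the $\alpha_{1}^{q-3}\alpha_{3}$-summand. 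The low-degree formulas for $q\in\{0,1,2,3\}$ follow from the same combination: $\dd_{1}^{\unit_{\Lambda}}(0)=\Sq^{2}\pr$ is forced by comparison with $\MGL_{\Lambda}$, the $\inc_{2}\Sq^{2}\Sq^{1}$-component of $\dd_{1}^{\unit_{\Lambda}}(1)$ projects to the corresponding $\KQ_{\Lambda}$-component via $\inc_{2}$, and the remaining entries for $q=2,3$ combine these inputs with the multiplicative action of $\eta$ encoded in Lemma~\ref{lem:slices-unit-hopf}.

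The main obstacle will be to pin down the correction term $c_{2q}\partial_{2q}\Sq^{2}$ and the $\rho\Sq^{1}$-term appearing for $q\geq 3$. These arithmetic contributions are invisible to the unit map $u$, since $\partial_{2q}\Sq^{2}$ and $\rho\Sq^{1}$ compose trivially with the relevant maps into $\s_{\ast}(\KQ_{\Lambda})$. I would determine them by applying the Leibniz rule to products $\eta\cdot x$ with $x$ an $\alpha_{2q-1}$- or $\alpha_{1}^{q-3}\alpha_{3}$-generator, using Lemma~\ref{lem:slices-unit-hopf} to compute the effect of $\eta$-multiplication on the target, and comparing with the relation $\eta^{2}\eta_{\Top}=12\nu$ that must hold in $\pi_{\ast,\ast}\unit_{\Lambda}$. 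The precise vanishing of $\tau\sigma_{2q}$ for even $q$ and the nontriviality in odd $q$ will require a separate Bockstein-style computation ruling out an image in the summand $\Sigma^{4q,2q+1}\M\Lambda/2$.
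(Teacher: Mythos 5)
Your overall strategy---enumerate the possible operations via Lemma~\ref{lem:steenrod-alg-weight-zero} and Theorem~\ref{theorem:steenrod-algebra}, compare with hermitian $K$-theory through the unit map, and exploit multiplicativity---matches the skeleton of the paper's argument, but three essential ingredients are missing or replaced by steps that would not go through. First, you never confront the fact that $\s_4(\unit_{\Lambda})$ contains \emph{two} copies of $\Sigma^{6,4}\M\Lambda/2$, generated by $\alpha_1\alpha_3$ and by $\beta_{2/2}$; the comparison with $\KQ_{\Lambda}$ cannot separate them by itself. The paper does this by showing that $\s_4(\nu^2)$ hits the $\beta_{2/2}$-summand (since $\nu^2$ is detected by $\beta_{2/2}$ in the Adams--Novikov $E_2$-page) while $\nu\smash\KQ_{\Lambda}$ is null (quaternionic projective bundle theorem, equivalently $\pi_{3,2}\KQ=0$), so the $\beta_{2/2}$-summand maps trivially to $\s_4(\KQ_{\Lambda})$. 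Without this geometric input the entries of $\dd^{\unit_{\Lambda}}_{1}(3)$ and $\dd^{\unit_{\Lambda}}_{1}(4)$ involving simplicial degree $6$ in weight $4$ are not determined. Second, your proposed determination of $c_2$ by invoking $\eta^2\eta_{\Top}=12\nu$ is circular: that relation is a \emph{consequence} of the computation of the $1$-line, not an available input. The paper instead computes $d_1(\tau\cdot\mathbf{c}_1)$ for the first Chern class of $\PP^2$ in two ways (directly from the candidate formula, and via the Leibniz rule together with Lemma~\ref{lem:mult-slices-sphere} and Corollary~\ref{cor:mult-mot-coh}), over a base with $\rho\neq 0$, to conclude $c_2=0$.

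Third, naturality under $u$ plus the Leibniz rule do not by themselves produce the off-diagonal component $\Sq^3\Sq^1$ nor exclude correction terms $\phi\Sq^1$ on the bottom summand: the paper's mechanism is the relation $\dd^{\unit_{\Lambda}}_{1}(q+1)\circ\dd^{\unit_{\Lambda}}_{1}(q)=0$ combined with the Adem relation $(\Sq^2+\phi\Sq^1)^2=\tau\Sq^1\Sq^2\Sq^1+\phi(\Sq^1\Sq^2+\Sq^2\Sq^1)$, which forces a nontrivial component out of $\Sigma^{q,q}\M\Lambda/2$ into a summand other than $\Sigma^{q+2,q+1}\M\Lambda/2$ and then pins down $\phi=0$ (and, for $q=2$, the coefficients of $\tau\sigma_2$, $\Sq^2\sigma_2$ and the vanishing of the $\Sq^1\Sq^2\pr_2$-term). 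You should also note that the base case $\dd^{\unit_{\Lambda}}_{1}(0)=\Sq^2\pr$ is obtained by comparison with $\KGL_{\Lambda}/2$, whose first slice differential $\Sq^2\Sq^1+\Sq^1\Sq^2$ is known, rather than with $\MGL_{\Lambda}$, whose first slice differential is not established here; and the parity statement for $\tau\sigma_{2q}$ concerns summands of $\s_{2q+1}$, not $\s_{q+1}$, and follows from Lemma~\ref{lem:slices-unit-kq-2} rather than from an absence of summands in $\s_{q+1}(\KQ_{\Lambda})$.
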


\begin{proof}
The unit map $\unit_{\Lambda} \to \KGL_{\Lambda}$ and mod-$2$ reduction map $\KGL_{\Lambda}\to \KGL_{\Lambda}/2$ for the algebraic $K$-theory spectrum induce a commutative diagram:
\[ 
\xymatrix{
\s_0(\unit_{\Lambda}) \ar[r]^-{\iso} \ar[d]_-{\dd^{\unit_{\Lambda}}_{1}(0)} 
& \s_0(\KGL_{\Lambda}) \ar[r]^-{\pr} \ar[d]_-{\dd^{\KGL_{\Lambda}}_{1}(0)} 
& \s_0(\KGL_{\Lambda}/2) \ar[d]_-{\dd^{\KGL_{\Lambda}/2}_{1}(0)} \\ 
\Sigma^{1,0} \s_1(\unit_{\Lambda}) \ar[r] 
& \Sigma^{1,0}\s_1(\KGL_{\Lambda}) \ar[r]^-{\pr} 
& \Sigma^{1,0}\s_1(\KGL_{\Lambda}/2)}
\]
The map $\s_0(\unit_{\Lambda})\to \s_0(\KGL_{\Lambda})$ is an isomorphism 
by \cite{levine.coniveau} (see also \cite[Theorem 4.1]{roendigs-oestvaer.hermitian}).
Over $\Spec(\ZZ[\tfrac{1}{2}])$, 
we have $\dd^{\KGL_{\Lambda}/2}_{1}=\Sq^2\Sq^1+\Sq^1\Sq^2$ \cite[Lemma 5.1]{roendigs-oestvaer.hermitian}.
Hence the map $\s_0(\unit_{\Lambda})\to \Sigma^{1,0}\s_1(\KGL_{\Lambda}/2)$ equals $\Sq^1\Sq^2 \pr$ (recall $\Sq^1\pr=0$ by \cite[Lemma A.3]{roendigs-oestvaer.hermitian}).
It follows that $\dd^{\unit_{\Lambda}}_{1}(0)\colon \s_0(\unit_{\Lambda})\to \Sigma^{1,0}\s_1(\unit_{\Lambda})$ and $\s_1(\unit_{\Lambda})\to\s_1(\KGL_{\Lambda}/2)$ are nontrivial. 
This implies $\s_1(\unit_{\Lambda})\to \s_1(\KGL_{\Lambda}/2)$ is given by $\Sq^1$, 
and $\dd^{\unit_{\Lambda}}_{1}(0)=\Sq^2 \pr$ (see e.g., \cite[Lemmas A.1, A.4]{roendigs-oestvaer.hermitian}).

By \cite[Lemma 2.1]{roendigs-oestvaer.hermitian} the Hopf map $\eta\colon \Sigma^{1,1}\unit_{\Lambda}\to \unit_{\Lambda}$ induces a commutative diagram:
\begin{equation}
\label{eq:hopf-diff-unit} 
\xymatrix{
\Sigma^{1,1}\s_{q-1}(\unit_{\Lambda})\ar[r]^-{\iso} \ar[d]_-{\Sigma^{1,1}\dd^{\unit_{\Lambda}}_{1}(q-1)} 
& \s_q(\Sigma^{1,1}\unit_{\Lambda}) \ar[rr]^-{\s_q(\eta)} \ar[d]_-{\dd^{\Sigma^{1,1}\unit_{\Lambda}}_{1}(q)} 
&& \s_{q}(\unit_{\Lambda}) \ar[d]^-{\dd^{\unit_{\Lambda}}_{1}(q)} \\ 
\Sigma^{2,1}\s_{q}(\unit_{\Lambda}) \ar[r]^-{\iso} 
& \Sigma^{1,0}\s_{q+1}(\Sigma^{1,1}\unit_{\Lambda}) \ar[rr]^-{\Sigma^{1,0}\s_{q+1}(\eta)} 
&& \Sigma^{1,0}\s_{q+1}(\unit_{\Lambda})}
\end{equation}
By Lemma~\ref{lem:slices-unit-hopf}, 
$\s_q(\eta)$ and $\Sigma^{1,0}\s_{q+1}(\eta)$ in \eqref{eq:hopf-diff-unit} are known on the summands generated by $\alpha_{1}^{q}$ and $\alpha_{1}^{q-1}\alpha_{3}$.
For $q\geq 1$,
\eqref{eq:hopf-diff-unit} and the motivic Steenrod algebra (see e.g., \cite[Lemma A.2]{roendigs-oestvaer.hermitian}) show there exists an element $\phi\in h^{1,1}$ 
--- independent of $q$ by the comparison with $\KQ$ given in Lemma~\ref{lem:slices-unit-kq-1} --- 
such that $\dd^{\unit_{\Lambda}}_{1}(q)=\Sq^2+\phi\Sq^1$ on the direct summand $\Sigma^{q,q}\M \Lambda/2$ of $\s_q(\unit_{\Lambda})$ generated by $\alpha_{1}^{q}$. 
The Adem relations $\Sq^2\Sq^2=\tau\Sq^1\Sq^2\Sq^1$ and $\Sq^1\Sq^1=0$ (see e.g., \cite[Theorem 5.1]{hko.positivecharacteristic}) imply
\[ 
(\Sq^2+\phi\Sq^1)^2
=
\tau \Sq^1\Sq^2\Sq^1+\phi(\Sq^1\Sq^2+\Sq^2\Sq^1).
\]
Since differentials square to zero, 
\cite[Lemma A.2]{roendigs-oestvaer.hermitian} implies that $\dd^{\unit_{\Lambda}}_{1}(q)$ maps $\Sigma^{q,q}\M\Lambda/2\{\alpha_{1}^{q}\}$ by $\Sq^3\Sq^1$ to 
$\Sigma^{q+4,q+1}\M\Lambda/2\{\alpha_{1}^{q-2}\alpha_{3}\}$ for $q\geq 4$.
A tedious calculation for $q>4$ using Adem relations and motivic cohomology operations of weight one yields 
\[  
\dd^{\unit_{\Lambda}}_{1}(q)_{\vert \Sigma^{q,q}\M \Lambda/2\vee\Sigma^{q+2,q}\M \Lambda/2}  
= 
\begin{pmatrix} 
\Sq^2 & \tau   \\ 
\Sq^3\Sq^1 & \Sq^2+\rho\Sq^1  
\end{pmatrix}. \]

Theorem~\ref{theorem:steenrod-algebra} shows $\dd^{\unit_{\Lambda}}_{1}(1)\colon \s_1(\unit_{\Lambda})\to \Sigma^{1,0}\s_2(\unit_{\Lambda})$ is given by 
\[ 
\begin{pmatrix} 
\Sq^2 \\ 
b_2 \inc^{2}_{12} \Sq^2\Sq^1 + c_2\partial^{2}_{12}  \Sq^2
\end{pmatrix}
\colon 
\Sigma^{1,1}  \M \Lambda/2 \rightarrow \Sigma^{3,2}  \M \Lambda/2\vee \Sigma^{4,2}  \M \Lambda/12, 
\]
where $b_2,c_2\in h^{0,0}$, 
and moreover that $\dd^{\unit_{\Lambda}}_{1}(2)$ is given by 
\[  
\begin{pmatrix} 
\Sq^2 &  x\tau \partial^{12}_{2} +\phi\pr^{12}_{2}\\ 
\Sq^3\Sq^1 & y\Sq^2\partial^{12}_{2} +z\Sq^1\Sq^2\pr^{12}_{2}
\end{pmatrix}
\colon 
\Sigma^{2,2}  \M \Lambda/2 \vee \Sigma^{3,2}  \M \Lambda/12 
\rightarrow
\Sigma^{4,3}  \M \Lambda/2 \vee \Sigma^{6,3}  \M \Lambda/2,  
\]
where $x,y,z\in h^{0,0}$ and $\phi\in h^{1,1}$.
(The first column is obtained as in the case $q>4$.)
In Example \ref{ex:mult-slices-sphere-p2} we use the multiplicative structure on the slices of $\unit_{\Lambda}$ to conclude $c_2=0$.
From $\dd^{\unit_{\Lambda}}_{1}(2)\dd^{\unit_{\Lambda}}_{1}(1)=0$ we obtain $b_2=y=1$.
Likewise, 
$\dd^{\unit_{\Lambda}}_{1}(3)\dd^{\unit_{\Lambda}}_{1}(2)=0$ where $\dd^{\unit_{\Lambda}}_{1}(3)$ maps by $\Sq^2+\tau$ to $\Sigma^{4,3}\M \Lambda/2$, 
so we obtain $\phi=z=0$ and $x=1$. 
  
To determine $\dd^{\unit_{\Lambda}}_{1}(3)$ and $\dd^{\unit_{\Lambda}}_{1}(4)$ the calculation for $q>4$ shows it remains to consider 
$\Sigma^{6,4}\M\Lambda/2\{\alpha_1\alpha_3\} \vee \Sigma^{6,4}\M \Lambda/2\{\beta_{2/2}\}$, 
cf.~Corollary \ref{corollary:small-slices} and \cite[Theorem 5.3.7, Table A3.3]{ravenel.green}. 
Since $\nu^2$ is detected to $\beta_{2/2}$ (see \cite[Table A.3.3]{ravenel.green}), 
Theorem \ref{theorem:mult-adams-resol} implies 
$
\s_4(\nu^2)\colon 
\s_4(\Sigma^{6,4}\unit_{\Lambda}) 
\to
\s_4(\unit_{\Lambda})
$ 
maps to $\Sigma^{6,4}\M \Lambda/2\{\beta_{2/2}\}$ via the coefficient reduction $\Lambda\to\Lambda/2$.
Here $\nu\colon \Sigma^{3,2}\unit_{\Lambda}\to \unit_{\Lambda}$ is the second motivic Hopf map, whose homotopy cofiber is the quaternionic projective plane.
Now $\Sigma^{6,4}\M \Lambda/2\{\beta_{2/2}\}$ maps trivially to $\s_4(\KQ_{\Lambda})$ under the unit map because $\nu\smash\KQ_{\Lambda}$ is trivial.
This follows from the quaternionic projective bundle theorem \cite[Theorem 9.2]{paninwalterBO},
or
$\pi_{3,2}\KQ
=GW^{[2]}_{-1}(S)
=KSp_{-1}(S)=0$,
see e.g., \cite[Proposition 6.3]{schlichting}.
Lemma \ref{lem:slices-unit-kq-2} combined with the computation of $\dd^{\KQ_{\Lambda}}_{1}$ in \cite[Theorem 5.5]{roendigs-oestvaer.hermitian}
identify $\dd^{\unit_{\Lambda}}_{1}(3)$ and $\dd^{\unit_{\Lambda}}_{1}(4)$ entering and exiting $\Sigma^{6,4}\M \Lambda/2\{\alpha_1\alpha_3\}$,
and also $\dd^{\unit_{\Lambda}}_{1}(4)$ exiting $\Sigma^{6,4}\M \Lambda/2\{\beta_{2/2}\}$.

Also the last four equations can be obtained by comparing with $\KQ_{\Lambda}$ using Lemma~\ref{lem:slices-unit-kq-2}. More precisely,
this comparison directly implies the last three equations, and also
that the differential restricts to a map 
\[ \inc^2_{a_{2q}}\Sq^2\Sq^1 +c_{2q}\Sq^2\partial^{2}_{a_{2q}}
\colon \Sigma^{4q-3,2q-1}   \M \Lambda/2
\to 
\Sigma^{4q,2q}  \M \Lambda/{a_{2q}} \]
for some $c_{2q}\in h^{0,0}$. 
Example~\ref{ex:mult-slices-sphere-p2} shows $c_{2q}=0$.
For degree reasons, 
any differential exiting a direct summand of $\s_q(\unit_{\Lambda})$ of simplicial degree at least $q+3$ maps trivially to the direct summand 
$\Sigma^{q+2,q+1}\M \Lambda/2$ of $\Sigma^{1,0}\s_{q+1}(\unit_{\Lambda})$. 
Since multiplication by $\tau\colon \M \Lambda/2\to \Sigma^{0,1}\M \Lambda/2$ is injective on the motivic Steenrod algebra, 
any differential exiting a direct summand of $\s_q(\unit_{\Lambda})$ of simplicial degree at least $q+3$ maps trivially to the direct summand 
$\Sigma^{q+4,q+1}\M \Lambda/2$ of $\Sigma^{1,0}\s_{q+1}(\unit_{\Lambda})$. 
\end{proof}

The motivic cohomology operation $\tau$ appears also in many other first slice differentials gotten from $\alpha_{1}$-towers in the Adams-Novikov spectral sequence.
\begin{lemma}
\label{lem:first-diff-unit-2}
At the prime $2$, 
for $i\geq 3$ and $q\geq 1$, 
restricting $\dd^{\unit_{\Lambda}}_{1}(4q+i-3)$ to the direct summand generated by $\alpha_{1}^{i-3}\overline{\alpha}_{4q+2/n}$ yields 
\[
\tau\pr
\colon
\Sigma^{8q+3,4q+2}\M\Lambda/a_{4q+2}
\to
\Sigma^{8q+3,4q+3}\M\Lambda/2 
\]
for $i=3$, 
and 
\[
\tau
\colon
\Sigma^{8q+i,4q+i-1}\M\Lambda/2
\to
\Sigma^{8q+i,4q+i}\M\Lambda/2
\] 
for $i>3$.
Moreover, 
for $i\geq 3$ and $q\geq 1$,
restricting $\dd^{\unit_{\Lambda}}_{1}(4q+i-4)$ to the direct summand generated by $\alpha_{1}^{i-3}\alpha_{4q-1}$ yields
\[
\tau
\colon
\Sigma^{8q+i-6,4q+i-4}\M\Lambda/2
\to
\Sigma^{8q+i-6,4q+i-3}\M\Lambda/2.
\] 
\end{lemma}
\begin{proof}
The Adams-Novikov $d^{3}$-differential maps $\alpha_{1}^{i-3}\overline{\alpha}_{4q+2/n}$ to $\alpha_{1}^{i}\overline{\alpha}_{4q/n}$, 
and $\alpha_{1}^{q-i}\alpha_{4q-1}$ to $\alpha_{1}^{i}\alpha_{4q-3}$ \cite{novikov}, \cite[Property~2.3, Table 2]{Zahler}.
Topological realization,
see e.g., \cite[Theorem 1]{levine.an}, 
shows the corresponding restrictions of $\dd^{\unit_{\Lambda}}_{1}(4q+i-3)$ and $\dd^{\unit_{\Lambda}}_{1}(4q+i-4)$ are nontrivial over fields of characteristic zero. 
The general case follows by base change from  $\ZZ[\tfrac{1}{2}]$.
\end{proof}

By taking into account the primes $3$ and $7$ we find that $\dd^{\unit_{\Lambda}}_{1}(6)$ restricts to 
\[
\tau \pr\colon \Sigma^{11,6}\M \Lambda/504\{\alpha_{6/3}\} \to \Sigma^{11,7}\M \Lambda/2\{\alpha_{1}^{3}\alpha_{4/4}\}.
\] 

We summarize these computations in Figure \ref{figure:slice-diff}.
An entry represents a shifted motivic Eilenberg-MacLane spectrum with coefficients determined by $\Ext^{p,2q}_{\MU_{\ast}\MU}(\MU_{\ast},\MU_{\ast})$ as in 
Theorem \ref{theorem:sphereslices}.  
The slices are labeled successively along the vertical axis. 
Horizontally, 
each direct summand of a fixed slice is labeled according to the difference between the simplicial suspension of the direct summand and the slice degree.
The $d^{1}$-differentials are labeled by colors corresponding to elements of the motivic Steenrod algebra, and ordered by simplicial degree.
An open square refers to $\Lambda$-coefficients and solid dots to $\Lambda/2$.
The asterisk in the second slice indicates $\Lambda/12$-coefficients, 
and similarly for $\Lambda/2\oplus \Lambda/2$, $\Lambda/240$, $\Lambda/6$, and $\Lambda/504$.

\begin{figure}[!ht]
\label{figure:slice-diff}
  \caption{The first slice differential for $\unit_{\Lambda}$.}
\begin{center}
  \pgfsetshortenend{3pt}
  \pgfsetshortenstart{3pt}
  \begin{tikzpicture}[scale=1.2,line width=1pt]
    \draw[help lines] (-4.3,-2.3) grid (3.3,5.3);
    {\draw[fill]     
      (-4.7,-2) circle (0pt) node[left=-1pt] {$0$}
      ;}
    \node[rectangle, draw] at (-4,-2) {};
    {\draw[fill]     
      (-4,-1) circle (2pt) 
      (-4.7,-1) circle (0pt) node[left=-1pt] {$1$}
      ;}
    {\draw[fill]     
      (-4,0) circle (2pt) 
      (-4.7,0) circle (0pt) node[left=-1pt] {$2$}
      ;}
    \node[star, star points=12, draw, fill] at (-3,0) {};
    {\draw[fill]     
      (-4,1) circle (2pt) (-2,1) circle (2pt) 
      (-4.7,1) circle (0pt) node[left=-1pt] {$3$}
      ;}
    {\draw[fill]     
      (-4,2) circle (2pt) (-2,2) circle (2pt) (-1.8,2) circle (2pt) 
      (-4.7,2) circle (0pt) node[left=-1pt] {$4$}
      ;}
    \node[star, star points=15, draw, fill] at (-1,2) {};
    {\draw[fill]     
      (-4,3) circle (2pt) (-2,3) circle (2pt) (-1,3) circle (2pt) (-0.8,3) circle (2pt) (0,3) circle (2pt) 
      (-4.7,3) circle (0pt) node[left=-1pt] {$5$}
      ;}
    {\draw[fill]     
      (-4,4) circle (2pt) (-2,4) circle (2pt) (-1,4) circle (2pt) (-0.8,4) circle (2pt) 
      (-4.7,4) circle (0pt) node[left=-1pt] {$6$}
      ;}
    \node[star, star points=21, draw, fill] at (1,4) {};
    \node[regular polygon, regular polygon sides=6, draw, fill] at (0,4)  {};
    {\draw[fill]     
      (-4,5) circle (2pt) (-2,5) circle (2pt) (-1,5) circle (2pt) (0,5) circle (2pt) (1,5) circle (2pt) (2,5) circle (2pt) 
      (-4.7,5) circle (0pt) node[left=-1pt] {$7$}
      ;}
    {\draw[fill]     
      (-6.1,2.4) circle (0pt) node[left=-1pt,rotate=90] {\sc \scriptsize Slice degree (weight)}
      ;}
    {\draw[fill]     
    (-4,-2.7) circle (0pt) node[below=-1pt] {$0$}
    (-3,-2.7) circle (0pt) node[below=-1pt] {$1$}
    (-2,-2.7) circle (0pt) node[below=-1pt] {$2$}
    (-1,-2.7) circle (0pt) node[below=-1pt] {$3$}
    (0,-2.7) circle (0pt) node[below=-1pt] {$4$} 
    (1,-2.7) circle (0pt) node[below=-1pt] {$5$}
    (2,-2.7) circle (0pt) node[below=-1pt] {$6$}
    (4.3,-2.5) circle (0pt) node[below=-1pt] {\sc \scriptsize Simplicial degree}
    (4.3,-2.8) circle (0pt) node[below=-1pt] {\sc \scriptsize minus slice degree}
    ;}
  
  {\draw[sq2prcolor,->] 
    (-4,-2) -- (-4,-1)
    ;}
  {\draw[sq2prcolor,->] 
    (0,4) -- (0,5)
    ;}

  {\draw[incsq2color,->] 
    (0,3) -- (0,4)
    ;}
  
  {\draw[incsq2sq1color,->] 
    (-4,-1) -- (-3,0)
    ;}
  {\draw[incsq2sq1color,->] 
    (-2,1) -- (-1,2)
    ;}
  {\draw[incsq2sq1color,->] 
    (0,3) -- (1,4)
    ;}
  {\draw[incsq2sq1color,->] 
    (2,5) -- (2.5,5.5)
    ;}

  {\draw[sq2partialcolor,->] 
    (-3,0) -- (-2,1)
    ;}
  {\draw[sq2partialcolor,->] 
    (-1,2) -- (0,3)
    ;}
  {\draw[sq2partialcolor,->] 
    (1,4) -- (2,5)
    ;}

  {\draw[taupartialcolor,->] 
    (-3,0) -- (-4,1)
    ;}

  {\draw[sq2color,->] 
    (-4,0) -- (-4,1)
    ;}
  {\draw[sq2color,->] 
    (-4,-1) -- (-4,0)
    ;}
  {\draw[sq2color,->] 
    (-4,1) -- (-4,2)
    ;}
  {\draw[sq2color,->] 
    (-4,2) -- (-4,3)
    ;}
  {\draw[sq2color,->] 
    (-4,3) -- (-4,4)
    ;}
  {\draw[sq2color,->] 
    (-4,4) -- (-4,5)
    ;}
  {\draw[sq2color,->] 
    (-4,5) -- (-4,5.5)
    ;}
  {\draw[sq2color,->] 
    (0,5) -- (0,5.5)
    ;}
  
  {\draw[sq3sq1color,->] 
    (-4,0) -- (-2,1)
    ;}
  {\draw[sq3sq1color,->] 
    (-4,1) -- (-2,2)
    ;}
  {\draw[sq3sq1color,->] 
    (-4,2) -- (-2,3)
    ;}
  {\draw[sq3sq1color,->] 
    (-4,3) -- (-2,4)
    ;}
  {\draw[sq3sq1color,->] 
    (-4,4) -- (-2,5)
    ;}
  {\draw[sq3sq1color,->] 
    (-4,5) -- (-3,5.5)
    ;}
  {\draw[sq3sq1color,->] 
    (-2,2) -- (0,3)
    ;}
  {\draw[sq3sq1color,->] 
    (-2,3) -- (0,4)
    ;}
  {\draw[sq3sq1color,->] 
    (-2,4) -- (0,5)
    ;}
  {\draw[sq3sq1color,->] 
    (-2,5) -- (-1,5.5)
    ;}
  {\draw[sq3sq1color,->] 
    (0,4) -- (2,5)
    ;}
  {\draw[sq3sq1color,->] 
    (0,5) -- (1,5.5)
    ;}
  
  {\draw[taucolor,->] 
    (-2,1) -- (-4,2)
    ;}
  {\draw[taucolor,->] 
    (-2,2) -- (-4,3)
    ;}
  {\draw[taucolor,->] 
    (-2,3) -- (-4,4)
    ;}
  {\draw[taucolor,->] 
    (-2,4) -- (-4,5)
    ;}
  {\draw[taucolor,->] 
    (-2,5) -- (-3,5.5)
    ;}
  {\draw[tauprcolor,->] 
    (1,4) -- (-1,5)
    ;}
  {\draw[taucolor,->] 
    (1,5) -- (0,5.5)
    ;}
  {\draw[taucolor,->] 
    (2,5) -- (1,5.5)
    ;}
  
  {\draw[sq2rhosq1color,->] 
    (-2,1) -- (-2,2)
    ;}
  {\draw[sq2rhosq1color,->] 
    (-2,2) -- (-2,3)
    ;}
  {\draw[sq2rhosq1color,->] 
    (-2,3) -- (-2,4)
    ;}
  {\draw[sq2rhosq1color,->] 
    (-2,4) -- (-2,5)
    ;}
  {\draw[sq2rhosq1color,->] 
    (-2,5) -- (-2,5.5)
    ;}
  {\draw[sq2rhosq1color,->] 
    (2,5) -- (2,5.5)
    ;}
  {\draw[taupartialcolor,->] 
    (1,4) -- (0,5)
    ;}

  {\draw[sq3sq1color] 
    (3.4,5.4) circle (0pt) node[right] {$\Sq^3\Sq^1$}
    ;}
  {\draw[incsq2sq1color,->] 
    (3.4,4.6) circle (0pt) node[right] {$\inc^2_{?} \Sq^2\Sq^1$}
    ;}
  {\draw[sq2partialcolor,->] 
    (3.4,3.8) circle (0pt) node[right] {$\Sq^2 \partial^{?}_{2}$}
    ;}
  {\draw[incsq2color,->] 
    (3.4,3) circle (0pt) node[right] {$\inc^{2}_{6} \Sq^2$}
    ;}
  {\draw[sq2prcolor] 
    (3.4,2.2) circle (0pt) node[right] {$\Sq^2 \pr^{?}_{2}$}
    ;}
  {\draw[sq2rhosq1color] 
    (3.4,1.4) circle (0pt) node[right] {$\Sq^2+\rho\Sq^1$}
    ;}
  {\draw[sq2color] 
    (3.4,0.6) circle (0pt) node[right] {$\Sq^2$}
    ;}
  {\draw[taupartialcolor] 
    (3.4,-0.2) circle (0pt) node[right] {$\tau \partial^{?}_{2}$}
    ;}
  {\draw[tauprcolor] 
    (3.4,-1) circle (0pt) node[right] {$\tau \pr$}
    ;}
  {\draw[taucolor] 
    (3.4,-1.8) circle (0pt) node[right] {$\tau$}
    ;} 
\end{tikzpicture}
\end{center}
\end{figure}

\begin{example}
\label{ex:mult-slices-sphere-p2}
Write $\mathbf{c}_1\in H^{2,1}(\PP^2) = [\Sigma^{-2,-1}  \PP^2_+,\s_0(\unit_{\Lambda})]$ for the first Chern class of the tautological line bundle on $\PP^2$.
Then $\Sq^2\pr^{\infty}_{2}(\mathbf{c}_1) = \pr^{\infty}_{2}(\mathbf{c}_1)^2\in h^{4,2}(\PP^2)$ is nonzero.
Lemma \ref{lem:mult-slices-sphere} computes the cup-product of $\tau\in h^{0,1}(\PP^2)=[\Sigma^{1,0}\PP^2_+,\s_1(\unit_{\Lambda})]$ with $\mathbf{c}_1$, 
i.e., 
\[ 
\tau\cdot\mathbf{c}_1 
= 
\tau\pr^{\infty}_{2}(\mathbf{c}_1)\in  h^{2,2}(\PP^2) 
= 
[\Sigma^{-1,-1}  \PP^2_+,\s_1(\unit_{\Lambda})].
\]
Applying the first slice differential yields 
\begin{align*}
d^{\unit_{\Lambda}}_{1}(\tau\cdot \mathbf{c}_1) 
& = \bigl(\Sq^2(\tau\pr^{\infty}_{2}(\mathbf{c}_1)),\inc^{2}_{12}\Sq^2\Sq^1(\tau\pr^{\infty}_{2}(\mathbf{c}_1))+c_2\partial^{2}_{12}\Sq^2(\tau\pr^{\infty}_{2}(\mathbf{c}_1))\bigr) \\
& = \bigl(\tau\pr^{\infty}_{2}(\mathbf{c}_1)^2,\inc^{2}_{12}\rho\pr^{\infty}_{2}(\mathbf{c}_1)^2+c_2\partial^{2}_{12}\tau\pr^{\infty}_{2}(\mathbf{c}_1)^2\bigr) \\
& = \bigl(\tau\pr^{\infty}_{2}(\mathbf{c}_1)^2,c_2\partial^{2}_{12}\tau\pr^{\infty}_{2}(\mathbf{c}_1)^2\bigr). 
\end{align*}
If $0\neq \rho\in h^{1,1}(S)$,  
then $\partial^{2}_{12}\tau\pr^{\infty}_{2}(\mathbf{c}_1)^2$ is nonzero because $\Sq^1 = \pr^{12}_{2} \partial^{2}_{12}$ and $\Sq^1\tau\pr^{\infty}_{2}(\mathbf{c}_1)^2=\rho\pr^{\infty}_{2}(\mathbf{c}_1)^2\in h^{5,3}(\PP^2)$.
The change of coefficients long exact sequence
\[ 
\dotsm \to h^{4,3}_6(\PP^2) \xrightarrow{\Sq^1} h^{5,3}(\PP^2)\xrightarrow{\inc^{2}_{12}}h^{5,3}_{12}(\PP^2) \to \dotsm
\]
implies $\inc^{2}_{12}\rho\pr^{\infty}_{2}(\mathbf{c}_1)^2=0$. 
By the Leibniz rule in Proposition \ref{prop:leibniz} we get  
\begin{align*}
d^{\unit_{\Lambda}}_{1}(\tau\cdot \mathbf{c}_1) 
& = d^{\unit_{\Lambda}}_{1}(\tau)\cdot \mathbf{c}_1 +\tau \cdot d^{\unit_{\Lambda}}_{1}(\mathbf{c}_1) \\
& = 0\cdot \mathbf{c}_1 + \tau \cdot \Sq^2\pr^{\infty}_{2}(\mathbf{c}_1) \\
& = \tau\cdot \pr^{\infty}_{2}(\mathbf{c}_1)^2 \\ 
& = ( \tau \pr^{\infty}_{2}(\mathbf{c}_1)^2,\inc^{2}_{12}\tau\Sq^1(\pr^{\infty}_{2}(\mathbf{c}_1)^2)) \\
& = ( \tau \pr^{\infty}_{2}(\mathbf{c}_1)^2,0).
\end{align*} 
Lemma \ref{lem:mult-slices-sphere} determines $\s_1(\unit_{\Lambda})\smash_{\s_0(\unit_{\Lambda})} \s_1(\unit_{\Lambda})\to \s_2(\unit_{\Lambda})$ and Corollary~\ref{cor:mult-mot-coh} identifies 
the effect on $\bigl(\tau,\pr^{\infty}_{2}(\mathbf{c}_1)^2\bigr)$. 
Hence $c_2=0$ over $\ZZ[\tfrac{1}{2}]$, 
where $\rho\neq 0$, 
and in general by base change. 
More generally, 
one may use the Leibniz rule, $\mathbf{c}_1$ as above, and 
\[ 
\tau\in h^{0,1}(\PP^2)=[\Sigma^{4q-3,2q-2}\PP^2_+,\Sigma^{4q-3,2q-1}\M\Lambda/2]
\to [\Sigma^{4q-3,2q-2}\PP^2_+,\s_{2q-1}(\unit_{\Lambda})] 
\]
to conclude $c_{2q}=0$ precisely if the multiplication map $\Tot\{2q-1\}\otimes \Tot\{1\}\to \Tot\{2q\}$ to the Adams degree $4q$ part of the standard 
cosimplicial $\BP$-resolution at the prime $2$
induces the map $\inc^2_{a_{2q}}=\inc^2_{a_{2q}}+0\cdot \partial^{2}_{a_{2q}}$
to $H_1(\Tot\{2q\})=\ZZ/a_{2q}$.
The latter is verified in Lemma~\ref{lem:mult-oddslice-1slice}.
\end{example}

\begin{figure}[!ht]
\label{figure:E1-page}
\caption{$E^{1}$-page of the weight $-n$th slice spectral sequence for $\unit_{\Lambda}$.}
\begin{center}
  \pgfsetshortenend{2pt}
  \pgfsetshortenstart{1pt}
  \begin{tikzpicture}[scale=1.95,line width=1pt]
    \draw[help lines,shift={(-.3,-.1)}] (-2.3,0) grid (1.95,7.9);
    \foreach \i in {0,...,7} {\node[label=left:$\i$] at (-2.4,\i+.3) {};}
    {\node[label=below:$-n$] at (-2,-0.3) {};}
    {\node[label=below:$-n+1$] at (-1,-0.3) {};}
    {\node[label=below:$-n+2$] at (0,-0.3) {};}
    {\node[label=below:$-n+3$] at (1,-0.3) {};}

    \draw[->,sq2prcolor]
    (0,0) -- (-1,1);
    \draw[->,sq2color]
    (0,1) -- (-1,2);
    \draw[->,sq2color]
    (0,2) -- (-1,3);
    \draw[sq2color,->]
    (0,3) -- (-1,4);
    \draw[sq2color,->]
    (0,4) -- (-1,5);
    \draw[sq2color,->]
    (0,5) -- (-1,6);
    \draw[sq2color,->]
    (0,6) -- (-1,7);
    \draw[sq2color,->]
    (0,7) -- (-1,8);

    \draw[->,sq2prcolor]
    (1,0) -- (0,1);
    \draw[->,sq2color]
    (1,1) -- (0,2);
    \draw[->,sq2color]
    (1,2) -- (0,3);
    \draw[sq2color,->]
    (1,3) -- (0,4);
    \draw[sq2color,->]
    (1,4) -- (0,5);
    \draw[sq2color,->]
    (1,5) -- (0,6);
    \draw[sq2color,->]
    (1,6) -- (0,7);
    \draw[sq2color,->]
    (1,7) -- (0,8);

    \draw[sq2rhosq1color,->]
    (1,3.5) -- (0,4.5);
    \draw[sq2rhosq1color,->]
    (1,4.5) -- (0,5.5);
    \draw[sq2rhosq1color,->]
    (1,5.5) -- (0,6.5);
    \draw[sq2rhosq1color,->]
    (1,6.5) -- (0,7.5);
    \draw[sq2rhosq1color,->]
    (1,7.5) -- (0.5,8);

    \draw[sq3sq1color,->]
    (1,2) -- (0,3.5);
    \draw[sq3sq1color,->]
    (1,3) -- (0,4.5);
    \draw[sq3sq1color,->]
    (1,4) -- (0,5.5);
    \draw[sq3sq1color,->]
    (1,5) -- (0,6.5);
    \draw[sq3sq1color,->]
    (1,6) -- (0,7.5);
    \draw[sq3sq1color,->]
    (1,7) -- (0.5,8);
    
    \draw[taucolor,->]
    (0,3.5) -- (-1,4);
    \draw[taucolor,->]
    (0,4.5) -- (-1,5);
    \draw[taucolor,->]
    (0,5.5) -- (-1,6);
    \draw[taucolor,->]
    (0,6.5) -- (-1,7);
    \draw[taucolor,->]
    (0,7.5) -- (-1,8);

    \draw[taucolor,->]
    (1,3.5) -- (0,4);
    \draw[taucolor,->]
    (1,4.5) -- (0,5);
    \draw[taucolor,->]
    (1,5.5) -- (0,6);
    \draw[taucolor,->]
    (1,6.5) -- (0,7);
    \draw[taucolor,->]
    (1,7.5) -- (0,8);

    \draw[taupartialcolor,->]
    (0,2.5) -- (-1,3);
    \draw[taupartialcolor,->]
    (1,2.5) -- (0,3);
   
    \draw[sq2partialcolor,->]
    (1,2.5) -- (0,3.5);
 
    \draw[incsq2sq1color,->]
    (1,1) -- (0,2.5);
    
    \node at (-2,0) [shape=rectangle,draw] {};
    \node at (-2,0) [above right=3pt] {$H^{n,n}$};
    \node at (-1,0) [shape=rectangle,draw] {};
    \node at (-1,0) [above right=3pt] {$H^{n-1,n}$};
    \node at (0,0) [shape=rectangle,draw] {};
    \node at (0,0) [above right=3pt] {$H^{n-2,n}$};
    \node at (1,0) [shape=rectangle,draw] {};
    \node at (1,0) [above right=3pt] {$H^{n-3,n}$};
    
    \node[star, star points=12, draw, fill] at (-1,2.5)  {};
    \node at (-1,2.5) [right=5pt] {$h_{12}^{n+2,n+2}$};
    \node[star, star points=12, draw, fill] at (0,2.5)  {};
    \node at (0,2.5) [right=5pt] {$h_{12}^{n+1,n+2}$};
    \node[star, star points=12, draw, fill] at (1,2.5)  {};
    \node at (1,2.5) [right=5pt] {$h_{12}^{n,n+2}$};

    \node[star, star points=15, draw, fill] at (1,4.75)  {};
    \node at (1,4.75) [right=5pt] {$h_{240}^{n+4,n+4}$};

    {\draw[fill]     
      (-2,1) circle (1pt) node[above right=3pt] {{$h^{n+1,n+1}$}}
      (-2,2) circle (1pt) node[above right=3pt] {{$h^{n+2,n+2}$}}
      (-2,3) circle (1pt) node[above right=3pt] {{$h^{n+3,n+3}$}}
      (-2,4) circle (1pt) node[above right=3pt] {{$h^{n+4,n+4}$}}
      (-2,5) circle (1pt) node[above right=3pt] {{$h^{n+5,n+5}$}}
      (-2,6) circle (1pt) node[above right=3pt] {{$h^{n+6,n+6}$}}
      (-2,7) circle (1pt) node[above right=3pt] {{$h^{n+7,n+7}$}}
      ;}

    {\draw[fill]     
      (-1,1) circle (1pt) node[above right=3pt] {{$h^{n,n+1}$}}
      (-1,2) circle (1pt) node[above right=3pt] {{$h^{n+1,n+2}$}}
      (-1,3) circle (1pt) node[above right=3pt] {{$h^{n+2,n+3}$}}
      (-1,4) circle (1pt) node[above right=3pt] {{$h^{n+3,n+4}$}}
      (-1,5) circle (1pt) node[above right=3pt] {{$h^{n+4,n+5}$}}
      (-1,6) circle (1pt) node[above right=3pt] {{$h^{n+5,n+6}$}}
      (-1,7) circle (1pt) node[above right=3pt] {{$h^{n+6,n+7}$}}
      ;}

    {\draw[fill]      
      (0,1) circle (1pt) node[above right=3pt] {$h^{n-1,n+1}$}
      (0,2) circle (1pt) node[above right=3pt] {$h^{n,n+2}$}
      (0,3) circle (1pt) node[above right=3pt] {$h^{n+1,n+3}$}
      (0,3.5) circle (1pt) node[right=3pt] {$h^{n+3,n+3}$}
      (0,4) circle (1pt) node[above right=3pt] {$h^{n+2,n+4}$}
      (0,4.5) circle (1pt) node[right=3pt] {$h_{2,2}^{n+4,n+4}$}
      (0.1,4.55) circle (1pt) node {}
      (0,5) circle (1pt) node[above right=3pt] {$h^{n+3,n+5}$}
      (0,5.5) circle (1pt) node[right=3pt] {$h^{n+5,n+5}$}
      (0,6) circle (1pt) node[above right=3pt] {$h^{n+4,n+6}$}
      (0,6.5) circle (1pt) node[right=3pt] {$h^{n+6,n+6}$}
      (0,7) circle (1pt) node[above right=3pt] {$h^{n+5,n+7}$}
      (0,7.5) circle (1pt) node[right=3pt] {$h^{n+7,n+7}$}
      ;}

    {\draw[fill]      
      (1,1) circle (1pt) node[above right=3pt] {$h^{n-2,n+1}$}
      (1,2) circle (1pt) node[above right=3pt] {$h^{n-1,n+2}$}
      (1,3) circle (1pt) node[above right=3pt] {$h^{n,n+3}$}
      (1,3.5) circle (1pt) node[right=3pt] {$h^{n+2,n+3}$}
      (1,4) circle (1pt) node[above right=3pt] {$h^{n+1,n+4}$}
      (1,4.5) circle (1pt) node[right=3pt] {$h_{2,2}^{n+4,n+4}$}
      (1.1,4.55) circle (1pt) node {}
      (1,5) circle (1pt) node[above right=3pt] {$h^{n+2,n+5}$}
      (1,5.5) circle (1pt) node[right=3pt] {$h^{n+4,n+5}$}
      (1,5.75) circle (1pt) node[right=3pt] {$h_{2,2}^{n+5,n+5}$}
      (1.1,5.8) circle (1pt) node {}
      (1,6) circle (1pt) node[above right=3pt] {$h^{n+3,n+6}$}
      (1,6.5) circle (1pt) node[right=3pt] {$h^{n+5,n+6}$}
      (1,6.75) circle (1pt) node[right=3pt] {$h_{2,2}^{n+6,n+6}$}
      (1.1,6.8) circle (1pt) node {}
      (1,7) circle (1pt) node[above right=3pt] {$h^{n+4,n+7}$}
      (1,7.5) circle (1pt) node[right=3pt] {$h^{n+6,n+7}$}
      (1,7.75) circle (1pt) node[right=3pt] {$h^{n+7,n+7}$}
      ;}

     {\draw[sq3sq1color] 
        (2.1,5.6) circle (0pt) node[right] {$\Sq^3\Sq^1$}
        ;}
      {\draw[incsq2sq1color,->] 
        (2.1,4.8) circle (0pt) node[right] {$\inc^{2}_{12} \Sq^2\Sq^1$}
        ;}
      {\draw[sq2partialcolor,->] 
        (2.1,4) circle (0pt) node[right] {$\Sq^2 \partial^{12}_{2}$}
        ;}
      {\draw[sq2prcolor] 
        (2.1,3.2) circle (0pt) node[right] {$\Sq^2 \pr$}
        ;}
      {\draw[sq2rhosq1color] 
        (2.1,2.4) circle (0pt) node[right] {$\Sq^2+\rho\Sq^1$}
        ;}
      {\draw[sq2color] 
        (2.1,1.6) circle (0pt) node[right] {$\Sq^2$}
        ;}
      {\draw[taupartialcolor,->] 
        (2.1,0.8) circle (0pt) node[right] {$\tau \partial^{12}_{2}$}
        ;}
      {\draw[taucolor] 
        (2.1,0) circle (0pt) node[right] {$\tau$}
        ;}
\end{tikzpicture}
\end{center}
\end{figure}

Fix a compatible pair $(F,\Lambda)$, where $F$ is a field of characteristic not two. 
For $n\in\ZZ$, 
Figure 4.2 
shows the $E^{1}$-page $E^{1}_{p,q,-n}(\unit_{\Lambda}) = \pi_{p,-n}\s_q(\unit_{\Lambda})$ of the weight $-n$th slice spectral sequence for $\unit_{\Lambda}$. 
Note that $E^{1}_{p,q,-n}(\unit_{\Lambda})=0$ for $p<-n$ or $q<0$, and $H^{p,n}=0$ for $n<0$.
In each bidegree, 
the corresponding group is decomposed into direct summands according to Theorem \ref{theorem:sphereslices}.
We write $h^{\ast,\ast}_{2,2}$ for motivic cohomology with $(\Lambda/2\times\Lambda/2)$-coefficients.
Each differential is assigned a color corresponding to a motivic Steenrod operation.

\begin{lemma}
\label{lemma:trivialenteringfirstdifferentials}
For $n\in\ZZ$ every $d_1$-differential entering the $-n$th column in the $-n$th slice spectral sequence for $\unit_{\Lambda}$ is trivial.
The $d^{1}$-differential $d^{1}\colon h^{n-1,n+1}\to h^{n+2,n+2}_{12}$ is also trivial.
\end{lemma}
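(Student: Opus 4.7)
The plan is a case check guided by Lemma~\ref{lem:first-diff-unit-1}. Two ingredients carry most of the weight: the vanishing $h^{p,q}(F) = 0$ for $p > q$ over a field, which eliminates most a~priori possible components, and Voevodsky's $\tau$-periodicity together with the motivic Cartan formula, which reduces the remaining Steenrod-operation computations to the standard identity $\Sq^2(\tau) = 0$ in $h^{2,2}(F)$.

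First I would identify, for each $q \geq 1$, which summands of $\s_{q-1}(\unit_\Lambda)$ and $\s_q(\unit_\Lambda)$ contribute nontrivially to $E^1_{-n+1, q-1, -n}$ and $E^1_{-n, q, -n}$ respectively. A summand $\Sigma^{a,q}\M\Lambda/m$ of $\s_q(\unit_\Lambda)$ contributes $H^{a+n,q+n}(F;\Lambda/m)$, which vanishes unless $a \leq q$. Inspection of Corollary~\ref{corollary:small-slices} then narrows the possibilities: in the target, only the bottom summand $\Sigma^{q,q}\M\Lambda/2$ (or $\M\Lambda$ at $q = 0$) contributes, giving $h^{n+q, n+q}$; in the source, only the bottom $\Sigma^{q-1, q-1}\M\Lambda/2$ (or $\M\Lambda$ at $q = 1$) contributes $h^{n+q-2, n+q-1}$, the sole exception being $\Sigma^{3,2}\M\Lambda/12 \subset \s_2(\unit_\Lambda)$ producing $h^{n+2, n+2}_{12}$ at $q = 3$. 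Reading off from Lemma~\ref{lem:first-diff-unit-1} the matrix entries of $\dd^{\unit_\Lambda}_1(q-1)$ that target the bottom summand of $\s_q(\unit_\Lambda)$ leaves three operations to verify: $\Sq^2\pr\colon H^{n-1,n} \to h^{n+1,n+1}$ at $q = 1$, $\Sq^2\colon h^{n+q-2, n+q-1} \to h^{n+q, n+q}$ at $q \geq 2$, and $\tau\sigma_2\colon h^{n+2, n+2}_{12} \to h^{n+3, n+3}$ at $q = 3$.

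The $\tau\sigma_2$ component vanishes automatically, since $\sigma_2$ already takes values in $H^{n+3, n+2}(F;\Lambda/2) = 0$. For the $\Sq^2$ and $\Sq^2\pr$ components, the Milnor/Voevodsky $\tau$-periodicity writes a class in $h^{p,p+1}(F)$ as $\tau y$ for a unique $y \in h^{p,p}(F)$, and the motivic Cartan formula gives
\[
\Sq^2(\tau y) \;=\; \Sq^2(\tau)\cdot y \;+\; \tau\, \Sq^1(\tau)\,\Sq^1(y) \;+\; \tau\,\Sq^2(y).
\]
The last two terms vanish because $\Sq^1(y) \in h^{p+1, p}(F) = 0$ and $\Sq^2(y) \in h^{p+2, p+1}(F) = 0$, so the whole operation collapses to $\Sq^2(\tau) \cdot y$, which is zero by the standard motivic Steenrod identity $\Sq^2(\tau) = 0$. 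For the second part of the lemma, the differential is the $\inc_2\Sq^2\Sq^1$ component of $\dd^{\unit_\Lambda}_1(1)$, and writing a class in $h^{n-1, n+1}$ as $\tau^2 y$ with $y \in h^{n-1, n-1}$ yields $\Sq^1(\tau^2 y) = 2\tau\rho \cdot y + \tau^2\Sq^1(y)$, which vanishes modulo $2$ and because $\Sq^1(y) \in h^{n, n-1}(F) = 0$. Hence $\Sq^2\Sq^1(\tau^2 y) = 0$ and the composite with $\inc_2$ is trivial.

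The main obstacle is justifying the input $\Sq^2(\tau) = 0 \in h^{2,2}(F)$; the rest of the proof is bookkeeping of bidegrees using Corollary~\ref{corollary:small-slices} together with the vanishing $h^{p,q}(F) = 0$ for $p > q$.
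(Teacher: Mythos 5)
Your argument is correct and follows essentially the same route as the paper's proof: both reduce, via Lemma~\ref{lem:first-diff-unit-1} and the vanishing of $h^{p,q}$ for $p>q$, to the triviality of the same components ($\Sq^2\pr$ on $H^{n-1,n}$, $\Sq^2$ on $h^{i,i+1}$, $\tau\sigma_2$ through $h^{n+3,n+2}=0$, and $\inc_2\Sq^2\Sq^1$ on $h^{n-1,n+1}$), and dispose of them the same way. The only difference is that the paper outsources the vanishing of $\Sq^2$ on $h^{i,i+1}$ to \cite[Corollary 6.2]{roendigs-oestvaer.hermitian}, which is precisely your ``$\tau$-periodicity plus Cartan plus $\Sq^2(\tau)=0$'' computation, so the input you flag as the main obstacle is exactly the content of that cited corollary.
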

\begin{proof}
By Lemma \ref{lem:first-diff-unit-1} this amounts to the triviality of $\Sq^{2}\pr\colon H^{n-1,n}\to h^{n-1,n}\to h^{n+1,n+1}$,
$\tau\partial^{12}_{2}\colon h^{n+2,n+2}_{12}\to h^{n+3,n+3}$, 
$\Sq^{2}\colon h^{i,i+1}\to h^{i+2,i+2}$ for all $i\geq n$, 
and finally the composition $\inc^{2}_{12}\Sq^{2}\Sq^{1}\colon h^{n-1,n+1}\to h^{n,n+1} \to h^{n+2,n+2}\to h^{n+2,n+2}_{12}$. 
Note that $\tau\partial^{12}_{2}$ factors through the group $h^{n+3,n+2}=0$, 
while $\Sq^{2}$ is trivial in the given range \cite[Corollary 6.2]{roendigs-oestvaer.hermitian}. 
\end{proof}

\begin{remark}
\label{remark:pi10}
Lemma \ref{lemma:trivialenteringfirstdifferentials} implies $E^{2}_{0,q,0}(\unit_{\Lambda})=E^{1}_{0,q,0}(\unit_{\Lambda})$.
For degree reasons it follows that $E^{2}_{1,q,0}(\unit_{\Lambda})=E^{1}_{1,q,0}(\unit_{\Lambda})$, $q\leq 2$.
Each red 
$d^{1}$-differential entering a solid dot representing a mod-$2$ motivic cohomology group is surjective
(recall that $\tau\colon h^{p,q}\to h^{p,q+1}$ is an isomorphism for $0\leq p\leq q$ by Voevodsky's solution of the Milnor conjecture,
cf.~\cite[Lemma 6.1]{roendigs-oestvaer.hermitian}).
In Theorem \ref{theorem:higher-diff-unit-zero-line} we show $E^{\infty}_{1,1,0}(\unit_{\Lambda})\cong h^{0,1}$, $E^{\infty}_{1,2,0}(\unit_{\Lambda})\cong h^{1,2}\oplus h^{2,2}_{12}$, 
$E^{\infty}_{1,3,0}(\unit_{\Lambda})\cong h^{2,3}/\tau\partial^{12}_{2} h^{1,2}_{12}$, and $E^{\infty}_{1,q,0}(\unit_{\Lambda})=0$ for $q>3$.
In Theorem \ref{theorem:1line}, 
see also Corollary \ref{corollary:1linecorollary}, 
we prove Morel's $\pi_{1}$-conjecture by solving the hidden extensions between these groups.
\end{remark}

\subsection{Higher slice differentials}
\label{sec:beyond-first-diff}

Throughout we fix a compatible pair $(F,\Lambda)$, where $F$ is a field of $\Char(F)\neq 2$.

\begin{lemma}
\label{lem:unit-kt-zero-line}
The unit maps $\unit_{\Lambda}\to \KQ_{\Lambda}$ and $\unit_{\Lambda}\to \KT_{\Lambda}$ induce isomorphisms 
\[ 
E_{n,0,n}^2(\unit_{\Lambda}) 
\xrightarrow{\iso} 
E^2_{n,0,n}(\KQ_{\Lambda}) 
\text{ and } 
E_{n,q,n}^2(\unit_{\Lambda}) 
\xrightarrow{\iso} E^2_{n,q,n}(\KT_{\Lambda}) 
\]
for all $n\in \ZZ$, $q> 0$.
\end{lemma}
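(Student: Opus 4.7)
The strategy is to identify the $E^1$-contributions and the relevant $d^1$-differentials at the ``Milnor'' position $(n,q,n)$ in each spectral sequence, and then to verify that the unit maps induce compatible identifications at the $E^2$-level.

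For $q = 0$ I would compare with $\KQ_{\Lambda}$. By Lemma~\ref{lem:slices-unit-kq-1}, the unit map induces the identity $\s_0(\unit_{\Lambda}) \iso \s_0(\KQ_{\Lambda}) = \M\Lambda$ and sends $\s_1(\unit_{\Lambda})$ identically onto the bottom summand $\Sigma^{1,1}\M\Lambda/2$ of $\s_1(\KQ_{\Lambda})$. No $d^1$-differentials enter position $(n,0,n)$ since slices vanish for negative $q$, and the exiting $d^1$ equals $\Sq^2\pr\colon H^{-n,-n}(F;\Lambda) \to h^{-n+2,-n+1}(F)$ for both $\unit_{\Lambda}$ (Lemma~\ref{lem:first-diff-unit-1}) and $\KQ_{\Lambda}$ (\cite[Theorem~5.5]{roendigs-oestvaer.hermitian}). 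The $E^2$-terms are the common kernel of this map, giving the first isomorphism.

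For $q > 0$ the key input is a weight restriction: Voevodsky's solution of the Milnor conjecture on Galois cohomology yields $h^{a,b}(F;\Lambda) = 0$ whenever $a > b \geq 0$, so a summand $\Sigma^{r,q}\MA$ of any slice contributes to $\pi_{n,n}$ only when $r = q$. Applied to $\s_q(\unit_{\Lambda})$ via Theorem~\ref{theorem:sphereslices} and Corollary~\ref{corollary:small-slices}, only the bottom summand $\Sigma^{q,q}\M\Lambda/2$ survives, so $E^1_{n,q,n}(\unit_{\Lambda}) = h^{q-n,q-n}(F)$. The same restriction forces $E^1_{n-1,q+1,n}(\unit_{\Lambda}) = 0$ (hence the exiting $d^1$ is automatically trivial) and gives $E^1_{n+1,q-1,n}(\unit_{\Lambda}) = h^{q-n-2,q-n-1}(F)$ for $q \geq n+2$; the entering $d^1$ is the bottom-to-bottom $\Sq^2$ identified in Lemma~\ref{lem:first-diff-unit-1}. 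Thus $E^2_{n,q,n}(\unit_{\Lambda})$ is the cokernel of $\Sq^2\colon h^{q-n-2,q-n-1}(F) \to h^{q-n,q-n}(F)$.

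Next I would compute $\s_q(\KT_{\Lambda})$ for $q > 0$ by inverting $\eta$ on the slices of $\KQ_{\Lambda}$ in Theorem~\ref{theorem:slices-kq}. Since by Lemma~\ref{lem:slices-unit-hopf} the map $\eta$ acts as the identity on each $\Sigma^{q+2i,q}\M\Lambda/2$-summand of $\s_q(\KQ_{\Lambda})$ and as the canonical projection $\M\Lambda \to \M\Lambda/2$ on the integral summand, the $\eta$-colimit yields $\s_q(\KT_{\Lambda}) \simeq \bigvee_{i \geq 0}\Sigma^{q+2i,q}\M\Lambda/2$. The weight restriction from the previous paragraph then singles out the $i = 0$ summand at each of the positions $(n,q,n)$ and $(n\pm 1, q \mp 1, n)$, so the corresponding $E^1$-terms of $\KT_{\Lambda}$ coincide with those of $\unit_{\Lambda}$, and the unit map $\unit_{\Lambda} \to \KQ_{\Lambda} \to \KT_{\Lambda}$ identifies these bottom summands identically. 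The $d^1$ for $\KT_{\Lambda}$ on the bottom-to-bottom position is again $\Sq^2$, inherited from the $\KQ_{\Lambda}$-differential of \cite[Theorem~5.5]{roendigs-oestvaer.hermitian}, which gives $E^2_{n,q,n}(\KT_{\Lambda}) \iso E^2_{n,q,n}(\unit_{\Lambda})$ as desired.

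The main technical point is the slice identification $\s_q(\KT_{\Lambda}) \simeq \bigvee_{i \geq 0}\Sigma^{q+2i,q}\M\Lambda/2$ together with the control of $d^1_{\KT_{\Lambda}}$ via the $\eta$-colimit of the $\KQ_{\Lambda}$-slices; it requires careful bookkeeping for the action of $\eta$ on the integral summand $\Sigma^{4q,2q}\M\Lambda$ of $\s_{2q}(\KQ_{\Lambda})$, but is entirely dictated by Lemma~\ref{lem:slices-unit-hopf} and the first slice differential computations in \cite[Theorem~5.5]{roendigs-oestvaer.hermitian}.
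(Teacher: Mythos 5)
There is a genuine gap on both the $\KQ$-side and the $\KT$-side of your argument, and in both cases it comes from the same oversight: the $E^1$-terms of $\KQ_{\Lambda}$ and $\KT_{\Lambda}$ at the relevant positions are \emph{not} just the bottom summands matching $\unit_{\Lambda}$. For $q=0$ you assert that no $d^1$ enters position $(n,0,n)$ ``since slices vanish for negative $q$'' — but $\KQ_{\Lambda}$ is not effective: by Theorem~\ref{theorem:slices-kq} its negative slices are nontrivial, so there is a potentially nontrivial entering differential from $E^1_{n+1,-1,n}(\KQ_{\Lambda})=h^{-n-4,-n-1}\oplus h^{-n-6,-n-1}\oplus\dotsm$. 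Moreover $E^1_{n,0,n}(\KQ_{\Lambda})=H^{-n,-n}\oplus h^{-n-2,-n}\oplus\dotsm$ carries infinitely many extra summands (which do not vanish for $n\leq -2$, since their degree is \emph{below} the weight), and your ``common kernel'' description never explains why these disappear on the $E^2$-page. The paper's proof handles exactly this by writing out the three-term complex and computing its homology using the explicit $d^1(\KQ_{\Lambda})$ from \cite[Theorem 5.5]{roendigs-oestvaer.hermitian}.

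The same issue defeats the $\KT$-part: your identification $\s_q(\KT_{\Lambda})\simeq\bigvee_{i\geq 0}\Sigma^{q+2i,q}\M\Lambda/2$ has the indexing backwards. Inverting $\eta$ on Theorem~\ref{theorem:slices-kq} yields a wedge over all $i\in\ZZ$ (the condition $i<\tfrac{q+m}{2}$ relaxes as $m\to\infty$ but never excludes negative $i$), and it is precisely the summands $\Sigma^{q+2i,q}\M\Lambda/2$ with $i<0$ that contribute nontrivially to $\pi_{n,n}$ and $\pi_{n\pm1,n}$, because $h^{a,b}$ with $a<b$ need not vanish. So $E^1_{n,q,n}(\KT_{\Lambda})=h^{q-n,q-n}\oplus h^{q-n-2,q-n}\oplus\dotsm$ is strictly larger than $E^1_{n,q,n}(\unit_{\Lambda})$, the unit map is only an inclusion of the bottom summand, and the collapse to $h^{q-n,q-n}$ on the $E^2$-page is a nontrivial homology computation (this is \cite[Theorem 6.3]{roendigs-oestvaer.hermitian} for $n=0$, extended by $\eta$-periodicity). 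A smaller but related slip: for $\unit_{\Lambda}$ the entering $\Sq^2\colon h^{q-n-2,q-n-1}\to h^{q-n,q-n}$ is actually trivial in this range (Lemma~\ref{lemma:trivialenteringfirstdifferentials}), so $E^2_{n,q,n}(\unit_{\Lambda})=E^1_{n,q,n}(\unit_{\Lambda})$ is the full group rather than a proper cokernel; you should either invoke that vanishing or verify that the same cokernel appears on the $\KT$-side after the missing summands have been dealt with.
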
  
\begin{proof}
For degree reasons only powers of $\alpha_1$ contribute to the $n$th column of the $n$th slice spectral sequence for $\unit_{\Lambda}$.
Lemma~\ref{lem:slices-unit-kq-1} implies the unit maps $\unit_{\Lambda}\to \KQ_{\Lambda}$ and $\unit_{\Lambda}\to \KT_{\Lambda}$ induce inclusions
\begin{align*}
& H^{-n,-n}=E^{1}_{n,0,n}(\unit_{\Lambda}) \hookrightarrow H^{-n,-n}\directsum h^{-n-2,-n}\directsum \dotsm =E^{1}_{n,0,n}(\KQ_{\Lambda}) 
& & 
n\leq 0 \\
& h^{q-n,q-n}=E^{1}_{n,q,n}(\unit_{\Lambda}) \hookrightarrow h^{q-n,q-n}\directsum h^{q-n-2,q-n}\directsum \dotsm =E^{1}_{n,q,n}(\KT_{\Lambda}) 
& & q>0 \mathrm{\ or\ } n > 0. 
\end{align*}
Lemma \ref{lemma:trivialenteringfirstdifferentials} shows the first slice differential for $\unit_{\Lambda}$ maps trivially to the $n$th column, 
so that $E_{n,q,n}^{2}(\unit_{\Lambda})=E_{n,q,n}^{1}(\unit_{\Lambda})$. 
The $E^2$-page $E^{2}_{n,0,n}(\KQ_{\Lambda})$ can be computed in low degrees using \cite[Theorem 5.5]{roendigs-oestvaer.hermitian}; 
for weight zero, see \cite[Section 7]{roendigs-oestvaer.hermitian}.
If $n>0$, 
the groups $E^1_{n,0,n}(\unit_{\Lambda})$ and $E^1_{n,0,n}(\KQ_{\Lambda})$ are trivial.
If $n\leq 0$, 
$E^{2}_{n,0,n}(\KQ_{\Lambda})$ is given by the homology of the complex:
\[\xymatrix{ 
h^{-n-4,-n-1}\directsum h^{-n-6,-n-1}\directsum\dotsm 
\ar[d]_-{d^{1}(\KQ_{\Lambda})} \\
H^{-n,-n}\directsum h^{-n-2,-n}\directsum\dotsm 
\ar[d]_-{d^{1}(\KQ_{\Lambda})} \\
h^{-n,-n+1}\directsum h^{-n-2,-n+1}\directsum\dotsm }
\]
This implies an isomorphism $E_{n,0,n}^2(\unit_{\Lambda}) \xrightarrow{\iso} E^2_{n,0,n}(\KQ_{\Lambda})$ for $n\leq 0$.
For $q>0$, 
$E^2_{n,q,n}(\KT_{\Lambda})$ is given by the homology of the complex:
\[ \xymatrix{
h^{q-n-2,q-n-1}\directsum h^{q-n-4,q-n-1}\directsum\dotsm 
\ar[d]_-{d^{1}(\KT_{\Lambda})} \\
h^{q-n,q-n}\directsum h^{q-n-2,q-n}\directsum\dotsm 
\ar[d]_-{d^{1}(\KT_{\Lambda})} \\
h^{q-n,q-n+1}\directsum h^{q-n-2,q-n+1}\directsum\dotsm } 
\]
This implies an isomorphism $E_{n,q,n}^2(\unit_{\Lambda}) \xrightarrow{\iso} E^2_{n,q,n}(\KT_{\Lambda})$;
for $n=0$ see \cite[Theorem 6.3]{roendigs-oestvaer.hermitian}.
For $n>0$ we use the periodicity isomorphism $\eta\colon \Sigma^{1,1}\KT_{\Lambda}\to\KT_{\Lambda}$ \cite[Example 2.3]{roendigs-oestvaer.hermitian}.
\end{proof}

\begin{theorem}
\label{theorem:higher-diff-unit-zero-line}
For all $n,q\in \ZZ$ there are isomorphisms
\[ 
E^\infty_{n,q,n}(\unit_{\Lambda}) 
\cong
E^1_{n,q,n}(\unit_{\Lambda})
\cong
\begin{cases} 
H^{-n,-n} & n\leq 0 \mathrm{\ and\ } q=0 \\
h^{q-n,q-n} & n>0 \mathrm{\ or\ } q>0.
\end{cases} 
\]
\end{theorem}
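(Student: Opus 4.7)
The plan is to identify $E^1_{n,q,n}$ directly from Theorem \ref{theorem:sphereslices}, observe that exiting differentials vanish for degree reasons, and kill the entering differentials via comparison with hermitian $K$-theory.

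First I would combine the slice decomposition
\[
  \s_q(\unit_\Lambda)
  \cong
  \bigvee_p \Sigma^{2q-p,q}\M\bigl(\Lambda\otimes\Ext_{\MU_\ast\MU}^{p,2q}(\MU_\ast,\MU_\ast)\bigr)
\]
with two vanishing inputs: the MU-vanishing line $\Ext_{\MU_\ast\MU}^{p,2q}(\MU_\ast,\MU_\ast)=0$ for $p>q$ (which follows at each prime from the estimate recalled before Lemma \ref{lem:hopf-slices-global}), and the motivic cohomology vanishing $H^{s,t}(F;A)=0$ for $s>t$. A summand $\Sigma^{2q-p,q}\M(A)$ contributes $H^{2q-p-n,q-n}(F;A)$ to $\pi_{n,n}$, which survives only when $p=q$. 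This isolates the $\alpha_1^q$-summand $\Sigma^{q,q}\M\Lambda/2$ for $q>0$ (contributing $h^{q-n,q-n}$) and $\s_0=\M\Lambda$ for $q=0$ (contributing $H^{-n,-n}$), matching the claimed $E^1$-page.

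The same pair of vanishings forces every exiting differential $d^r\colon E^r_{n,q,n}(\unit_\Lambda)\to E^r_{n-1,q+r,n}(\unit_\Lambda)$ to be trivial for $r\geq 1$: each summand of $\s_{q+r}$ contributing to $\pi_{n-1,n}$ has $s-t=q+r-p+1\geq 1$, so $E^1_{n-1,q+r,n}(\unit_\Lambda)=0$. Only the entering differentials $d^r\colon E^r_{n+1,q-r,n}(\unit_\Lambda)\to E^r_{n,q,n}(\unit_\Lambda)$ require work. The case $r=1$ is Lemma \ref{lemma:trivialenteringfirstdifferentials}. For $r\geq 2$ I would proceed by induction on $r$: assuming all lower entering differentials into the diagonal vanish for both $\unit_\Lambda$ and the comparison spectrum, the $E^2$-isomorphism of Lemma \ref{lem:unit-kt-zero-line} persists to $E^r$ on the diagonal. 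Naturality of $d^r$ with respect to the unit maps $\unit_\Lambda\to\KT_\Lambda$ (for $q>0$) and $\unit_\Lambda\to\KQ_\Lambda$ (for $q=0$), together with this target-isomorphism, then reduces triviality of $d^r(\unit_\Lambda)$ to triviality of the corresponding $d^r$ on the hermitian side.

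The main obstacle is establishing this triviality on the hermitian side. For $\KT_\Lambda$, the slice spectral sequence converges to $W(F)\otimes\Lambda$ filtered by powers of the fundamental ideal $I$, with associated graded $I^k/I^{k+1}\cong h^{k,k}$ by Voevodsky's solution of the Milnor conjecture, cf.~\cite[Theorem 6.12]{roendigs-oestvaer.hermitian}. Comparing this identification of $E^\infty$ with the $E^2$-description used in the proof of Lemma \ref{lem:unit-kt-zero-line} forces $E^2=E^\infty$ on the diagonal of the $\KT_\Lambda$-spectral sequence, so every higher differential into or out of the $p=n$ column there vanishes. The analogous analysis for $\KQ_\Lambda$ and the Milnor-K-theoretic quotient of $GW_{-n}(F)\otimes\Lambda$ handles the $q=0$, $n\leq 0$ case. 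Feeding this back into the inductive comparison above yields $E^\infty_{n,q,n}(\unit_\Lambda)=E^1_{n,q,n}(\unit_\Lambda)$ as claimed.
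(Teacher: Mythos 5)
Your identification of the $E^1$-column and the vanishing of all exiting differentials reproduce the paper's argument: the vanishing line $\Ext^{p,2q}_{\MU_\ast\MU}(\MU_\ast,\MU_\ast)=0$ for $p>q$ together with $H^{s,t}(F;A)=0$ for $s>t$ isolates the $\alpha_1^q$-summand on the diagonal and annihilates every target $E^1_{n-1,q+r,n}$, and for $q>0$ the entering differentials are handled, exactly as in the paper, by naturality along $\unit_{\Lambda}\to\KT_{\Lambda}$, Lemma \ref{lem:unit-kt-zero-line}, and the degeneration $E^2_{n,q,n}(\KT_{\Lambda})=E^\infty_{n,q,n}(\KT_{\Lambda})$ imported from \cite[Theorem 6.3]{roendigs-oestvaer.hermitian}. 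One caveat on your justification of that degeneration: an abstract isomorphism between $E^2_{n,q,n}(\KT_{\Lambda})$ and the subquotient $E^\infty_{n,q,n}(\KT_{\Lambda})\cong I^{q-n}/I^{q-n+1}$ does not by itself force $E^2=E^\infty$ (a proper subquotient of an infinite-dimensional $\mathbb{F}_2$-vector space can be isomorphic to it), so you should cite the degeneration from the hermitian $K$-theory paper rather than deduce it by counting.

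The step that would fail as written is your treatment of $q=0$. You propose to reduce the vanishing of the entering differentials $d^r\colon E^r_{n+1,-r,n}(\unit_{\Lambda})\to E^r_{n,0,n}(\unit_{\Lambda})$ to the vanishing of the corresponding differentials for $\KQ_{\Lambda}$. But $\KQ_{\Lambda}$ is not effective: its negative slices are nonzero, and for $n\leq -r$ the groups $E^r_{n+1,-r,n}(\KQ_{\Lambda})$ need not vanish, so differentials entering $E^r_{n,0,n}(\KQ_{\Lambda})$ from negative slices cannot be dismissed. This is precisely the difficulty the paper flags just before Proposition \ref{prop:Einfty-kq} and the reason it replaces $\KQ$ by $\f_0\KQ$ there; Example \ref{ex:unit-not-surj} exhibits a negative slice of $\KQ$ contributing a nontrivial class. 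The correct argument is far simpler and is the one the paper uses: for $q=0$ the source $E^r_{n+1,-r,n}(\unit_{\Lambda})$ of every entering differential lies in a negative slice of the effective spectrum $\unit_{\Lambda}$ and is therefore zero, so no comparison with the hermitian side is needed at all.
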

\begin{proof}
The connectivity of the slices shows $E^1_{n,q,n}(\unit_{\Lambda})$ consists of infinite cycles.
The case $q=0$ follows by effectivity of $\unit_{\Lambda}$, 
which excludes nontrivial differentials entering $E^1_{n,q,n}(\unit_{\Lambda})$. 
Lemma~\ref{lem:unit-kt-zero-line} shows $E^2_{n,q,n}(\unit_{\Lambda})=E^1_{n,q,n}(\unit_{\Lambda})$ and that the unit map $\unit_{\Lambda}\to \KT_{\Lambda}$ induces an isomorphism 
$E^2_{n,q,n}(\unit_{\Lambda})\iso E^2_{n,q,n}(\KT_{\Lambda})$ for $q>0$. 
Since $E^2_{n,q,n}(\KT_{\Lambda}) = E^\infty_{n,q,n}(\KT_{\Lambda})$ by \cite[Theorem 6.3]{roendigs-oestvaer.hermitian} all differentials entering $E^2_{n,q,n}(\KT_{\Lambda})$, 
and hence $E^2_{n,q,n}(\unit_{\Lambda})$, 
are trivial.
In loc.~cit., 
$n=0$;
the general case follows from the isomorphism $\eta\colon\Sigma^{1,1}\KT_{\Lambda}\to \KT_{\Lambda}$. 
\end{proof}

\begin{corollary}
\label{cor:unit-nonneg-zero-line}
If $n>0$, 
the unit maps $\unit_{\Lambda}\to \KQ_{\Lambda}$ and $\unit_{\Lambda}\to \KT_{\Lambda}$ induce isomorphisms 
\[ 
\pi_{0,0}\slicecomp(\unit_{\Lambda}) 
\xrightarrow{\iso} 
\pi_{0,0}\slicecomp(\KQ_{\Lambda})
\text{ and }
\pi_{n,n}\slicecomp(\unit_{\Lambda}) 
\xrightarrow{\iso} 
\pi_{n,n}\slicecomp(\KT_{\Lambda}), 
\]
respectively. 
If $n<0$, 
$\pi_{n,n}\slicecomp(\unit_{\Lambda})$ is the $I$-adic completion of an extension of $H^{-n,-n}$ by $I^{-n+1}$, 
where $I$ denotes the fundamental ideal of the Witt ring.
\end{corollary}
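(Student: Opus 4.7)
The plan is to read off all three statements from the slice spectral sequence for $\unit_{\Lambda}$, using Theorem~\ref{theorem:higher-diff-unit-zero-line} to identify the $E^{\infty}$-column in weight $n$ and Lemma~\ref{lem:unit-kt-zero-line} to compare with $\KQ_{\Lambda}$ and $\KT_{\Lambda}$. By the conditional convergence of the slice spectral sequence, $\pi_{n,n}\slicecomp(\unit_{\Lambda})$ carries a descending filtration $\{F^{q}\}_{q\geq 0}$ with associated graded pieces $E^{\infty}_{n,q,n}(\unit_{\Lambda})$. By Theorem~\ref{theorem:higher-diff-unit-zero-line}, this graded consists of $H^{-n,-n}$ in slice $q=0$ (when $n\leq 0$) together with $h^{q-n,q-n}$ in slices $q>0$.

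For $n=0$, the analogous filtration of $\pi_{0,0}\slicecomp(\KQ_{\Lambda})$ has the same successive quotients by~\cite[\S 7]{roendigs-oestvaer.hermitian} (namely $H^{0,0}$ plus the $h^{q,q}$ for $q>0$), and Lemma~\ref{lem:unit-kt-zero-line} identifies these with the $E^{2}=E^{\infty}$-pieces for $\unit_{\Lambda}$ under the unit map. Since $\KQ_{\Lambda}$ is slice-complete (Example~\ref{ex:kq-sc-eta}), both spectral sequences converge strongly to the claimed homotopy groups, yielding the asserted isomorphism after verifying that the tower $\{\pi_{0,0}\f^{q-1}(\unit_{\Lambda})\}_{q}$ has vanishing $\lim^{1}$ (which follows from the finiteness of each graded piece modulo the contribution of $H^{0,0}$). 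For $n>0$, the same comparison works with $\KT_{\Lambda}$ in place of $\KQ_{\Lambda}$: $\eta$-periodicity $\Sigma^{1,1}\KT_{\Lambda}\simeq\KT_{\Lambda}$ reduces to weight zero, and Lemma~\ref{lem:unit-kt-zero-line} again provides the $E^{2}$-isomorphism. Strong convergence of the slice spectral sequence for $\KT_{\Lambda}$ is given by~\cite[Theorem 6.12]{roendigs-oestvaer.hermitian}.

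For $n<0$, the $q>0$ part of the filtration matches the $\KT_{\Lambda}$-filtration, whose successive quotients $h^{q-n,q-n}$ are identified via~\cite[Theorem 6.12]{roendigs-oestvaer.hermitian} with the graded pieces $I^{q-n}/I^{q-n+1}$ of the $I$-adic filtration of the Witt ring, with the inverse limit computing the $I$-completion of $I^{-n+1}$. The new feature is the slice-zero piece $H^{-n,-n}$, which produces a short exact sequence
\[
0 \to (I^{-n+1})^{\wedge}_{I} \to \pi_{n,n}\slicecomp(\unit_{\Lambda}) \to H^{-n,-n} \to 0
\]
after assembling the tower and then $I$-completing, as claimed. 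The principal obstacle is verifying that the tower of homotopy groups is Mittag-Leffler so that conditional convergence upgrades to strong convergence; this is expected because each successive quotient is a finitely generated Milnor or Milnor-Witt $K$-group, but its verification requires keeping careful track of the integral class in slice zero alongside the mod-$2$ $\KT_{\Lambda}$-pieces.
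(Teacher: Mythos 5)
Your argument is essentially the paper's: for $n\geq 0$ the isomorphisms are read off from Theorem~\ref{theorem:higher-diff-unit-zero-line} together with the known convergence statements for $\KQ_{\Lambda}$ and $\KT_{\Lambda}$, and for $n<0$ the slice-zero piece $H^{-n,-n}$ is split off from the $q>0$ part, which is matched against the $I$-adic filtration of the Witt ring via \cite[Theorem 6.12]{roendigs-oestvaer.hermitian} (the paper packages this as the fibre sequence $\f_1\unit_{\Lambda}\to\unit_{\Lambda}\to\s_0\unit_{\Lambda}$ and a comparison of $\f_1\unit_\Lambda$ with $\f_{-n+1}\KT_\Lambda$, which is the first step of the filtration you use).

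One justification is wrong as stated: the graded pieces $h^{q-n,q-n}\cong \mathbf{K}^{\M}_{q-n}(F)/2$ are not finite for a general field (e.g.\ $h^{1,1}\cong F^{\times}/2$), so "finiteness of each graded piece" cannot be the reason the conditional convergence upgrades, nor is there any Mittag--Leffler condition to check by counting. The correct point is that Theorem~\ref{theorem:higher-diff-unit-zero-line} shows every entry $E^{r}_{n,q,n}(\unit_{\Lambda})$ is a permanent cycle that is never a boundary, so the $E^{r}$-terms in the relevant column are eventually constant; by Boardman's criterion ($RE_{\infty}=0$) the conditionally convergent slice spectral sequence then converges strongly to $\pi_{n,n}\slicecomp(\unit_{\Lambda})$, and the comparison on $E^{\infty}$-columns yields the isomorphisms. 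With that repair the proof is fine.
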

\begin{proof}
The $\pi_{n,n}$-isomorphisms for $n\geq 0$ follow from Theorem~\ref{theorem:higher-diff-unit-zero-line} and the corresponding statements for $\KQ_{\Lambda}$ and $\KT_{\Lambda}$.
If $n<0$, 
$\unit_{\Lambda}\to \s_0(\unit_{\Lambda})$ (with homotopy fiber $\f_1(\unit_{\Lambda})$) induces a surjection $\pi_{n,n}\unit_{\Lambda} \to \pi_{n,n}\s_0(\unit_{\Lambda})\iso H^{-n,-n}$. 
The image of $\pi_{0,0}\f_{-n+1}(\KT_{\Lambda})$ in $\pi_{0,0}\KT_{\Lambda}$ is $I^{-n+1}$ by \cite[Theorem 6.12]{roendigs-oestvaer.hermitian}, 
whence Theorem~\ref{theorem:higher-diff-unit-zero-line} shows 
\[ 
\pi_{n,n}\f_1(\unit_{\Lambda}) \iso \pi_{0,0}\f_{-n+1}(\unit_{\Lambda})
\to 
\pi_{0,0}\f_{-n+1}(\KT_{\Lambda}) \]
induces an isomorphism on $I$-adic completions. 
\end{proof}

\begin{remark}
Suppose the slice filtration on $\pi_{n,n}\unit_{\Lambda}$ is Hausdorff, 
i.e., 
$\unit_{\Lambda}$ is convergent with respect to the slice filtration \cite[Definition 7.1]{voevodsky.open}. 
Theorem~\ref{theorem:higher-diff-unit-zero-line} and Corollary~\ref{cor:unit-nonneg-zero-line} imply 
\[ 
\pi_{0,0}\unit_{\Lambda} 
\xrightarrow{\iso} 
\pi_{0,0}\KQ_{\Lambda}
\text{ and } 
\pi_{n,n}\unit_{\Lambda} 
\xrightarrow{\iso} 
\pi_{n,n}\KT_{\Lambda}
\]
for $n>0$, 
and an extension 
\[ 
0
\to
I^{-n+1} 
\to 
\pi_{n,n}\unit_{\Lambda}
\to
H^{-n,-n}
\to
0,
\]
for $n<0$.
This is Morel's identification of the $0$-line \cite[Theorem 1.23, Corollary 1.25]{morel.field}.
\end{remark}

To compute $\pi_{n+1,n}{\unit_{\Lambda}}^\smash_\eta$ we set out to determine the $E^\infty$-terms of the $-n+1$st column of the $-n$th slice spectral sequence for $\unit_{\Lambda}$. 
Lemma \ref{lem:first-diff-unit-1} and the isomorphism $\tau\colon h^{p,q}\to h^{p,q+1}$ for $0\leq p\leq q$ imply that 
only terms from the first four slices may survive to the $-n+1$st column of the $E^{2}$-page of the $-n$th slice spectral sequence, 
cf.~Remark \ref{remark:pi10}.
Thus the only possibly nonzero differentials entering the $-n+1$st column on the $E^{2}$-page are
\[ 
E^2_{-n+2,0,-n} \to E^2_{-n+1,2,-n} 
\text{ and }
E^2_{-n+2,1,-n}\to E^2_{-n+1,3,-n}.
\]
To show that these $d^{2}$-differentials, 
as well as the $d^{3}$-differential 
\[
E^{3}_{-n+2,0,-n}\to E^{3}_{-n+1,3,-n},
\]
are trivial we shall reduce to global fields.
All $d^{r}$-differentials entering the $-n+1$st column for $r\geq 4$ are trivial for degree reasons, 
so this determines the desired $E^\infty$-terms.

We write $\cd_{p}(F)$ for the $p$-cohomological dimension of a field $F$ \cite[\S3.1]{serre.Gcohomology}.

\begin{lemma}
\label{lem:higher-diff-local-fields}
If $\cd_{2}(F)\leq 2$ and $\cd_{3}(F)\leq 4$, 
then every $r$th differential in the $-n$th slice spectral sequence entering the $-n+1$st column is trivial for $r\geq 2$.
\end{lemma}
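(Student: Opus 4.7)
The plan is to enumerate the finitely many candidates for nonzero $r$th differentials entering the $-n+1$st column with $r\geq 2$ and argue each vanishes by a source/target dimension count on the $E^{1}$-page. The discussion preceding the lemma statement already identifies the three candidates and notes that incoming $d^{r}$ for $r\geq 4$ are trivial for degree reasons. Thus we only need to control
\begin{align*}
d^{2} &\colon E^{2}_{-n+2,0,-n} \to E^{2}_{-n+1,2,-n}, \\
d^{2} &\colon E^{2}_{-n+2,1,-n} \to E^{2}_{-n+1,3,-n}, \\
d^{3} &\colon E^{3}_{-n+2,0,-n} \to E^{3}_{-n+1,3,-n}.
\end{align*}
Since every $E^{r}$-term is a subquotient of the corresponding $E^{1}$-term, it suffices to show either the source or the target is already zero on $E^{1}$.

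Next I would unpack sources and targets using Theorem \ref{theorem:sphereslices} and Corollary \ref{corollary:small-slices}. The two distinct sources reduce to $H^{n-2,n}(F,\Lambda)$ and $h^{n-1,n+1}(F)$, while the relevant targets reduce to $h^{n+1,n+2}(F)\oplus h^{n+2,n+2}_{12}(F)$ and $h^{n+2,n+3}(F)$; note that the would-be second summand $h^{n+4,n+3}(F)$ of $E^{1}_{-n+1,3,-n}$ is killed by the Beilinson-Soul\'e/Morel vanishing $h^{p,q}=0$ for $p>q$, so contributes to neither $d^{2}$ nor $d^{3}$.

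The argument then concludes with a bounded case analysis in $n$. Morel vanishing together with $H^{0,q}(F,\Lambda)=0$ for $q\geq 1$ forces the integral sources to vanish for $n\leq 2$; the hypothesis $\cd_{2}(F)\leq 2$ combined with Voevodsky's proof of Bloch-Kato gives $h^{p,q}(F)=0$ for $p>2$, which kills the mod-$2$ source $h^{n-1,n+1}$ when $n\geq 4$ and the mod-$2$ targets $h^{n+1,n+2}$, $h^{n+2,n+3}$ once $n\geq 1$; finally $\cd_{3}(F)\leq 4$ kills the mod-$3$ part of $h^{n+2,n+2}_{12}(F)$ whenever $n\geq 3$, with the mod-$4$ part controlled via the Bockstein from the mod-$2$ sequence under $\cd_{2}(F)\leq 2$. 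Matching these vanishing windows against the three differentials shows that in every bidegree either the source or the target vanishes.

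The main obstacle is purely bookkeeping: keeping track of the small values $n\in\{0,1,2,3\}$ where neither set of general bounds immediately applies to both sides. Once one verifies that the source-vanishing range and the target-vanishing range together cover all of $\mathbb{Z}$ for each of the three differentials, the lemma follows without invoking any finer structure of the slice spectral sequence.
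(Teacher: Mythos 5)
Your proposal is correct and follows essentially the same route as the paper: the source $H^{n-2,n}$ (resp.\ $h^{n-1,n+1}$) is killed for small $n$ and the targets are killed for large $n$ by the hypotheses $\cd_{2}(F)\leq 2$ and $\cd_{3}(F)\leq 4$, with the two ranges covering all of $\ZZ$ for each of the three candidate differentials. Two cosmetic points: the blanket assertion ``$H^{0,q}(F,\Lambda)=0$ for $q\geq 1$'' should be restricted to $q\leq 2$ (for $q=2$ it is the Lichtenbaum/Merkurjev--Suslin weight-two theorem the paper cites, while for $q\geq 3$ it is the open Beilinson--Soul\'e conjecture --- only $q\leq 2$ is actually used), and $h^{n+1,n+2}$ vanishes for $n\geq 2$ rather than $n\geq 1$, which does not affect the covering since the mod-$3$ part of $h^{n+2,n+2}_{12}$ already forces $n\geq 3$ there.
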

\begin{proof}
Note that $E^{2}_{-n+2,0,-n}$ is a subgroup of $H^{n-2,n}$.
The latter group is trivial by definition when $n\leq 0$ and for $n=1,2$ by \cite{lichtenbaum.weight-2}, \cite[(4.1), (4.2)]{merkurjev.weighttwo}.
If $n\geq 3$, 
the possible targets are zero for fields of cohomological dimension at most three for the prime 2 and at most four for the prime 3. 
The group $E^{2}_{-n+2,1,-n}$ is a subquotient of $h^{n-1,n+1}$, 
which is trivial for $n\leq 0$.
If $n\geq 1$,
then $h^{n+2,n+3}=0$ since $\cd_{2}(F)\leq 2$, 
so that $E^{2}_{-n+2,3,-n}=0$. 
\end{proof}

\begin{lemma}
\label{lem:higher-diff-real-complex}
Let $F$ be $\RR$, $\CC$ or a global field of positive characteristic not two.
For $n\in \ZZ$, $r \geq 2$, 
every $d^{r}$-differential in the $-n$th slice spectral sequence for $\unit_{\Lambda}$ entering the $-n+1$st column is trivial.
\end{lemma}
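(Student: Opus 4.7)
The plan is to handle the three families of fields separately. For $F=\CC$, every $\cd_p(\CC)$ vanishes, so the hypotheses of Lemma~\ref{lem:higher-diff-local-fields} are trivially satisfied. For a global field $F$ of positive characteristic $p\neq 2$, the compatible pair $(F,\Lambda)$ has $\Lambda$ a localization of $\ZZ[\tfrac{1}{p}]$ by Remark~\ref{rem:compatible}, and $\cd_\ell(F)\leq 2$ for every prime $\ell$ invertible in $\Lambda$ (since $\cd_\ell(\mathbb{F}_q)\leq 1$ for finite fields and $F$ has transcendence degree one over its field of constants). Hence Lemma~\ref{lem:higher-diff-local-fields} applies.

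The substantive case is $F=\RR$, where $\cd_2(\RR)=\infty$ blocks a direct application of the previous lemma. My plan is to exploit naturality of the slice spectral sequence under the unit maps $\unit_\Lambda\to\KQ_\Lambda$ and $\unit_\Lambda\to\KT_\Lambda$, together with the $\bigoplus_n\pi_{n,n}\unit_\Lambda$-module structure of Lemma~\ref{lem:differential-module-hom}. The three potentially non-trivial differentials identified before the statement — the $d_2$'s exiting $E^2_{-n+2,0,-n}$ and $E^2_{-n+2,1,-n}$, and the $d_3$ exiting $E^3_{-n+2,0,-n}$ — have sources which are subquotients of $H^{n-2,n}(\RR;\Lambda)$ and $h^{n-1,n+1}(\RR)$ respectively, by inspection of $\s_0(\unit_\Lambda)$ and $\s_1(\unit_\Lambda)$. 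Adapting Lemma~\ref{lem:slices-unit-kq-1} and its $\KT_\Lambda$-analogue via the motivic Wood cofiber sequence~\eqref{equation:motivicWood}, these sources embed as direct summands of the corresponding $E^2$-terms for $\KT_\Lambda$.

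The slice spectral sequence for $\KT_\Lambda$ is controlled in the relevant range by \cite[Theorem 6.3]{roendigs-oestvaer.hermitian} combined with the $\eta$-periodicity $\eta\colon\Sigma^{1,1}\KT_\Lambda\xrightarrow{\sim}\KT_\Lambda$, which propagates the known vanishing of higher differentials from the $0$-column to all columns. Consequently, the images of our three suspected $\unit_\Lambda$-differentials in $E^r(\KT_\Lambda)$ vanish. Provided the unit-induced map on $E^2$-terms is injective on the identified summands in the $-n+1$st and $-n+2$nd columns, the differentials for $\unit_\Lambda$ must then be zero, and Lemma~\ref{lem:differential-module-hom} propagates this conclusion to all weights $n\in\ZZ$.

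The main obstacle will be verifying this injectivity over $\RR$. The slices of $\KT_\Lambda$ over $\RR$ decompose as infinite $\rho$-periodic wedges of mod-$2$ motivic Eilenberg-MacLane spectra (Theorem~\ref{theorem:slices-kq} combined with $\eta$-periodicity), and matching specific direct summands from $\s_q(\unit_\Lambda)$ into $\s_q(\KT_\Lambda)$ in columns $s=-n+1,-n+2$ is delicate: one must trace through the explicit descriptions of the unit maps in Lemmas~\ref{lem:slices-unit-kq-1}--\ref{lem:slices-unit-kq-2} and exploit the $\ZZ/2[\tau,\rho]$-algebra structure of $h^{*,*}(\RR)$ from Voevodsky's solution of the Milnor conjecture. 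For the integral source $H^{n-2,n}(\RR;\Lambda)$, the comparison with $\KQ_\Lambda$ (rather than $\KT_\Lambda$) together with the nontrivial contributions of real places via the fundamental ideal filtration in \cite[Theorem 6.12]{roendigs-oestvaer.hermitian} provide the required detection.
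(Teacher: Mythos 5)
Your treatment of $\CC$ and of global fields of positive characteristic is correct and is exactly what the paper does: both satisfy the hypotheses of Lemma~\ref{lem:higher-diff-local-fields}. The problem is the case $F=\RR$, where your comparison strategy has a genuine gap that cannot be repaired. To kill a differential $d_r$ for $\unit_\Lambda$ by naturality along the unit map, you need the induced map to be injective on the \emph{target} term, and this fails for the one differential that actually matters over $\RR$, namely $d_2\colon E^2_{-n+2,0,-n}\to h^{n+2,n+2}_{12}\subseteq E^2_{-n+1,2,-n}$. The summand $h^{n+2,n+2}_{12}$ comes from the slice summand $\Sigma^{3,2}\M\Lambda/12\{\alpha_{2/2}\}$ of $\s_2(\unit_\Lambda)$, generated by $\nu$, and by Lemma~\ref{lem:slices-unit-kq-2} the unit map sends it to $\Sigma^{4,2}\M\Lambda\subseteq\s_2(\KQ_\Lambda)$ via an odd multiple of the Bockstein $\delta_{12}$; on $\pi_{-n+1,-n}$ this lands in $H^{n+3,n+2}=0$. (Equivalently: $\nu\smash\KQ_\Lambda$ is trivial, as used in the proof of Lemma~\ref{lem:first-diff-unit-1}, and the slices of $\KT_\Lambda$ carry only mod-$2$ information, so neither $\KQ_\Lambda$ nor $\KT_\Lambda$ can detect this $12$-torsion target.) The injectivity you flag as "the main obstacle" is therefore not merely delicate -- it is false, and the vanishing of the image in $E^2(\KT_\Lambda)$ gives no information about this differential.

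The paper's actual argument over $\RR$ is different in both steps. First, it uses $h^{\ast,\ast}(\RR)=\Z/2[\tau,\rho]$ together with the identity $\Sq^2=\rho^2\cdot(-)$ up to $\tau$-power isomorphisms (from \cite[Corollary 6.2]{roendigs-oestvaer.hermitian}) to show that every $d_1$-differential restricting to $\Sq^2$ and entering the $-n+1$st column is an \emph{isomorphism}; this wipes out all the mod-$2$ targets on the $E^2$-page and reduces the whole lemma to the single differential $d_2\colon 2H^{n-2,n}\to h^{n+2,n+2}_{12}$. Second, it shows that the target is a cyclic group of order two generated by $\nu\cdot\{-1,\dotsc,-1\}$, which is a permanent cycle detected by the real Betti realization $\pi_{p,q}\unit_\Lambda\to\pi_{p-q}\unit_{\Top}$ (it hits $\eta_{\Top}\neq 0$); a nonzero class surviving to $E^\infty$ cannot be a boundary, so the differential vanishes. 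If you want to salvage your write-up, you should replace the $\KQ$/$\KT$ comparison for $\RR$ by this realization argument, or by some other means of detecting the $\nu$-generated summand that does not factor through hermitian $K$-theory.
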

\begin{proof}
Lemma~\ref{lem:higher-diff-local-fields} applies to $\CC$ and global fields of positive dimension \cite[\S4.2]{serre.Gcohomology}.
The mod-$2$ motivic cohomology ring of the reals is $h^{\ast,\ast}(\RR)=\Z/2[\tau,\rho]$.
Any $d^{1}$-differential restricting to $\Sq^2$ and entering the $-n+1$st column is an isomorphism:
up to a cup-product isomorphism by some $\tau$-power,
such a $d^{1}$-differential is cup-product by $\rho^2$ \cite[Corollary 6.2]{roendigs-oestvaer.hermitian}.
Lemma \ref{lem:first-diff-unit-1} implies the only possibly nontrivial $r$th differential is 
\begin{equation}
\label{eq:1} 
d^{2}
\colon
2H^{n-2,n}\to h^{n+2,n+2}_{12};
n\geq 0.
\end{equation}
The group $h^{n+2,n+2}_{12}(\RR)$ is cyclic of order two with a generator represented by $\nu\cdot \{-1,\dotsc,-1\}$, 
where $\nu\colon S^{7,4}\to S^{4,2}$ is the second motivic Hopf map, cf.~Example \ref{ex:Hopf-maps}. 
Taking real points 
yields the real Betti realization map 
$\pi_{p,q}\unit_{\Lambda}\to\pi_{p-q}\unit_{\Top}$,
which maps $\nu$ to $\eta_{\Top}\in \pi_1 \unit_{\Top}$, 
and likewise for every element of the form $\nu\cdot \{-1,\dotsc,-1\}$.
This implies that \eqref{eq:1} is trivial.
\end{proof}

\begin{lemma}
\label{lem:terms-zero-global-field}
Let $F$ be a global field of $\Char(F)\neq 2$.
Then for $n\geq 1$,
\begin{equation*}
h^{n+2,n+3}/\Sq^2h^{n,n+2}
=
h^{n+2,n+3}/\tau\partial^{12}_{2} h_{12}^{n+1,n+2}
=
0.
\end{equation*}
\end{lemma}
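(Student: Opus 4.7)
The plan is to split on the two types of global fields of characteristic not two. If $F$ is a function field of a smooth curve over a finite field $\mathbb{F}_q$ with $q$ odd, then $\cd_2(F)=2$, so via the Bloch-Kato isomorphism $h^{p,q}(F)\cong H^p_{\mathrm{et}}(F,\mu_2^{\otimes q})$ for $p\leq q$, the group $h^{n+2,n+3}(F)$ vanishes for $n+2>2$, i.e.~for $n\geq 1$. The assertion is trivial in this case, and the remainder of the argument concerns a number field $F$.

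For a number field $F$, I would invoke the Artin-Tate theorem on the Galois cohomology of number fields: for $p\geq 3$ the canonical map
\[
h^{p,q}(F) \cong H^p_{\mathrm{et}}(F,\mathbb{Z}/2) \xrightarrow{\;\iso\;} \bigoplus_{v\text{ real}} H^p_{\mathrm{et}}(F_v,\mathbb{Z}/2) \cong \bigoplus_{v\text{ real}} h^{p,q}(F_v)
\]
is an isomorphism. Since the motivic Steenrod operation $\Sq^2$ and the Bockstein-type operation $\sigma_2$ are natural with respect to base change, and since $h^{n,n+2}(F)\to\bigoplus_{v\text{ real}} h^{n,n+2}(F_v)$ is surjective for all $n\geq 1$ (by Kummer theory and approximation when $n=1$, by the Albert-Brauer-Hasse-Noether theorem when $n=2$, and by the same Artin-Tate result when $n\geq 3$), the surjectivity of $\Sq^2$ and $\tau\sigma_2$ onto $h^{n+2,n+3}(F)$ reduces to the analogous surjectivity over each real place $F_v\cong\RR$.

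Over $\RR$, the bigraded algebra $h^{\ast,\ast}(\RR)$ is $\mathbb{F}_2[\tau,\rho]$ with $|\tau|=(0,1)$ and $|\rho|=(1,1)$. Hence $h^{n,n+2}(\RR)=\mathbb{F}_2\{\rho^n\tau^2\}$ and $h^{n+2,n+3}(\RR)=\mathbb{F}_2\{\rho^{n+2}\tau\}$ are both one-dimensional. For the $\Sq^2$ statement, the motivic Cartan formula (with its $\tau$-correction, so that $\Sq^2(\tau^2)=\tau\rho^2$) combined with $\Sq^i(\rho)=0$ for $i\geq 1$ yields
\[
\Sq^2(\rho^n\tau^2) \;=\; \rho^n\Sq^2(\tau^2) \;=\; \tau\rho^{n+2},
\]
so $\Sq^2$ is surjective onto $h^{n+2,n+3}(\RR)$, proving the first equality.

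For the second equality, I first observe that at the odd prime $3$ the group $H^p_{\mathrm{et}}(\RR,\mathbb{Z}/3)$ vanishes for $p\geq 1$ because $\mathrm{Gal}(\CC/\RR)=\Z/2$ acts trivially on $\Z/3$ and $2$ is invertible mod $3$; thus $h^{n+1,n+2}_{12}(\RR)=h^{n+1,n+2}_{4}(\RR)$, and a direct Bockstein calculation using $0\to\Z/2\to\Z/4\to\Z/2\to 0$ and $\Sq^1(\rho^{n+1}\tau)=\rho^{n+2}$ identifies this group with $\mathbb{F}_2$. To compute $\tau\sigma_2$ on the generator I would use the description from Lemma~\ref{lem:slices-unit-kq-2} that $\sigma_2$ is the composite of the integral Bockstein $\M \Lambda/12\to\Sigma^{1,0}\M \Lambda$ (for $0\to\Z\xrightarrow{12}\Z\to\Z/12\to 0$) followed by mod-$2$ reduction; this is equivalent to the higher Bockstein $\beta$ associated to $0\to\Z/2\to\Z/8\to\Z/4\to 0$ at the prime $2$. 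The unique nonzero element of $h^{n+1,n+2}_{4}(\RR)$ is a lift of $\rho^{n+1}\tau$ to $\Z/4$-coefficients, and this higher Bockstein sends it to $\rho^{n+2}$ in $h^{n+2,n+2}(\RR)$; multiplying by $\tau$ gives $\tau\rho^{n+2}$, the nonzero generator of $h^{n+2,n+3}(\RR)$. The main obstacle in writing this up is the explicit identification of $\sigma_2$ as a cohomology operation and the verification that the higher Bockstein behaves as claimed on the relevant class; once this is pinned down from the description in Lemma~\ref{lem:slices-unit-kq-2}, the vanishing of both quotients over $F$ follows.
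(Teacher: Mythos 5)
Your proposal is correct and follows essentially the same route as the paper's proof: positive characteristic is disposed of by cohomological dimension, number fields are reduced to their real places via Tate's computation, and surjectivity over $\RR$ is checked using that $\Sq^2$ acts as $\rho^2$ up to isomorphism and that $\sigma_2\circ\inc_2=\Sq^1$ acts as $\rho$. One small correction to your treatment of the second quotient: the unique nonzero element of $h^{n+1,n+2}_4(\RR)$ is the image of $\rho^{n+1}\tau$ under $\inc_{2/4}$ rather than a lift of it (the reduction $h^{n+1,n+2}_4\to h^{n+1,n+2}$ is zero in that bidegree because $\Sq^1$ is injective on $\rho^{n+1}\tau$), so the value of $\sigma_2$ on it should be computed directly from the relation $\sigma_2\circ\inc_2=\Sq^1$ of Example~\ref{ex:steenrod-alg-weight-zero}, which is exactly how the paper argues and which yields your answer $\rho^{n+2}$.
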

\begin{proof}
For $n\geq 1$, $h^{n+2,n+3}=0$ over global fields of positive characteristic \cite[\S4.2]{serre.Gcohomology}.
By the cup-product isomorphism $\tau\colon h^{i,i}\to h^{i,i+1}$ \cite[Corollary 6.1]{roendigs-oestvaer.hermitian} for $i\geq 0$ and Tate's computation of $h^{i,i}$, 
$i\geq 3$, 
for number fields \cite[Theorem A.2]{milnor.k-quadratic} it suffices to consider $\RR$.
For $n\geq 0$, 
\[ 
\Sq^2
\colon 
h^{n,n+2}\to h^{n+2,n+3} 
\]
is the cup-product map by $\rho^2$ \cite[Corollary 6.2]{roendigs-oestvaer.hermitian}, 
and hence it is an isomorphism.
Likewise, 
the second group vanishes since by Theorem \ref{theorem:steenrod-algebra} the composition
\[ 
h^{n+1,n+2}
\xrightarrow{\inc^{2}_{12}} 
h^{n+1,n+2}_{12} 
\xrightarrow{\partial^{12}_{2}} 
h^{n+2,n+2}
\]
coincides with $\Sq^1$,
i.e., 
the cup-product map by $\rho$ \cite[Corollary 6.2]{roendigs-oestvaer.hermitian}.
\end{proof}

\begin{lemma}
\label{lem:higher-diff-global-field}
Let $F$ be a global field of $\Char(F)\neq 2$. 
For $n\in \ZZ$, $r\geq 2$, 
all $r$th differentials in the $-n$th slice spectral sequence for $\unit_{\Lambda}$ entering the $-n+1$st column are trivial.
\end{lemma}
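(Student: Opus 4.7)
The plan is to reduce to the case of number fields via Lemma~\ref{lem:higher-diff-real-complex}, which already handles positive-characteristic global fields, and then proceed by base change. The first step is to enumerate, using Lemma~\ref{lem:first-diff-unit-1} and the analysis preceding Lemma~\ref{lem:higher-diff-local-fields}, the only $d_r$-differentials with $r \geq 2$ that could possibly be nontrivial on entering the $-n+1$st column:
\begin{align*}
& d_2 \colon E^2_{-n+2,0,-n} \to E^2_{-n+1,2,-n}, \\
& d_2 \colon E^2_{-n+2,1,-n} \to E^2_{-n+1,3,-n}, \\
& d_3 \colon E^3_{-n+2,0,-n} \to E^3_{-n+1,3,-n}.
\end{align*}
For $n \leq 0$ all sources vanish, being subquotients of $H^{n-2,n}(F) = 0$ or $h^{n-1,n+1}(F) = 0$. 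For $n \geq 1$, Lemma~\ref{lem:terms-zero-global-field} combined with the vanishing $h^{p,q}(F) = 0$ for $p > q$ trivialises $E^2_{-n+1,3,-n}$, killing the second and third differentials. Only $d_2 \colon E^2_{-n+2,0,-n} \to E^2_{-n+1,2,-n}$ over a number field remains; since the source is a subgroup of $H^{n-2,n}(F)$, which vanishes for $n \in \{1,2\}$, it suffices to treat $n \geq 3$.

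To dispatch this last differential I would use naturality of the slice spectral sequence under the completions $F \to F_v$, supplied by Corollary~\ref{cor:cell-slice-basechange} and Remark~\ref{rem:cell-filt-basechange}. At every non-archimedean place $F_v$ has $\cd_p \leq 2$ for all primes $p$, so Lemma~\ref{lem:higher-diff-local-fields} gives vanishing of $d_2$ at $F_v$; archimedean places are handled by Lemma~\ref{lem:higher-diff-real-complex}. Transferring this to $F$ requires a Hasse principle on the target, which is a subquotient of $h^{n+1,n+2}(F) \oplus h^{n+2,n+2}_{12}(F)$. For $n \geq 3$ both indices satisfy $n+1, n+2 \geq 4$, placing them in the range where global duality for number fields (the Poitou-Tate Hasse principle for Galois cohomology, respectively Tate's theorem on Milnor $K$-theory) guarantees that each summand injects into the product of its completions at the real places. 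Combining these ingredients yields triviality of $d_2$ over $F$.

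The main obstacle is the Hasse-principle step, specifically ensuring that the injection survives passage from $E^1$ to the subquotient $E^2 = \ker d_1^{\text{out}} / \operatorname{im} d_1^{\text{in}}$ compatibly with base change. This amounts to tracking the behaviour of the explicit first slice differentials of Lemma~\ref{lem:first-diff-unit-1} under the completion maps; while each constituent is controlled individually by standard arithmetic input, assembling the injection of the full subquotient into the product over archimedean places is the technical crux of the argument.
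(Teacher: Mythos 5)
Your proof is correct and follows essentially the same route as the paper: enumerate the three candidate higher differentials, kill the $q=3$ targets and the low-weight sources by the vanishing results over global fields, and dispose of the remaining $d_2$ into $h^{n+2,n+2}_{12}$ by naturality under base change to the real places via the Bass--Tate isomorphism $h^{n+2,n+2}_{12}(F)\cong\bigoplus h^{n+2,n+2}_{12}(\RR)$ together with Lemma~\ref{lem:higher-diff-real-complex} (positive characteristic being absorbed by Lemma~\ref{lem:higher-diff-local-fields}). The ``technical crux'' you flag dissolves: the summand $h^{n+1,n+2}/\Sq^2(h^{n-1,n+1})$ of the $q=2$ target is already zero for $n\geq 2$ by Lemma~\ref{lem:terms-zero-global-field} (applied with $n$ shifted by one), and $h^{n+2,n+2}_{12}$ survives to the $E^2$-page as an honest direct summand because the entering first differential is trivial by Lemma~\ref{lemma:trivialenteringfirstdifferentials}, so no Hasse principle for proper subquotients --- and no detour through the non-archimedean completions --- is needed.
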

\begin{proof}
Lemma~\ref{lem:terms-zero-global-field} implies this for differentials entering $E^{2}_{-n+1,3,-n}$ and also for the first component of the differential entering 
$E^{2}_{-n+1,2,-n} \iso h^{n+1,n+2}/\Sq^2h^{n-1,n+1}\directsum h^{n+2,n+2}_{12}$. 
We may assume $\Char(F)=0$ by Lemma \ref{lem:higher-diff-local-fields}.
Then there is a canonically induced isomorphism 
\begin{equation*}
h^{n+2,n+2}_{12}(F)
\xrightarrow{\cong}
\bigoplus h^{n+2,n+2}_{12}(\RR),  
\end{equation*}
where the direct sum is indexed by the real places of $F$ \cite[Theorem 2.1]{bass-tate}. 
We conclude using Lemma~\ref{lem:higher-diff-local-fields} and base change.
\end{proof}

\begin{lemma}
\label{lem:differential-module-hom}
The $r$th slice differentials induce an $\mathbf{K}^{\M}_{\ast}\cong\bigoplus_{n\in\NN}H^{n,n}$-module map
\[ 
\bigoplus_{n\in \ZZ}
d^{r}_{p+n,q,n}(\E)
\colon 
\bigoplus_{n\in \ZZ} E^r_{p+n,q,n}(\E) 
\rightarrow 
\bigoplus_{n\in \ZZ} E^r_{p-1+n,q+r,n}(\E). 
\]
\end{lemma}
\begin{proof}
By naturality, any map $f$ of $\Lambda$-local motivic spectra induces a graded module map $\bigoplus_{n\in \ZZ} \pi_{p+n,n} f$ with respect to $\bigoplus_{n\in \ZZ} \pi_{n,n} \unit_{\Lambda}$. 
The first slice differential is induced by the naturally induced map $\s_q(\E)\rightarrow\Sigma^{1,0} \s_{q+1}(\E)$.
Each slice $\s_{q}(\E)$ is a module over the motivic ring spectrum $\s_{0}(\unit_{\Lambda})$,  
cf.~\cite[\S6 (iv),(v)]{grso} and \cite[Theorem 3.6.13(6)]{Pelaez}. 
Thus  $\s_{q}(\E)$ is an $\M \Lambda$-module. 
In particular,
$\eta\in \pi_{1,1}\unit$ acts trivially on both $\s_q(\E)$ and $\Sigma^{1,0} \s_{q+1}(\E)$.
Hence the first differential is an $\bigoplus_{n\in\NN}H^{n,n}$-module map. 
The other cases follow by construction.
\end{proof}

For $\underline{a}=(a_{1},\dots,a_{n})$, 
$a_{i}\in F^{\times}$, 
the Pfister quadric $Q_{\underline{a}}$ is the $(2^{n-1}-1)$-dimensional projective quadric defined by the quadratic form 
$q_{\underline{a}}=\langle\langle a_{1},\dots,a_{n-1}\rangle\rangle-\langle a_{n}\rangle$ \cite[\S2]{ovv}.
Its closed points $(Q_{\underline{a}})_{(0)}$ contains the subset $(Q_{\underline{a}})_{(0,\leq 2)}$ comprised of $x$ for which the degree $[F_{x}:F]\leq 2$ \cite[\S3]{ovv}.
By \cite[Theorem 3.2]{ovv} there is an exact sequence
\begin{equation}
\label{equation:OVVexactsequence}
\bigoplus_{x\in (Q_{\underline{a}})_{(0,\leq 2)}} \mathbf{K}^{\M}_{\ast}(F_{x})/2
\to
\mathbf{K}^{\M}_{\ast}(F)/2
\overset{{\underline{a}}}{\to}
\mathbf{K}^{\M}_{\ast+n}(F)/2
\to
\mathbf{K}^{\M}_{\ast+n}(F(Q_{\underline{a}}))/2.
\end{equation}
In the following we employ this sequence to give substantive applications of presheaves with transfers structure to computational motivic homotopy theory.

\begin{lemma}
\label{lem:sec-differential-to-13}
For $n\in \ZZ$ the $d^{2}$-differential
$
E^2_{-n+2,0,-n} 
\to 
E^2_{-n+1,2,-n} 
$
in the $-n$th slice spectral sequence for $\unit_{\Lambda}$ is trivial.
\end{lemma}
\begin{proof}
Lemma \ref{lem:first-diff-unit-1} shows the direct summand $h^{n+2,n+2}_{12}$ of $E^1_{-n+1,2,-n}$ survives to a direct summand of $E^2_{-n+1,2,-n}$ since the entering first differential is trivial, 
and we deduce
\begin{equation*} 
E^2_{-n+1,2,-n} 
\cong
h^{n+2,n+2}_{4}
\directsum 
h^{n+2,n+2}_{3}
\directsum 
h^{n+1,n+2}/\Sq^2h^{n-1,n+2}.
\end{equation*} 
To deal with $h^{n+2,n+2}_{3}$ we employ the mod-$3$ reduction map $\unit_{\Lambda} \to \unit_{\Lambda}/3$.
In low degrees, 
Corollary~\ref{corollary:small-slices} determines the induced map of slices. 
For the $E^2$-pages of the corresponding slice spectral sequences, 
there is a naturally induced commutative diagram:
\[ 
\xymatrix{
E^2_{-n+2,0,-n}(\unit_{\Lambda}) \ar[r] \ar[d]_{d^{2}({\unit_{\Lambda}})} &  E^2_{-n+2,0,-n}(\unit_{\Lambda}/3) = h^{n-2,n}_{3} \ar[d]^-{d^{2}({\unit_{\Lambda}/3})} \\
E^2_{-n+1,2,-n}(\unit_{\Lambda}) \ar[r] & E^2_{-n+1,2,-n}(\unit_{\Lambda}/3)=h^{n+2,n+2}_{3}} 
\]
The lower horizontal map is the projection.
Thus it suffices to show $d^{2}({\unit_{\Lambda}/3})$ is trivial.
By Lemma~\ref{lem:differential-module-hom},  
the second differential is a graded $\mathbf{K}^{\M}_{\ast}/3$-module map
\[ 
\bigoplus_{n\in\ZZ} d^{2}({\unit_{\Lambda}/3})
\colon 
\bigoplus_{n\in\ZZ} h^{n-2,n}_{3} 
\to 
\bigoplus_{n\in\ZZ} h^{n+2,n+2}_{3}. 
\]
Suppose $F$ contains a primitive third root of unity $\xi\in h^{0,1}_{3}$.  
The $\mathbf{K}^{\M}_{\ast}/3$-module $\bigoplus_{n\in\ZZ} h^{n-2,n}_{3}$ is then generated by $\xi^2\in h^{0,2}_{3}$, 
and it suffices to show $d^{2}({\unit_{\Lambda}/3})(\xi^2)=0\in h^{4,4}_{3}$.  
Let $F_{0}$ be the prime field of $F$.
By base change we may replace $F$ by $F_{0}(\xi)$.
We have $h^{4,4}_{3}(F_{0}(\xi))=0$ by \cite[\S3.3, 4.4]{serre.Gcohomology}.
Hence $d^{2}({\unit_{\Lambda}/3})(\xi^2)=0$, 
see Lemma~\ref{lem:higher-diff-local-fields}.
If $F$ does not contain $\xi$ as above, 
a transfer argument for the quadratic extension $F(\xi)/F$ 
implies the claim.

Next we analyze the component of the second differential entering 
$h^{n+1,n+2}/\Sq^2h^{n-1,n+1}$. The unit map $\unit_{\Lambda}\to \f_{0}(\KQ)_{\Lambda}$
induces a commutative diagram
\[ 
\xymatrix{
E^2_{-n+2,0,-n}(\unit_{\Lambda}) \ar[r] \ar[d]_{d^{2}({\unit_{\Lambda}})} &  E^2_{-n+2,0,-n}(\f_{0}(\KQ_{\Lambda}))  \ar[d]^-{d^{2}({\f_{0}(\KQ_{\Lambda})})} \\
E^2_{-n+1,2,-n}(\unit_{\Lambda}) \ar[r] & E^2_{-n+1,2,-n}(\f_{0}(\KQ_{\Lambda}))=h^{n+1,n+2}/\Sq^{2}h^{n-1,n+1}} 
\]
Here the lower horizontal map is the projection,
and the upper horizontal map is the inclusion of a direct summand.
We conclude using Lemma~\ref{lem:KQeff-sec-differential-to-12}, 
which shows triviality of 
\[ 
d^{2}({\f_{0}(\KQ_{\Lambda})})\colon E^2_{-n+2,0,-n}(\f_{0}(\KQ_{\Lambda})) 
\to 
E^2_{-n+1,2,-n}(\f_{0}(\KQ_{\Lambda})). 
\] 

Finally, 
we consider the component of $d^{2}(\unit_{\Lambda})$ entering $h^{n+2,n+2}_4$ using the canonical map $\unit_{\Lambda} \to\unit_{\Lambda}/12{\hyper}$ for the hyperbolic plane $\hyper=1-\epsilon$.
In $\pi_{0,0}\unit$ we have $12\hyper = 3\hyper^3$.
The slices and slice differentials of $\unit_{\Lambda}/{12\hyper}$ are readily computed. 
In particular, the map
\[ 
E^{2}_{-n+1,2,-n}(\unit_{\Lambda}) 
\to
E^{2}_{-n+1,2,-n}\bigl(\unit_{\Lambda}/{12\hyper}\bigr) 
\]
in slice degree two is a (split) injection.\footnote{Injectivity holds also for $\unit_{\Lambda} \to \unit_{\Lambda}/6{\hyper}$, but the proof of Lemma~\ref{lem:sec-differential-to-14} does not work for 
$\unit_{\Lambda}/6{\hyper}$.} 
Hence it suffices to show the composite map
\[ 
E^2_{-n+2,0,-n} (\unit_{\Lambda})
\to
E^2_{-n+2,0,-n} \bigl(\unit_{\Lambda}/{12\hyper}\bigr)
\xrightarrow{d^{2}} 
E^2_{-n+1,2,-n} \bigl(\unit_{\Lambda}/{12\hyper}\bigr)
\]
is zero. 
In slice degree zero there is a commutative diagram:
\[ 
\xymatrix{
E^{2}_{-n+2,0,-n}(\unit_{\Lambda}) \ar[d] \ar[r] & E^{2}_{-n+2,0,-n}\bigl(\unit_{\Lambda}/{12\hyper}\bigr) \ar[d] \\
H^{n-2,n} \ar[r]^{\pr^{\infty}_{24}} &
h^{n-2,n}_{24}
}
\]
The vertical inclusions are isomorphisms if $\Sq^2\colon h^{n-2,n} \to h^{n,n+1}$ is trivial, 
i.e., 
if $\rho^2=0$.
For $E^{2}_{-n+2,0,-n}\bigl(\unit_{\Lambda}/{12\hyper}\bigr)$ there is a short exact sequence
\begin{equation}
\label{eq:E2-20-12hyper}
0 
\to \ker(\pr^{24}_{2} \colon h^{n-2,n}_{24}\to h^{n-2,n}) 
\to E^{2}_{-n+2,0,-n}\bigl(\unit_{\Lambda}/{12\hyper}\bigr) 
\to \ker(\Sq^2\colon h^{n-2,n}\to h^{n,n+1}), 
\end{equation}
and similarly for $E^{2}_{-n+2,0,-n}(\unit_{\Lambda})$.
The rightmost map in~(\ref{eq:E2-20-12hyper}) is surjective, 
being the base change of $\pr^{24}_{2}\colon h^{n-2,n}_{24}\to h^{n-2,n}$,
which is surjective in bidegree $(0,2)$. 
Hence the boundary map $\partial^{2}_{12}\colon h^{0,2}\to h^{1,2}_{12}$ is trivial.
Since the $\KMil$-module $\directsum_{n\in \ZZ}h^{n-2,n}$ is generated by $\tau^2\in h^{0,2}$, 
the boundary and $\KMil$-module map $\partial^{2}_{12}\colon h^{n-2,n}\to h^{n-1,n}_{12}$ is trivial.
Thus the $\KMil$-module
\[ 
\bigoplus_{n\in \ZZ} E^{2}_{-n+2,0,-n}\bigl(\unit_{\Lambda}/{12\hyper}\bigr)  
\]
has $2$-primary component generated in degrees $(0,-2)$ and $(-1,-3)$, 
since the same holds for the outer terms in~(\ref{eq:E2-20-12hyper}).
We can identify the $\KMil$-module
\[ 
\bigoplus_{n\in \ZZ} \ker(\Sq^{2}\colon h^{n-2,n}\to h^{n,n+1}) 
\]
with $\{ x\in \directsum_{n\in \ZZ} h^{n,n} \vert \rho^2x = 0\}$ generated in degree at most $(1,1)$ by \cite[Theorem 3.3]{ovv} (see \cite[Theorem 2.1]{merkurjev-suslin.rost} if $\Char(F)$ is odd). 
We note the $2$-primary component of the $\KMil$-module 
\[ 
\bigoplus_{n\in \ZZ}
\ker(\pr^{24}_{2} \colon h^{n-2,n}_{24}\to h^{n-2,n})
= 
\bigoplus_{n\in \ZZ}
h^{n-2,n}_{3} \directsum \image(h^{n-2,n}_{4} \to h^{n-2,n}_{8}) 
\]
is generated in degree $(0,2)$ by comparing with $\directsum_{n\in \ZZ} h^{n-2,n}_{4}$.
It remains to prove that the $\KMil$-module generators map trivially under $d^{2}$. 

Let $g_0 \in E^{2}_{0,0,-2}\bigl(\unit_{\Lambda}/{12\hyper}\bigr)$ be a generator in degree $(0,-2)$.
If $g_0\in\directsum_{n\in \ZZ} h^{n-2,n}_{4}$,
then it is defined over the prime field $F_{0}\subset F$.
Naturality of the slice spectral sequence with respect to field extensions implies 
\[ 
d^{2}(g_{0})
\in
E^{2}_{-1,2,-2}\bigl(\unit_{\Lambda}/{12\hyper}\bigr)(F_{0}) 
= 
h^{3,4}/\Sq^{2}h^{1,3} \directsum h^{4,4}\directsum h^{4,4}_{12} (F_{0}) = 
\begin{cases} 
\Lambda/2\directsum \Lambda/2 & F_{0} = \QQ \\
0 & \Char(F)>0. 
\end{cases} 
\]
The inclusion $\QQ \hookrightarrow \RR$ induces an isomorphism on $E^{2}_{-1,2,-2}\bigl(\unit_{\Lambda}/{12\hyper}\bigr)\iso \ZZ/2\{\rho^4\}\directsum \ZZ/2\{\rho^4\nu\}$. 
The real Betti realization sends ${\unit}$ to $\mathbb{S}$, 
$\hyper$ to $0$, 
$\rho$ to $1$, 
and $\nu$ to $\eta_{\Top}$. 
Hence it defines a surjective homomorphism
\[ 
\pi_{-1,-2}\unit/{12\hyper} 
\to 
\pi_{1}\mathbb{S} \directsum \pi_{0}\mathbb{S}.
\]
It follows that $d^{2}(g_0)=0$.
If $g_{0}$ maps to a degree $(0,2)$ generator in the $\KMil$-module
\[ 
\bigoplus_{n\in \ZZ} 
\ker(\Sq^{2}\colon h^{n-2,n}\to h^{n,n+1}), 
\]
then $\rho^2=0$, 
and $-1$ is a sum of at most two squares in $F$ by the proof of \cite[Theorem 1.4]{milnor.k-quadratic}, \cite[Corollary 3.5]{elman-lam.pfister}. 
If $\rho=0$,
then $g_{0}$ is defined over the smallest subfield $F_{0}(\sqrt{-1})$ containing $\sqrt{-1}$.
It follows that $d^{2}(g_{0}) = 0$ since
\[ 
E^{2}_{-1,2,-2}\bigl(\unit_{\Lambda}/{12\hyper}\bigr)(F_{0}(\sqrt{-1})) 
= 
h^{3,4}/\Sq^{2}h^{1,3} 
\directsum 
h^{4,4}
\directsum 
h^{4,4}_{12} (F_{0}(\sqrt{-1})) 
= 
0.\]
If $\rho\neq 0$ but $\rho^2=0$, 
there exist elements $a,b\in F$ such that $a^2+b^2=-1$ and $g_{0}$ is defined over the field $F_{0}(a,b)$ of cohomological dimension at most 3 \cite[\S3.3, 4.4]{serre.Gcohomology}.
Hence $h^{4,4}\directsum h^{4,4}_{12}(F_{0}(a,b)) = 0$ and $d^{2}(g_{0})=0$. 
This shows $d^{2} = 0$ over every field for which $\rho^2=0$.

Suppose $g_1 \in E^{2}_{-1,0,-3}\bigl(\unit_{\Lambda}/{12\hyper}\bigr)$ is a generator in degree $(-1,-3)$.
It maps to a degree $(1,3)$ generator in the $\KMil$-module
\[ 
\bigoplus_{n\in \ZZ} 
\ker(\Sq^{2}\colon h^{n-2,n}\to h^{n,n+1}). 
\]
Every such a generator is of the form $\tau^2a$ with $\rho^2a=0$, 
where $a\in h^{1,1}(F)$ is represented by a unit in $F$. 
The exact sequence \eqref{equation:OVVexactsequence} of \cite[Theorem 3.2]{ovv} implies there exist finitely many quadratic field extensions $L_1,\dotsc,L_m$ of $F$ splitting $\rho^2$ 
(i.e., $\rho^2=0$ over each $L_j$), 
such that $a$ is in the image of the transfer map
\[ 
\sum_{j=1}^m \mathrm{tr}_j
\colon
\bigoplus_{j=1}^m h^{1,1}(L_j) 
\rightarrow 
h^{1,1}(F). 
\]
By \cite[Theorem 1.9]{hoyois.transfer}
the transfer map for $L_j/F$ is induced by the Spanier-Whitehead dual of the structure map
$ 
\Sigma^\infty {\Spec(L_j)}_+
\to 
\Sigma^\infty \Spec(F)_+=\unit_{\Lambda}, 
$
and hence it commutes with the slice differentials.
Since the differential 
$d^{2}({\unit_{\Lambda}/12{\hyper}})$ is zero for fields in which
$\rho^2=0$, 
also $d^{2}(\tau^2{a})=0$, 
which concludes the proof.
\end{proof}

\begin{lemma}
\label{lem:sec-differential-to-14}
For $n\in \ZZ$
the $d^{2}$-differential
$
E^2_{-n+2,1,-n} 
\to 
E^2_{-n+1,3,-n} 
$
in the $-n$th slice spectral sequence for $\unit_{\Lambda}$ is trivial.
\end{lemma}
\begin{proof}
By comparison with $\unit_{\Lambda}/{12\hyper}$ we find a naturally induced (split) injection
\[ 
E^{2}_{-n+1,3,-n}(\unit_{\Lambda}) 
\to 
E^{2}_{-n+1,3,-n}\bigl(\unit_{\Lambda}/{12\hyper}\bigr). 
\]
This reduces the proof to showing triviality of the composite map
\[ 
E^2_{-n+2,1,-n}(\unit_{\Lambda})
\to 
E^2_{-n+2,1,-n} \bigl(\unit_{\Lambda}/{12\hyper}\bigr)
\xrightarrow{d^{2}(\unit_{\Lambda}/{12\hyper})} 
E^2_{-n+1,3,-n} \bigl(\unit_{\Lambda}/{12\hyper}\bigr).
\]
We claim $d^{2}(\unit_{\Lambda}/{12\hyper})\colon E^2_{-n+2,1,-n} \bigl(\unit_{\Lambda}/{12\hyper}\bigr)\to E^2_{-n+1,3,-n} \bigl(\unit_{\Lambda}/{12\hyper}\bigr)$ is trivial.
The canonical pairing $\unit_{\Lambda}\smash \unit_{\Lambda}/{12\hyper} \to \unit_{\Lambda}/{12\hyper}$ induces 
\[ 
\pi_{1,1}\s_1(\unit_{\Lambda}) \times \pi_{-n+1,-n-1} \s_0\bigl(\unit_{\Lambda}/{12\hyper}\bigr) 
=
h^{0,0}\times h^{n-1,n+1}_{24} 
\to 
\pi_{-n+2,-n}\s_1\bigl(\unit_{\Lambda}/{12\hyper}\bigr) \]
sending $(1,x)$ to $\pr^{24}_{2}(x)\in h^{n-1,n+1}\hookrightarrow h^{n-1,n+1}\times h^{n,n+1} = \pi_{-n+2,-n}\s_1\bigl(\unit_{\Lambda}/{12\hyper}\bigr)$. 
Invoking Proposition~\ref{prop:leibniz} yields the induced pairing between $E^{2}$-terms
\[ 
E^{2}_{1,1,1}(\unit_{\Lambda}) \times E^{2}_{-n+1,0,-n-1}(\unit_{\Lambda}/12{\hyper})
\to 
E^{2}_{-n+2,1,-n}(\unit_{\Lambda}/12{\hyper}). 
\]
We claim it surjects onto the image of $E^{2}_{-n+2,1,-n}(\unit_{\Lambda}\to \unit_{\Lambda}/12{\hyper})$:
Product with $1\in h^{0,0}=E^{2}_{1,1,1}(\unit_{\Lambda})$ yields $E^{2}_{-n+2,0,-n}\bigl(\unit_{\Lambda}/{12\hyper}\bigr)\to \ker(\Sq^2\colon h^{n-2,n}\to h^{n,n+1})$ in \eqref{eq:E2-20-12hyper}, 
which is surjective by the proof of Lemma~\ref{lem:sec-differential-to-13}. 
Hence for every $x$ in the image of $E^{2}_{-n+2,1,-n}(\unit_{\Lambda}\to\unit_{\Lambda}/12{\hyper})$,
there exists $y\in E^{2}_{-n+1,0,-n-1}(\unit_{\Lambda}/12{\hyper})$ with $x=1\cdot y$. 
Proposition~\ref{prop:leibniz} implies the vanishing
\[ 
d^{2}(x) 
= 
d^{2}(1\cdot y) 
= 
d^{2}(1)\cdot y \pm 1 \cdot d^{2}(y) 
= 
0\cdot y \pm 1\cdot 0 
= 
0, 
\]
where $d^{2}(y)=0$ by the proof of Lemma~\ref{lem:sec-differential-to-13}. 
The complementary direct summand of $E^{2}_{-n+2,1,-n}(\unit_{\Lambda}/12{\hyper})$ is $h^{n,n+1}/\Sq^2\pr^{24}_{2}h^{n-2,n}_{24}=h^{n,n+1}/\Sq^2h^{n-2,n}$.
Here $g_{0}:=\tau \in h^{0,1}$,
which is defined over $F_{0}$, 
generates the $\KMil$-module $\directsum_{n\in \Z} h^{n,n+1}/\Sq^2h^{n-2,n}$.
It suffices to show $d^{2}(g_0)=0$ over $\QQ$.
We note that $g_0$ detects an element in $\pi_{2,0}\unit_{\Lambda}/12\hyper$ mapping to $\eta_{\Top}\in \ker(12\hyper:\pi_{1,0}\unit_{\Lambda} \to \pi_{1,0}\unit_{\Lambda})$;
its nontriviality follows from Betti realization.
It follows that $d^{2}(\unit_{\Lambda}/{12\hyper})\colon E^2_{-n+2,1,-n} \bigl(\unit_{\Lambda}/{12\hyper}\bigr)\to E^2_{-n+1,3,-n} \bigl(\unit_{\Lambda}/{12\hyper}\bigr)$ is trivial.
\end{proof}

\begin{lemma}
\label{lem:third-differential-to-14}
For $n\in \ZZ$ the $d^{3}$-differential
$
E^3_{-n+2,0,-n} 
\to 
E^3_{-n+1,3,-n} 
$
in the $-n$th slice spectral sequence for $\unit_{\Lambda}$ is trivial.
\end{lemma}
\begin{proof}
Lemmas \ref{lem:sec-differential-to-13} and \ref{lem:sec-differential-to-14} imply $E^2_{-n+2,0,-n} =E^3_{-n+2,0,-n}$, $E^2_{-n+1,3,-n}=E^3_{-n+1,3,-n}$, 
and injectivity of $E^3_{-n+1,3,-n}(\unit_{\Lambda}\to \unit_{\Lambda}/12\hyper)$.
It remains to prove the composite
\[ 
E^3_{-n+2,0,-n}(\unit_{\Lambda}) 
\to 
E^3_{-n+2,0,-n}(\unit_{\Lambda}/12\hyper) 
\xrightarrow{d^{3}({\unit_{\Lambda}/12\hyper})}
E^3_{-n+1,3,-n}(\unit_{\Lambda}/12\hyper) 
\]
is trivial.
To that end it suffices to consider the generators of the $\KMil$-module
\[ 
\bigoplus_{n\in\ZZ} E^3_{-n+2,0,-n}(\unit_{\Lambda}/12\hyper) 
= 
\bigoplus_{n\in\ZZ} \ker(h^{n-2,n}_{24}\xrightarrow{\Sq^2\pr^{24}_{2}} h^{n,n+1}).
\] 
We apply the same techniques as in the proofs of Lemmas~\ref{lem:sec-differential-to-13} and \ref{lem:sec-differential-to-14},
e.g., 
base change to field extensions of $F_{0}$ for which the generators map trivially by a cohomological dimension consideration, 
and the real Betti realization.
Further details are left to the reader.
\end{proof}

To summarize, 
Lemmas \ref{lem:first-diff-unit-1}, \ref{lem:sec-differential-to-13}, \ref{lem:sec-differential-to-14}, and \ref{lem:third-differential-to-14} imply, 
for all $q,n\in\ZZ$, 
the equality
\begin{equation}
\label{eq:higher-diff-triv} 
E^{\infty}_{-n+1,q,-n}(\unit_{\Lambda})
=
E^2_{-n+1,q,-n}(\unit_{\Lambda}). 
\end{equation}
In the remainder of this section, 
we shift focus from the motivic sphere spectrum $\unit$ to the hermitian $K$-theory spectrum $\KQ$, in order to study the unit map $\unit \to \KQ$.
However, some of the earlier arguments do not apply to $\KQ$ since 
it is not effective. 
By computation the (possibly) nontrivial entries for nonnegative $q$ in the relevant column are given by:
\begin{center}
\begin{tabular}{lll}
\hline
$q$ & $E^2_{-n+1,q,-n}(\KQ)$ \\ \hline
$2$ & $h^{n+1,n+2}/\Sq^{2}(h^{n-1,n+1})$ \\
$1$ & $h^{n,n+1}/\Sq^{2}\pr(H^{n-2,n})$ \\
$0$ & $H^{n-1,n}$  
\end{tabular}
\end{center}
These groups may be hit by higher differentials originating from the negative slices of $\KQ$,
potentially increasing the kernel of $\pi_{n+1,n}\unit_{\Lambda} \to \pi_{n+1,n}\KQ_{\Lambda}$.
(Some of the source groups of these differentials are trivial according to the Beilinson-Soul\'e vanishing conjecture.) 
However, 
since $\unit$ is effective,
$\unit\to \KQ$ factors over the effective cover $\f_{0}(\KQ)$ with trivial negative slices. 

\begin{proposition}
\label{prop:Einfty-KQ}
There are isomorphisms
\[ 
E^\infty_{-n+1,q,-n}(\f_0(\KQ)) = \begin{cases}
0 & q>2 \\
h^{n+1,n+2}/\Sq^{2}(h^{n-1,n+1}) & q=2 \\
h^{n,n+1}/\Sq^{2}\pr(H^{n-2,n}) & q=1 \\
H^{n-1,n}\oplus h^{n-5,n}\oplus h^{n-9,n}\oplus \dotsm & q=0 \\
0 & q<0.
\end{cases}
\]
\end{proposition}
\begin{proof}
Theorem \ref{theorem:slices-kq} implies that $E^{1}_{p,q,-n}(\f_0(\KQ))$ is given by
\[ 
\pi_{p,-n}\s_{q}(\f_0(\KQ)) 
= 
\begin{cases}
H^{2q-p,q+n}\directsum \bigoplus_{i<\frac{q}{2}} h^{2i+(q-p),q+n} & 0\leq q\equiv 0 \bmod 2 \\
\bigoplus_{i<\frac{q+1}{2}} h^{2i+(q-p),q+n} & 0\leq q\equiv 1 \bmod 2 \\
0 & 0>q.
\end{cases}
\]
The first differential for $\KQ$ is described in \cite[Theorem 5.5]{roendigs-oestvaer.hermitian}. 
It coincides with the first differential for $\f_0(\KQ)$ on nonnegative slices. 
From this we deduce the computation
\[ 
E^2_{-n+1,q,-n}(\f_0(\KQ)) = \begin{cases}
0 & q>2 \\
h^{n+1,n+2}/\Sq^{2}(h^{n-1,n+1}) & q=2 \\
h^{n,n+1}/\Sq^{2}\pr(H^{n-2,n}) & q=1 \\
0 & q<0.
\end{cases} \]
The case $q=0$ is special since the entering differential is zero for $\f_0(\KQ)$, but nonzero for $\KQ$ if $n\leq -3$. 
Hence $E^2_{-n+1,0,-n}(\f_0(\KQ))$ is the kernel of the map
\begin{align*}
H^{n-1,n}\oplus h^{n-3,n}\oplus \dotsm & \to  h^{n+1,n+1}\oplus h^{n-1,n+1} \oplus h^{n-3,n+1}\oplus h^{n-5,n+1}\oplus \dotsm \\ 
(x_{n-1},x_{n-3},\dotsc) & \mapsto (\Sq^3\Sq^1 x_{n-3},0,\tau x_{n-3}+\Sq^3\Sq^1 x_{n-7},0,\dotsc),
\end{align*}
i.e., 
$H^{n-1,n}\oplus h^{n-5,n}\oplus \dotsm$. 
This determines the $E^2$-page. 
The only possibly nontrivial $d_2$-differential is $E^2_{-n+2,0,-n}(\f_0(\KQ))\to E^2_{-n+1,2,-n}(\f_0(\KQ))$, whose triviality is
provided below in Lemma~\ref{lem:KQeff-sec-differential-to-12}. 
\end{proof}

\begin{lemma}
\label{lem:KQeff-sec-differential-to-12}
The second differential
\[ 
E^2_{-n+2,0,-n}(\f_{0}(\KQ))  
\to 
E^2_{-n+1,2,-n}(\f_{0}(\KQ)) 
\]
in the $-n$th slice spectral sequence for $\f_{0}(\KQ)$ is trivial.
\end{lemma}
\begin{proof}
Observe first that, 
analogous to the computation of $E^2_{-n+1,0,-n}(\f_0(\KQ))$, 
one obtains
\[ 
E^2_{-n+2,0,-n}(\f_0(\KQ)) 
\iso 
J^{n-2,n}\directsum j^{n-6,n} \directsum j^{n-10,n} \directsum \dotsm;
\]
$J^{n-2,n}:=\ker(H^{n-2,n} \xrightarrow{\pr^{\infty}_{2}} h^{n-2,n} \xrightarrow{\Sq^2} h^{n,n+1})$, 
$j^{n-2m,n}:=\ker(h^{n-2m,n} \xrightarrow{\Sq^2} h^{n-2m+2,n+1})$. 
The claim follows readily for $n\leq 1$, 
since then $E^{1}_{-n+2,0,-n}(\f_{0}(\KQ)) =0$.
It also follows if the cohomological dimension $\cd_{2}(F[\sqrt{-1}])\leq 2$, 
since then 
\[ 
E^2_{-n+1,2,-n}(\f_{0}(\KQ))
= 
h^{n+1,n+2}/\Sq^2h^{n-1,n+1}
= 
0
\text{ for } n\geq 2.
\]
In general, 
the cup-product map $\rho\colon h^{n,n}(F)\to h^{n+1,n+1}(F)$ is surjective when $n\geq \cd_{2}(F[\sqrt{-1}])$.
For $q\geq 0$ we recall from Theorem~\ref{theorem:slices-kq} the slices 
\[ 
\s_{q}(\f_{0}(\KQ/2)) 
= 
\bigvee_{i\leq q} \Sigma^{q+i,q} \MZ/2. 
\]
There is a naturally induced commutative diagram:
\[ 
\xymatrix{
E^{2}_{-n+2,0,-n} (\f_{0}(\KQ)) \ar[r] \ar[d]_{d^{2}} & 
E^{2}_{-n+2,0,-n} (\f_{0}(\KQ/2)) \ar[d]^{d^{2}} \\ 
E^{2}_{-n+1,2,-n} (\f_{0}(\KQ)) \ar[r]  & 
E^{2}_{-n+1,2,-n} (\f_{0}(\KQ/2))
}
\]
Here the lower horizontal map is split injective by a comparison of slices and $d^{1}$-differentials.
Hence it suffices to prove the composite
\[  
\xymatrix{
E^{2}_{-n+2,0,-n} (\f_{0}(\KQ)) \ar[r] &
E^{2}_{-n+2,0,-n} (\f_{0}(\KQ/2)) \ar[r]^{d^{2}} &
E^{2}_{-n+1,2,-n} (\f_{0}(\KQ/2))
}
\]
is trivial.  
The first map in the composite is trivial when $n \leq 2$ via identification with 
\[ 
\ker (H^{n-2,n}\xrightarrow{\Sq^2\pr^{\infty}_{2}} h^{n,n+1})
\to 
\ker (h^{n-2,n}\xrightarrow{\Sq^2} h^{n,n+1}).
\]
The source and target are trivial for $n<2$, 
and the connecting map $h^{0,2}\xrightarrow{\partial^{2}_{\infty}} H^{1,2}$ is injective since $H^{1,2}$ contains a (unique) element of order two \cite{levine.indec}, \cite{merkurjev-suslin.k3}, 
whence $H^{0,2}\to h^{0,2}$ is trivial.

Suppose $\sqrt{-1}\in F$, so that $\rho=0\in h^{1,1}$.
Then the $\KMil$-module 
\begin{equation}\label{eq:kmil-module}
\bigoplus_{n\in \ZZ}  E^{2}_{-n+2,0,-n} (\f_{0}(\KQ/2)) 
= 
\bigoplus_{n\in \ZZ} h^{n-2,n}
\directsum
\bigoplus_{n\in \ZZ} h^{n-6,n}
\directsum
\bigoplus_{n\in \ZZ} h^{n-10,n}
\directsum \dotsm
\end{equation}
is generated by the elements $\tau^{4k+2}\in h^{0,4k+2}$, $k\geq 0$, 
defined over $F_{0}(\sqrt{-1})$.
By cohomological dimension, these generators map trivially, 
and hence also~(\ref{eq:kmil-module}) by Lemma~\ref{lem:differential-module-hom}.

Suppose $\sqrt{-1}\notin F$, but $\rho^2=0$. 
Again we may choose $a,b\in F$ such that $a^2+b^2=-1$ \cite[Corollary 3.5]{elman-lam.pfister}. 
The $\KMil$-module~(\ref{eq:kmil-module}) is then generated by the elements $\tau^{4k+2}\in h^{0,4k+2}$, $k\geq 0$, defined over $F_{0}(a,b)$.
If $\Char(F)>0$ or $a$ is algebraic over $F_{0}$,
$\tau^{4k+2}$ maps trivially by cohomological dimension. 
In the remaining case ($\Char(F)=0$ and $a$ transcendental over $\QQ$),
$\tau^2$ is the only generator that may map nontrivially. 
Nevertheless, 
the $d^{2}$-differential 
\[\xymatrix{  
E^{2}_{-n+2,0,-n} (\f_{0}(\KQ/2)) 
\ar[r]^{d^{2}} &
E^{2}_{-n+1,2,-n} (\f_{0}(\KQ/2))}
\]
is trivial over $F_{0}(a,b)$ for all $n\geq 3$,
since the target is then trivial by cohomological dimension.
It follows that $d^{2}\colon E^2_{-n+2,0,-n}(\f_{0}(\KQ)_{})\to E^2_{-n+1,2,-n}(\f_{0}(\KQ)_{})$ is trivial over $F_{0}(a,b)$. 
This computes $\pi_{-n+1,-n} \f_{0}(\KQ)_{}$ via the slice filtration over $F_{0}(a,b)$:
\begin{equation}
\label{equation:ingeneralnontrivialextensions}
\xymatrix{
h^{n+1,n+2}= \f_{2}\pi_{-n+1,-n}\f_{0}(\KQ) \ar[r] &
\f_{1}\pi_{-n+1,-n}\f_{0}(\KQ) \ar[r] \ar[d] &
\pi_{-n+1,-n}\f_{0}(\KQ) \ar[d] \\
& h^{n,n+1} & H^{n-1,n}\directsum h^{n-5,n}\directsum h^{n-9,n}\directsum \dotsm 
} 
\end{equation}
In general, 
\eqref{equation:ingeneralnontrivialextensions} is a nontrivial extension of $\mathbf{K}^{\mathrm{MW}}_{\ast}$-modules. 
If $d^{2}\colon E^2_{-n+2,0,-n}(\f_{0}(\KQ)_{})\to E^2_{-n+1,2,-n}(\f_{0}(\KQ)_{})$ is trivial, 
the above computation extends to any $F$ by replacing $h^{n,n+1}$ with $h^{n,n+1}/\Sq^2(h^{n-2,n})$,
and similarly for $\f_{2}\pi_{-n+1,-n}\f_{0}(\KQ)$.

{\bf Claim:} There is a split extension of abelian groups 
\[ \xymatrix{
0\ar[r] & \f_{2}\pi_{-n+1,-n}\f_{0}(\KQ) \ar[r] &
\f_{1}\pi_{-n+1,-n}\f_{0}(\KQ) \ar[r] 
& h^{n,n+1}/\Sq^2 \ar[r] & 0.
} 
\]

{\bf Proof of Claim:} If $d^{2}$ is trivial for $\f_{0}(\KQ)$, 
then $\f_{2}\pi_{-n+1,-n}\f_{0}(\KQ)=h^{n+1,n+2}/\Sq^2$. 
The first nontrivial group $\f_{2}\pi_{2,1}\f_{0}(\KQ)=\pi_{2,1}\f_{0}(\KQ) = h^{0,1}$ is generated by the image of $\eta\eta_{\Top}$.
The extension for $\f_{1}\pi_{1,0}\f_{0}(\KQ)=\pi_{1,0}\f_{0}(\KQ)$ splits by comparison with the composite
\[ 
\pi_{1}^{\Top} \mathbb{S} \to \pi_{1,0}\unit\to \pi_{1,0}\f_{0}(\KQ) \to \pi_{1,0}\KQ \to \pi_{1}^{\Top} \mathbf{KO}, 
\]
obtained from Betti realization. 
The summand $h^{0,1}$ is generated by the image of $\eta_{\Top}$.
Define the map $h^{n,n+1}\to \f_{1}\pi_{-n+1,-n}\f_{0}(\KQ)$ by sending $\tau\phi$, 
$\phi\in \mathbf{K}^{\Mil}_{n}$, 
to the image of $[\phi]\eta_{\Top}$. 
Then 
\[ 
h^{n,n+1}
\to 
\f_{1}\pi_{-n+1,-n}\f_{0}(\KQ) 
\to 
\pi_{-n+1,-n}\s_{1}(\f_{0}(\KQ)) 
= 
h^{n,n+1} 
\]
is the identity since $\tau\in h^{0,1}=\pi_{1,0}\s_{1}(\unit_{\Lambda})$ detects $\eta_{\Top}\in \pi_{1,0}\unit_{\Lambda} = \f_{1}\pi_{1,0}\unit_{\Lambda}$. 
It follows that 
\begin{equation}
\label{eq:splitting-oneline-kq}
\f_{1}\pi_{-n+1,-n}\f_{0}(\KQ) \iso h^{n,n+1}/\Sq^2 \directsum h^{n+1,n+2}/\Sq^2
\end{equation}
as abelian groups over any field with trivial $d^{2}\colon E^2_{-n+2,0,-n}(\f_{0}(\KQ)_{})\to E^2_{-n+1,2,-n}(\f_{0}(\KQ)_{})$.  
The $\KMW$-module structure of $\bigoplus_{n\in \ZZ}\f_{1}\pi_{-n+1,-n}\f_{0}(\KQ)$ is given by $\eta\cdot (x,y) = (0,x)$ and for $u\in F^{\times}$, 
$[u]\cdot (x,y) =\bigl([u]\cdot x,[u]\cdot y\bigr)$, 
compatible with~(\ref{eq:splitting-oneline-kq}). 
The abelian group extension
\[ 
0
\to 
\f_{1}\pi_{-n+1,-n}\f_{0}(\KQ) 
\to
\pi_{-n+1,-n}\f_{0}(\KQ)
\to 
H^{n-1,n}\directsum h^{n-5,n}\directsum h^{n-9,n}\directsum \dotsm
\to 
0
\]
is 
classified by an element 
\begin{align*} 
x=(x_1,x_2) 
& \in 
\Ext_{\Ab} (H^{n-1,n}\directsum h^{n-5,n}\directsum h^{n-9,n}\directsum \dotsm, h^{n,n+1}/\Sq^2 \directsum h^{n+1,n+2}/\Sq^2)
\end{align*}
via the identification~(\ref{eq:splitting-oneline-kq}). Observe that $x_1\neq 0$ over $F=\QQ$:
The third algebraic $K$-group $\pi_{-1,-2}\kgl(\QQ) = \pi_{-1,-2}\KGL(\QQ) = K_{3}(\QQ) \cong \ZZ/48$ is the target of $\pi_{-1,-2}\f_{0}(\KQ)(\QQ)$ via the forgetful map, 
which on extensions takes the form:
\[ \xymatrix{
\f_{1}\pi_{-1,-2}\f_{0}(\KQ) \iso h^{2,3}(\QQ)/\Sq^2 \directsum h^{3,4}(\QQ)/\Sq^2
\ar[r] \ar[d]_{(\partial^{2}_{\infty},0)} & 
\pi_{-1,-2}\f_{0}(\KQ)(\QQ)
\ar[r] \ar[d]_{\mathrm{forget}} & 
H^{1,2} \ar[d]^{\id} \\
\f_{1}\pi_{-1,-2}\kgl \iso H^{3,3}(\QQ)\iso \ZZ/2
\ar[r]  & 
\pi_{-1,-2}\kgl(\QQ)
\ar[r]  & 
H^{1,2}(\QQ) \cong \ZZ/24 
}
\]
The vertical maps are surjective, and it follows that the top row is not a trivial extension. 
(Note that $h^{3,4}(\QQ)/\Sq^{2}=0$.)

{\bf Claim:}
$x_2=0$ over $\QQ(a,b)$ (and more generally over any field of characteristic not two).

{\bf Proof of Claim:} The degree two extension $\QQ(a,b)/\QQ(a)$ yields an injection $H^{1,2}(\QQ(a)) \hookrightarrow H^{1,2}(\QQ(a,b))$ by \cite[Corollary 4.6]{levine.indec}. 
For the purely transcendental field extension $\QQ(a)/\QQ$ we have $H^{1,2}(\QQ)\cong H^{1,2}(\QQ(a)) \cong \ZZ/24$ by \cite[p.~327]{levine.indec}. 
Moreover,  
on $2$-torsion subgroups \cite[Corollary 4.4]{levine.indec} implies ${}_{2}H^{1,2}(\QQ(a))={}_{2}H^{1,2}(\QQ(a,b))$.

If the extension
\[ 
0
\to 
h^{3,4}(\QQ(a,b))
\to 
E(\QQ(a,b)) 
\to 
H^{1,2}(\QQ(a,b))
\to 
0
\]
corresponding to $x_2$ is nontrivial, 
then so is the extension between the $2$-torsion subgroups
\[ 
0
\to 
h^{3,4}(\QQ(a,b))
\to 
E^\prime(\QQ(a,b)) 
\to 
{}_{2}H^{1,2}(\QQ(a,b))
\to 
0.
\]
Since $h^{3,4}(\QQ)/\Sq^{2}h^{1,3}(\QQ)=0$ the extension corresponding to $x_2$ over $\QQ$ implies $E(\QQ)=H^{1,2}(\QQ)$, 
and hence $E^\prime(\QQ)={}_{2}H^{1,2}(\QQ)={}_{2}H^{1,2}(\QQ(a,b))$.
By naturality there is a splitting ${}_{2}H^{1,2}(\QQ(a,b))\to E^\prime(\QQ(a,b))$,
which shows $x_2=0$ over $\QQ(a,b)$.
It follows that 
$$
\pi_{-1,-2}\f_{0}(\KQ)(\QQ(a,b))\to \pi_{-1,-2}\f_{0}(\KQ/2)(\QQ(a,b)
$$ 
restricts to an injection on the summand $h^{3,4}(\QQ(a,b))$.
We conclude that 
$$
d^{2}\colon E^2_{0,0,-2}(\f_{0}(\KQ/2))\to E^2_{-1,2,-2}(\f_{0}(\KQ/2))
$$ 
is trivial over $\QQ(a,b)$.  
Base change implies the same result if $\rho^2=0$.
Using the generator of $E^2_{0,0,-2}(\f_{0}(\KQ/2))=h^{0,2}$ we deduce triviality of
$$
d^{2}\colon E^2_{-n+2,0,-n}(\f_{0}(\KQ/2))\to E^2_{-n+1,2,-n}(\f_{0}(\KQ/2)).
$$ 

In case the pure symbol $\rho^2\neq 0$, 
we conclude by applying to $\f_{0}(\KQ/2)$ the argument with \eqref{equation:OVVexactsequence} from \cite[Theorem 3.3]{ovv} (\cite[Theorem 2.1]{merkurjev-suslin.rost} for fields of odd characteristic) 
as in the proof of Lemma \ref{lem:sec-differential-to-13}.
\end{proof}

\section{The $1$-line}
\label{section:1line}

Throughout this section we fix a compatible pair $(F,\Lambda)$, where $F$ is a field of $\Char(F)\neq 2$.
Theorem~\ref{theorem:eta-slice-completion} shows the slice spectral sequence for $\unit_{\Lambda}$ determines the homotopy groups of the $\eta$-completion ${\unit_{\Lambda}}^\wedge_\eta$.
A straightforward calculation of $E^2$-pages using Lemma \ref{lem:first-diff-unit-1} shows $\pi_{n+1,n}{\unit_{\Lambda}}^\wedge_\eta = 0$ for $n\geq 3$, 
and $\pi_{n+2,n}{\unit_{\Lambda}}^\wedge_\eta= 0$ for $n\geq 5$. 
For all $n\in \ZZ$ we deduce 
\[ 
\pi_{n+1,n}{\unit_{\Lambda}}^\wedge_\eta[\tfrac{1}{\eta}]
= 
\pi_{n+2,n}{\unit_{\Lambda}}^\wedge_\eta[\tfrac{1}{\eta}] 
=
0. 
\]
Combined with Lemma \ref{lem:arithmetic-square}, 
the arithmetic square for $\eta$,
we obtain short exact sequences
\[ 
0 
\to \pi_{n,n}{\unit_{\Lambda}} 
\to \pi_{n,n} {\unit_{\Lambda}}^\wedge_\eta\directsum \pi_{n,n}{\unit_{\Lambda}}[\tfrac{1}{\eta}] 
\to \pi_{n,n}{\unit_{\Lambda}}^\wedge_\eta[\tfrac{1}{\eta}]\to 0,
\]
and 
\[ 
0 
\to \pi_{n+1,n}{\unit_{\Lambda}} 
\to \pi_{n+1,n}{\unit_{\Lambda}}^\wedge_\eta\directsum \pi_{n+1,n}{\unit_{\Lambda}}[\tfrac{1}{\eta}] 
\to 0. 
\]

\begin{theorem}
\label{theorem:pi1-eta-inv-sphere}
The groups $\pi_{n+1,n}{\unit_{\Lambda}}[\tfrac{1}{\eta}]$ and $\pi_{n+2,n}{\unit_{\Lambda}}[\tfrac{1}{\eta}]$ are trivial for all $n\in\ZZ$.
\end{theorem}
\begin{proof}
This is shown for $\unit$ in \cite[Theorem 8.3]{roendigs.etainv}.
\end{proof}

\begin{corollary}
\label{corollary:1linecorollary}
For all $n\in\Z$ there is a canonically induced isomorphism
\[ 
\pi_{n+1,n}\unit_{\Lambda}
\xrightarrow{\cong}
\pi_{n+1,n}{\unit_{\Lambda}}^\wedge_\eta. 
\]
\end{corollary}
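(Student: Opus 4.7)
The plan is to assemble three results preceding the corollary via the arithmetic square for $\eta$. First I would apply Lemma~\ref{lem:arithmetic-square} to $\E=\unit_{\Lambda}$ with $\alpha=\eta$, producing a homotopy pullback square whose associated Mayer--Vietoris long exact sequence in bigraded stable homotopy groups contains the fragment
\[
\pi_{n+2,n}(\unit_{\Lambda})^{\wedge}_{\eta}[\tfrac{1}{\eta}]
\to
\pi_{n+1,n}\unit_{\Lambda}
\to
\pi_{n+1,n}(\unit_{\Lambda})^{\wedge}_{\eta}\directsum \pi_{n+1,n}\unit_{\Lambda}[\tfrac{1}{\eta}]
\to
\pi_{n+1,n}(\unit_{\Lambda})^{\wedge}_{\eta}[\tfrac{1}{\eta}].
\]

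Next I would invoke the $E^2$-page calculation highlighted at the start of this section: combining Lemma~\ref{lem:first-diff-unit-1} with Theorem~\ref{theorem:eta-slice-completion} one reads off the vanishing $\pi_{n+1,n}(\unit_{\Lambda})^{\wedge}_{\eta}=0$ for $n\geq 3$ and $\pi_{n+2,n}(\unit_{\Lambda})^{\wedge}_{\eta}=0$ for $n\geq 5$. Since $\eta$-inversion is computed by the sequential colimit along multiplication by $\eta$ and this colimit is therefore eventually zero in each fixed bidegree, it follows that for every $n\in\Z$ both outer terms of the above fragment vanish:
\[
\pi_{n+1,n}(\unit_{\Lambda})^{\wedge}_{\eta}[\tfrac{1}{\eta}]
=
\pi_{n+2,n}(\unit_{\Lambda})^{\wedge}_{\eta}[\tfrac{1}{\eta}]
=
0.
\]

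Finally I would appeal to Theorem~\ref{theorem:pi1-eta-inv-sphere}, which supplies $\pi_{n+1,n}\unit[\tfrac{1}{\eta}]=0$; smashing with $\unit_{\Lambda}$ and using that $\Lambda$-localization commutes with $\eta$-inversion gives $\pi_{n+1,n}\unit_{\Lambda}[\tfrac{1}{\eta}]=0$ for every $n\in\Z$. Substituting the three vanishing statements into the Mayer--Vietoris fragment collapses it to the short exact sequence
\[
0
\to
\pi_{n+1,n}\unit_{\Lambda}
\to
\pi_{n+1,n}(\unit_{\Lambda})^{\wedge}_{\eta}
\to
0,
\]
which is exactly the asserted canonical isomorphism. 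The conceptually substantive input -- the vanishing of the $1$-line after inverting $\eta$ -- has already been isolated in Theorem~\ref{theorem:pi1-eta-inv-sphere}; so at this stage of the paper the corollary is a matter of formal bookkeeping and there is essentially no obstacle to overcome.
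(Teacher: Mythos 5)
Your proposal is correct and follows essentially the same route as the paper: the $\eta$-arithmetic square of Lemma~\ref{lem:arithmetic-square}, the vanishing of $\pi_{n+1,n}(\unit_{\Lambda})^{\wedge}_{\eta}[\tfrac{1}{\eta}]$ and $\pi_{n+2,n}(\unit_{\Lambda})^{\wedge}_{\eta}[\tfrac{1}{\eta}]$ deduced from the finite-range vanishing on the $E^2$-page via Theorem~\ref{theorem:eta-slice-completion} and Lemma~\ref{lem:first-diff-unit-1}, and Theorem~\ref{theorem:pi1-eta-inv-sphere} for the $\eta$-inverted term. The only cosmetic difference is that you make explicit the passage from $\unit$ to $\unit_{\Lambda}$ in applying Theorem~\ref{theorem:pi1-eta-inv-sphere}, which the paper leaves implicit.
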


\begin{proposition}
\label{prop:eff-kq-conv}
The canonical map $\f_0(\KQ) \to \slicecomp (\f_0(\KQ))$ induces an isomorphism
\[ 
\pi_{n+k,n}\f_0(\KQ) 
\xrightarrow{\cong}
\pi_{n+k,n}\slicecomp (\f_0(\KQ)) 
\]
for all integers $n$ and $k\equiv 1,2\bmod 4$.
\end{proposition}
\begin{proof}
Since $\KT = \KQ[\tfrac{1}{\eta}]$ and $\eta$ acts invertibly on $\s_\ast(\KT)$, 
the columns in the respective slice spectral sequences for $\KQ$ and $\KT$ agree outside a finite range. 
The computation of the $E^\infty$ page for $\KT$ from \cite[Theorem 6.3]{roendigs-oestvaer.hermitian} implies $\pi_{n+k,n}\slicecomp(\KQ)[\tfrac{1}{\eta}]=0$ 
for $k\not\equiv 0\bmod 4$.
Example~\ref{ex:kq-sc-eta} shows the slice- and $\eta$-completions agree on $\f_0(\KQ)$. 
In the $\eta$-arithmetic square for $\f_0(\KQ)$, 
see Lemma~\ref{lem:arithmetic-square}, 
we observe that $(\f_0(\KQ))[\tfrac{1}{\eta}]$ is the homotopy colimit of the diagram
\[ 
\f_0(\KQ) \to \f_{-1}\Sigma^{-1,-1}\KQ \to \f_{-2}\Sigma^{-2,-2}\KQ \to \dotsm, 
\]
which agrees with $\KQ[\tfrac{1}{\eta}] = \KT$.
The result follows from the vanishing $\pi_{n+k,n}\KT = 0$ for $k\not\equiv 0\bmod 4$. 
\end{proof}

\begin{lemma}
\label{lem:kernel-eta}
Let $U_n$ be the kernel of the naturally induced map
\[ 
\pi_{n+1,n} \unit_{\Lambda}  
\rightarrow 
\pi_{n+1,n} \f_0(\KQ_{\Lambda}).
\]
The graded group $\bigoplus_{n\in \ZZ} U_n$ has a naturally induced $\bigoplus_{n\in \ZZ} \pi_{n,n}\unit_{\Lambda}/(\eta)$-module structure.
\end{lemma}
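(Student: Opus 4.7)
The plan splits into two parts: first establishing a $\pi_{\ast,\ast}\unit_\Lambda$-module structure on $\bigoplus_n U_n$, then verifying the action of $\eta$ is trivial so that the structure descends to $\bigoplus_n \pi_{n,n}\unit_\Lambda/(\eta)$.

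Part 1 is essentially formal. Since $\f_0\KQ_\Lambda$ inherits a commutative motivic ring structure as the effective cover of the $E_\infty$-ring $\KQ_\Lambda$, the unit $u\colon\unit_\Lambda\to\f_0\KQ_\Lambda$ is a ring map. Therefore $u_\ast$ is a homomorphism of bigraded rings, its kernel $\ker u_\ast$ is a bigraded ideal, and its component in bidegrees $(n+1,n)$ yields a $K^{MW}_\ast(F) \cong \bigoplus_n\pi_{n,n}\unit_\Lambda$-submodule $\bigoplus_n U_n$ of $\bigoplus_n \pi_{n+1,n}\unit_\Lambda$, using Morel's identification of the $0$-line.

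For Part 2, I want to show $\eta\cdot x = 0$ in $\pi_{n+2,n+1}\unit_\Lambda$ for every $x\in U_n$, via a slice filtration argument. The first key observation is that $\eta$ acts as zero on every slice $\s_q\unit_\Lambda$: each slice is an $\M\Lambda$-module by Theorem~\ref{theorem:s0isomorphism}, and the image of $\eta\in K^{MW}_1(F)\cong\pi_{1,1}\unit_\Lambda$ under the canonical surjection to $\pi_{1,1}\M\Lambda \cong K^M_1(F)$ is zero. Consequently $\eta$ acts trivially on every page of the slice spectral sequence. Combined with the Leibniz rule (Proposition~\ref{prop:leibniz}) and the fact that $\eta$ is represented in $\s_1\unit_\Lambda$, multiplication by $\eta$ strictly increases slice filtration, hence by at least two: if $x$ has filtration $q$, then $\eta x$ has filtration $\geq q+2$.

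The second observation is that any $x\in U_n$ has slice filtration at least $2$: by Lemmas~\ref{lem:slices-unit-kq-1} and~\ref{lem:slices-unit-kq-2}, combined with Proposition~\ref{prop:Einfty-kq}, the map $u_\ast$ induces an injection on $E^\infty_{-n+1,q,-n}$ for $q\in\{0,1\}$, forcing any element of $\ker u_\ast$ to lift to at least $\f_2\unit_\Lambda$. Hence $\eta x$ has slice filtration $\geq 4$ in $\pi_{n+2,n+1}\unit_\Lambda$. On the other hand, Lemma~\ref{lem:first-diff-unit-1} together with~\eqref{eq:higher-diff-triv} shows that $E^\infty_{n+2,q,n+1}(\unit_\Lambda)=0$ for $q\geq 4$. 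Using Corollary~\ref{corollary:1linecorollary} and Theorem~\ref{theorem:eta-slice-completion} to identify $\pi_{n+2,n+1}\unit_\Lambda$ with $\pi_{n+2,n+1}\slicecomp(\unit_\Lambda)$, and provided the slice filtration is Hausdorff at this bidegree, we conclude $\eta x = 0$.

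The main obstacle is the Hausdorff-ness at the end: although the $E^\infty$-contributions in the $1$-line vanish beyond slice $3$, the Milnor sequence for $\pi_{n+2,n+1}\slicecomp(\unit_\Lambda)$ introduces a $\lim^1_q \pi_{n+3,n+1}\f^{q-1}\unit_\Lambda$-term coming from the $2$-line tower, where the successive quotients need not vanish in high slice degree. One must verify Mittag-Leffler (or otherwise direct $\lim^1$-vanishing) for this inverse system, by analysing the $2$-line contributions from Theorem~\ref{theorem:sphereslices} and the differential structure of Lemma~\ref{lem:first-diff-unit-1}.
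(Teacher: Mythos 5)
There is a genuine gap in Part 2, at the step ``multiplication by $\eta$ strictly increases slice filtration, hence by at least two.'' This does not follow from $\eta$ acting as zero on the slices. Two different actions are being conflated: the external $\pi_{1,1}\unit_{\Lambda}$-module action on each $\s_{q}(\unit_{\Lambda})$ (which is indeed trivial for $\eta$, since slices are $\M\Lambda$-modules -- this is what makes the differentials $\mathbf{K}^{\M}_{\ast}$-linear in Lemma~\ref{lem:differential-module-hom}), and the internal product of the multiplicative slice spectral sequence, which is what governs the filtration of a product $\eta\cdot x$. In the latter, $\eta$ is detected by $\alpha_{1}=1\in\pi_{1,1}\s_{1}(\unit_{\Lambda})\cong h^{0,0}$, and multiplication by $\alpha_{1}$ on the $E^{\infty}$-page is induced by the maps $\Sigma^{1,1}\s_{q}(\unit_{\Lambda})\to\s_{q+1}(\unit_{\Lambda})$ of Lemma~\ref{lem:slices-unit-hopf}, which are frequently nonzero (the identity on the summands $\Sigma^{q+2i,q}\M\Lambda/2$). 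Concretely, $\eta^{q}\neq 0$ has slice filtration exactly $q$ by Corollary~\ref{corollary:etanonnilpotent}, so multiplication by $\eta$ raises filtration by exactly one there. Since the weaker, correct statement (filtration jumps by at least one) only gives $\eta x$ filtration $\geq 3$, and the $q=3$ term $E^{\infty}_{-n+1,3,-n}$ of the $1$-line does not vanish in general, your argument does not close.

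What is actually needed -- and what the paper does -- is a computation of $\alpha_{1}\cdot b$ for the specific leading terms $b$ that an element of $U_{n}$ can have. By Lemmas~\ref{lem:slices-unit-kq-1}, \ref{lem:slices-unit-kq-2} and Propositions~\ref{prop:Einfty-kq}, \ref{prop:eff-kq-conv}, the $E^{\infty}$-map to $\f_{0}\KQ_{\Lambda}$ is injective except on two subquotients, $h^{-n+2,-n+2}_{12}$ in filtration $2$ and $h^{-n+2,-n+3}/\bigl(\tau\sigma_{2}(h^{-n+1,-n+2}_{12})+\Sq^{2}(h^{-n,-n+2})\bigr)$ in filtration $3$; the leading term of any nonzero $x\in U_{n}$ lies in one of these. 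In the first case $\alpha_{1}\cdot b$ is $\sigma_{2}(b)$ landing in $\pi_{n+2,n+1}\s_{3}(\unit_{\Lambda})$, whose relevant summand $h^{-n+3,-n+2}$ vanishes for degree reasons; in the second case the target $\pi_{n+2,n+1}\s_{4}(\unit_{\Lambda})\cong h^{-n+2,-n+3}$ is entirely hit by the $d_{1}$-differential exiting tridegree $(n+3,3,n)$ (Lemma~\ref{lem:first-diff-unit-1}), so $\alpha_{1}\cdot b=0$ on $E^{2}=E^{\infty}$. Only after these two computations does $\eta x$ acquire filtration $\geq 4$, where the $1$-line $E^{\infty}$-terms vanish, giving $\eta x=0$. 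Finally, the $\lim^{1}$-issue you flag as the main obstacle is not one: by Theorem~\ref{theorem:eta-slice-completion} and Corollary~\ref{corollary:1linecorollary} the $1$-line of $\pi_{\ast,\ast}\unit_{\Lambda}$ agrees with that of the slice completion, and the relevant column has only finitely many nonzero $E^{\infty}$-entries, so conditional convergence in the sense of Boardman already yields a finite, exhaustive and Hausdorff filtration there.
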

\begin{proof}
We show that multiplication with $\eta$ induces the trivial map $\eta\colon U_{n}\to U_{n+1}$ for all $n\in \ZZ$.
For any $a\in U_n$ there is a natural number $q$ such that $a\in\f_q\pi_{n+1,n}\unit_{\Lambda}$ and $a\notin\f_{q+1}\pi_{n+1,n}\unit_{\Lambda}$. 
Then $a$ maps to a nontrivial element $b$ in a subquotient of $\pi_{n+1,n}\s_q(\unit_{\Lambda})$.
Since $a\in U_n$, 
$b$ maps trivially to the corresponding subquotient of $\pi_{n+1,n}\s_q(\KQ_{\Lambda})$ on $E^2=E^\infty$-pages by Propositions \ref{prop:Einfty-KQ} and \ref{prop:eff-kq-conv}.
Lemmas \ref{lem:slices-unit-kq-1} and \ref{lem:slices-unit-kq-2} show there are only two such subquotients on the $E^2$-page of the $n$th slice spectral sequence for $\unit_{\Lambda}$ by \eqref{eq:higher-diff-triv},
namely
\[ 
h^{-n+2,-n+3}/\tau\partial^{12}_{2}(h^{-n+1,-n+2}_{12})+\Sq^2(h^{-n,-n+2})
\text{ and }
h^{-n+2,-n+2}_{12}. 
\]
If $b\in h^{-n+2,-n+2}_{12}$, then it lifts to an element $b^\prime\in \pi_{n+1,n}\s_2(\unit_{\Lambda})$ and 
$\eta b^\prime\in\pi_{n+2,n+1}\s_3(\unit_{\Lambda}) \iso h^{-n+1,-n+2}\directsum h^{-n+3,-n+2}\iso h^{-n+1,-n+2}$.
By Lemma~\ref{lem:slices-unit-hopf}, 
$\eta b^\prime=\partial^{12}_{2} b^\prime=0$, 
and hence $\eta b = 0$. 
If $b\in h^{-n+2,-n+3}/\bigl(\tau\partial^{12}_{2}(h^{-n+1,-n+2}_{12})+\Sq^2(h^{-n,-n+2})\bigr)$, then it lifts to an element $b^\prime$ in $\pi_{n+1,n}\s_3(\unit_{\Lambda})$. 
Lemma \ref{lem:first-diff-unit-1} shows the product $\eta b^\prime\in\pi_{n+2,n+1}\s_4(\unit_{\Lambda}) \iso h^{-n+2,-n+3}$ is hit by the $d^{1}$-differential exiting tridegree $(n+3,3,n+1)$,  
and hence $\eta b = 0$.
Since multiplication with $\eta$ is natural with respect to the slice filtration, 
it follows that $\eta a =0$. 
\end{proof}

\begin{theorem}
\label{theorem:1line}
Let $(F,\Lambda)$ be a compatible pair, where $F$ is a field of $\Char(F)\neq 2$.
The unit map $\unit_{\Lambda}\to \KQ_{\Lambda}$ induces an exact sequence
\[ 
0 
\rightarrow 
\mathbf{K}^{\M}_{2-n}\otimes \Lambda/24 
\rightarrow 
\pi_{n+1,n} \unit_{\Lambda}
\rightarrow 
\pi_{n+1,n}\f_0(\KQ_{\Lambda}). 
\]
If $n\geq -4$, 
the rightmost map is surjective.
In particular, 
since $\pi_{n+1,n}\f_0(\KQ_{\Lambda})=0$ for $n\geq 2$,
$\pi_{3,2}\unit_{\Lambda}\cong \Lambda/24$ and $\pi_{n+1,n}\unit_{\Lambda}=0$ for $n\geq 3$.
\end{theorem}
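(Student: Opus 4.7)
The plan is to compute $\pi_{n+1,n}\unit_\Lambda$ via the slice spectral sequence, compare with the analogous computation for $\f_0\KQ_\Lambda$ along the unit map, and then resolve the resulting extension problem using the Milnor $K$-module structure on the kernel provided by Lemma \ref{lem:kernel-eta}.

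First, Corollary \ref{corollary:1linecorollary} together with Theorem \ref{theorem:eta-slice-completion} reduces the computation of $\pi_{n+1,n}\unit_\Lambda$ to the slice spectral sequence. By \eqref{eq:higher-diff-triv}, the $E^\infty$-terms in the $(-n+1)$st column of the $(-n)$th slice spectral sequence agree with the $E^2$-terms, and Lemma \ref{lem:first-diff-unit-1} together with the Milnor-conjecture isomorphism $\tau\colon h^{p,q}\xrightarrow{\cong} h^{p,q+1}$ for $0\leq p\leq q$ yields at most four (potentially) nonzero contributions, concentrated in slices $0$, $1$, $2$, $3$. Proposition \ref{prop:Einfty-kq} records the analogous $E^\infty$-terms for $\f_0\KQ_\Lambda$, which converge to $\pi_{n+1,n}\f_0\KQ_\Lambda$ via Proposition \ref{prop:eff-kq-conv}. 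Lemmas \ref{lem:slices-unit-kq-1} and \ref{lem:slices-unit-kq-2} show that the unit $\unit_\Lambda\to\f_0\KQ_\Lambda$ identifies the slice-$0$, slice-$1$, and the first summand $h^{n+1,n+2}/\Sq^2(h^{n-1,n+1})$ of the slice-$2$ contributions with their counterparts for $\f_0\KQ_\Lambda$, while annihilating both the summand $h^{n+2,n+2}_{12}$ of slice $2$ and the entire slice-$3$ contribution.

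Consequently, the kernel $U_n$ of $\pi_{n+1,n}\unit_\Lambda\to\pi_{n+1,n}\f_0\KQ_\Lambda$ carries a two-step slice filtration with associated graded pieces
\[
h^{n+2,n+2}_{12}
\qquad\text{and}\qquad
h^{n+2,n+3}\bigm/\bigl(\tau\sigma_2 h^{n+1,n+2}_{12}+\Sq^2 h^{n,n+2}\bigr).
\]
By Lemma \ref{lem:kernel-eta}, $\bigoplus_{n\in\ZZ} U_n$ carries the structure of a graded module over $\mathbf{K}^{\M}_{\ast}=\bigoplus_n\pi_{n,n}\unit_\Lambda/(\eta)$. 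Example \ref{ex:Hopf-maps} identifies the second motivic Hopf map $\nu\in U_{-2}$ with the canonical generator of $h^{0,0}_{12}\cong\Lambda/12$ in slice $2$. Multiplication by $\mathbf{K}^{\M}_{2-n}$ therefore yields a $\mathbf{K}^{\M}_{\ast}$-module map $\mathbf{K}^{\M}_{2-n}\to U_n$ whose composition with the projection onto the slice-$2$ piece recovers the canonical surjection $\mathbf{K}^{\M}_{2-n}\twoheadrightarrow h^{n+2,n+2}_{12}$.

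The hard part will be resolving the hidden multiplicative extension between slices $2$ and $3$, which upgrades the order-$12$ quotient on the slice-$2$ level to an order-$24$ quotient globally. Here I would work out that $12\nu$, which vanishes on the slice-$2$ level, lifts to a class in slice $3$ represented by $\eta^2\eta_{\Top}$ (detected by $\alpha_1^3$ in the Adams-Novikov $E_2$-page), and then use $2\eta_{\Top}=0$ together with the $\mathbf{K}^{\M}_{\ast}$-module structure to conclude $24\cdot\mathbf{K}^{\M}_{2-n}\cdot\nu=0$. This identifies the kernel of $\mathbf{K}^{\M}_{2-n}\to U_n$ exactly with $24\mathbf{K}^{\M}_{2-n}$, producing the desired short exact sequence. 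Surjectivity onto $\pi_{n+1,n}\f_0\KQ_\Lambda$ for $n\geq -4$ then follows from a direct lift: the $E^\infty$-comparison above immediately yields every class in the slice-$0$, slice-$1$, and first slice-$2$ summand, while the secondary summands $h^{n-5,n},h^{n-9,n},\ldots$ of Proposition \ref{prop:Einfty-kq} either vanish in this range or are killed by the stated $2$-torsion hypothesis on $H^{0,3}\otimes\Lambda$ when $n=-5$. The concluding assertions $\pi_{3,2}\unit_\Lambda\cong\Lambda/24$ and $\pi_{n+1,n}\unit_\Lambda=0$ for $n\geq 3$ then follow from the vanishing of $\pi_{n+1,n}\f_0\KQ_\Lambda$ in those degrees.
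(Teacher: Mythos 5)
Your overall strategy is the paper's own: compute the $(-n+1)$st column of the slice spectral sequence using Lemma~\ref{lem:first-diff-unit-1} and \eqref{eq:higher-diff-triv}, compare with $\f_0\KQ_{\Lambda}$ via Lemmas~\ref{lem:slices-unit-kq-1} and~\ref{lem:slices-unit-kq-2} together with Propositions~\ref{prop:Einfty-kq} and~\ref{prop:eff-kq-conv}, and control the kernel $U_n$ through the $\mathbf{K}^{\M}_{\ast}$-module structure of Lemma~\ref{lem:kernel-eta}. Your identification of the two associated graded pieces of $U_n$ and of the surjectivity range $n\geq -4$ is correct and matches the paper.

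The genuine gap is the step you label ``the hard part'' and then only gesture at. The relation $12\nu=\eta^2\eta_{\Top}$ is itself one of the hidden multiplicative extensions that this theorem is meant to establish (the introduction lists it as a consequence of the main results), so taking it as input is circular; likewise $2\eta_{\Top}=0$ in $\pi_{1,0}\unit_{\Lambda}$ is a statement about a possible filtration jump, not something read off the $E^\infty$-page. Note also that the upper bound $24\nu=0$ requires neither claim: $U_2$ is an extension of a $12$-torsion group by a $2$-torsion group and is therefore automatically $24$-torsion. What genuinely has to be proved is the lower bound, i.e.\ that the extension does not split. The paper does this in two steps. First, after rewriting the slice-$3$ term as $h^{n+2,n+2}/\sigma_2(h^{n+1,n+2}_{12})$ using $\Sq^2(\tau^2 c)=\rho^2\tau c$ and Theorem~\ref{theorem:steenrod-algebra}, the change-of-coefficients sequence exhibits $h^{n+2,n+2}_{24}$ as an extension of $h^{n+2,n+2}_{12}$ by $h^{n+2,n+2}_{2}/\sigma_2(h^{n+1,n+2}_{12})$, and the computation $\Ext_{\mathbf{K}^{\M}_{\ast}}(h^{\ast}_{12},h^{\ast}_{2})\cong \mathbf{K}^{\M}_{\ast}/2$ shows there are exactly two possible degree-zero extensions of graded modules. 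Second, topological (or, in positive characteristic, \'etale) realization $\pi_{3,2}\unit_{\Lambda}\to\pi_{3}\unit_{\Top}\cong\Z/24$ rules out the split one. Some detection argument of this kind is indispensable and is missing from your outline; once it is supplied, the relation $12\nu=\eta^2\eta_{\Top}$ follows as a corollary rather than serving as an input.
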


\begin{proof}
The well-known vanishing $\pi_{n+1,n}\f_0(\KQ)=\pi_{n+1,n}\KQ=GW^{[n]}_{1-n}(F)=W^{2n-1}(F)=0$ for $n>1$ follows for example from \cite[Proposition~6.3]{schlichting}.
In the proof we shall make use of the computations in Proposition~\ref{prop:Einfty-KQ}.

Lemma \ref{lem:first-diff-unit-1} and (\ref{eq:higher-diff-triv}) show the following motivic cohomology groups in the $-n$th slice spectral sequence for $\unit_{\Lambda}$ 
are the only terms contributing to the $1$-line.
\begin{center}
\begin{tabular}{lll}
\hline
$q$ & $E^{\infty}_{-n+1,q,-n}(\unit_{\Lambda})$ \\ \hline
$0$ & $H^{n-1,n}$  \\
$1$ & $h^{n,n+1}/\Sq^{2}\pr(H^{n-2,n})$ \\
$2$ & $h^{n+2,n+2}_{12}\oplus h^{n+1,n+2}/\Sq^{2}(h^{n-1,n+1})$ \\
$3$ & $h^{n+2,n+3}/\tau\partial^{12}_{2}(h^{n+1,n+2}_{12})+\Sq^{2}(h^{n,n+2})$\\
\end{tabular}
\end{center}
Lemmas \ref{lem:slices-unit-kq-1} and \ref{lem:slices-unit-kq-2} imply a split injection $E^{2}_{-n+1,0,-n}(\unit_{\Lambda})\to E^{2}_{-n+1,0,-n}(\f_0(\KQ))$ and an isomorphism 
$E^{2}_{-n+1,1,-n}(\unit_{\Lambda})\to E^{2}_{-n+1,1,-n}(\f_0(\KQ))$.
Likewise, 
$h^{n+1,n+2}/\Sq^{2}(h^{n-1,n+1})$
--- a direct summand of $E^{2}_{-n+1,2,-n}(\unit_{\Lambda})$ --- maps isomorphically to $E^{2}_{-n+1,2,-n}(\f_0(\KQ))$, 
while the other direct summand $h_{12}^{n+2,n+2}$ of $E^2_{-n+1,2,-n}(\unit_{\Lambda})$ maps trivially to $E^2_{-n+1,2,-n}(\f_0(\KQ_{\Lambda}))$ 
and similarly for $h^{n+2,n+3}/\tau\partial_{2}^{12}(h^{n+1,n+2}_{12})+\Sq^{2}(h^{n,n+2}) = E^2_{-n+1,3,-n}(\unit_{\Lambda})$.

Every element of $h^{n,n+2}$ is of the form $\tau^{2} c$ for some class $c\in h^{n,n}$, 
cf.~Remark \ref{remark:pi10}.
We have $\Sq^{2}(\tau^{2} c)=\rho^{2}\tau c$ by \cite[Corollary 6.2]{roendigs-oestvaer.hermitian}.
Combined with Theorem \ref{theorem:steenrod-algebra} in the case $s=1$ we conclude there is an isomorphism
\[
h^{n+2,n+3}/\tau\partial^{12}_{2}(h^{n+1,n+2}_{12})+\Sq^{2}(h^{n,n+2})
\cong
h^{n+2,n+2}/\partial^{12}_{2}(h^{n+1,n+2}_{12}).
\]

The change of coefficients long exact sequence
\[ 
\dotsm 
\to 
h^{n+1,n+2} 
\to 
h^{n+1,n+2}_{24}
\to h^{n+1,n+2}_{12} 
\xrightarrow{\partial^{12}_{2}} 
h^{n+2,n+2} 
\to 
\dotsm 
\]
induces a short exact sequence
\[ 
0 
\to 
h^{n+2,n+2}/\partial^{12}_{2}(h^{n+1,n+2}_{12})
\to 
h^{n+2,n+2}_{24} 
\to 
h^{n+2,n+2}_{12} 
\to 
0. 
\]
Lemma \ref{lem:kernel-eta} shows that any hidden extension or filtration shift in the slice spectral sequence is a graded $\mathbf{K}^{\M}_{\ast}$-module.
The group $\Ext_{\mathbf{K}^{\M}_{\ast}}(h^\ast_{12},h^\ast)$ identifies with $\mathbf{K}^{\M}_{\ast}/2$, 
whose degree zero part is $\Lambda/2$, 
i.e., 
there are only two possible degree zero extensions. 
By comparing $\pi_{3,2}\unit_{\Lambda}$ with $\pi_{3}\unit_{\Top}\cong\Z/24$ via topological or \'etale realization (the latter involves $2$-adic completion,
see Remark~\ref{rem:etale-realization})
it follows that the extension is nontrivial. 

Surjectivity of the rightmost map for $n\geq -4$ follows since the induced map between the $n$th slice spectral sequences is a surjection on the $E^{2}=E^{\infty}$-pages in the range we consider
by Propositions~\ref{prop:Einfty-KQ} and~\ref{prop:eff-kq-conv}.
Here we use $H^{-1,2}=0$ to conclude for $n=-4$ \cite[(4.2)]{merkurjev.weighttwo}.
\end{proof}

\begin{corollary}
\label{cor:hurewicz-1-line-surjective}
For every $n\in\ZZ$ the unit map $\unit_\Lambda\to \M \Lambda$ induces a surjection 
\[ 
\pi_{n+1,n}\unit_\Lambda
\to 
\pi_{n+1,n}\M \Lambda = H^{-n-1,-n}. 
\]
\end{corollary}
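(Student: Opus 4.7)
The plan is to realize the corollary's map as the edge homomorphism of the $n$-th slice spectral sequence for $\unit_\Lambda$ and to conclude from differential computations already in hand. First I would factor the unit $\unit_\Lambda\to\M\Lambda$ through the coaugmentation $\unit_\Lambda\to\s_0(\unit_\Lambda)$: Theorem~\ref{theorem:s0isomorphism} identifies $\s_0(\unit_\Lambda)$ with $\M\Lambda$, and Definition~\ref{def:compatible}(2) ensures that $\M\Lambda$ is concentrated in slice zero, so the unit factors canonically through the zeroth slice. Combined with $\pi_{n+1,n}\unit_\Lambda\cong\pi_{n+1,n}\slicecomp(\unit_\Lambda)$ from Corollary~\ref{corollary:1linecorollary}, this exhibits the map as the edge homomorphism
\[
\pi_{n+1,n}\slicecomp(\unit_\Lambda) \longrightarrow \pi_{n+1,n}\s_0(\unit_\Lambda) = H^{-n-1,-n}(S;\Lambda),
\]
whose image is the top filtration quotient $E^{\infty}_{n+1,0,n}(\unit_\Lambda)\subseteq E^{1}_{n+1,0,n}(\unit_\Lambda) = H^{-n-1,-n}(S;\Lambda)$.

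Next I would verify $E^{\infty}_{n+1,0,n}(\unit_\Lambda) = E^{1}_{n+1,0,n}(\unit_\Lambda)$, which amounts to showing every differential leaving the bidegree $(n+1,0)$ vanishes. The outgoing $d^{1}$ is the operation $\Sq^{2}\pr\colon H^{-n-1,-n}\to h^{-n+1,-n+1}$ identified in Lemma~\ref{lem:first-diff-unit-1}; its target lies in the $n$-th column, so it vanishes by Lemma~\ref{lemma:trivialenteringfirstdifferentials}. Every higher $d^{r}$ with $r\geq 2$ leaving the $(n+1)$-st column vanishes by~\eqref{eq:higher-diff-triv}, which consolidates Lemmas~\ref{lem:sec-differential-to-14}, \ref{lem:sec-differential-to-13}, and \ref{lem:third-differential-to-14}. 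The cases $n\geq 0$ are vacuous because $H^{-n-1,-n}=0$, but the argument is uniform in $n$.

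The corollary therefore reduces to assembling earlier results, and no serious obstacle remains. The main subtlety I foresee is bookkeeping, namely recognizing that the outgoing $d^{1}$ from bidegree $(n+1,0)$ actually enters the $n$-th column and is thus precisely one of the differentials whose triviality is already covered by Lemma~\ref{lemma:trivialenteringfirstdifferentials}; that lemma in turn rests on the vanishing of the motivic Steenrod operation $\Sq^{2}$ on $h^{-n-1,-n}$ in the relevant range, drawn from the companion work on hermitian $K$-theory.
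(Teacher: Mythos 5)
Your argument is correct and is essentially the route the paper takes: the corollary is read off from the computation $E^{\infty}_{-n+1,0,-n}(\unit_\Lambda)=E^{1}_{-n+1,0,-n}(\unit_\Lambda)=H^{n-1,n}$ appearing in the proof of Theorem~\ref{theorem:1line}, combined with the convergence statements (Theorem~\ref{theorem:eta-slice-completion} and Corollary~\ref{corollary:1linecorollary}) identifying $\pi_{n+1,n}\unit_\Lambda$ with $\pi_{n+1,n}\slicecomp(\unit_\Lambda)$, so that the edge homomorphism to the zeroth slice is onto. One bookkeeping correction: the higher differentials leaving bidegree $(n+1,0)$ land in the $n$-th column and their vanishing is really supplied by Theorem~\ref{theorem:higher-diff-unit-zero-line} (that column consists of permanent cycles which receive no differentials), whereas Lemmas~\ref{lem:sec-differential-to-14}--\ref{lem:third-differential-to-14} concern differentials entering the $(n+1)$-st column from the $(n+2)$-nd; the displayed equality~\eqref{eq:higher-diff-triv} that you cite does, however, assert exactly what you need at $q=0$.
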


\begin{remark}\label{rem:etale-realization}
In odd characteristic the proof of Theorem~\ref{theorem:1line} uses the \'etale realization functor 
\[
\mathrm{Et}_2\colon\SH(F) \rightarrow \SH^{\wedge}_2 
\]
to the 2-completed profinite stable homotopy category over the \'etale homotopy type of a separable closure of $F$ \cite[Theorem 31]{quick}.
The examples in \cite[p.~743]{quick} show $\mathrm{Et}_2(S^{s,t}) = \bigl(S^s\bigr)^\wedge_2$.
By construction we have $\mathrm{Et}_2(\eta_{\Top})=\eta_{\Top}$, 
while the description of $\eta$ via the Hopf construction on the multiplicative group $\G$ implies $\mathrm{Et}_2(\eta)=\eta_{\Top}$.
\end{remark}

\begin{remark}
\label{rem:image-oneline-kq}
As an abelian group the kernel of $\pi_{3,2}\unit_{\Lambda}\to\pi_{3,2}\f_0(\KQ_{\Lambda}) =0$ is generated by the second motivic Hopf map 
$\nu\colon\Sigma^{3,2}\unit_{\Lambda}\to\unit_{\Lambda}$.
Hence as a graded $\mathbf{K}^{\M}_{\ast}$-module, 
the kernel of \[ \bigoplus_{n\in \ZZ}\pi_{-n+1,-n}\unit_{\Lambda}\to\bigoplus_{n\in \ZZ}\pi_{-n+1,-n}\f_{0}(\KQ_{\Lambda})\] is generated by $\nu$.
In principle, 
the image of $\pi_{-n+1,-n}\unit_{\Lambda}\to\pi_{-n+1,-n}\f_0(\KQ_{\Lambda})$ can be described as an extension of the three groups:
\begin{equation}
\label{eq:image-kq} 
H^{n-1,n}, 
\quad \quad
h^{n,n+1}/\Sq^2\pr (H^{n-2,n}), 
\quad \quad
h^{n+1,n+2}/\Sq^2(h^{n-1,n+1}). 
\end{equation}
As graded $\mathbf{K}^{\M}_{\ast}$-modules, 
the second group in \eqref{eq:image-kq} is generated by the topological Hopf map $\eta_{\Top}\colon\Sigma^{1,0}\unit_{\Lambda}\to\unit_{\Lambda}$, 
while the third group is generated by $\eta\eta_{\Top}\colon\Sigma^{2,1}\unit_{\Lambda}\to\unit_{\Lambda}$. 
Hence the extension of the last two groups is generated by $\eta_{\Top}$ as a graded $\mathbf{K}^{\MW}_{\ast}$-module. 
The group $H^{n-1,n}$ is not necessarily generated by a $2$-torsion element.
For example,
$H^{1,2}(\mathbb{Q})\iso\ZZ/24$ and $H^{1,2}(\mathbb{Q}(\sqrt{-1}))$ contains $\Z$ as a direct summand,  
see e.g., 
\cite[p.~542, 564]{merkurjev-suslin.k3}.
The computation of $\pi_{3,2}\unit_{\Lambda}$ implies the relation
$\eta^2\eta_{\Top}=12 \nu$, because $\eta^2\eta_{\Top}$ is an element
of order two and nontrivial by complex or \'etale realization, 
cf.~Remark~\ref{rem:etale-realization}.
\end{remark}

\begin{example}
\label{ex:unit-not-surj}
The map 
$
\pi_{-4,-5}\unit_\Lambda
\to 
\pi_{-4,-5}\f_0(\KQ)_\Lambda 
$
is never surjective because the summand $h^{0,5}\cong \ZZ/2$ of $E^{\infty}_{-n+1,0,-n}(\f_0(\KQ))$ is not in its image.
The map
$
\pi_{-n+1,-n}\unit_\Lambda
\to 
\pi_{-n+1,-n}\KQ_\Lambda 
$
may also fail to be surjective.
Over $F=\overline{\mathbb{Q}}$, 
$
\pi_{-6,-7}\unit
\to 
\pi_{-6,-7}\KQ 
$
is not a surjection. 
The slice $\s_{-2}(\KQ)$ gives rise to a nontrivial element of $\pi_{-6,-7}\KQ$.
Here we rely on the computations for hermitian $K$-theory in \cite[\S5,7]{roendigs-oestvaer.hermitian}.
The differential $d^{\KQ}_{r}$ exiting $E^{r}_{-6,-2,-7}(\KQ)$ is trivial when $r\geq 3$, 
while
$
d^{\KQ}_{2}
\colon
E^{2}_{-6,-2,-7}(\KQ)
= 
h^{0,5}\directsum H^{2,5} 
\to 
H^{7,7}
$
maps $h^{0,5}$ trivially, since $H^{7,7}=\mathbf{K}^{\M}_{7}$ contains no elements of order two. 
The only possible nontrivial $r$th differential entering $E^{r}_{-6,-2,-7}(\KQ)$ is 
$ 
E^{2}_{-5,-4,-7}(\KQ) 
= 
H^{-3,3} 
\to 
h^{0,5}\directsum H^{2,5}.
$
Here $H^{-3,3}=0$ since $\overline{\mathbb{Q}}$ is a filtered colimit of number fields, whence the Beilinson-Soul\'e vanishing conjecture holds.
\end{example}

\begin{example}\label{ex:reals}
  It is instructive to compare the short exact sequence of
  Theorem~\ref{theorem:1line} with the computation 
  $ \pi_{-n+1,-n}(\unit^{\wedge}_{2})\iso \ZZ/2 $ over $\mathbb{R}$
  for $n>1$  
  in \cite[Section 8.3, Figure 4]{dugger-isaksen.real}. 
  Theorem~\ref{theorem:1line} and Remark~\ref{rem:image-oneline-kq}
  provide
  an extension
  \[ 0 \to \mathbf{K}^{\M}_{n+2}(\mathbb{R})/24 \to
  \pi_{-n+1,-n} \unit \to \mathbf{kq}_{-n} \to 0, \]
  where $\mathbf{kq}_{-n}$ is an extension of $H^{n-1,n}$ by
  $h^{n,n+1}/\Sq^2\pr (H^{n-2,n})$. In particular,
  $\mathbf{kq}_{-2}$ is an extension of $h^{2,3}=\{0,\tau\rho^2\}$
  by the $2$-divisible group $H^{1,2}$ 
  (which contains a unique nontrivial element of order $2$).
  If both $\mathbf{kq}_{-2}$ and $\pi_{-1,-2} \unit$  were trivial extensions, 
  the $2$-adic completion
  of $\pi_{-1,-2}\unit$ would consist of four elements.
  Thus one of the extensions is nontrivial,
  whence $\pi_{-1,-2}\unit$ is the direct sum of 
  $\mathbf{K}^{\M}_{4}(\mathbb{R})/24\iso \{0,\rho^{4}\}$
  and a nontrivial $2$-divisible group. 
\end{example}

\appendix

\section{The Steenrod algebra and its dual}
\label{sec:steenrod-algebra-its}

This section lists results on the Steenrod algebra which are used in the preceding sections.
In the following the base scheme contains no points of residue characteristic two.

\begin{theorem}[Voevodsky \cite{Voevodsky.reduced}, Hoyois-Kelly-{\O}stv{\ae}r \cite{hko.positivecharacteristic}, Spitzweck \cite{spitzweckmz}]
\label{theorem:dual-steenrod-algebra}
There exists a weak equivalence of right $\MZ/2$-modules
\begin{equation}
\label{eq:dual-steenrod-alg}
\MZ/2\smash \MZ/2 
\xrightarrow{\sim} 
\MZ/2 \vee \Sigma^{1,0} \MZ/2\vee \bigvee_{(i,j)\in I} \Sigma^{i,j}  \MZ/2,
\end{equation}
where $I\subset \N\times \N$ consists of pairs $(i,j)$ of integers with $i\geq 2j>0$. 
With respect to this weak equivalence, 
the unit and multiplication maps are given by 
\begin{align*}
(\id,0,\dotsc)\colon & \MZ/2\iso \MZ/2\smash \unit \to \MZ/2\smash \MZ/2  \\
(\id,\Sq^1,0,\dotsc)\colon & \MZ/2\iso \unit\smash \MZ/2 \to \MZ/2\smash \MZ/2 \\
(\id,0,\dotsc)\colon &    \MZ/2\smash \MZ/2 \to \MZ/2.
\end{align*}
\end{theorem}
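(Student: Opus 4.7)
The plan is to deduce the statement from the computation of the dual motivic Steenrod algebra $\mathcal{A}_{\star} := \pi_{\star}(\MZ/2\smash \MZ/2)$ carried out by Voevodsky in characteristic zero \cite{Voevodsky.reduced} and extended to positive characteristic not two by Hoyois-Kelly-{\O}stv{\ae}r \cite{hko.positivecharacteristic}. Those works present $\mathcal{A}_{\star}$ as a graded-commutative $h^{\star}$-algebra on Milnor generators $\tau_i$ of bidegree $(2^{i+1}-1,2^{i}-1)$ for $i\geq 0$ and $\xi_i$ of bidegree $(2^{i+1}-2,2^{i}-1)$ for $i\geq 1$, with admissible monomials $\tau_0^{\epsilon_0}\tau_1^{\epsilon_1}\dotsm\xi_1^{r_1}\xi_2^{r_2}\dotsm$ (with $\epsilon_i\in\{0,1\}$ and almost all $r_j=0$) forming a free $h^{\star}$-module basis.

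First, I would translate this module decomposition into a weak equivalence of right $\MZ/2$-module spectra. Each basis monomial $b$ of bidegree $(a,c)$ represents an element of $\pi_{a,c}(\MZ/2\smash \MZ/2)$, hence a map $\Sigma^{a,c}\MZ/2\to \MZ/2\smash \MZ/2$ of right $\MZ/2$-modules; assembling these yields a wedge of suspensions of $\MZ/2$ mapping into $\MZ/2\smash \MZ/2$ and inducing an isomorphism on $\pi_{\star}$, which forces a weak equivalence. It then remains to verify that the bidegrees of these monomials fit the index set $\{(0,0), (1,0)\} \cup I$. The empty product $1$ has bidegree $(0,0)$; $\tau_0$ has bidegree $(1,0)$; and every other nontrivial monomial contains at least one factor $\tau_k$ or $\xi_k$ with $k\geq 1$, hence has weight $j\geq 1$. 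Each such generator satisfies $a\geq 2c$ (with $2^{k+1}-1>2(2^k-1)$ for $\tau_k$ and equality $2^{k+1}-2=2(2^k-1)$ for $\xi_k$), and the inequality is preserved under products; adjoining $\tau_0$ of bidegree $(1,0)$ only strengthens the simplicial degree. Therefore every monomial other than $1$ and $\tau_0$ has bidegree in $I$.

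Second, I would verify the unit and multiplication formulas. The map $\MZ/2\iso \MZ/2\smash \unit\to \MZ/2\smash \MZ/2$ is by construction the inclusion of the summand indexed by the basis element $1$, hence takes the form $(\id,0,\dotsc)$ with respect to the splitting. Dually, the multiplication $\MZ/2\smash \MZ/2\to \MZ/2$ realizes the augmentation $\mathcal{A}_{\star}\to h^{\star}$ of the Hopf algebroid; it maps the basis element $1$ identically and kills every element in the augmentation ideal, yielding $(\id,0,\dotsc)$. The left unit $\MZ/2\iso \unit\smash \MZ/2\to \MZ/2\smash \MZ/2$ encodes $\eta_L\colon h^{\star}\to \mathcal{A}_{\star}$; its component in each $\Sigma^{a,c}\MZ/2$ summand is the cohomology operation dual to the corresponding basis element. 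Since $1$ is dual to $\id$ and $\tau_0$ is dual to the Bockstein $\Sq^1$, the first two components are $\id$ and $\Sq^1$, while the higher components vanish because the Milnor primitives dual to $\tau_i, \xi_i$ ($i\geq 1$) act trivially on $h^{\star}$ in the relevant bidegrees (Voevodsky's explicit formula $\eta_L(\tau)=\tau+\rho\tau_0$ and $\eta_L(\rho)=\rho$ supplies all the input, combined with $\Sq^1\tau=\rho$ and $\Sq^1\rho=0$).

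The main technical point is the last step: identifying the components of $\eta_L$ beyond $\id$ and $\Sq^1$ as zero rather than as higher Milnor operations. This reduces to showing that $\tau$ and $\rho$ are annihilated by every $\Sq^I$ with $I$ of weight $\geq 2$, equivalently to Voevodsky's formula for $\eta_L$ on the weight-one and weight-zero generators of $h^{\star}$. Granted this, the three displayed formulas follow at once from the Hopf algebroid structure, and the theorem is established.
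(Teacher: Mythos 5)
The paper itself offers no proof of this statement: it is quoted in Appendix~\ref{sec:steenrod-algebra-its} with attributions to Voevodsky and Hoyois--Kelly--{\O}stv{\ae}r, so there is no in-paper argument to compare against and your proposal has to be judged on its own. The parts concerning the splitting are fine and are the standard route: freeness of $\pi_{\star,\star}(\MZ/2\smash\MZ/2)$ on the admissible monomials in the $\tau_i$, $\xi_i$, realization of each basis monomial as a right $\MZ/2$-module map from a suspension of $\MZ/2$, and the bidegree bookkeeping (each generator satisfies $i\geq 2j$, the inequality is closed under products, and only $1$ and $\tau_0$ have weight zero) are exactly what is needed. The identification of the module-structure unit with $(\id,0,\dotsc)$ and of the multiplication with the projection onto the bottom summand is also correct.

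The gap is in your last step. The components of the remaining unit $\unit\smash\MZ/2\to\MZ/2\smash\MZ/2\simeq\bigvee_b\Sigma^{|b|}\MZ/2$ are, as you say, the Milnor-dual operations $\theta_b\in[\MZ/2,\Sigma^{|b|}\MZ/2]$ -- but these do not vanish for $b\notin\{1,\tau_0\}$. As maps of spectra they are nonzero: the $\xi_1$-component is $\Sq^2$ modulo $h^{1,1}\cdot\Sq^1$, which is detected on the class $c\in h^{2,1}(\PP^\infty)$ via $\Sq^2(c)=c^2\neq 0$ while $\Sq^1(c)=0$; since $\Sq^2$ is indecomposable, no change of basis of the splitting can remove this component. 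Even restricted to the homotopy groups of the point the vanishing fails: the formula $\eta_R(\tau)=\tau+\rho\tau_0$ must be combined with the relation $\tau_0^2=\tau\xi_1+\rho\tau_1+\rho\tau_0\xi_1$, which yields $\eta_R(\tau^2)=\tau^2+\rho^2\tau\cdot\xi_1+\rho^3\cdot\tau_1+\rho^3\cdot\tau_0\xi_1$, so over $F=\RR$ the $\xi_1$-component of $\eta_R(\tau^2)$ is $\rho^2\tau\neq 0$ (equivalently, $\Sq^2(\tau^2)=\rho^2\tau\neq 0$). Your assertion that ``the Milnor primitives dual to $\tau_i,\xi_i$ ($i\geq 1$) act trivially on $h^{\star}$ in the relevant bidegrees'' is therefore false whenever $\rho^2\neq 0$; the input $\eta_L(\tau)=\tau+\rho\tau_0$ does \emph{not} supply everything, precisely because $\tau_0$ does not square to zero but decomposes into higher monomials. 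What your method does establish -- and what is all the paper ever uses (compare Corollary~\ref{cor:mult-mot-coh} and Example~\ref{ex:mult-slices-sphere-p2}, where the higher components are left unspecified as ``$\dotsm$'') -- is the description of the components landing in the first two summands $\MZ/2\vee\Sigma^{1,0}\MZ/2$. You should either restrict the claim to those two components, or add an argument for the vanishing of the remaining ones; as stated, such an argument is not available in general.
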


The identification of $\MZ/2\smash \MZ/2$ in Theorem~\ref{theorem:dual-steenrod-algebra} induces a weak equivalence
\begin{equation}\label{eq:dual-steenrod} 
\MZ/2\smash_{\MZ} \MZ/2 \xrightarrow{\sim} 
\MZ/2 \vee \Sigma^{1,0} \MZ/2. 
\end{equation}

\begin{corollary}
\label{cor:mult-mot-coh}
Via \eqref{eq:dual-steenrod-alg} the external multiplication $(f,g)\mapsto f\smash g$ induces the pairing
\begin{align*}
h^{a,b}\directsum h^{c,d} & \to h^{a+c,b+d} \directsum h^{a+c+1,b+d}\directsum \dotsm;
(f,g) 
\mapsto 
\bigl(f\cdot g,f\cdot \Sq^1(g),\dotsm\bigr).
\end{align*}
\end{corollary}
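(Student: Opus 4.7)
The plan is to interpret the external product $f \smash g$ as a ring-theoretic operation in the commutative ring spectrum $\MZ/2 \smash \MZ/2$ and then extract its components via Theorem~\ref{theorem:dual-steenrod-algebra}.

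The first component of $f \smash g$ in the splitting is immediate: by the theorem the multiplication $\mu$ coincides with the projection $\pi_0$ onto the leading $\MZ/2$ summand, and so $\pi_0(f \smash g) = \mu \circ (f \smash g) = f \cdot g$ is the usual cup product. This disposes of the first entry in the tuple without further work.

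For the second component I would exploit the ring structure on $\MZ/2 \smash \MZ/2$. The elementary identity $(a \smash 1)(1 \smash b) = a \smash b$ in a smash-product of ring spectra gives
\[
f \smash g
=
\eta_L(f) \cdot \eta_R(g)
\]
in $\pi_{*,*}(\MZ/2 \smash \MZ/2)$, where $\eta_L = \MZ/2 \smash u$ and $\eta_R = u \smash \MZ/2$ are the two unit maps and $\cdot$ is ring multiplication. Commutativity of $\MZ/2 \smash \MZ/2$ rewrites this as $\eta_R(g) \cdot \eta_L(f)$, and ring multiplication by $\eta_L(f)$ realizes the right $\MZ/2$-module action of $f$ for which the theorem's splitting is a right-module decomposition. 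Each projection $\pi_\alpha$ therefore commutes with this action, and
\[
\pi_1(f \smash g)
=
\pi_1(\eta_R(g)) \cdot f
=
\Sq^1(g) \cdot f
=
f \cdot \Sq^1(g),
\]
where the middle equality is the theorem's formula $\pi_1 \circ \eta_R = \Sq^1$ applied pointwise to $g$. The unspecified ``$\dotsm$'' components are obtained the same way from the higher components of $\eta_R$ recorded by the theorem.

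The main technical obstacle is the right $\MZ/2$-module bookkeeping required to pass $\pi_1$ through multiplication by $\eta_L(f)$: one must verify that ring multiplication by $\eta_L(f)$ actually implements the right-module action of $f$ under which the splitting of Theorem~\ref{theorem:dual-steenrod-algebra} is linear. This is the single place where the phrase ``of right $\MZ/2$-modules'' in the theorem is genuinely used; once that identification is fixed, the corollary reduces to the two one-line projections above.
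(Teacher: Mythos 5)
Your argument is correct and is exactly the computation the paper leaves implicit (the corollary is stated without proof): writing $f\smash g=\eta_L(f)\cdot\eta_R(g)$, using that the splitting of Theorem~\ref{theorem:dual-steenrod-algebra} is linear for the module action implemented by multiplication with $\eta_L(f)$, and reading off the components of $\eta_R$ as $(\id,\Sq^1,0,\dotsc)$. Your closing remark correctly isolates the one point needing care — that the module structure for which the splitting is linear is the one through $\eta_L$, which is forced since $\Sq^1$ is not an $\MZ/2$-module map and hence $\eta_R$ cannot be the module-linear unit.
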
  

\begin{lemma}
\label{lem:steenrod-alg-weight-zero}
For abelian groups $A$ and $B$ there are isomorphisms
\[ 
[\MA,\Sigma^{s,0}\MB] 
\iso 
\begin{cases} 
\Hom(A,B) & s=0 \\
\Ext(A,B) & s=1 \\
0 & \mathrm{otherwise}.
\end{cases}
\]
Moreover, 
if $\MA\to \Sigma^{1,0}\MB$ is induced by the extension
$ 
0 
\to 
B
\to 
C
\to 
A 
\to 
0 
$,
then its cone is $\Sigma^{1,0}\MB\to \Sigma^{1,0}\MC$.
\end{lemma}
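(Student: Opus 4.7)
The plan is to reduce the computation to the case of free $\Lambda$-modules via a projective resolution of $A$. First I would establish the base case $[\M\Lambda, \Sigma^{s,0}\MC] \cong C$ for $s=0$ and zero otherwise, valid for any $\Lambda$-module $C$. This relies on the identification of the derived $\M\Lambda$-module hom $\uHom_{\M\Lambda-\Mod}(\M\Lambda, \MC)$ with the chain complex concentrated in degree zero with value $C$, which follows from Definition \ref{def:compatible}(3), together with the fact (implicit in the computations of Section \ref{sec:slic-motiv-sphere}) that $\SH$-maps between motivic Eilenberg-MacLane spectra coincide with derived $\M\Lambda$-module maps. Since $\mathbf{M}$ takes direct sums of $\Lambda$-modules to wedges of spectra, for a free $\Lambda$-module $F = \bigoplus_{i \in I}\Lambda$ we obtain $[\mathbf{M}F, \Sigma^{s,0}\MC] \cong \Hom_\Lambda(F, C) \cdot \delta_{s,0}$.

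Next, choose a short projective resolution $0 \to F_1 \to F_0 \to A \to 0$; such a resolution exists because $\Lambda$ is a principal ideal domain. The functor $\mathbf{M}$ sends this short exact sequence to a distinguished triangle $\mathbf{M}F_1 \to \mathbf{M}F_0 \to \MA$ in $\SH$. Applying $[-, \Sigma^{s,0}\MB]$ yields a long exact sequence in which, by the base case, $[\mathbf{M}F_i, \Sigma^{s, 0}\MB]$ is supported only in $s=0$. For $s \neq 0, 1$ both flanking terms vanish, so $[\MA, \Sigma^{s, 0}\MB] = 0$. For $s = 0, 1$ the sequence collapses to
\[ 0 \to [\MA, \MB] \to \Hom_\Lambda(F_0, B) \to \Hom_\Lambda(F_1, B) \to [\MA, \Sigma^{1, 0}\MB] \to 0, \]
which identifies the outer terms with $\Hom_\Lambda(A, B)$ and $\Ext^1_\Lambda(A, B)$ respectively.

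For the cone identification, observe that the short exact sequence $0 \to B \to C \to A \to 0$ gives rise to a cofiber sequence $\MB \to \MC \to \MA$ in $\SH$, whose connecting map $\MA \to \Sigma^{1, 0}\MB$ represents the extension class in $\Ext^1_\Lambda(A, B)$ under the isomorphism established above. Rotating the Puppe sequence one step further yields $\MB \to \MC \to \MA \to \Sigma^{1, 0}\MB \to \Sigma^{1, 0}\MC$, exhibiting the cone of the connecting map as $\Sigma^{1, 0}\MB \to \Sigma^{1, 0}\MC$.

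The main subtlety is the base step: identifying $\SH$-maps $\M\Lambda \to \Sigma^{s, 0}\MC$ with the derived hom of $\Lambda$-modules, rather than merely the underived hom. This is a version of the standard fact that $\mathbf{M}$ extends to a fully faithful embedding of the derived category of $\Lambda$-modules into $\M\Lambda$-modules in $\SH$, and implicitly underlies the cosimplicial identification \eqref{equation:cosimplicalisomorphism} used in the proof of Theorem \ref{theorem:sphereslices}.
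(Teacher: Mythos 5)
The paper states this lemma without proof, so I am judging your argument on its own terms. Your overall architecture --- a two-term free resolution of $A$, the induced cofiber sequence $\mathbf{M}F_1\to\mathbf{M}F_0\to\MA$, the long exact sequence, and the Puppe rotation for the cone statement --- is the right one, and those steps are fine. The genuine gap is in your base case. The fully faithful embedding $\mathbf{D}_{\Lambda}\to\mathbf{D}_{\M\Lambda}$ of Remark \ref{rem:cell-filt-basechange} computes morphisms in the homotopy category of $\M\Lambda$-\emph{modules}, whereas the lemma asserts a computation of $[\MA,\Sigma^{s,0}\MB]$ in $\SH$. The forgetful comparison from module maps to $\SH$-maps is very far from an isomorphism in general: topologically $[HA,\Sigma^{s}HB]_{H\ZZ-\Mod}\iso\Ext^{s}_{\ZZ}(A,B)$, while $[HA,\Sigma^{s}HB]$ in spectra contains the entire Steenrod algebra. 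The assertion that ``$\SH$-maps between motivic Eilenberg--MacLane spectra coincide with derived $\M\Lambda$-module maps'' is false outside weight zero (the operations $\Sq^{i}$, $i\geq 2$, are counterexamples), and in weight zero it is essentially equivalent to the lemma you are trying to prove; nothing in Section~\ref{sec:slic-motiv-sphere} supplies it.

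The base case does hold, but for a different reason, and this is the one idea your proof is missing: effectivity together with the vanishing of motivic cohomology in negative weights. Since $[\Sigma^{p,q}\Sigma^{\infty}X_{+},\Sigma^{s,0}\MC]\iso H^{s-p,-q}(X;C)=0$ for $q\geq 1$, one gets $[\E,\Sigma^{s,0}\MC]=0$ for every $\E\in\Sigma^{2,1}\SH^{\eff}$. Applying this to $\f_{1}\unit$ in the cofiber sequence $\f_{1}\unit\to\unit\to\s_{0}\unit\simeq\M\Lambda$ (Theorem \ref{theorem:s0isomorphism}) yields $[\M\Lambda,\Sigma^{s,0}\MC]\iso[\unit,\Sigma^{s,0}\MC]=H^{s,0}(S;C)$, which is $C$ for $s=0$ and zero otherwise; this passes to arbitrary wedges and hence to $\mathbf{M}F$ for $F$ free. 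With that substitution your resolution argument goes through. Two smaller points: the $\Hom$ and $\Ext$ in the statement are over $\ZZ$, so you should resolve $A$ as an abelian group (harmless when $A$ and $B$ are modules over a localization $\Lambda$ of $\ZZ$); and in the cone statement you should justify why the connecting map of $\MB\to\MC\to\MA$ corresponds to the class of the given extension under your $\Ext$-identification, e.g.\ by mapping the resolution $0\to F_{1}\to F_{0}\to A\to 0$ to the extension $0\to B\to C\to A\to 0$ and chasing the resulting diagram of cofiber sequences.
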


\begin{example}\label{ex:steenrod-alg-weight-zero}
Let $A$ and $B$ be finite cyclic groups of order $a$ and $b$, 
respectively.
Let $\inc^a_b$ denote an inclusion $A\hookrightarrow B$, 
and let $\pr^{a}_{b}$ denote a quotient map $A\to B$.
If $\inc^a_b\colon A\hookrightarrow B$ is an inclusion of finite cyclic groups, 
with quotient $\pr^b_c\colon B\to C$, 
Lemma~\ref{lem:steenrod-alg-weight-zero} shows there exists a unique map
\[ 
\partial^c_a\colon \M C \to \Sigma^{1,0}\M A 
\]
representing the cone of $\M B\to \M C$.
For example, $\partial^2_2=\Sq^1$. 
If no confusion can arise, the ubiquitous index ``2'' will be left out.
\end{example}

\begin{theorem}
\label{theorem:steenrod-algebra}
For $r\geq 1$ the homotopy cofiber sequences
\[ 
\MZ/{2^{r-1}}
\rightarrow
\MZ/2^{r} 
\rightarrow 
\MZ/2 
\xrightarrow{\partial^2_{2^{r-1}}} 
\Sigma^{1,0}\MZ/2^{r-1} 
\]
and 
\[ 
\MZ/{2}
\rightarrow 
\MZ/2^{r} 
\rightarrow 
\MZ/2^{r-1} 
\xrightarrow{\partial^{2^{r-1}}_2}
\Sigma^{1,0}\MZ/2 
\]
induce isomorphisms
\begin{align*}
[\MZ/2,\Sigma^{s,1}\MZ/2^{r}] &
\cong 
\begin{cases}
\inc^2_{2^r} \circ h^{0,1} & s=0 \\
\inc^2_{2^r} \circ h^{1,1} \oplus \partial^{2}_{2^r} \circ h^{0,1} & s=1 \\
\inc^2_{2^r} \circ h^{0,0}\{\Sq^2\} \oplus \partial^{2}_{2^r} \circ h^{1,1} & s=2 \\
\inc^2_{2^r} \circ h^{0,0}\{\Sq^2\Sq^1\} \oplus \partial^{2}_{2^r} \circ h^{0,0}\{\Sq^2\} & s=3 \\
\partial^{2}_{2^r} \circ h^{0,0}\{\Sq^2\Sq^1\} & s=4 \\
0 & s\geq 5,
\end{cases} 
\\
[\MZ/2^r,\Sigma^{s,1}\MZ/2] &
\cong
\begin{cases}
h^{0,1}  \circ \pr^{2^r}_{2} & s=0 \\
h^{1,1}\circ \pr^{2^r}_{2} \oplus  h^{0,1} \circ \partial^{2^r}_{2}& s=1 \\
h^{0,0}\{\Sq^2\} \circ \pr^{2^r}_{2}\oplus  h^{1,1} \circ \partial^{2^r}_{2}& s=2 \\
h^{0,0}\{\Sq^1\Sq^2\} \circ  \pr^{2^r}_{2}\oplus  h^{0,0}\{\Sq^2\}\circ \partial^{2^r}_{2}& s=3 \\
h^{0,0}\{\Sq^1\Sq^2\}\circ  \partial^{2^r}_{2} & s=4 \\
0 &  s\geq 5.
\end{cases}
\end{align*}
\end{theorem}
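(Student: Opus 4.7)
The plan is to establish both isomorphisms by induction on $r \geq 1$. The two cofiber sequences supply six-term exact sequences upon applying $[\MZ/2, \Sigma^{\ast,1}-]$ to the first and $[-,\Sigma^{\ast,1}\MZ/2]$ to the second. The base case $r=1$, in which $\MZ/{2^{r-1}}$ is contractible, reduces both statements to the computation of the motivic Steenrod algebra $[\MZ/2, \Sigma^{s,1}\MZ/2]$ for $0 \leq s \leq 4$. I would extract this from Theorem~\ref{theorem:dual-steenrod-algebra}: dualizing the right $\MZ/2$-module splitting of $\MZ/2 \wedge \MZ/2$ identifies $[\MZ/2, \Sigma^{\ast,\ast}\MZ/2]$ as the $h^{\ast,\ast}$-dual of a free module on generators indexed by $J = \{(0,0),(1,0)\} \cup I$, so in bidegree $(s,1)$ the surviving summands are $h^{s-s_j,\,1-t_j}$ for those $(s_j, t_j)\in J$ with $0 \leq s-s_j \leq 1-t_j$. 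The vanishing $h^{a,b}(F) = 0$ for $a > b$ over any field of characteristic not two trims this list down to the generators $\tau$, $\rho$, $\Sq^2$, and $\Sq^2\Sq^1$ appearing in the statement.

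For the inductive step of the first isomorphism, applying $[\MZ/2,\Sigma^{\ast,1}-]$ to the first cofiber sequence produces
\[
[\MZ/2, \Sigma^{s-1,1}\MZ/2] \xrightarrow{(\partial_{r-1})_\ast} [\MZ/2, \Sigma^{s,1}\MZ/{2^{r-1}}] \to [\MZ/2, \Sigma^{s,1}\MZ/{2^r}] \to [\MZ/2, \Sigma^{s,1}\MZ/2] \xrightarrow{(\partial_{r-1})_\ast} [\MZ/2, \Sigma^{s+1,1}\MZ/{2^{r-1}}].
\]
The inductive hypothesis decomposes the flanking groups as $\inc_{r-1}(\cdot) \oplus \partial_{r-1}(\cdot)$. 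The middle arrow on the left is induced by the map $\MZ/{2^{r-1}} \to \MZ/{2^r}$ from the cofiber sequence (multiplication by $2$ on coefficients), and the relation $\inc_r = (\cdot 2) \circ \inc_{r-1}$ from Example~\ref{ex:steenrod-alg-weight-zero} identifies its image with the $\inc_r$-summand of $[\MZ/2, \Sigma^{s,1}\MZ/{2^r}]$. The outgoing $(\partial_{r-1})_\ast$ sends a generator $\alpha$ of $[\MZ/2, \Sigma^{s,1}\MZ/2]$ to the composite $\partial_{r-1}\circ\alpha$, which inspection shows to be the $\partial_{r-1}$-summand one bidegree higher. Assembling cokernel and kernel identifies $[\MZ/2, \Sigma^{s,1}\MZ/{2^r}]$ as the asserted direct sum $\inc_r(\cdot) \oplus \partial_r(\cdot)$. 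The second isomorphism follows by the dual argument applied to the second cofiber sequence, with $\sigma_{r-1}$ playing the role of $\partial_{r-1}$.

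The main obstacle will be verifying that the resulting short exact sequence
\[
0 \to \mathrm{coker}((\partial_{r-1})_\ast) \to [\MZ/2, \Sigma^{s,1}\MZ/{2^r}] \to \ker((\partial_{r-1})_\ast) \to 0
\]
splits as $\inc_r(\cdot) \oplus \partial_r(\cdot)$ in each relevant bidegree. A candidate section sends a generator $\alpha \in \ker((\partial_{r-1})_\ast)$ to $\inc_r \circ \alpha$; that the result is nonzero and linearly independent from the $\partial_r$-piece rests on the compatibility relations $\pr_r\circ\partial_r = \Sq^1 = \sigma_r\circ\inc_r$ from Example~\ref{ex:steenrod-alg-weight-zero}, together with the cross-check provided by the parallel induction on the second isomorphism. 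The most delicate book-keeping occurs near bidegrees $(2,1)$ and $(3,1)$, where motivic Adem relations interact nontrivially with the extension classes $\partial_r$ and $\sigma_r$; once those bidegrees are settled, the range $s \geq 4$ is cleaned up using the vanishing of the base-case group $[\MZ/2, \Sigma^{s,1}\MZ/2]$ for $s \geq 5$ and simple bookkeeping in the exact sequences.
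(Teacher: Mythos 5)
The paper offers no proof of this statement at all: it is listed in Appendix~\ref{sec:steenrod-algebra-its} as a standard consequence of the known structure of the motivic Steenrod algebra, so your induction on $r$ is a reconstruction rather than a comparison. Your skeleton is the right one: the base case $r=1$ does follow from Theorem~\ref{theorem:dual-steenrod-algebra} by dualizing the free right $\MZ/2$-module splitting and discarding the summands $h^{a,b}$ with $a<0$ or $a>b$, and the inductive step does come from the long exact sequences of the two cofiber sequences, with the image of $[\MZ/2,\Sigma^{s,1}\MZ/{2^{r-1}}]\to[\MZ/2,\Sigma^{s,1}\MZ/{2^{r}}]$ identified with the $\inc_r$-summand via $\inc_r=(\cdot 2)\circ\inc_{r-1}$ (note $(\cdot 2)\circ\partial_{r-1}=0$, since $\Ext(\ZZ/2,\ZZ/{2^{r-1}})\to\Ext(\ZZ/2,\ZZ/{2^{r}})$ is the zero map).

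There are, however, two genuine problems in the second half. First, your candidate section is wrong: for $r\geq 2$ the composite $\pr_r\circ\inc_r$ is induced by $\ZZ/2\hookrightarrow\ZZ/{2^r}\twoheadrightarrow\ZZ/2$, which is zero, so $\inc_r\circ\alpha$ lies in $\ker((\pr_r)_*)=\mathrm{im}((\cdot 2)_*)$ --- the subgroup you have already accounted for --- and is not a lift of $\alpha\in\ker((\partial_{r-1})_*)$. The correct lifts are the classes $\partial_r\circ\theta$, using $\pr_r\circ\partial_r=\Sq^1$ from Example~\ref{ex:steenrod-alg-weight-zero}. Second, and relatedly, the assertion that ``inspection shows'' $(\partial_{r-1})_*$ carries generators to the $\partial_{r-1}$-summand one degree higher conceals the real content: one must show that $\ker((\partial_{r-1})_*)$ in degree $s$ is exactly the set of operations $\Sq^1\circ\theta$ with $\theta$ running through the generators $h^{0,1},h^{1,1},\Sq^2,\Sq^2\Sq^1$ in degree $s-1$. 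This uses $\partial_{r-1}\circ\Sq^1=0$ (a consequence of $[\MZ/2,\Sigma^{2,0}\MZ/{2^{r-1}}]=0$ from Lemma~\ref{lem:steenrod-alg-weight-zero}) together with the fact that multiplication by $\tau$ does \emph{not} commute with Bocksteins ($\Sq^1(\tau)=\rho$): for instance $(\partial_{r-1})_*$ kills $\Sq^1\circ\tau=\rho+\tau\Sq^1$ but sends $\rho$ and $\tau\Sq^1$ separately to $\partial_{r-1}\rho$. Once the kernel is identified in this form, $\partial_r\circ\theta$ is a lift of $\Sq^1\circ\theta$, and no extension problem remains because every group $[\MZ/2,X]$ is $2$-torsion ($2=0$ in $\pi_{0,0}\MZ/2$); the appeal to Adem relations at bidegrees $(2,1)$ and $(3,1)$ is not where the difficulty lies.
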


\section{Ext groups for complex bordism}
\label{sec:ext-groups-complex}

Nearly all of the results in this section go back to Novikov's paper~\cite{novikov}.
If $(s,t)\neq (0,0)$, the group $\Ext_{\MU_\ast\MU}^{s,t}(\MU_\ast,\MU_\ast)$ is finite \cite[Corollary 2.1]{novikov}, \cite[Property 2.2]{Zahler}.
Moreover, 
\begin{align*}
\Ext_{\MU_\ast\MU}^{s,0}(\MU_\ast,\MU_\ast) 
&
\cong 
\begin{cases} 
\ZZ & s=0 \\
0 & s\neq 0. 
\end{cases}
\end{align*}

It is convenient to display the Ext-groups groups for the Brown-Peterson spectrum $\BP$ at a prime $p$, 
and use $\Ext_{\MU_\ast \MU}^{s,t}(\MU_\ast,\MU_\ast)=\bigoplus_{p}\Ext_{\BP_\ast \BP}^{s,t}(\BP_\ast,\BP_\ast)$ from~\cite{novikov}.
Then 
\begin{equation*}
\Ext_{\BP_\ast\BP}^{s,t}(\BP_\ast,\BP_\ast) 
= 
0 
\mathrm{\ for\ } 2s(p-1)>t, \ \mathrm{ and\ for }\  2p-2 \nmid t, \\
\end{equation*}
by \cite[Proposition 4.4.2, Edge Theorem 5.1.23]{ravenel.green} and \cite[Property 2.1]{Zahler}.

For $p=2$, \cite[Edge Theorem 5.1.23, Theorem 5.2.6(c), (d)]{ravenel.green} shows 
\begin{align*}
\Ext_{\BP_\ast\BP}^{1,2q}(\BP_\ast,\BP_\ast) 
&
\cong 
\begin{cases}
\ZZ/2\{\alpha_{q}\}         & q\equiv 1,3 \bmod 4 \\
\ZZ/4\{\alpha_{2/2}\}       & q=2 \\
\ZZ/2^{n+3}\{\alpha_{q/n+3}\}    & q=2^{n+1}r \equiv 0,2 \bmod 4; q>2, 2\nmid r.
\end{cases}
\end{align*}

In the proof of Lemma \ref{lem:mult-oddslice-1slice} we use the normalized cobar complex obtained from the standard $\BP$-cosimplicial resolution at the prime $2$, 
and corresponding representatives for the generators in $\Ext_{\BP_{\ast}\BP}^{1,2q}(\BP_{\ast},\BP_{\ast})$.
At the prime $2$,
the choice of Hazewinkel generators gives $\BP_{\ast} = \ZZ_{(2)}[v_{1},v_{2},\dotsc]$ and $\BP_{\ast}\BP = \BP_{\ast}[t_{1},t_{2},\dotsc]$, 
where $\lvert v_i\rvert = \lvert t_i\rvert = 2(2^{i}-1)$. 
If $q$ is odd, 
then $\alpha_q$ is represented by $\tfrac{d(v_{1}^q)}{2}$.
Here $d$ is the differential in the Adams degree $2q$ part $\Tot\{q\}$ of the 
cobar complex for $\BP$.
In particular, 
$\alpha_{1}$ is represented by $[t_{1}]$ in 
\[ 
\Tot\{1\} 
= 
\Bigl( \ZZ_{(2)}\{v_{1}\} \xrightarrow{d=2}\ZZ_{(2)}\{[t_{1}]\}\Bigr). 
\] 
Further,
$\alpha_{2/2}$ is represented by 
$\tfrac{d(v_{1}^{2})}{4}=v_{1}[t_{1}]+[t_{1}^{2}]$.
If $q=2^{n+1}r>2$ with
$r$ odd, 
$\alpha_{q/n}$ is represented by 
\begin{equation*}
\frac{d(v_{1}^{q}+2qv_{1}^{q-3}v_{2})}{2^{n+3}}.
\end{equation*}
For all $q\in \NN$, 
$H_{0}(\Tot\{2q-1\}\otimes \Tot\{1\})=0$ and the generator of $H_{1}(\Tot\{2q-1\}\otimes \Tot\{1\})=\ZZ/2$ is represented by 
\[ 
\frac{d(v_{1}^{2q-1}\otimes v_{1})}{2}
=
\frac{d(v_{1}^{2q-1})}{2}\otimes v_{1} + v_{1}^{2q-1}\otimes [t_{1}].
\] 
Moreover, 
$H_{2}(\Tot\{2q-1\}\otimes \Tot\{1\})$ contains a direct summand $\ZZ/2$ generated by 
\[
\frac{d(v_{1}^{2q-1})}{2}\otimes [t_{1}].
\]
Using the multiplication in the $\BP$-cobar complex there is an induced map
\[ 
\Tot\{2q-1\}\otimes \Tot\{1\} 
\to 
\Tot\{2q\}.
\]
By the proof of Lemma \ref{lem:slices-unit-kq-2} there is an induced inclusion 
\[ 
H_{1}(\Tot\{2q-1\}\otimes \Tot\{1\}) 
= 
\ZZ/2
\to 
H_{1}(\Tot\{2q\}) 
= 
\ZZ/a_{2q}.
\]
Since $\Tot\{2q\}$ is degreewise free there exists a map $\Tot\{2q\} \to H_{1}(\Tot\{2q\})[1]=\ZZ/a_{2q}[1]$ inducing the identity on $H_{1}$.

In order to describe the multiplication map to $H_{1}(\Tot\{2q\})[1]$ completely, 
it remains to identify the composite
\[
\Bigl( \ZZ_{(2)}\{v_{1}^{2q-1}\otimes [t_{1}]\} 
\xrightarrow{d}
\ZZ_{(2)}\{\tfrac{d(v_{1}^{2q-1})}{2}\otimes [t_{1}]\}\Bigr)[1]
\to 
\Tot\{2q-1\}\otimes \Tot\{1\} 
\to 
\Tot\{2q\} 
\to 
H_{1}(\Tot\{2q\})[1]. 
\] 
It represents either the trivial map, or the connecting map for the short exact sequence
\[ 
0
\to 
\ZZ/2
\to 
\ZZ/2a_{2q} 
\to 
\ZZ/a_{2q} 
\to 
0. 
\]

\begin{lemma}
\label{lem:mult-oddslice-1slice}
Let $\Tot\{k\}$ denote the Adams degree $2k$ part of the cobar complex for $\BP_\ast$ at the prime $2$. 
The multiplication map $\Tot\{2q-1\}\otimes \Tot\{1\} \to \Tot\{2q\}$ induces the trivial map of chain complexes
\[ 
\bigl(\ZZ_{(2)}\{v_{1}^{2q-1}\otimes [t_{1}]\}
\xrightarrow{2} 
\ZZ_{(2)}\{\tfrac{d(v_{1}^{2q-1})}{2}\otimes [t_{1}]\}\bigr)[1] 
\to
H_{1}(\Tot\{2q\})[1]. 
\]
\end{lemma}
\begin{proof}
The generator $v_{1}^{2q-1}\otimes [t_{1}]$ in degree $1$ maps to $v_{1}^{2q-1}[t_{1}]$. 
The element $\alpha_{2q/n+3}\in \Tot\{2q\}_{1}$ in the target is represented by 
\[
g_{2q}
= \frac{d(v_1^{2q} + 4q v_1^{2q-3}v_2)}{2^{n+3}}, 
\]
where $2q=2^{n+1}r$, $r$ odd. 
We conclude by showing $\{g_{2q},v_{1}^{2q-1}[t_{1}]\}$ extends to a basis of the free $\ZZ_{(2)}$-module $\Tot\{2q\}_{1}$. 
To wit,
the coefficient of $v_{1}^{2q-4}v_{2}[t_{1}]$ for $2q\geq 4$ (of $[t_{1}^{2}]$ for $2q=2$) in $g_{2q}$ is odd with respect to the usual monomial basis of $\Tot\{2q\}_{1}$: 
The coefficient of $v_{1}^{2q-4}v_{2}[t_{1}]$ in $d(v_{1}^{2q-3}v_{2})$ is $4q-6$.
The coefficient of $v_{1}^{2q-4}v_{2}[t_{1}]$ in $d(v_{1}^{2q})$ is zero.
Thus when $2q\geq 4$, the coefficient of $v_{1}^{2q-4}v_{2}[t_{1}]$ in $g_{2q}$ is the odd integer
\begin{equation*}
\frac{(4q-6)4q}{2^{n+3}}=
\frac{(2^{n+2}r - 6)2^{n+2}r}{2^{n+3}} = (2^{n+1}r-3)r,
\end{equation*}
which completes the argument.
\end{proof}

Letting $\overline{\alpha}_{q}$ be the generic notation for a generator of the cyclic group $\Ext_{\BP_\ast\BP}^{1,2q}(\BP_\ast,\BP_\ast)$, 
\cite[Proposition 6.1]{Zahler} shows
\begin{align*}
\Ext_{\BP_\ast\BP}^{s,2(s+t)}(\BP_\ast,\BP_\ast) 
&
\cong 
\begin{cases}
0                                     & t=1, s\geq 2 \\ 
\ZZ/2\{\alpha_{1}^{s}\}                &  t=0, s\geq 1 \\
\ZZ/2\{\alpha_{1}^{s-1}\overline{\alpha}_{t+1}\}       & 2\leq t\leq 6, s\geq 4. \\
\end{cases}
\end{align*}

For $p$ odd, 
\cite[Theorem 5.2.6(a), (b)]{ravenel.green} shows 
\begin{align*}
\Ext_{\BP_\ast\BP}^{1,2k(p-1)}(\BP_\ast,\BP_\ast) 
&
\cong 
\ZZ/p^{n+1}\{\alpha_{k/n}\}, k=p^{n}r \mathrm{\ for\ } p\nmid r. 
\end{align*}

For the beta family of elements on the $2$-line of the Adams-Novikov spectral sequence we refer to \cite[Chapter 4, \S4]{ravenel.green}.

{\bf Acknowledgments.}
We thank Aravind Asok and Kyle Ormsby for discussions at an early stage of this work, 
and Dan Isaksen for pointing out \cite{andrews-miller}.
We also thank the participants of the USC $K$-theory summer school 2018, 
and in particular Aravind Asok, Bert Guillou, and Glen Wilson, for comments. 
The authors gratefully acknowledge support by the RCN grant no.~239015 "Special Geometries",
RCN Frontier Research Group Project no.~250399 "Motivic Hopf Equations'', 
and the DFG Priority Program 1786 "Homotopy theory and algebraic geometry''.
{\O}stv{\ae}r is supported by a Friedrich Wilhelm Bessel Research Award from the Humboldt Foundation.

\begin{small}

\vspace{0.1in}

\begin{center}
Institut f\"ur Mathematik, Universit\"at Osnabr\"uck, Germany.\\
e-mail: oliver.roendigs@uni-osnabrueck.de
\end{center}
\begin{center}
Institut f\"ur Mathematik, Universit\"at Osnabr\"uck, Germany.\\
e-mail: markus.spitzweck@uni-osnabrueck.de
\end{center}
\begin{center}
Department of Mathematics, University of Oslo, Norway.\\
e-mail: paularne@math.uio.no
\end{center}
\end{small}
\end{document}